\newtheorem{thm}{Theorem}[subsection]
\newtheorem{lem}[thm]{Lemma}
\newtheorem{cor}[thm]{Corollary}
\newtheorem{prop}[thm]{Proposition}
\theoremstyle{definition}
\newtheorem{defn}[thm]{Definition}
\newtheorem{eg}[thm]{Example}
\newtheorem{notation}[thm]{Notation}
\newtheorem{rem}[thm]{Remark}
\numberwithin{equation}{subsection}
\newcommand{\op}{^{op}}
\newcommand{\mat}[1]{\ensuremath{
\left[\begin{matrix}#1
\end{matrix}\right]
}}
\newcommand{\vs}[1]{\vskip .#1 cm} %enter amount of skip wanted at #1
\newcommand{\noi}{\noindent}
\newcommand{\xrarrow}{\xrightarrow} %right arrow {label on top}
\newcommand{\xlarrow}{\xleftarrow}
\newcommand{\ot}{\leftarrow}
\newcommand{\then}{\Rightarrow}
\newcommand{\into}{\hookrightarrow}
 \newcommand{\onto}{\twoheadrightarrow}
 \newcommand{\cof}{\rightarrowtail}
\newcommand{\toto}{\rightrightarrows}
\newcommand{\smallprod}{\,{\textstyle{\prod}}\,}
 \newcommand{\Vrep}{V\!rep}
\DeclareMathOperator{\coker}{coker}%\newcommand{\coker}{\text{coker}}
\DeclareMathOperator{\Hom}{Hom}%
\DeclareMathOperator{\Ext}{Ext}%
\DeclareMathOperator{\sgn}{sgn}%
\DeclareMathOperator{\End}{End}%
\DeclareMathOperator{\Aut}{Aut}
\DeclareMathOperator{\undim}{\underline{dim}}
\newcommand{\field}[1]{\mathbb{#1}}
\newcommand{\ZZ}{\ensuremath{{\field{Z}}}}
\newcommand{\FF}{\ensuremath{{\field{F}}}}
\newcommand{\CC}{\ensuremath{{\field{C}}}}
\newcommand{\RR}{\ensuremath{{\field{R}}}}
\newcommand{\QQ}{\ensuremath{{\field{Q}}}}
\newcommand{\NN}{\ensuremath{{\field{N}}}}
\newcommand{\HH}{\ensuremath{{\field{H}}}}
\newcommand{\kk}{\ensuremath{K}}
\newcommand{\commentout}[1]{}
\newcommand{\cC}{\ensuremath{{\mathcal{C}}}}
\newcommand{\cD}{\ensuremath{{\mathcal{D}}}}
\newcommand{\cM}{\ensuremath{{\mathcal{M}}}}
\newcommand{\cS}{\ensuremath{{\mathcal{S}}}}
\newcommand{\cW}{\ensuremath{{\mathcal{W}}}}
\newcommand{\cX}{\ensuremath{{\mathcal{X}}}}
 \newcommand\brk[1]{\left<#1\right>}
\tikzset{help lines/.style={step=#1cm,very thin, color=gray},
help lines/.default=.5} % draws a grid spaced .#1 cm
\tikzset{thick grid/.style={step=#1cm,thick, color=red},
thick grid/.default=1} % draws a grid spaced .#1 cm
\title{Modulated semi-invariants}
\author{Kiyoshi Igusa}
\address{Department of Mathematics, Brandeis University, Waltham, MA 02454}\email{igusa@brandeis.edu}
 \thanks{The first author is supported by NSA Grant \#H98230-13-1-0247}
\author{Kent Orr}
\address{Department of Mathematics, Indiana University, Bloomington, IN 47405}
\email{korr@indiana.edu}
\thanks{The second author is supported by Simons Foundation Grant \#209082}
\author{Gordana Todorov}
\address{Department of Mathematics, Northeastern University, Boston, MA 02115}
\email{g.todorov@neu.edu}
\thanks{The third author is supported by NSF Grant \#DMS-1103813 and \#DMS-0901185}
\author{Jerzy Weyman}
\address{Department of Mathematics, University of Connecticut, Storrs, CT 06269}
\email{jerzy.weyman@uconn.edu}
\thanks{The fourth author is supported by NSF Grant \#DMS-1400740}
\subjclass[2010]{
16G20; 20F55}
\begin{document}

\color{black}

\begin{abstract} We prove the basic properties of determinantal semi-invariants for presentation spaces over any finite dimensional hereditary algebra over any field. The results include the virtual generic decomposition theorem, stability theorem and the $c$-vector theorem, the last says that the $c$-vectors of a cluster tilting object are, up to sign, the determinantal weights of the determinantal semi-invariants defined on the cluster tilting objects. Applications of these theorems are given in several concurrently written papers.
\end{abstract}

\maketitle

%\tableofcontents

%\listoffigures

 %\newpage
 
 \section*{Introduction}

There is a rich theory of semi-invariants for representations of quivers \cite{S91}, \cite{King}, \cite{DW}, \cite{SW}, \cite{SvdB} and its relation to cluster categories and cluster algebras \cite{IOTW1}, \cite{Chindris}, \cite{BHIT}. In this paper, we show how this theory and its relation to cluster algebras can be extended {to finite dimensional hereditary algebras over a field, which in particular include} all modulated acyclic quivers over any field. Furthermore, we prove the relationship between $c$-vectors and semi-invariants.

Over a fixed field $\kk$, a \emph{$\kk$-modulated quiver} is a triple $(Q,\{F_i\}_{i\in Q_0},\{M_{ij}\}_{i\to j\in Q_1})$ where $Q$ is a quiver (directed graph) without oriented cycles, $F_i$ is a finite dimensional division algebra for each vertex $i\in Q_0$ and $M_{ij}$ is an $F_i$-$F_j$ bimodule for every arrow $i\to j$ in $Q_1$. The standard modulation of a simply laced quiver $Q$ is given by taking each $F_i=K$ and each $M_{ij}=K$. A representation $V$ of a modulated quiver with \emph{dimension vector} $\alpha=(\alpha_1,\cdots,\alpha_n)$ consists of {$F_i$-modules $V_i$ of dimension $\alpha_i$ at each vertex $i\in Q_0$ and $F_j$-linear maps $V_i\otimes_{F_i}M_{ij}\to V_j$} for each arrow $i\to j$ in $Q_1$.

We study representation and presentation spaces of modulated quivers. When $Q$ is a simply laced quiver, the standard definition of the {representation space} of $Q$ with dimension vector $\alpha\in \NN^n$ is
\[
	Rep(Q,d)=\prod_{i\to j\in Q_1} \Hom_\kk(K^{\alpha_i},K^{\alpha_j}).
\]
When $\kk$ is algebraically closed, any finite dimensional hereditary algebra is Morita equivalent to the path algebra $KQ$ of a quiver $Q$. Choosing an element of $Rep(Q,\alpha)$ is equivalent to choosing a $KQ$-module structure on the $KQ_0$-module $\bigoplus_i K^{\alpha_i}$.

Over the modulated quiver $(Q,\{F_i\}_{i\in Q_0},\{M_{ij}\}_{i\to j\in Q_1})$ the representation space for dimension vector $\alpha\in \NN^n$ is
\[
	Rep(Q,\{F_i\}_{i\in Q_0},\{M_{ij}\}_{i\to j\in Q_1},\alpha)=\prod_{i\to j\in Q_1} \Hom_{F_j}(M_{ij}^{\alpha_i},F_j^{\alpha_j}).
\]
Each element of $Rep(Q,\{F_i\}_{i\in Q_0},\{M_{ij}\}_{i\to j\in Q_1},\alpha)$ gives the right $\prod_{i=1}^n F_i$-module $\bigoplus_{i=1}^n F_i^{\alpha_i}$ the structure of a right module over the tensor algebra of $(Q,\{F_i\}_{i\in Q_0},\{M_{ij}\}_{i\to j\in Q_1})$. In each case, the representation space is an affine space over $\kk$.

In this paper we deal with arbitrary finite dimensional hereditary algebras over any field, not necessarily tensor algebras of modulated quivers. Notice that, if $\kk$ is not perfect, there may be finite dimensional hereditary algebras over $\kk$ which are not Morita equivalent to the tensor algebras of modulated quivers. (Appendix A, Sec \ref{ss: Appendix}.) 

For an arbitrary finite dimension hereditary algebra $\Lambda$ we define (in \ref{def: representation space}) the representation space $Rep(\Lambda,\alpha)$ to be a certain subspace of the space $\Hom_\Lambda(rad\,P(\alpha),P(\alpha))$, where $P(\alpha)$ denotes $\bigoplus_i P_i^{\alpha_i}$. This is isomorphic to $Rep(Q,\{F_i\}_{i\in Q_0},\{M_{ij}\}_{i\to j\in Q_1})$ in the modulated case. We identify each element $f\in Rep(\Lambda,\alpha)$ with the $\Lambda$-module which is the cokernel of the homomorphism $f:rad\,P(\alpha)\to P(\alpha)$.

We consider $Rep(\Lambda,\alpha)$ as an affine space over $\kk$. {At the beginning we assume that $\kk$ is infinite so that nonempty open subsets of this space are dense. (We extend to arbitrary fields later, in Section \ref{sec: extend to finite K}) }The first theorem of this paper is Theorem \ref{thm: generic decomposition thm 1}: If there exists a $\Lambda$-module $M$ which is \emph{rigid} meaning $\Ext^1_\Lambda(M,M)=0$ with $\undim M=\alpha$, then the elements of $Rep(\Lambda,\alpha)$ which are isomorphic to $M$ form an open dense subset. We call this the \emph{generic representation} of dimension $\alpha$ and denote it by $M_\alpha$. If $M_\alpha$ is indecomposable then $\alpha$ is a \emph{real Schur root} of $\Lambda$. As a consequence we have: 

{
\begin{thm} [Generic Decomposition Theorem \ref{cor: generic decomposition thm 2}] Let $\Lambda$ be a finite dimensional hereditary algebra over an infinite field, and let $\beta_1,\cdots,\beta_k$ be real Schur roots so that $M_{\beta_i}$ do not extend each other. Then for any nonnegative integer linear combination $\alpha=\sum_{i=1}^k n_i\beta_i$, the generic representation in $Rep(\Lambda,\alpha)$ is isomorphic to $\bigoplus_{i=1}^k M_{\beta_i}^{n_i}$.
\end{thm}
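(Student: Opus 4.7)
The plan is to reduce the statement directly to Theorem \ref{thm: generic decomposition thm 1}, which asserts that any rigid $\Lambda$-module of dimension vector $\alpha$ is isomorphic to the generic representation $M_\alpha$. Thus it suffices to exhibit $M := \bigoplus_{i=1}^k M_{\beta_i}^{n_i}$ as a rigid $\Lambda$-module with $\undim M = \alpha$.

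The first check, $\undim M = \sum_i n_i\,\undim M_{\beta_i} = \sum_i n_i\beta_i = \alpha$, is immediate. For rigidity, since $\Lambda$ is hereditary only $\Ext^1$ can obstruct, and bi-additivity gives
\[
\Ext^1_\Lambda(M,M) \;\cong\; \bigoplus_{i,j}\Ext^1_\Lambda(M_{\beta_i},M_{\beta_j})^{n_i n_j}.
\]
The off-diagonal summands ($i\ne j$) vanish by the hypothesis that the $M_{\beta_i}$ do not extend each other. The diagonal summands vanish because each $M_{\beta_i}$ is itself rigid: as the generic representation corresponding to a real Schur root, it is exactly the rigid witness whose existence at dimension $\beta_i$ makes Theorem \ref{thm: generic decomposition thm 1} applicable there, so $\Ext^1_\Lambda(M_{\beta_i},M_{\beta_i})=0$. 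Hence $\Ext^1_\Lambda(M,M)=0$, and a second application of Theorem \ref{thm: generic decomposition thm 1} identifies the isomorphism class of $M$ with $M_\alpha$.

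There is essentially no obstacle beyond having Theorem \ref{thm: generic decomposition thm 1} in hand — all the substantive content (openness and density of the rigid isomorphism class in $Rep(\Lambda,\alpha)$) is packaged into that earlier result. The only items to watch are that ``$M_{\beta_i}$ do not extend each other'' should be read as the symmetric vanishing of both $\Ext^1_\Lambda(M_{\beta_i},M_{\beta_j})$ and $\Ext^1_\Lambda(M_{\beta_j},M_{\beta_i})$ for $i\ne j$, and that the hereditary assumption makes rigidity a single-degree condition automatically preserved by finite direct sums. Everything else is formal bookkeeping.
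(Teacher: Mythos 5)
Your proposal is correct and matches the paper's argument exactly: the paper's proof is just ``apply Theorem \ref{thm: generic decomposition thm 1} to $M=\bigoplus_i M_{\beta_i}^{n_i}$,'' and you have spelled out the routine verification that $M$ is rigid of the right dimension vector. The bi-additivity of $\Ext^1$, the off-diagonal vanishing from the hypothesis, and the diagonal vanishing from each $M_{\beta_i}$ being exceptional are precisely the intended (if unstated) details.
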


Representation spaces are defined for $\alpha\in\NN^n$.}
Next, we generalize the construction to arbitrary integer vectors $\alpha\in\ZZ^n$ by constructing presentation spaces and considering their direct limit which we call \emph{virtual representation space}. We choose vectors $\gamma_0,\gamma_1\in\NN^n$ so that $\undim P(\gamma_0)-\undim P(\gamma_1)=\alpha$. We call $\Hom_\Lambda(P(\gamma_1),P(\gamma_0))$ a \emph{presentation space} and denote it by $Pres_\Lambda(\gamma_1,\gamma_0)$ and view it as a generalization of $Rep(\Lambda,\alpha)$. However, there are an infinite number of choices for $\gamma_0,\gamma_1$ for each $\alpha\in \ZZ^n$. To define a single space for each $\alpha\in\ZZ^n$ which contains all of these presentation spaces, we take their direct limit (colimit):
\[
	\Vrep(\Lambda,\alpha):=colim\, Pres_\Lambda(\gamma_1,\gamma_0)
\]
where the colimit is over all pairs $\gamma_0,\gamma_1$ so that $\undim P(\gamma_0)-\undim P(\gamma_1)=\alpha$. Representatives of $\Vrep(\Lambda,\alpha)$ are presentations $p:P(\gamma_1)\to P(\gamma_0)$ which we denote $P(\gamma_\ast)$. The next theorem in this paper is:

{
\begin{thm}[Virtual Generic Decomposition Theorem \ref{thm 2.3.11: virtual generic decomposition theorem}] Let $\{\beta_i\}$ be a {partial cluster tilting set} (Definition \ref{def: partial cluster tilting set}). Let $\alpha=\sum r_i\beta_i\in\ZZ^n$ where $r_i\in \QQ$. Then $r_i\in \ZZ$ and the general virtual representation in $\Vrep(\Lambda,\alpha)$ is isomorphic to $\bigoplus_i P(\gamma^i_{\ast})^{r_i}$ where $P(\gamma^i_{\ast})$ are rigid objects in $\Vrep(\Lambda,\beta_i)$. In other words, the set of all elements of $\Vrep(\Lambda,\alpha)$ isomorphic to $\bigoplus_i P(\gamma^i_{\ast})^{r_i}$ is open and dense.
\end{thm}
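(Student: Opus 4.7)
The plan is to reduce the virtual statement to the ordinary Generic Decomposition Theorem \ref{cor: generic decomposition thm 2} by choosing a single presentation space $Pres_\Lambda(\gamma_1,\gamma_0)$ large enough to contain a representative of the candidate decomposition, and to exhibit the locus of such presentations as an open orbit under the natural automorphism action.

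The integrality statement $r_i\in\ZZ$ would come first: a partial cluster tilting set extends to a full cluster tilting set of $n$ elements whose virtual dimension vectors form a $\ZZ$-basis of $\ZZ^n$, which is the modulated analogue of the fact that $c$-vectors of a cluster tilting object form a basis (the $c$-vector theorem advertised in the abstract). Since $\alpha\in\ZZ^n$ lies in the $\QQ$-span of a subset of this basis, unique expansion forces $r_i\in\ZZ$. Next, for each $i$ I would pick a presentation $P(\gamma^i_\ast):P(\gamma^i_1)\to P(\gamma^i_0)$ representing a rigid element of $\Vrep(\Lambda,\beta_i)$, swapping source and target when $r_i<0$ so that the virtual dimension of the $i$th factor is exactly $r_i\beta_i$, and form
\[
M:=\bigoplus_i P(\gamma^i_\ast)^{r_i}\in\Vrep(\Lambda,\alpha).
\]
Choose $\gamma_0,\gamma_1$ large enough that a representative $p_M:P(\gamma_1)\to P(\gamma_0)$ of $M$ lies in $Pres_\Lambda(\gamma_1,\gamma_0)$. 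By the defining hypothesis of a partial cluster tilting set, the individual presentations $P(\gamma^i_\ast)$ are pairwise $\Ext^1$-vanishing in the homotopy category of two-term complexes of projectives, so $M$ itself is rigid.

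The main step is openness. The group $G:=\Aut(P(\gamma_1))\times\Aut(P(\gamma_0))$ acts on the irreducible affine space $Pres_\Lambda(\gamma_1,\gamma_0)=\Hom_\Lambda(P(\gamma_1),P(\gamma_0))$ with orbits equal to isomorphism classes of two-term presentations, and a tangent-space computation, using hereditarity to ensure that every obstruction lies in $\Ext^1$, identifies the cokernel of the differential of the orbit map at $p_M$ with $\Ext^1(M,M)$. Vanishing of this $\Ext^1$ makes the $G$-orbit through $p_M$ open, hence dense, inside $Pres_\Lambda(\gamma_1,\gamma_0)$. Since openness and density in $\Vrep(\Lambda,\alpha)$ are detected in any sufficiently large presentation space, the stratum of virtual representations isomorphic to $M$ is open and dense in $\Vrep(\Lambda,\alpha)$, completing the proof.

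The hard step is the $\Ext^1$ identification in the tangent-space computation: showing that the normal direction to the $G$-orbit of $p_M$ is $\Ext^1(M,M)$ in the derived sense required to detect isomorphism in $\Vrep$, without recourse to the explicit matrix coordinates available in the modulated case. This should follow from the hereditary hypothesis, which kills $\Ext^{\geq 2}$ and makes two-term presentations essentially projective resolutions (up to a negative projective part), paralleling the proof of Theorem \ref{thm: generic decomposition thm 1}, but it is where the virtual nature of the setup demands the most care.
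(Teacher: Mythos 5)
Your proposal has a real gap in the openness step, and it sidesteps the paper's actual route.

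The paper's proof of Theorem \ref{thm 2.3.11: virtual generic decomposition theorem} is two lines: extend the underlying modules $|P(\gamma^i_\ast)|$ to a complete exceptional sequence via Proposition \ref{prop: properties of exceptional sequences}(2)--(3) (giving the integrality of the $r_i$), and then apply Theorem \ref{Virtual canonical decomposition theorem}, which has already done all the hard work. Theorem \ref{Virtual canonical decomposition theorem} in turn reduces to Lemma \ref{lem: generic decomposition} by splitting the presentation as $P[1]\oplus P(\gamma'_\ast)$, observing that rigidity forces $\Hom_\Lambda(P,M')=0$ for $M'=\coker q$, and then showing that the generic $f\in\Hom_\Lambda(P(\gamma_1),P(\gamma_0))$ can be conjugated to a map of the form $(0,f_2)$ with $\coker f_2\cong M'$.

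Your proposal replaces this with a direct orbit argument: the differential of the orbit map $G\to Pres_\Lambda(\gamma_1,\gamma_0)$ at $p_M$ has cokernel $\Ext^1_{Pres(\Lambda)}(p_M,p_M)$, so rigidity makes the orbit open. This step fails over the non-algebraically-closed fields the paper works with. Surjectivity of the differential does make the scheme-theoretic orbit $G\cdot p_M$ an open subscheme, but the set you need to be open --- presentations isomorphic to $p_M$ in $Pres(\Lambda)$, which by Remark \ref{rem: easy observations about presentation spaces}(5) is precisely the $G(K)$-orbit --- need not coincide with the $K$-points of $G\cdot p_M$. The discrepancy is controlled by a Galois cohomology obstruction, and dismissing it requires work you have not done. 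This is exactly why the paper remarks, before Lemma \ref{lem: generic decomposition}, that Kac's orbit-theoretic proof needs $K$ algebraically closed and then supplies a module-theoretic substitute: it constructs an explicit open set $U$ (where the three $\psi$-maps are surjective) and proves directly that every $f\in U$ has $\coker f\cong M$ by an exceptional-sequence argument, rather than appealing to openness of an orbit. Your plan omits this entirely.

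Two smaller points. First, when $p_M$ has a shifted-projective summand $P[1]$, the relevant rigidity condition is $\Ext^1_{Pres(\Lambda)}(p_M,p_M)\cong\Hom_\Lambda(P,M')\oplus\Ext^1_\Lambda(M',M')$, not just $\Ext^1_\Lambda$ of a module; writing it as $\Ext^1(M,M)$ hides precisely the $\Hom_\Lambda(P,M')=0$ condition that drives the paper's reduction in Theorem \ref{Virtual canonical decomposition theorem}. Second, your integrality argument via completion of partial cluster tilting sets works in principle, but requires a nontrivial completion theorem that the paper does not invoke; the paper instead extends the exceptional sequence of underlying modules, which is a more elementary fact (Proposition \ref{prop: properties of exceptional sequences}(2)) already in hand. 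You could repair your proof most cleanly by simply invoking Theorem \ref{Virtual canonical decomposition theorem} for the openness step, as the paper does, rather than redoing the orbit analysis.
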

}

The groups $\Aut_\Lambda(P(\gamma_0))$, $\Aut_\Lambda(P(\gamma_1))$ act on presentation space $Pres_\Lambda(\gamma_1,\gamma_0)$. A \emph{semi-invariant} on $Pres_\Lambda(\gamma_1,\gamma_0)$ is a polynomial function $\sigma: Pres_\Lambda(\gamma_1,\gamma_0)\to \kk$ so that, for any $(g_0,g_1)\in\Aut_\Lambda(P(\gamma_0))\times\Aut_\Lambda(P(\gamma_1))^{op}$ and $f:P(\gamma_1)\to P(\gamma_0)$ we have $\sigma (g_0fg_1)=\chi_0(g_0)\sigma(f)\chi_1(g_1)$ where $\chi_0,\chi_1$ are characters $\Aut_\Lambda(P(\gamma_s))\to\kk^\ast$ for $s=0,1$ where by \emph{character} we mean a regular (polynomial) function which is a homomorphism of groups. Every group homomorphism $\Aut_\Lambda(P(\alpha))\to\kk^\ast$ factors through the group $\prod_{i=1}^n \Aut_\Lambda(P_i^{\alpha_i})=\prod_{i=1}^n GL(\alpha_i,F_i)$. Since $\sigma$ is defined on the affine space $Pres_\Lambda(\gamma_1,\gamma_0)$, these characters extend to the endomorphism rings of $P(\gamma_0)$, $P(\gamma_1)$ (by $g\mapsto \sigma(gf)/\sigma(f)$ for a fixed $f\in Pres_\Lambda(\gamma_1,\gamma_0)$ on which $\sigma(f)\neq0$.) In Appendix B Theorem \ref{thm relating determinant and reduced norm} we show that every character $\End_F(F^m)\to K$ is a power of the ``reduced norm'' {(and thus a fractional power of the \emph{determinantal character} given by taking the determinant of an $F$-endomorphism of $F^m$ considered as a linear map over $\kk$.} {See Definition \ref{def: reduced norm}}). Therefore, the characters associated to any semi-invariant on the presentation space $Pres_\Lambda(\gamma_1,\gamma_0)$ are nonnegative integer powers of the reduced norm for each division algebra $F_i$. This gives a vector weight in $\NN^n$. The weights coming from $P(\gamma_0)$ and $P(\gamma_1)$ are equal when defined.

In this paper we do not use the reduced norm weights. We use \emph{determinantal (det-) weights}. The coefficients of the det-weights are in general fractions. They are integers if and only if the characters are powers of the determinantal character. We also consider only certain semi-invariants: the \emph{determinantal semi-invariants} $\sigma_\beta$ which have \emph{determinantal weight} $\beta\in \NN^n$. These semi-invariants are given on any presentation $f:P(\gamma_1)\to P(\gamma_0)$ by $\sigma_\beta(f)=\det_\kk \Hom_\Lambda(f,M_\beta)$. These semi-invariants are clearly compatible with stabilization and therefore define semi-invariants on $\Vrep(\alpha)$ in the case when $\Hom_\Lambda(f,M_\beta)$ is an isomorphism.  We call the set of such $\alpha\in\ZZ^n$ the (integral) \emph{domain of the semi-invariant of det-weight $\beta$} and denote it by $D_\ZZ(\beta)$.

In Section \ref{ss: virtual stability theorem} %{Virtual stability theorem}
we prove the virtual stability theorem which states that these domains of semi-invariants are given by ``stability conditions''.

{
\begin{thm}[Virtual Stability Theorem \ref{thm 3.1.1: virtual stability theorem}]
Let $\Lambda$ be a finite dimensional hereditary algebra over a field with $n$ simple modules. Let $\alpha\in \mathbb Z^n$ and $\beta$ a real Schur root. Then, the following are equivalent:
\begin{enumerate}
\item  There exists a morphism of projective modules $f\!:\!P\to Q$ so that $\  \undim Q-\undim P=\alpha$ and $f$ induces an isomorphism
\[
f^\ast:\Hom_\Lambda(Q,M_\beta)\xrightarrow{\approx} \Hom_\Lambda(P,M_\beta).
\]
\item Stability conditions for $\alpha$ and $\beta$ hold:
 $\brk{\alpha,\beta}=0$ and
 $\brk{\alpha,\beta'}\le 0$ for all real Schur subroots $\beta'\subseteq \beta$ where $\brk{\cdot,\cdot}$ is the Euler-Ringel form defined in Proposition \ref{Euler-Ringel form}.
\item There is a semi-invariant of {det-weight} $\beta$ on the presentation space $\Hom_\Lambda(P,Q)$.\end{enumerate}
\end{thm}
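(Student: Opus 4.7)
My plan is to prove $(1)\Leftrightarrow(3)$ and $(1)\Rightarrow(2)$ directly, and to treat $(2)\Rightarrow(1)$ as the main obstacle via a modulated perpendicular-category argument.

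For $(1)\Rightarrow(3)$, define $\sigma_\beta:\Hom_\Lambda(P,Q)\to\kk$ by $\sigma_\beta(g):=\det_\kk\Hom_\Lambda(g,M_\beta)$. This is polynomial in the matrix entries of $g$; a direct computation of the bi-action by $\Aut_\Lambda(Q)\times\Aut_\Lambda(P)^{op}$ shows that its characters are $h\mapsto\det_\kk\Hom_\Lambda(h,M_\beta)$, which by Appendix B Theorem \ref{thm relating determinant and reduced norm} have det-weight exactly $\beta$. The hypothesis in $(1)$ provides an $f$ with $\sigma_\beta(f)\ne 0$. For $(3)\Rightarrow(1)$: any nonzero semi-invariant $\sigma$ of det-weight $\beta$ has the same characters on $\Aut_\Lambda(Q)$ and $\Aut_\Lambda(P)$ as $\sigma_\beta$ (the det-weight determines these characters via Appendix B), so matching polynomial degrees forces $\dim_\kk\Hom_\Lambda(Q,M_\beta)=\dim_\kk\Hom_\Lambda(P,M_\beta)$. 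Thus $\sigma_\beta$ is a well-defined semi-invariant of the same weight, and a standard uniqueness-up-to-scalar argument then shows $\sigma_\beta$ is nonzero wherever $\sigma$ is, producing $f$ with $f^*$ an isomorphism.

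For $(1)\Rightarrow(2)$, projectivity of $P,Q$ yields $\brk{\alpha,\beta}=\dim_\kk\Hom_\Lambda(Q,M_\beta)-\dim_\kk\Hom_\Lambda(P,M_\beta)=0$ since $f^*$ is an iso. For any real Schur subroot $\beta'\subseteq\beta$, apply $\Hom_\Lambda(f,-)$ to $0\to M_{\beta'}\to M_\beta\to N\to 0$; projectivity of $P,Q$ yields a map of short exact sequences whose middle column $\Hom_\Lambda(f,M_\beta)$ is an iso. The snake lemma forces $\ker\Hom_\Lambda(f,M_{\beta'})=0$, hence
\[
\brk{\alpha,\beta'}=\dim_\kk\ker\Hom_\Lambda(f,M_{\beta'})-\dim_\kk\coker\Hom_\Lambda(f,M_{\beta'})\le 0.
\]

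For $(2)\Rightarrow(1)$, the main obstacle, I would use the modulated version of Schofield's perpendicular-category theorem. Set $M_\beta^\perp:=\{X:\Hom_\Lambda(X,M_\beta)=0=\Ext^1_\Lambda(X,M_\beta)\}$; this subcategory should be equivalent to $\operatorname{mod}\Lambda'$ for a hereditary algebra $\Lambda'$ with one fewer simple. The stability conditions in $(2)$ are exactly what is needed for $\alpha$ to be realizable as the virtual dimension of a projective presentation in $\Lambda'$: $\brk{\alpha,\beta}=0$ is the Euler-orthogonality to $\beta$, while the subroot inequalities $\brk{\alpha,\beta'}\le 0$ rule out positive contributions from submodules of $M_\beta$, which are precisely the obstructions to lying in the virtual-representation lattice of $\Lambda'$. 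Applying the Virtual Generic Decomposition Theorem \ref{thm 2.3.11: virtual generic decomposition theorem} inside $\Lambda'$ produces a projective presentation realizing $\alpha$ in $M_\beta^\perp$; lifting to $f:P\to Q$ over $\Lambda$, the cone of $f$ lies in $M_\beta^\perp$ and therefore $f^*$ is an isomorphism. I expect the main technical work to be setting up the modulated perpendicular-category theorem and verifying the lattice identification above, which I would approach by induction on the rank of $\Lambda$ with the case of simple $\beta$ as base.
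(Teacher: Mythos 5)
Your handling of $(1)\Leftrightarrow(3)$ and $(1)\Rightarrow(2)$ is essentially the paper's: the $\det_\kk$ construction of $\sigma_\beta$, the open-condition passage to $M_\beta$, and the short-exact-sequence argument applying $\Hom_\Lambda(f,-)$ to $0\to M_{\beta'}\to M_\beta\to N\to 0$ to get injectivity of $\Hom_\Lambda(f,M_{\beta'})$ all match Lemma \ref{lem 3.1.6: D(b) in Dss(b)} and the lemma preceding it. (Minor notational slip: the subcategory you call $M_\beta^\perp$ is the \emph{left} perpendicular $^\perp M_\beta$.)

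The genuine gap is in $(2)\Rightarrow(1)$, and it is exactly at the lifting step. You assert that the stability conditions let you realize $\alpha$ by a projective presentation inside $\Lambda'\cong\,^\perp M_\beta$ and then lift to $f:P\to Q$ over $\Lambda$, but these two steps do not automatically compose. The condition $\brk{\alpha,\beta}=0$ only places $\alpha$ in the rank-$(n-1)$ sublattice spanned by the dimension vectors $\undim E_i$ of the simple objects of $^\perp M_\beta$; a ``virtual representation'' of $\Lambda'$ with that dimension vector may involve shifted $\Lambda'$-projectives $P_i'[1]$, and the $P_i'$ (projective covers of $E_i$ in $^\perp M_\beta$) are \emph{not} in general projective $\Lambda$-modules, so the lift fails. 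The paper's actual argument is precisely about controlling this. In the sincere case it shows $\alpha$ must be a \emph{nonnegative} combination of the $\undim E_i$ — so no shifted projectives are needed — and the nonnegativity of each coefficient $a_k$ is extracted by constructing a specific real Schur subroot $\gamma_k\subsetneq\beta$ with $\brk{\undim E_i,\gamma_k}=0$ for $i\neq k$ and $\brk{\undim E_k,\gamma_k}<0$ (built from the rank-2 wide subcategory $\cW_k=(\bigoplus_{i\neq k}E_i)^\perp$), then pairing $\alpha$ against $\gamma_k$ via the hypothesis $\brk{\alpha,\gamma_k}\le 0$. The distinction between the sincere case ($\Delta^+(\beta)=\Delta(\beta)$) and the non-sincere case (where integer multiples of $\undim P_j$, $j\in J_\beta$, are allowed precisely because those $P_j$ \emph{are} $\Lambda$-projective; see Lemma \ref{lem: Mb is not simple} and Definition \ref{def: Delta beta}) is the heart of the matter and is invisible in your sketch. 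Also note you invoke $\det_\kk$ degree-matching and Theorem \ref{thm relating determinant and reduced norm}, which require $\kk$ infinite; the paper proves the infinite case first and then extends to finite fields in \ref{sec: extend to finite K} via base change to $\kk(t)$, a step your outline would also need.
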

}

We prove this first in the case when $\beta$ is sincere  and $K$ is infinite (subsection \ref{ss: sincere case}), then for any $\beta$ (subsection \ref{ss: non-sincere case}), then for any field $\kk$ (subsection \ref{sec: extend to finite K}).

In Section \ref{sec: c-vector theorem} we prove the $c$-vector theorem below which states that, up to a precisely given sign, the {det-weight}s $\beta_i$ associated to a cluster tilting object are equal to the $c$-vectors associated to the cluster tilting object.

\begin{thm}[c-vector Theorem \ref{c-vector thm}]
Let $T=\bigoplus_{i=1}^n T_i$ be a cluster tilting object for $\Lambda$ and let $f_i=\dim_\kk\End_\Lambda(T_i)$.
\begin{enumerate}
\item There exist unique real Schur roots $\beta_1,\cdots,\beta_n$ so that $\undim T_i\in D(\beta_j)$ for $i\neq j$.
\item The $c$-vectors associated to the cluster tilting object are equal to $\beta_i$ up to sign: $c_i=\pm\beta_i$. More precisely,
$
	c_i=(-f_i/{\brk{\undim T_i,\beta_i}})\beta_i
$.
\item $\brk{\undim T_i,c_i}=-f_i$ for each $i=1,\dots,n$. 
\end{enumerate}
\end{thm}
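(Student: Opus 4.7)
The plan is to identify the real Schur roots $\beta_i$ from perpendicular subcategories, then establish the pairing identity of part (3) by induction on mutation, and finally obtain the proportionality of part (2) as a consequence.

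\textbf{For part (1),} I would construct $\beta_i$ from the almost complete cluster tilting object $T/T_i = \bigoplus_{j\neq i}T_j$. Its dimension vectors span an $(n-1)$-dimensional sublattice of $\ZZ^n$, so the orthogonal complement under the Euler-Ringel form $\langle -,-\rangle$ is one-dimensional. The corresponding perpendicular wide subcategory contains a unique indecomposable rigid object whose dimension vector, with the positive sign convention, gives a real Schur root $\beta_i$ by the Virtual Generic Decomposition Theorem; uniqueness follows from one-dimensionality plus positivity. To verify $\undim T_j \in D(\beta_i)$ via the Virtual Stability Theorem, observe that $\langle \undim T_j, \beta_i\rangle = 0$ by construction, and for the subroot condition $\langle \undim T_j, \beta'\rangle \leq 0$ for every real Schur subroot $\beta'\subsetneq \beta_i$, note that such $\beta'$ corresponds to an object $M_{\beta'}$ in the perpendicular wide subcategory, so $\Ext^1_\Lambda(T_j, M_{\beta'}) = 0$, and this vanishing yields the required Euler-Ringel inequality.

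\textbf{For part (3),} I would prove $\langle \undim T_i, c_i\rangle = -f_i$ by induction on the mutation distance from the initial cluster tilting object $T^{(0)} = \Lambda[1]$, whose $c$-matrix is $-I$. The base case reduces to $\langle \undim P_i[1], -e_i\rangle = -f_i$, following from $\dim_\kk \Hom_\Lambda(P_i, S_i) = f_i$ together with the derived sign convention for shifts. For the inductive step, use the exchange triangle $T_k^*\to B\to T_k\to T_k^*[1]$ at the mutated summand, together with the Fomin-Zelevinsky $c$-vector mutation formula, to check that $\langle \undim T_i, c_i\rangle + f_i$ is preserved under any single mutation. Part (2) then follows: by the same mutation induction, $c_i$ is orthogonal to $\{\undim T_j\}_{j\neq i}$ under the Euler-Ringel form, so by part (1) it lies in the one-dimensional perpendicular spanned by $\beta_i$. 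Writing $c_i = \lambda_i \beta_i$ with $\lambda_i\in\QQ$ and substituting into (3) forces $\lambda_i = -f_i/\langle \undim T_i, \beta_i\rangle$.

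\textbf{Main obstacle.} I expect the most delicate step to be the subroot condition in part (1), since it requires uniform control over all real Schur subroots of $\beta_i$ inside the rank-one perpendicular wide subcategory, and must be verified for every $T_j$ simultaneously. A secondary challenge is the sign-bookkeeping in the mutation induction for part (3), particularly in distinguishing summands $T_i$ that are genuine modules from those that are shifted projectives, in the modulated and possibly non-algebraically-closed generality handled by this paper. The orthogonality $\langle \undim T_j, c_i\rangle = 0$ for $j\neq i$, classically a consequence of sign-coherence and $g$-vector duality, may also need a careful re-derivation in the modulated setting to ensure compatibility with the reduced-norm versus determinantal-character distinction discussed earlier.
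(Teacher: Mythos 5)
Your construction of the $\beta_i$ in part (1) from the perpendicular categories of $T/T_i$ matches the paper's Theorem~\ref{thm 4.1.5: clusters and SIs}(a), but your verification of the subroot condition is wrong in two ways. First, a real Schur subroot $\beta'\subsetneq\beta_i$ gives a submodule $M_{\beta'}\hookrightarrow M_{\beta_i}$ in $mod\text-\Lambda$, \emph{not} an object of the rank-one perpendicular wide subcategory $|T/T_i|^\perp$ (where $M_{\beta_i}$ is simple, so its only nonzero subobject is itself). Second, even if that were so, the vanishing of $\Ext^1_\Lambda(T_j,M_{\beta'})$ yields $\langle\undim T_j,\beta'\rangle=\dim_K\Hom_\Lambda(T_j,M_{\beta'})\geq0$, the \emph{wrong} sign. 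What is actually true is that $\Hom_\Lambda(T_j,M_{\beta'})$ injects into $\Hom_\Lambda(T_j,M_{\beta_i})=0$, whence $\langle\undim T_j,\beta'\rangle=-\dim_K\Ext^1_\Lambda(T_j,M_{\beta'})\leq0$; the paper in fact bypasses subroot arithmetic entirely, using Lemma~\ref{lemma when T0 lies in D(b)}(b) and the Virtual Stability Theorem to put the generic presentation of dimension $\undim T_0$, and hence $T_0$ itself, into $^{\perp_V}M_\beta$.

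The more serious gap is in parts (2) and (3). The mutation induction you propose is plausible in outline, but its inductive step is precisely the hard content of the theorem, and you do not supply it. The FZ $c$-vector mutation rule involves the signs of the exchange-matrix entries $b_{kj}$ attached to the current $T$, so you would need (i) a module-theoretic formula for $b_{kj}$ in terms of the $\beta_i$ --- the paper's Lemma~\ref{lem: condition (3)} gives $b_{ij}=f_i^{-1}(\langle\gamma_j,\gamma_i\rangle-\langle\gamma_i,\gamma_j\rangle)$ --- and (ii) a comparison of the FZ sign $\sgn(b_{kj}c_k)$ with the module-theoretic sign $\varepsilon_k=\sgn\langle\undim T_k,\beta_k\rangle$ that determines which exchange triangle governs $T_k\mapsto T_k'$. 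Establishing (ii) is exactly Proposition~\ref{prop: Gamma matrix transforms correctly}, proved first in rank $2$ and then reduced to rank $2$ by projecting through $|T_0|^\perp$ for an $(n-2)$-summand subobject. The paper also avoids directly tracking $c$-vector mutations by invoking the Nakanishi--Zelevinsky characterization (Theorem~\ref{thm 4.2.3: NZ on candidate c-vectors}): once the matrices $-\Gamma_T$, with columns $-\varepsilon_i\beta_i$, are shown to satisfy that recursion, they must be the $c$-matrices. This sidesteps the separate re-derivation of the orthogonality $\langle\undim T_j,c_i\rangle=0$ ($j\neq i$) that you flag as a concern at the end; as you suspect, that orthogonality is not available as an input --- it is an output of the argument, and invoking it beforehand would be circular. (A smaller point: with the paper's initial data $\widetilde B_0=\mat{B_0\\I_n}$, the initial $c$-matrix is $I_n$, not $-I$, so the base case identity should read $\langle\undim P_i[1],e_i\rangle=-\langle\undim P_i,e_i\rangle=-f_i$.)
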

 
The $c$-vector theorem implies the sign coherence of $c$-vectors since weight vectors always lie in $\NN^n$. Sign coherence of $c$-vectors has been shown in many cases \cite{DWZ1}, \cite{Pl} and in general in \cite{GHKK}. We end with an example of a semi-invariant picture (Figure \ref{Fig C3}) illustrating some of our theorems and the important properties of the picture used in other papers \cite{BHIT}, \cite{IOTW4}, \cite{IT13}, \cite{IT14}. 

There are also two appendices. {Appendix A (Sec \ref{ss: Appendix})} discusses when an hereditary algebra is Morita equivalent to the tensor algebra of a modulated quiver and gives an example when this is not true. {Appendix B (Sec \ref{ss: Appendix B})} reviews the basic properties of reduced norm and shows that every character $M_k(D)\to\kk$ is a power of the reduced norm. Thus every semi-invariant on presentation space has a weight vector $w\in\NN^n$ so that, under automorphisms of $P(\gamma_0),P(\gamma_1)$, the semi-invariant changes by the product of $\overline n_i^{w_i}$ where $\overline n_i$ are the reduced norms of the $GL(F_i)$ blocks of the automorphisms. We call $w$ the \emph{reduced weight}. We define the \emph{reduced norm semi-invariants} $\overline\sigma_\beta$ and show that their reduced weights $\overline\beta$ are the $c$-vectors associated to a \emph{reduced exchange matrix} $\overline B_\Lambda=ZB_\Lambda Z^{-1}$.

%\newpage
%%%%%%%%%%%%%%%%%%%%%%%%%%
%
%                Section  {Basic notation}
%
%%%%%%%%%%%%%%%%%%%%%%%%%%

{
\section{Basic definitions}

In this paper $\Lambda$ will be a basic finite dimensional hereditary algebra over any field $\kk$. Basic means that, as a right module over itself, the summands of $\Lambda$ are pairwise nonisomorphic. {Finite dimensional hereditary algebras share many important properties with the tensor algebra of their associated modulated quiver. For example they have the same Euler matrix, the same real Schur roots, the same semi-invariant domains and the same $c$-vectors, which are the topics we study in this paper. So, we begin with modulated quivers which are slightly easier to understand than the general case. Then we extend the definitions of presentation spaces and semi-invariants on presentation spaces to general finite dimensional hereditary algebras. }

\subsection{Modulated quivers}\label{sec1.1}

By a \emph{modulated quiver} $(Q,\cM)$ over $\kk$ we mean a finite quiver $Q$ without oriented cycles together with
\begin{enumerate}
\item a finite dimensional division algebra $F_i$ over $\kk$ at each vertex $i$ of $Q$ and
\item a finite dimensional $F_i$-$F_j$ bimodule $M_{ij}$ for every arrow $i\to j$ in $Q$.
\end{enumerate}
The absence of multiple arrows is not a restriction. If we have a quiver with more than one arrow $i\to j$ then these are combined into one arrow with the associated bimodule being the direct sum of the bimodules on the original arrows. For example, the quiver $1\toto 2$ is equivalent to $1\to 2$ with bimodule $\kk^2$ on the arrow.
}

\begin{defn}\label{def: valuation of a modulated quiver} The \emph{valuation} on $Q=(Q_0,Q_1)$ given by the modulation $\cM$ is defined to be the sequence of positive integers $f_i, i\in Q_0$ and $d_{ij},d_{ji}$ for $i\to j$ in $Q_1$ given as follows.% set of numbers.
\begin{enumerate}
\item $f_i=\dim_\kk F_i$ for each $i\in Q_0$.
\item $d_{ij}=\dim_{F_j}M_{ij}$, $d_{ji}=\dim_{F_i}M_{ij}$ for each $i\to j$ in $Q_1$.
\end{enumerate}
\end{defn}

\begin{prop}\cite{DR}
For any sequence of positive integers $f_i, i\in Q_0$ and pairs of positive integers $(d_{ij},d_{ji})$ for every arrow $i\to j$ in $Q_1$ there exists a modulation of $Q$ having these numbers as valuation if and only if $d_{ij}f_j=f_id_{ji}$ for all $i,j$.
\end{prop}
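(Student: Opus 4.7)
The plan is to establish necessity by a short dimension count and sufficiency by constructing the division algebras and bimodules.

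For necessity, suppose a modulation realizing the given valuation exists. For each arrow $i\to j$, the bimodule $M_{ij}$ is a $\kk$-vector space, and its $\kk$-dimension can be computed from either side: the left $F_i$-module structure gives $\dim_\kk M_{ij} = (\dim_{F_i}M_{ij})(\dim_\kk F_i) = d_{ji} f_i$, while the right $F_j$-module structure gives $\dim_\kk M_{ij} = d_{ij} f_j$. Equating these forces $d_{ij}f_j = f_i d_{ji}$.

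For sufficiency, first choose, for each vertex $i$, a division algebra $F_i$ over $\kk$ with $\dim_\kk F_i = f_i$; the statement tacitly assumes $\kk$ is flexible enough to admit such algebras. For each arrow $i\to j$, the task is to construct an $F_i$-$F_j$-bimodule $M_{ij}$ with $\dim_{F_i}M_{ij}=d_{ji}$ and $\dim_{F_j}M_{ij}=d_{ij}$. Since $F_i$-$F_j$-bimodules are precisely modules over the finite-dimensional $\kk$-algebra $A_{ij}:=F_i\otimes_\kk F_j^{op}$, this reduces to exhibiting an $A_{ij}$-module with the prescribed pair of dimensions.

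The main obstacle, and the heart of the argument, is producing such an $A_{ij}$-module. In the semisimple setting (for instance when $F_i$ or $F_j$ is separable over $\kk$), the Artin--Wedderburn decomposition of $A_{ij}$ into a product of matrix algebras over division rings gives a finite list of simple modules whose dimension pairs $(\dim_{F_i}S,\dim_{F_j}S)$ all satisfy the valuation compatibility automatically, and one then takes a suitable $\mathbb{Z}_{\geq 0}$-combination of these simples to realize $(d_{ji},d_{ij})$; the identity $d_{ij}f_j=f_id_{ji}$ is exactly what makes such a combination feasible. The inseparable case requires a more delicate analysis using the structure of $A_{ij}$ modulo its Jacobson radical, and for this I would appeal to the explicit construction in \cite{DR}. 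Once $M_{ij}$ has been built for every arrow, the triple $(Q,\{F_i\},\{M_{ij}\})$ is the required modulation.
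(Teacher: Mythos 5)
The necessity half of your argument is fine and is what anyone would write. The sufficiency half, however, misreads the claim and then runs into real difficulties as a result. The proposition does not assert that a modulation exists \emph{over a fixed given field} $\kk$ once the divisibility condition holds; that would be false (over $\kk=\CC$ there are no nontrivial finite-dimensional division algebras at all). The assertion is that \emph{some} modulation exists, with the ground field chosen as part of the construction. Your remark that ``the statement tacitly assumes $\kk$ is flexible enough to admit such algebras'' is an attempt to patch this but is not something the statement actually says, and it would leave the theorem with an unstated hypothesis that makes it unusable.

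The paper sidesteps all of the machinery you invoke (Artin--Wedderburn, simple $F_i\otimes_\kk F_j^{op}$-modules, the separable/inseparable split) by taking $\kk$ to be a finite field $\FF_q$. Then $F_i := \FF_{q^{f_i}}$ is a division algebra of the required $\kk$-dimension, and for each arrow $i\to j$ one sets $M_{ij} := \FF_{q^{d_{ij}f_j}}$. The hypothesis $d_{ij}f_j = f_id_{ji}$ guarantees that both $f_i$ and $f_j$ divide $d_{ij}f_j$, so $M_{ij}$ contains $F_i$ and $F_j$ as subfields and is therefore an $F_i$-$F_j$-bimodule (in fact both module structures commute since everything is a subfield of a common field). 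Its dimensions are exactly $\dim_{F_j}M_{ij}=d_{ij}$ and $\dim_{F_i}M_{ij}=d_{ij}f_j/f_i=d_{ji}$.

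Even if one grants you a favorable $\kk$, your sufficiency argument has a gap: you claim that a nonnegative integer combination of the simple $A_{ij}$-modules realizing the pair $(d_{ji},d_{ij})$ is always available and that this ``is exactly what'' the relation $d_{ij}f_j=f_id_{ji}$ buys you, but that single linear identity does not in itself guarantee the needed $\ZZ_{\ge 0}$-solvability in terms of whatever dimension pairs the simples of $A_{ij}$ happen to have. And the inseparable case is simply deferred. The finite-field construction avoids all of this and proves the theorem in two lines; you should replace the division-algebra discussion with it.
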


\begin{proof} Let $\kk$ be any finite field, $\kk=\FF_q$. For each $i$ let $F_i$ be the field with $q^{f_i}$ elements. For each arrow $i\to j$, let $M_{ij}$ be the field with $q^{d_{ij}f_j}$ elements.\end{proof}

{
A (finite dimensional) \emph{representation} $V$ of a modulated quiver $Q$ is given by
\begin{enumerate}
\item a finite dimensional $F_i$-vector space $V_i$ at each vertex $i$ in $Q_0$ and
\item an $F_j$ linear map $V_i\otimes_{F_i}M_{ij}\to V_j$ for every arrow $i\to j$ in $Q_1$.
\end{enumerate}}
A (finite dimensional) representation of a modulated quiver is the same as a finite dimensional module over the \emph{tensor algebra} $T(Q,\cM)$ of $(Q,\cM)$ which is defined to be the direct sum of all \emph{tensor paths}:
\[
	T(Q,\cM):=\bigoplus M_{j_0,j_1}\otimes_{F_{j_1}} M_{j_1,j_2}\otimes_{F_{j_2}} \cdots \otimes_{F_{j_{r-1}}} M_{j_{r-1},j_r}\ ,
\]
including paths of length zero ($F_j$), with multiplication given by concatenation of paths. Since the quiver $Q$ has no oriented cycles this algebra is a finite dimensional hereditary algebra over $\kk$. 

\begin{defn}\label{def: associated modulated quiver}
Given a finite dimensional hereditary algebra $\Lambda$ over a field $\kk$, the \emph{associated modulated quiver} $(Q,\cM)$ is given as follows. Fix an ordering of the simple $\Lambda$-modules $S_1,\cdots,S_n$. Let $P_i$ be the projective cover of $S_i$.
\begin{enumerate}
\item Let $Q$ be the quiver with $Q_0=\{1,\cdots,n\}$ and arrows $i\to j$ when $\Ext^1_\Lambda(S_i,S_j)\neq 0$.
\item Let $F_i=\End_\Lambda(S_i)$ for each $i\in Q_0$.
\item For each $i\to j$ in $Q_1$ let $M_{ij}=\Hom_\Lambda(P_j,rP_i/r^2P_i)$.
\end{enumerate}
\end{defn}

{
There are examples of hereditary algebras which are not equivalent to their associated modulated quiver. We discuss these pathologies in {Appendix A (Sec \ref{ss: Appendix})}. Our results are general and hence include these pathological cases as well.%Results in the paper are done in general, hence including these pathological cases.

\subsection{Euler matrix}\label{ss: Euler matrix}
The underlying valued quiver of an hereditary algebra $\Lambda$ is the valued quiver of its associated modulated quiver. However, it is useful to go directly from $\Lambda$ to its underlying valued quiver, i.e., $f_i=\dim_\kk F_i$ where $F_i=\End_\Lambda(S_i)$ for each $i\in Q_0$, $d_{ij}=\dim_{F_j}\Hom_\Lambda(P_j,rP_i/r^2P_i)$, $d_{ji}=\dim_{F_i}\Hom_\Lambda(P_j,rP_i/r^2P_i)$ for each $i\to j$ in $Q_1$.}

The \emph{dimension vector} $\undim V$ of $V$ is defined to be the vector whose $i$-th coordinate is $\dim_{F_i}V_i$. We also have the dimension vector over $\kk$ given by
\[
	\undim_\kk V=D\undim V
\]
where $D$ is the diagonal matrix with diagonal entries $f_i=\dim_\kk F_i$. Let $E,L,R$ be the $n\times n$ matrices with $ij$ entries
\[
	E_{ij}=\dim_{\kk}\Hom_\Lambda(S_i,S_j)-\dim_{\kk}\Ext_\Lambda^1(S_i,S_j)
\]
\[
	L_{ij}=\dim_{F_j}\Hom_\Lambda(S_i,S_j)-\dim_{F_j}\Ext_\Lambda^1(S_i,S_j)
\]
\[
	R_{ij}=\dim_{F_i}\Hom_\Lambda(S_i,S_j)-\dim_{F_i}\Ext^1_\Lambda(S_i,S_j)
\]
Then $E_{ij}=L_{ij}f_j=f_iR_{ij}$ or, equivalently,
\[	E=LD=DR\, 	.	\]
We call $E$ the \emph{Euler matrix} of $\Lambda$, $L$ the \emph{left Euler matrix} of $\Lambda$ and $R$ the \emph{right Euler matrix} of $\Lambda$. The underlying valued quiver of $\Lambda$ has vertices $1\le i\le n$ corresponding to the simple modules $S_i$ and an arrow $i\to j$ if $E_{ij}<0$ with valuations $f_i$ on vertex $i$ and $(d_{ij},d_{ji})=(-L_{ij},-R_{ij})$ for every arrow $i\to j$.

{
\begin{eg} For example, for the valued quiver  $
\xymatrixrowsep{10pt}\xymatrixcolsep{60pt}
\xymatrix{%begin xy matrix
\,_{f_2=3}\bullet \ar[r]^{(d_{21},d_{12})=(3,2)}&\bullet_{f_1=2}
	}%end xy matrix
$  we have:
\[
	LD=\mat{1 & 0 \\ -3 & 1}\mat{2 & 0 \\ 0 & 3}=E=\mat{2 & 0 \\ -6 & 3}=\mat{2 & 0 \\ 0 & 3}\mat{1 & 0 \\ -2 & 1}=DR.
\]
\end{eg}}

The matrices $L,R$ are always unimodular and $\det\,E=\det\,D$ is always the product of the dimensions $f_i$ of $F_i=\End_\Lambda(P_i)=\End_\Lambda(S_i)$.

{We also use, in the subsection on $c$-vectors (sec \ref{ss: def of c-vectors})}, the \emph{exchange matrix} $B=L^t-R$. Since $DB=DL^t-DR=E^t-E$, $DB$ is always skew symmetric. In the example this is:
\[
	B=L^t-R=\mat{0 & -3 \\ 2 & 0},\ DB=E^t-E=\mat{0 & -6 \\ 6 & 0}.
\]

{
\begin{prop}\label{Euler-Ringel form} Let $\left<\cdot,\cdot\right>:\RR^n\times\RR^n\to\RR$ be the Euler-Ringel pairing given by $\left<x,y\right>=x^t Ey$. Then, for any two $\Lambda$-modules $M,N$ we have:
\[
	\left<\undim M,\undim N\right>=\dim_K \Hom_\Lambda(M,N)-\dim_K\Ext^1_\Lambda(M,N)\,.
\]
\end{prop}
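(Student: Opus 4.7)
The plan is a standard \emph{dévissage} (composition-series) argument, which works because $\Lambda$ is hereditary, so $\Ext^{\geq 2}_\Lambda = 0$. I will show that both sides of the claimed equality are biadditive on short exact sequences and agree on pairs of simples; this forces them to agree on all pairs of finitely generated $\Lambda$-modules.

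First I would check additivity of the right-hand side in each variable. Given a short exact sequence $0\to M'\to M\to M''\to 0$, applying $\Hom_\Lambda(-,N)$ produces a long exact sequence which, since $\Ext^2_\Lambda(M'',N)=0$, terminates as
\[
0\to \Hom(M'',N)\to \Hom(M,N)\to \Hom(M',N)\to \Ext^1(M'',N)\to \Ext^1(M,N)\to \Ext^1(M',N)\to 0.
\]
The alternating sum of $\kk$-dimensions is zero, which gives additivity of $\dim_\kk\Hom_\Lambda(-,N)-\dim_\kk\Ext^1_\Lambda(-,N)$ on the first variable. The symmetric argument, using $\Hom_\Lambda(M,-)$ and a short exact sequence in the second slot, gives additivity there.

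Next I would check additivity of the left-hand side. Since $P_i$ is projective, $\Hom_\Lambda(P_i,-)$ is exact, so the functor $M\mapsto\dim_{F_i}\Hom_\Lambda(P_i,M)$ is additive on short exact sequences; taking these as the coordinates of $\undim M$ shows that $\undim$ is additive. Consequently $\langle\undim(-),\undim N\rangle$ and $\langle\undim M,\undim(-)\rangle$ are additive in their respective arguments, as they are obtained by composing the $\ZZ$-bilinear form $x^tEy$ with an additive map.

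Now I would run the induction. Fix $N$; by additivity in $M$ and induction on $\kk$-length it suffices to verify the equality when $M=S_i$ is simple. Then fix $M=S_i$; by additivity in $N$ and induction on $\kk$-length it suffices to verify the equality when $N=S_j$ is also simple. In that base case $\undim S_i=e_i$ and $\undim S_j=e_j$, so $\langle\undim S_i,\undim S_j\rangle=e_i^tEe_j=E_{ij}$, which is exactly $\dim_\kk\Hom_\Lambda(S_i,S_j)-\dim_\kk\Ext^1_\Lambda(S_i,S_j)$ by the definition of $E$ in Subsection \ref{ss: Euler matrix}. There is no real obstacle here beyond bookkeeping; the one subtlety worth flagging is the mismatch of scalars (the entries of $E$ are $\kk$-dimensions while the entries of $\undim M$ are $F_i$-dimensions), but this is already built into the definition of $E$ and is consistent with the identity $\undim_\kk M=D\,\undim M$, so the base case matches without any extra factor.
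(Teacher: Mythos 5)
The paper states this proposition without proof, treating it as a standard fact, so there is no argument to compare against. Your dévissage proof is correct: bi-additivity of both sides (using $\Ext^2_\Lambda=0$ for the right side and exactness of $\Hom_\Lambda(P_i,-)$ for the left), followed by reduction to the base case $M=S_i$, $N=S_j$, where the equality becomes the definition of $E_{ij}$. The ordering of the double induction (first in $M$ for fixed $N$, then in $N$ for $M$ simple) is sound, and you are right that no extra scaling factor appears in the base case since $\undim S_i=e_i$ and the entries of $E$ are already $K$-dimensions.
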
}

For example, pairing $(P_1,\cdots,P_n)$ with $(S_1,\cdots,S_n)$ gives
\[
	\left<\undim P_i,\undim S_j\right>=\dim_K \Hom_\Lambda(P_i,S_j)=f_i\delta_{ij}.
\]
This equation can be written as $PEI_n=D$ where the $i$-th row of the matrix $P$ is $\undim P_i$. Furthermore $PE=D$ and $E=LD$ imply that $P=L^{-1}$.

{
\begin{prop}\label{prop: pairing is divisible by fM}
Suppose that $\End_\Lambda(M)$ is a division algebra. Then $\left<\undim M,\undim N\right>$ and $\left<\undim N,\undim M\right>$ are divisible by $f_M=\dim_\kk \End_\Lambda(M)$.
\end{prop}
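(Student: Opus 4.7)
The plan is to use the Euler--Ringel formula from Proposition \ref{Euler-Ringel form} to replace $\left<\undim M,\undim N\right>$ by the alternating sum $\dim_\kk\Hom_\Lambda(M,N)-\dim_\kk\Ext^1_\Lambda(M,N)$, and then to show that each term separately is divisible by $f_M=\dim_\kk\End_\Lambda(M)$. Once that is done, the statement follows immediately, and the same argument handles $\left<\undim N,\undim M\right>$.

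The key observation is that, setting $F_M:=\End_\Lambda(M)$, both $\Hom_\Lambda(M,N)$ and $\Ext^1_\Lambda(M,N)$ carry a natural right $F_M$-module structure via precomposition in the first argument. (Concretely, for $\varphi\in F_M$ and $g\in\Hom_\Lambda(M,N)$ one sets $g\cdot\varphi:=g\circ\varphi$; for $\Ext^1$ one uses functoriality of $\Ext^1_\Lambda(-,N)$ in the first variable applied to $\varphi\in\End_\Lambda(M)$.) By hypothesis $F_M$ is a division algebra over $\kk$, so every right $F_M$-module is free. Consequently its $\kk$-dimension is an integer multiple of $\dim_\kk F_M=f_M$. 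Applying this to $\Hom_\Lambda(M,N)$ and to $\Ext^1_\Lambda(M,N)$ shows that both $\dim_\kk\Hom_\Lambda(M,N)$ and $\dim_\kk\Ext^1_\Lambda(M,N)$ are divisible by $f_M$, hence so is their difference, which equals $\left<\undim M,\undim N\right>$.

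For the second assertion, one uses that $\Hom_\Lambda(N,M)$ and $\Ext^1_\Lambda(N,M)$ are left $F_M$-modules via postcomposition and the functoriality of $\Ext^1_\Lambda(N,-)$. Again, since $F_M$ is a division algebra, these are free left $F_M$-modules, so their $\kk$-dimensions are divisible by $f_M$, and therefore $\left<\undim N,\undim M\right>$ is divisible by $f_M$ as well.

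There is no real obstacle here; the only subtle point is remembering that the division algebra $F_M$ need not be commutative and need not equal $F_i$ for a simple module, so one should explicitly note that modules over a (not necessarily commutative) division algebra are free and hence have $\kk$-dimension a multiple of $\dim_\kk F_M$. Everything else is an application of Proposition \ref{Euler-Ringel form}.
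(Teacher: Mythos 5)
Your proof is correct and takes essentially the same approach as the paper: apply the Euler--Ringel formula and observe that $\Hom_\Lambda(M,N)$ and $\Ext^1_\Lambda(M,N)$ (respectively $\Hom_\Lambda(N,M)$ and $\Ext^1_\Lambda(N,M)$) are vector spaces over the division algebra $\End_\Lambda(M)$, hence have $\kk$-dimension divisible by $f_M$. You merely spell out in more detail why such modules are free and how the $\End_\Lambda(M)$-action arises on each $\Hom$ and $\Ext$ group, which the paper leaves implicit.
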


\begin{proof}
$
	\left<\undim M,\undim N\right>=\dim_\kk\Hom_\Lambda(M,N)-\dim_\kk\Ext^1_\Lambda(M,N)
$
which is divisible by $f_M$ since $\Hom_\Lambda(M,N)$ and $\Ext^1_\Lambda(M,N)$ are vector spaces over $\End_\Lambda(M)$.
\end{proof}}

 \begin{notation}
 For each $\alpha\in\NN^n$ we use the notation $P(\alpha)=\bigoplus_iP_i^{\alpha_i}$. For example, $\Lambda=P(1,1,\cdots,1)$ if $\Lambda$ is basic.
 \end{notation}

%For any $\gamma=(\gamma_i)\in\NN^n$ we will use the notation $P(\gamma)=\bigoplus_{i=1}^n P_i^{\gamma_i}$.
With this notation, we have the following. Suppose $\undim M=\beta$ and $\gamma\in\NN^n$. Then
\[
	\brk{\undim P(\gamma),\undim M}=\sum_{i=1}^n  \gamma_i\dim_\kk \Hom_\Lambda(P_i,M)=\sum_{i=1}^n  \gamma_if_i\beta_i\,.
\]

\subsection{Exceptional sequences}\label{sec:exc seq}

We review the definition and basic properties of exceptional sequences. See \cite{C-B93}, \cite{Ri} for details.

\begin{defn}
Let $\Lambda$ be a finite dimensional hereditary algebra over any field $\kk$. Then a $\Lambda$-module $M$ is called \emph{exceptional} if $\Ext^1_\Lambda(M,M)=0$ and $\End_\Lambda(M)$ is a division algebra. In particular $M$ is indecomposable. A sequence of modules $(X_1,\cdots,X_k)$ is called an \emph{exceptional sequence} if all objects are exceptional and
\[
	\Hom_\Lambda(X_j,X_i)=\Ext^1_\Lambda(X_j,X_i)=0\ \text{for all }j>i.
\]
An exceptional sequence is called \emph{complete} if it is of maximal length. {By \ref{prop: properties of exceptional sequences}(1) below, the maximal length is equal to the number of nonisomorphic simple modules.}

\end{defn}
The following are standard examples of complete exceptional sequences.

\begin{prop} \label{exceptional sequences} Let $\Lambda$ be a finite dimensional hereditary algebra with admissible order given by $\Hom_{\Lambda}(P_j,P_i) =0$ for all $j>i$.
Then:
\begin{enumerate}
\item The simple modules $(S_n, S_{n-1},\cdots,S_1)$ form an exceptional sequence.
\item The projective modules $(P_1, P_2, \cdots,P_n)$ form an exceptional sequence.
\item The injective modules $(I_1, I_{2},\cdots,I_n)$ form an exceptional sequence where $I_i$ is the injective envelope of the simple module $S_i$ for $i=1,\cdots,n$.
\end{enumerate}
\end{prop}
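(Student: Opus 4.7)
The plan is to verify, for each of the three sequences, the two ingredients in the definition of an exceptional sequence: every entry must itself be exceptional, and $\Hom_\Lambda(X_j,X_i)=\Ext^1_\Lambda(X_j,X_i)=0$ must hold whenever $j>i$. For the projective sequence $(P_1,\dots,P_n)$ the vanishing splits cleanly in two: $\Hom_\Lambda(P_j,P_i)=0$ for $j>i$ is exactly the admissible-order hypothesis, while $\Ext^1_\Lambda(P_j,P_i)=0$ is automatic from the projectivity of $P_j$. To see that each $P_i$ is exceptional I use the fact that the underlying quiver $Q$ of any finite-dimensional hereditary algebra is acyclic: every nonzero endomorphism of $P_i$ is either an isomorphism or factors through $rad\,P_i$, and a nonzero map $P_i\to rad\,P_i$ would give a nontrivial path from $i$ back to $i$, contradicting acyclicity. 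Hence $\End_\Lambda(P_i)=F_i$ is a division algebra and $\Ext^1_\Lambda(P_i,P_i)=0$.

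For the injective sequence $(I_1,\dots,I_n)$ I would transport the projective result through the Nakayama functor $\nu$, which restricts to an equivalence from the full subcategory of finitely generated projectives to that of finitely generated injectives, sending $P_i$ to $I_i$. This equivalence yields $\End_\Lambda(I_i)\cong\End_\Lambda(P_i)=F_i$ and $\Hom_\Lambda(I_j,I_i)\cong\Hom_\Lambda(P_j,P_i)=0$ for $j>i$, while $\Ext^1_\Lambda(I_j,I_i)=0$ is immediate from the injectivity of $I_i$.

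For the simple sequence $(S_n,\dots,S_1)$ the indexing is reversed, so the condition $j>i$ on the position in the sequence translates to $a<b$ on the pair $(S_a,S_b)$. Vanishing of $\Hom_\Lambda(S_a,S_b)$ is immediate from the non-isomorphism of distinct simples. For $\Ext^1_\Lambda(S_a,S_b)=0$ with $a<b$, a nonzero class would produce an arrow $a\to b$ in the underlying valued quiver, so by Definition \ref{def: associated modulated quiver} the module $P_b$ appears as a summand of $rad\,P_a/rad^2 P_a$; since $\Lambda$ is hereditary, $rad\,P_a$ is itself projective and $P_b$ is already a summand of it, yielding a nonzero composition $P_b\hookrightarrow rad\,P_a\hookrightarrow P_a$ that contradicts the admissible order. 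Exceptionality of each $S_i$ is then immediate from $\End_\Lambda(S_i)=F_i$ together with the absence of loops in the acyclic $Q$.

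The delicate point will be invoking the Nakayama functor in the generality of an arbitrary finite-dimensional hereditary algebra over an arbitrary field, without assuming a tensor-algebra or modulated-quiver presentation: I need to make sure that the equivalence $\nu$ between finitely generated projectives and injectives, together with its preservation of $\Hom$-spaces and the identification $\nu P_i=I_i$, really goes through in this setting. A second small subtlety is justifying $\End_\Lambda(P_i)=F_i$ purely from acyclicity of $Q$; both issues are standard but must be pinned down before the three verifications above can be stated cleanly.
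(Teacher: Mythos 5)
The paper states this proposition without proof, treating it as standard, so there is no in-paper argument to compare against; your proof is essentially correct, and the three verifications you carry out are the right ones. A few remarks to tighten it. For the injectives, your worry about the Nakayama functor is unfounded: $\nu=D\Hom_\Lambda(-,\Lambda)$ is defined for any finite-dimensional algebra over any field and restricts to an equivalence $\mathrm{proj}\text{-}\Lambda\to\mathrm{inj}\text{-}\Lambda$ with $\nu(e_i\Lambda)=D(\Lambda e_i)=I_i$; nothing about tensor-algebra or modulated-quiver presentations is needed, so the transport of $\Hom$ and $\End$ is legitimate. For $\End_\Lambda(P_i)$ being a division algebra, the path-counting argument you sketch can be made rigorous (a nonzero $P_i\to rad\,P_i=\bigoplus P_j^{m_j}$ gives nonzero $\Hom(P_i,P_j)$, while $P_j\subseteq P_i$ gives nonzero $\Hom(P_j,P_i)$, forcing $i=j$ by the admissible order and hence $P_i\mid rad\,P_i$, impossible), but a cleaner and quiver-free argument is available: if $f\in\End_\Lambda(P_i)$ is nonzero, its image is a submodule of a projective, hence projective since $\Lambda$ is hereditary, so $f\colon P_i\to\im f$ splits and $\ker f$ is a direct summand of the indecomposable $P_i$; thus $\ker f=0$, and $f$ is an isomorphism by a dimension count. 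The same mechanism gives $\Ext^1_\Lambda(S_i,S_i)=0$ directly: an arrow $i\to i$ would make $P_i$ a summand of $rad\,P_i$, which is impossible. Your treatment of the simples, in particular the use of Definition \ref{def: associated modulated quiver} together with projectivity of $rad\,P_a$ to turn $\Ext^1_\Lambda(S_a,S_b)\neq0$ into a nonzero $\Hom_\Lambda(P_b,P_a)$ contradicting the admissible order, is exactly right.
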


{Exceptional sequences have many nice properties. We list here only those properties that we use to prove the main theorems in this paper.}

\begin{prop}\label{prop: properties of exceptional sequences}
Let $n$ be the number of simple $\Lambda$ modules.
\begin{enumerate}
\item Exceptional sequences are complete if and only if they have $n$ objects.
\item Every exceptional sequence can be extended to a complete exceptional sequence.
\item If $(X_1,\cdots,X_n)$ is an exceptional sequence, $\{\undim X_i\}$ generates $\ZZ^n$ as a $\ZZ$-module.
\item Given an exceptional sequence $(X_1,\cdots,X_{n-1})$ of length $n-1$ and any $j=1,\cdots,n$, there are modules $Y_j$, unique up to isomorphism, so that
\[
	(X_1,\cdots,X_{j-1},Y_j,X_j,\cdots,X_{n-1})
\]
is a (complete) exceptional sequence.
\item {$\End_\Lambda(Y_j)\cong \End_\Lambda(Y_{j'})$ for all $j,j'$ in (4) above.}
\item Let $(X_1,\cdots,X_n)$ be an exceptional sequence. Then:\\
If $X_n$ is non-projective, then  $(\tau X_{n}, X_1,\cdots,X_{n-1})$ is an exceptional sequence.  \\
If $X_n=P_k$ is projective, then  $(I_{k}, X_1,\cdots,X_{n-1})$ is an exceptional sequence. 
\end{enumerate}
\end{prop}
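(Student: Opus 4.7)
The plan is to deduce items (1)--(5) from a single tool, Schofield's perpendicular category construction as developed in \cite{C-B93}: for any exceptional $\Lambda$-module $X$ the right and left perpendicular categories $X^\perp$ and ${}^\perp X$ are module categories of hereditary algebras with $n-1$ simples, and iterating this for any exceptional sequence of length $k$ (split arbitrarily into a prefix taken perpendicular on the left and a suffix taken perpendicular on the right) yields the module category of a hereditary algebra with $n-k$ simples. Item (6) will be handled separately by Auslander--Reiten duality.

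For (1) I would observe that the Gram matrix $G_{ij}=\brk{\undim X_i,\undim X_j}$ of an exceptional sequence is upper triangular with positive diagonal entries $f_{X_i}=\dim_\kk\End_\Lambda(X_i)$, hence nondegenerate; this forces the $\undim X_i$ to be linearly independent in $\QQ^n$, bounding the length by $n$, while Proposition~\ref{exceptional sequences} achieves the bound. For (2) I would prepend any simple module of the hereditary algebra associated to ${}^\perp(X_1\oplus\cdots\oplus X_k)$ and iterate until length $n$ is reached. For (4) the required $Y_j$ must lie in the two-sided perpendicular
\[
	\mathcal{C}_j={}^\perp(X_1\oplus\cdots\oplus X_{j-1})\cap(X_j\oplus\cdots\oplus X_{n-1})^\perp,
\]
which by the iterated construction is the module category of a hereditary algebra with exactly one simple; $Y_j$ is forced to be that unique simple, giving simultaneous existence and uniqueness.

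For (3) let $A$ have columns $\undim X_i$; the identity $A^tEA=G$ combined with $\det G=\prod f_{X_i}$ (upper triangularity) gives $(\det A)^2\det E=\prod f_{X_i}$. Specializing to the simple sequence of Proposition~\ref{exceptional sequences}(1), where $A$ is a permutation matrix, pins down $\det E=\prod f_i$, and the general case is obtained by induction on $n$: the dimension vectors of the simples of ${}^\perp X_1$, together with $\undim X_1$, form a $\ZZ$-basis of $\ZZ^n$ (a standard property of the perpendicular construction, see \cite{C-B93}), and within ${}^\perp X_1$ the sequence $(X_2,\ldots,X_n)$ is exceptional and generates $K_0({}^\perp X_1)=\ZZ^{n-1}$ by induction. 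Item (5) is then immediate from the same determinantal identity applied to any completion $(X_1,\ldots,Y_j,\ldots,X_{n-1})$, giving $f_{Y_j}\prod_i f_{X_i}=\prod_i f_i$ and hence $f_{Y_j}$ independent of $j$; the finer statement that $\End_\Lambda(Y_j)\cong\End_\Lambda(Y_{j'})$ as division algebras I would deduce from transitivity of the braid group action on complete exceptional sequences, since mutation preserves each individual endomorphism algebra up to isomorphism, and hence preserves the multi-set of endomorphism algebras of a complete exceptional sequence. Finally, for (6), the Auslander--Reiten formulas give $\Ext^1_\Lambda(X_i,\tau X_n)\cong D\Hom_\Lambda(X_n,X_i)=0$ and $\Hom_\Lambda(X_i,\tau X_n)\cong D\Ext^1_\Lambda(X_n,X_i)=0$ for $i<n$, so $(\tau X_n,X_1,\ldots,X_{n-1})$ satisfies the exceptional sequence conditions; the projective case with $I_k$ is analogous using classical AR-theory at the boundary of the AR-quiver. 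The main technical obstacle in the plan is the perpendicular category theorem itself, which I would cite from \cite{C-B93}; granted this, (1)--(5) are essentially bookkeeping with determinants and Grothendieck groups, while (6) is a formal AR-duality computation.
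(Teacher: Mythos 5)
The paper gives no proof of this proposition; it quotes it as standard material from \cite{C-B93} and \cite{Ri}, so your attempt must be judged on its own merits. Your perpendicular-category plan is the natural one, and parts (1)--(4) and (6) go through essentially as written. One small orientation slip in (2): to \emph{prepend} a term $Y$ to $(X_1,\dots,X_k)$ you need $\Hom_\Lambda(X_i,Y)=\Ext^1_\Lambda(X_i,Y)=0$, i.e.\ $Y\in(X_1\oplus\cdots\oplus X_k)^\perp$, not $Y\in{}^\perp(X_1\oplus\cdots\oplus X_k)$; the latter perpendicular is what you would use to \emph{append}, which works equally well for the conclusion.

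The genuine gap is in (5). The determinant identity correctly gives the equality of $K$-dimensions $f_{Y_j}=f_{Y_{j'}}$, but you then promote this to an isomorphism of division algebras by appealing to braid transitivity together with the claim that ``mutation preserves each individual endomorphism algebra up to isomorphism.'' Applied to the braid generator $\sigma_i$, that claim says $\End_\Lambda(X_{i+1}^\ast)\cong\End_\Lambda(X_{i+1})$, which is exactly the instance of (5) obtained by deleting $X_{i+1}$ from a complete exceptional sequence and reinserting it at the two adjacent positions. So your argument for (5) presupposes (5). To close the circle you must prove the single-mutation statement directly: the pair $X_i,X_{i+1}$ generates a rank-two wide subcategory $\cW\cong mod\text-H$ by the same perpendicular-category machinery, and inside $\cW$ the braid move replaces $X_{i+1}$ by $\tau_H X_{i+1}$ when $X_{i+1}$ is not projective in $\cW$, and by the corresponding injective $I^H_k$ when $X_{i+1}=P^H_k$; now $\tau_H$ is an equivalence between the full subcategories of non-projective and non-injective $H$-modules, so it preserves endomorphism rings, and in the boundary case $\End_H(P^H_k)\cong F_k\cong\End_H(I^H_k)$. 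This supplies the missing step and makes both (5) and Corollary~\ref{cor: End(Xi) is End(S_i)} rigorous.
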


Condition (4) implies that there is an action of the braid group on $n$ strands on the set of (isomorphism classes of) complete exceptional sequences. {For example, the braid generator $\sigma_i$ which moves the $i$-th strand over the $(i+1)$-st strand acts by:
\begin{equation}\label{eq: braid action}
	\sigma_i(X_1,\cdots,X_n)=(X_1,\cdots,X_{i-1},X_{i+1}^\ast,X_i,X_{i+2},\cdots,X_n)
\end{equation}
where, by property (4), $X_{i+1}^\ast$ is the unique exceptional module which fits in the indicated location in the exceptional sequence given the other objects. One of the very important theorems about exceptional sequences used in this paper is the following result proved in \cite{C-B93} in the algebraically closed case and \cite{Ri} in general.}

\begin{thm}[Crawley-Boevey, Ringel]\label{Thm: braid group acts transitively on exc seqs}
The braid group on $n$ strands acts transitively on the set of all complete exceptional sequences.
\end{thm}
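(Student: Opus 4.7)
The plan is to proceed by induction on $n$, the number of non-isomorphic simple $\Lambda$-modules. The base case $n=1$ is trivial since $\Lambda$ is then a division algebra and every complete exceptional sequence is just $(S_1)$. For the inductive step, I would invoke the perpendicular category theorem of Schofield and Geigle--Lenzing (in the form extended to modulated quivers by Crawley-Boevey/Ringel): for any exceptional $\Lambda$-module $E$, the right perpendicular subcategory
\[
E^\perp=\{X\in\text{mod-}\Lambda: \Hom_\Lambda(E,X)=\Ext^1_\Lambda(E,X)=0\}
\]
is equivalent to the module category of a finite dimensional hereditary algebra $\Lambda_E$ with exactly $n-1$ simple modules. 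Given any complete exceptional sequence $(X_1,\dots,X_n)$ of $\Lambda$, each $X_i$ with $i<n$ lies in $X_n^\perp$ and $(X_1,\dots,X_{n-1})$ is a complete exceptional sequence in $X_n^\perp$. Since the braid action on the first $n-1$ strands is intertwined with this equivalence, the inductive hypothesis shows that any two complete exceptional sequences with a common last term lie in the same braid orbit.

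The remaining task is to show that, given two complete exceptional sequences $(X_1,\dots,X_n)$ and $(Y_1,\dots,Y_n)$, a suitable sequence of braid moves arranges them to share a common final object. The strategy is to prove that the braid orbit of any complete exceptional sequence contains a sequence whose last entry is a simple projective $P_k$. I would carry out this normalization by induction on a complexity measure of $X_n$ such as $\dim_\kk X_n$. If $X_n$ is not simple projective, then property (4) of Proposition \ref{prop: properties of exceptional sequences} supplies the unique object $X_n^\ast$ making $(X_1,\dots,X_{n-2},X_n^\ast,X_{n-1})=\sigma_{n-1}^{-1}(X_1,\dots,X_n)$ a complete exceptional sequence; one analyzes the exceptional pair $(X_{n-1},X_n)$ using the Euler--Ringel form on $\undim X_{n-1}$ and $\undim X_n$ to produce either $\sigma_{n-1}$ or $\sigma_{n-1}^{-1}$, possibly preceded by a braid move further to the left, that strictly decreases the chosen invariant. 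Iterating eventually forces the last term to be a simple projective. A further braid on the first $n-1$ strands, combined with the inductive hypothesis applied inside the perpendicular category of a single fixed simple projective, allows one to match the terminal simple projectives of the two normalized sequences.

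The main obstacle is the complexity reduction step: one must show that from any non-trivial exceptional pair $(A,B)$ there is a braid move producing a pair with strictly smaller right-hand endpoint, and rule out infinite loops or pairs where no local move helps. Crawley-Boevey's original argument handles this by a careful case analysis of $\Hom$ and $\Ext^1$ for exceptional pairs, using the short exact sequences that realize the mutations $\sigma_i^{\pm 1}$. In the modulated setting one must verify that these short exact sequences and the dimension counts, which rest on Proposition \ref{Euler-Ringel form} and Proposition \ref{prop: pairing is divisible by fM}, go through over an arbitrary ground field $\kk$; this is the technical heart of Ringel's extension of the theorem to the modulated case, and once it is in place the induction closes and transitivity follows.
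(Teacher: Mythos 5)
The paper does not prove this theorem; it is stated as a cited result of Crawley-Boevey (algebraically closed case) and Ringel (general case), so there is no internal proof to compare against. Your proposal correctly reproduces the broad shape of Crawley-Boevey's original argument: induct on $n$, reduce via the perpendicular-category equivalence $X_n^\perp \simeq \mathrm{mod}\text{-}\Lambda_{X_n}$ to the case where two complete exceptional sequences share a last term, and separately show that any complete exceptional sequence can be braided to a normalized form.

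However, there are two genuine gaps. First, the final "matching" step is stated incorrectly. You write that a braid on the first $n-1$ strands, together with the inductive hypothesis inside a perpendicular category, "allows one to match the terminal simple projectives of the two normalized sequences." A braid in the subgroup generated by $\sigma_1,\dots,\sigma_{n-2}$ leaves the $n$-th position untouched, and the inductive hypothesis inside $P_k^\perp$ also only rearranges the first $n-1$ terms. Neither operation can change the terminal object, so as written this step cannot turn a sequence ending in $P_k$ into one ending in $P_{k'}$ when $k\neq k'$. The fix is to normalize both sequences not merely to end in \emph{some} simple projective but to the \emph{same} fixed complete exceptional sequence (e.g.\ $(P_1,\dots,P_n)$ in an admissible order), or else to observe explicitly that for any two simple projectives $P_k,P_{k'}$ the standard projective sequence can be braided to end in either, and chain through that.

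Second, the core technical content — that any complete exceptional sequence can be braided so its last term is simple projective, via a strictly decreasing complexity measure — is not actually supplied. You propose $\dim_K X_n$ as the invariant and gesture at case analysis on the Euler--Ringel form, but you yourself flag this as "the main obstacle" and defer it to Crawley-Boevey's case analysis. Since that case analysis is precisely where the theorem's content lies (one must show that for an exceptional pair $(X_{n-1},X_n)$ with $X_n$ not simple projective, one of $\sigma_{n-1}^{\pm 1}$, possibly after preliminary moves, strictly reduces the invariant, and that the $\Hom$/$\Ext$ dimension counts from Proposition \ref{Euler-Ringel form} and Proposition \ref{prop: pairing is divisible by fM} carry over to the modulated setting), the proposal as written is a scaffold around a citation rather than a self-contained proof.
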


{In the case when $\kk$ is algebraically closed, or, more generally when $\Lambda=\kk Q$ is the path algebra of a simply laced quiver without oriented cycles, it follows that $\End_\Lambda(M)=\kk$ for every exceptional $\Lambda$-module $M$.} In general, the endomorphism rings of the $X_i$ are division algebras which remain the same after any braid move by Proposition \ref{prop: properties of exceptional sequences}(5). So, Theorem \ref{Thm: braid group acts transitively on exc seqs} implies the following.

\begin{cor}\label{cor: End(Xi) is End(S_i)}
For any exceptional sequence $(X_1,\cdots,X_n)$, there is a permutation $\pi$ of $n$ so that $\End_\Lambda(X_i)\cong \End_\Lambda(S_{\pi(i)})$ for all $i$.
\end{cor}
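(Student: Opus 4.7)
The plan is to combine the transitivity of the braid group action (Theorem \ref{Thm: braid group acts transitively on exc seqs}) with the invariance of endomorphism rings under a single braid move, which is encoded in Proposition \ref{prop: properties of exceptional sequences}(5). The target multiset of endomorphism rings will be that of the simples, coming from Proposition \ref{exceptional sequences}(1).

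First, I would unwind what a single generator $\sigma_i$ of the braid group does to the multiset $\{\End_\Lambda(X_1),\ldots,\End_\Lambda(X_n)\}$ (taken up to isomorphism). Starting from the complete exceptional sequence $(X_1,\ldots,X_n)$, delete $X_{i+1}$ to obtain the length $n-1$ exceptional sequence
\[
(X_1,\ldots,X_{i-1},X_i,X_{i+2},\ldots,X_n).
\]
Proposition \ref{prop: properties of exceptional sequences}(4) guarantees that each of the $n$ empty slots admits a unique insertion producing a complete exceptional sequence; filling slot $i+1$ returns $X_{i+1}$, while filling slot $i$ produces the object which, by definition, is $X_{i+1}^\ast$ appearing in the formula \eqref{eq: braid action} for $\sigma_i$. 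By Proposition \ref{prop: properties of exceptional sequences}(5), the endomorphism rings of all these insertions are isomorphic division algebras, so in particular $\End_\Lambda(X_{i+1}^\ast)\cong \End_\Lambda(X_{i+1})$. Since $\sigma_i$ only replaces $X_{i+1}$ by $X_{i+1}^\ast$ (and permutes positions), the multiset of endomorphism rings is preserved. The same argument applied to $\sigma_i^{-1}$ gives the analogous invariance for inverse braid generators.

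Next, I would apply Theorem \ref{Thm: braid group acts transitively on exc seqs} to the standard simple exceptional sequence $(S_n,S_{n-1},\ldots,S_1)$ of Proposition \ref{exceptional sequences}(1): there exists a word in the $\sigma_i^{\pm 1}$ whose action carries $(S_n,\ldots,S_1)$ to the given $(X_1,\ldots,X_n)$. Iterating the invariance from the previous step along this word yields the equality of multisets
\[
\{\End_\Lambda(X_1),\ldots,\End_\Lambda(X_n)\}=\{\End_\Lambda(S_1),\ldots,\End_\Lambda(S_n)\}
\]
up to isomorphism, which is exactly the existence of a permutation $\pi$ with $\End_\Lambda(X_i)\cong \End_\Lambda(S_{\pi(i)})$ for all $i$.

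The only slightly delicate point is to verify that the change effected by $\sigma_i$ really corresponds to the insertion comparison made by (5): one must check that removing $X_{i+1}$ (rather than $X_i$) from $(X_1,\ldots,X_n)$ is the correct setup so that the slot-$i$ and slot-$(i+1)$ insertions give $X_{i+1}^\ast$ and $X_{i+1}$ respectively. Once that bookkeeping is in order, the rest is purely formal: braid invariance plus transitivity force the multiset of endomorphism division algebras of any complete exceptional sequence to coincide with that of the simples.
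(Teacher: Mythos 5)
Your proof is correct and takes exactly the route the paper does: it sketches the same one-line argument (Proposition \ref{prop: properties of exceptional sequences}(5) preserves endomorphism rings under a braid generator, Theorem \ref{Thm: braid group acts transitively on exc seqs} lets you reach any complete exceptional sequence from the simples), just spelling out the bookkeeping. Your resolution of the ``delicate point'' is also correct: deleting $X_{i+1}$ from $(X_1,\dots,X_n)$ and deleting $X_{i+1}^\ast$ from $\sigma_i(X_1,\dots,X_n)$ yield the \emph{same} length-$(n-1)$ exceptional sequence, so $X_{i+1}$ and $X_{i+1}^\ast$ are the two insertions at adjacent slots that Proposition \ref{prop: properties of exceptional sequences}(5) compares, whereas deleting $X_i$ would not give a common $(n-1)$-sequence.
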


Another important consequence of this theorem is the following.

\begin{prop}
Suppose that $(\beta_1,\cdots,\beta_n)$ is the set of dimension vectors of a complete exceptional sequence. Then each vector $\beta_j$ is uniquely detemined by the other vectors together with the requirements that
\begin{enumerate}
\item $\brk{\beta_k,\beta_i}=0$ for $k>i$.
\item The vectors $\beta_i$ additively generate $\ZZ^n$.
\end{enumerate}
\end{prop}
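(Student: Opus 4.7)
The strategy is to observe that condition (1) says the Gram matrix of the Euler form on the ordered tuple $(\beta_1,\ldots,\beta_n)$ is upper triangular, and then to leverage the nonvanishing of its diagonal (coming from exceptionality) to run triangular elimination in both directions around the slot $j$.

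First I would set $B=[\beta_1|\cdots|\beta_n]$ and $G=B^tEB$, so $G_{ki}=\brk{\beta_k,\beta_i}$. Condition (1) is exactly upper-triangularity of $G$. Proposition \ref{Euler-Ringel form} identifies each diagonal entry as $G_{ii}=\dim_\kk\End_\Lambda(X_i)-\dim_\kk\Ext^1_\Lambda(X_i,X_i)=\dim_\kk\End_\Lambda(X_i)>0$, where the last step uses that $X_i$ is exceptional. In particular $G$ is invertible, so the $\beta_i$ are $\QQ$-linearly independent; together with condition (2) (and the fact that any $n$-element $\ZZ$-generating set of $\ZZ^n$ is automatically a basis) they form a $\ZZ$-basis of $\ZZ^n$.

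Next, I would take any candidate $\beta_j'\in\ZZ^n$ such that inserting it in position $j$ preserves both (1) and (2), and expand $\beta_j'=\sum_kc_k\beta_k$ in the basis above. The relations $\brk{\beta_j',\beta_i}=0$ for $i<j$ become $\sum_{k\le i}c_kG_{ki}=0$ (terms with $k>i$ vanish by (1)); forward substitution on $i=1,2,\ldots,j-1$, using $G_{ii}\neq 0$, kills $c_1,\ldots,c_{j-1}$. Dually, the relations $\brk{\beta_k,\beta_j'}=0$ for $k>j$ reduce to $\sum_{i\ge k}c_iG_{ki}=0$, and back substitution on $k=n,n-1,\ldots,j+1$ kills $c_n,\ldots,c_{j+1}$. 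Hence $\beta_j'=c_j\beta_j$.

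To pin down $c_j$, I would use that replacing $\beta_j$ by $c_j\beta_j$ in a $\ZZ$-basis of $\ZZ^n$ still gives a $\ZZ$-basis iff $c_j=\pm 1$; the sign is then fixed by the fact that $\beta_j$ and $\beta_j'$ are dimension vectors and hence have nonnegative entries, forcing $c_j=+1$ and $\beta_j'=\beta_j$. The only conceptual subtlety is this final sign disambiguation; everything else is mechanical triangular elimination whose sole algebraic input is the strict positivity of the diagonal of $G$, supplied by exceptionality of each $X_i$.
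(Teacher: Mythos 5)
The paper states this proposition without an explicit proof (it is used immediately as the setup for the braid group action on dimension vectors), so there is no paper argument to compare against line by line. That said, your proof is correct and is almost certainly the argument the authors had in mind, since their remark that ``these conditions depend only on $n$ and the Euler form'' signals a pure linear-algebra reduction of exactly the kind you give: the Gram matrix $G_{ki}=\brk{\beta_k,\beta_i}$ is upper triangular by (1), has strictly positive diagonal because each $X_i$ is exceptional (so $\brk{\beta_i,\beta_i}=\dim_\kk\End_\Lambda(X_i)>0$ by Proposition \ref{Euler-Ringel form}), forward/backward substitution then forces $\beta_j'=c_j\beta_j$, and (2) pins $c_j$ to $\pm1$ since any $n$-element generating set of $\ZZ^n$ is a basis.

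You rightly flag the sign disambiguation as the only genuine subtlety: conditions (1) and (2) alone cannot distinguish $\beta_j$ from $-\beta_j$, and the statement implicitly requires that $\beta_j'$ be a dimension vector in $\NN^n$. The paper in fact confirms this reading in the very next sentence, where it describes $\beta_{i+1}^\ast$ as ``the unique vector in $\NN^n$ satisfying the conditions of the proposition above.'' So your appeal to nonnegativity, together with $\beta_j\neq 0$ (it is the dimension vector of an exceptional module), correctly forces $c_j=+1$. The proof stands.
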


Note that these conditions depend only on $n$ and the Euler form $\brk{\cdot,\cdot}$. {This implies that there is an action of the braid group on $n$-strands on the set of dimension vectors of exceptional sequences which is given by}
\[
	\sigma_i(\beta_1,\cdots,\beta_n)=(\beta_1,\cdots,\beta_{i-1},\beta_{i+1}^\ast,\beta_i,\beta_{i+2},\cdots,\beta_n)
\]
where $\beta_{i+1}^\ast$ is the unique vector in $\NN^n$ satisfying the conditions of the proposition above.

\begin{cor}\label{cor: characterization of real Schur roots}
An exceptional $\Lambda$-module is uniquely determined up to isomorphism by its dimension vector. Furthermore, a vector $\beta\in\NN^n$ is the dimension vector of an exceptional module if and only if it appears in $\sigma(\alpha_1,\cdots,\alpha_n)$ for some element $\sigma$ of the braid group on $n$ strands where $\alpha_i=\undim S_i$ are the unit vectors in $\ZZ^n$. In particular, the set of such dimension vectors depends only on the underlying valued quiver of $\Lambda$.
\end{cor}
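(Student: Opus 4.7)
My plan is to deduce the corollary from Theorem \ref{Thm: braid group acts transitively on exc seqs} (braid transitivity on complete exceptional sequences) together with the braid-equivariance of the dimension vector map and the uniqueness statement in Proposition \ref{prop: properties of exceptional sequences}(4).

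First I would establish the characterization of exceptional dim vectors (the ``furthermore'' statement). Given an exceptional module $M$, Proposition \ref{prop: properties of exceptional sequences}(2) extends $(M)$ to a complete exceptional sequence $(M,X_2,\ldots,X_n)$, and by Theorem \ref{Thm: braid group acts transitively on exc seqs} this equals $\sigma\cdot(S_n,\ldots,S_1)$ for some braid $\sigma$. Applying $\undim$ componentwise and invoking the equivariance set up in the proposition preceding the corollary, the dim vector sequence equals $\sigma\cdot(\alpha_n,\ldots,\alpha_1)$, so $\undim M$ appears as an entry. Conversely, if $\beta$ appears as the $j$th entry of a braid image $\sigma\cdot(\alpha_1,\ldots,\alpha_n)$, then applying $\sigma$ on the module side to the corresponding simple exceptional sequence produces a complete exceptional sequence whose $j$th term is an exceptional module with dim vector $\beta$.

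For the uniqueness up to isomorphism, suppose $M,M'$ are exceptional with $\undim M=\undim M'=\beta$. I extend each to a complete exceptional sequence, writing $(M,X_2,\ldots,X_n)=\sigma(S_n,\ldots,S_1)$ and $(M',Y_2,\ldots,Y_n)=\sigma'(S_n,\ldots,S_1)$; then $\tau=\sigma'\sigma^{-1}$ sends the first sequence to the second on the module side and fixes the entry $\beta$ in position $1$ on the dim vector side. The key step is to show: whenever a braid fixes an entry of a dim vector sequence in the orbit it also fixes the corresponding module entry up to isomorphism---equivalently, the surjective $\undim$ map from isomorphism classes of complete exceptional sequences onto their dim vector sequences is entrywise injective. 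I would prove this by induction on the word length of $\tau$ in the generators $\sigma_i^{\pm1}$, using the fact that a single generator $\sigma_i$ modifies a sequence only at position $i$ by inserting the unique entry determined by the other $n-1$ entries, with uniqueness supplied by Proposition \ref{prop: properties of exceptional sequences}(4) on the module side and by the preceding proposition on the dim vector side; these two uniqueness statements match under the equivariance of $\undim$.

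The main obstacle is this compatibility argument: the braid group acts non-faithfully on both module and dim vector sequences, so it is not enough to argue that $\tau$ is the identity braid; instead, one must track the effect on individual entries through the inductive step, ensuring that agreement at a given position propagates through the braid word. The final assertion, that the set of exceptional dim vectors depends only on the underlying valued quiver, then follows immediately: the Euler form is determined by the valuation (as in subsection \ref{ss: Euler matrix}), the braid action on dim vector sequences is defined purely in terms of the Euler form via the conditions in the preceding proposition, and the starting basis of unit vectors depends only on $n$.
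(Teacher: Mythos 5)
Your treatment of the ``furthermore'' clause and of the final ``in particular'' clause is sound and matches the paper's implicit reasoning: extend the exceptional module to a complete exceptional sequence (Proposition~\ref{prop: properties of exceptional sequences}(2)), invoke braid transitivity (Theorem~\ref{Thm: braid group acts transitively on exc seqs}), and observe that $\undim$ intertwines the module-side braid action with the dimension-vector-side braid action (the unnamed proposition just before the corollary), so that the set of exceptional dimension vectors is the set of entries in the braid orbit of the unit-vector sequence, a set that visibly depends only on $n$ and the Euler form.

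The uniqueness claim is where the proposal has a genuine gap. The induction on braid word length you sketch is circular even in the base case: applying a single generator $\sigma_i$ to an exceptional sequence replaces the entry at position $i$ by $X_{i+1}^\ast$, and the entry at position $i+1$ by the old $X_i$, so if you want to conclude ``same dimension vector at a fixed position $\Rightarrow$ same module there'' you would already need to know that two distinct exceptional modules cannot share a dimension vector --- precisely what you are trying to prove. More structurally, the map $\Phi$ from isomorphism classes of complete exceptional sequences to dimension-vector sequences is indeed a $B_n$-equivariant bijection between transitive $B_n$-sets (the base-point stabilizers agree because an exceptional module with a unit dimension vector must be the corresponding simple), but bijectivity of $\Phi$ on whole sequences does not by itself force two \emph{completions} of $M$ and $M'$ to have the same dimension-vector sequence, so ``entrywise'' uniqueness does not drop out of the orbit picture. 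The paper does not in fact prove this uniqueness in Section~\ref{sec:exc seq}; it attributes Corollary~\ref{cor: characterization of real Schur roots} to Schofield \cite{S91} and takes the statement as a known restatement of his theorem. The paper's own independent proof of uniqueness is later, via generic representations: Corollary~\ref{thm: Mbeta is unique} (for infinite $K$, using Theorem~\ref{thm: generic decomposition thm 1}) together with the base-change argument of Theorem~\ref{thm: extension to finite fields}. If you want a self-contained argument in the spirit of Section~\ref{sec:exc seq}, you would need to bring in additional structure (e.g.\ the Happel--Ringel Lemma together with $\brk{\beta,\beta}>0$ and Schofield's ordering lemma, or a perpendicular-category induction) rather than the braid word induction you propose.
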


{This is a restatement of another important theorem of Schofield \cite{S91}: The dimension vectors of the exceptional $\Lambda$-modules are the real Schur roots of $\Lambda$. In this paper we will not need the original definition of a real Schur root \cite{K}. Following \cite{S91}, \cite{S92}, we use the characterizing property of real Schur roots given by the above corollary as the definition. }

\begin{defn}\cite{S92}
A \emph{real Schur root} of $\Lambda$ is a vector $\beta\in\NN^n$ with the property that there exists an exceptional module $M_\beta$ with $\undim M_\beta=\beta$.
\end{defn}

As a special case of Corollary \ref{cor: characterization of real Schur roots} above we have the following.

\begin{cor}\label{cor: real schur roots are same for associated modulated quiver}
The real Schur roots of an hereditary algebra are the same as those of the associated modulated quiver.
\end{cor}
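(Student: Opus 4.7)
The plan is to deduce this corollary directly from Corollary \ref{cor: characterization of real Schur roots}, which asserts that the collection of real Schur roots is determined by the underlying valued quiver alone (via the braid orbit of the unit vectors $\alpha_i = \undim S_i$). Once this is in hand, it suffices to show that $\Lambda$ and its associated modulated quiver $(Q,\cM)$ share the same underlying valued quiver, and therefore have the same number of simple modules $n$ and the same braid action on $\ZZ^n$.

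So the first step is to unravel Definition \ref{def: associated modulated quiver}. For $\Lambda$ itself, the underlying valued quiver (per Section \ref{ss: Euler matrix}) is computed from
\[
f_i=\dim_\kk\End_\Lambda(S_i),\quad d_{ij}=\dim_{F_j}\Hom_\Lambda(P_j,rP_i/r^2P_i),\quad d_{ji}=\dim_{F_i}\Hom_\Lambda(P_j,rP_i/r^2P_i).
\]
These are exactly the numerical invariants that Definition \ref{def: associated modulated quiver} packages into $(F_i)$ and $(M_{ij})$. Thus the underlying valued quiver of $\Lambda$ coincides tautologically with the valued quiver of $(Q,\cM)$ in the sense of Definition \ref{def: valuation of a modulated quiver}.

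The second step is to verify that the tensor algebra $T(Q,\cM)$ has this same valued quiver. For this one identifies the simple $T(Q,\cM)$-modules $S_i'$ with the one-dimensional $F_i$ sitting at vertex $i$, so that $\End_{T(Q,\cM)}(S_i') = F_i$, giving the same $f_i$. Since $T(Q,\cM)$ is a tensor algebra on the bimodules $M_{ij}$ over the semisimple ring $\prod_i F_i$, the standard projective resolution of $S_i'$ immediately yields $\Ext^1_{T(Q,\cM)}(S_i',S_j') \cong \Hom_{F_i\text{-}F_j}(M_{ij}, F_j \otimes_\kk F_i)$ (or the appropriate bimodule form), whose $F_j$- and $F_i$-dimensions are $d_{ij}$ and $d_{ji}$ respectively. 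Hence the underlying valued quiver of $T(Q,\cM)$ is identical to that of $\Lambda$.

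Combining the two steps, both $\Lambda$ and $T(Q,\cM)$ have the same underlying valued quiver, so Corollary \ref{cor: characterization of real Schur roots} forces their real Schur roots to coincide. The main obstacle is making precise the identification of the valued quiver of $T(Q,\cM)$ with that of $\Lambda$; this is routine but does rely on knowing the standard projective resolution of a simple over a tensor algebra, which is where one has to be slightly careful about sides of bimodules. No braid-group or stability input beyond Corollary \ref{cor: characterization of real Schur roots} is needed.
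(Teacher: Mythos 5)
Your proposal is correct and takes essentially the same route the paper intends: Corollary \ref{cor: characterization of real Schur roots} says the set of real Schur roots depends only on the underlying valued quiver, and since the associated modulated quiver is built (Definition \ref{def: associated modulated quiver}) to have the same valuation data as $\Lambda$, the two sets coincide. The explicit $\Ext^1$ formula you wrote for $T(Q,\cM)$ is not quite right (the correct statement is that $\Ext^1_{T(Q,\cM)}(S_i',S_j')$ is the bimodule $M_{ij}$ itself, or its dual depending on conventions, not a $\Hom$ into $F_j\otimes_\kk F_i$), but you flag this uncertainty and only the $F_i$- and $F_j$-dimensions are used, so the gap is cosmetic.
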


\subsection{Extension to arbitrary fields}\label{sec1.4}

The main results of this paper hold for arbitrary fields. The proofs are first done for infinite fields and they are extended to all fields using the following arguments.

Recall that if $\kk$ is a finite field and $F$ is a finite field extension of $\kk$ then
\[
	F\otimes_\kk\kk(t)\cong F(t)
\]
{is a finite field extension of $\kk(t)$. For any $\kk$-algebra $\Lambda$ we will use the notation $\Lambda(t)$ to denote $\Lambda\otimes_\kk\kk(t)$. This is a finite dimensional hereditary algebra over $\kk(t)$. For any $\Lambda$-module $M$, let $M(t)$ denote the $\Lambda(t)$-module $M\otimes_\kk\kk(t)$. Recall that the dimension vector of $M(t)$ as a $\Lambda(t)$-module is the vector whose $i$-th coordinate is $\dim_{F_i(t)}\Hom_{\Lambda(t)}(P_i(t),M(t))$.}

\begin{thm}\label{thm: extension to finite fields}
Let $\Lambda$ be a finite dimensional hereditary algebra over a finite field $\kk$ and {let $M$ be} an exceptional $\Lambda$-module with $\undim M=\beta$. Then, $\Lambda(t)$ is a finite dimensional hereditary algebra over $\kk(t)$ and $M(t)$ is an exceptional $\Lambda(t)$-module with {the same} dimension vector $\beta$. Furthermore, every exceptional $\Lambda(t)$ module is isomorphic to $M(t)$ for a unique exceptional $\Lambda$-module $M$.
\end{thm}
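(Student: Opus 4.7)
My plan is to establish each assertion in turn, relying on flatness of the base change $\kk \hookrightarrow \kk(t)$ together with the characterization of real Schur roots via the underlying valued quiver given in Corollary \ref{cor: characterization of real Schur roots}.

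First, to see that $\Lambda(t)$ is a finite dimensional hereditary algebra over $\kk(t)$, I would note that $\dim_{\kk(t)}\Lambda(t) = \dim_\kk \Lambda$ is finite. For hereditariness, since $\Lambda$ is basic with $\Lambda/\mathrm{rad}\,\Lambda = \prod_i F_i$, flat base change yields $\Lambda(t)/(\mathrm{rad}\,\Lambda)(t) = \prod_i F_i(t)$, which is semisimple; hence $\mathrm{rad}\,\Lambda(t) = (\mathrm{rad}\,\Lambda)(t)$ and the simples of $\Lambda(t)$ are exactly the $S_i(t)$. Choosing a finitely generated projective resolution of each $S_i$ in $\Lambda$-mod and tensoring with $\kk(t)$ gives a projective resolution of $S_i(t)$ in $\Lambda(t)$-mod, so
\[
\Ext^k_{\Lambda(t)}(S_i(t), S_j(t)) \;=\; \Ext^k_\Lambda(S_i, S_j) \otimes_\kk \kk(t);
\]
thus $\Ext^2$ vanishes between simples and $\Lambda(t)$ is hereditary.

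For the second assertion, the same flatness argument yields $\Ext^1_{\Lambda(t)}(M(t), M(t)) = 0$ and $\End_{\Lambda(t)}(M(t)) = \End_\Lambda(M) \otimes_\kk \kk(t)$. Since $\kk$ is finite, Wedderburn's little theorem forces $\End_\Lambda(M)$ to be a finite field $F$, so $\End_{\Lambda(t)}(M(t)) = F(t)$ is a field and $M(t)$ is exceptional. Applying flat base change to $\Hom_\Lambda(P_i, M)$ gives $\dim_{F_i(t)}\Hom_{\Lambda(t)}(P_i(t), M(t)) = \dim_{F_i}\Hom_\Lambda(P_i, M)$, so $\undim M(t) = \beta$.

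The third assertion rests on the observation that $\Lambda$ and $\Lambda(t)$ share the same underlying valued quiver: $\dim_{\kk(t)}F_i(t) = \dim_\kk F_i$, and the valuations $d_{ij}$, $d_{ji}$, which are defined as dimensions of $\Hom$-spaces over the $F_j$, $F_i$, are preserved by flat base change. By Corollary \ref{cor: characterization of real Schur roots} the set of real Schur roots depends only on the underlying valued quiver, so the real Schur roots of $\Lambda$ and $\Lambda(t)$ coincide. Given an exceptional $\Lambda(t)$-module $N$ with $\undim N = \gamma$, I would pick the exceptional $\Lambda$-module $M$ with $\undim M = \gamma$, which exists by the preceding sentence; then $M(t)$ is exceptional of dimension vector $\gamma$ by the previous paragraph, and the uniqueness clause of Corollary \ref{cor: characterization of real Schur roots} applied over $\Lambda(t)$ gives $N \cong M(t)$. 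If also $M'(t) \cong N$, then $\undim M' = \gamma$, and the same uniqueness applied over $\Lambda$ shows $M \cong M'$. The main obstacle I anticipate is rigorously justifying the base-change identities for $\mathrm{rad}$ and for $\Ext$ (and, by extension, for the valuation); all of these ultimately rest on flatness, and in fact separability, since $\kk(t)/\kk$ is purely transcendental.
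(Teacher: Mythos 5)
Your proposal is correct and follows essentially the same route as the paper: both rely on exactness of $-\otimes_\kk \kk(t)$ to carry $\Hom$, $\Ext^1$, and the dimension vector across the base change, and then invoke Corollary \ref{cor: characterization of real Schur roots} to set up the bijection via common real Schur roots. You fill in more detail than the paper (e.g.\ the Wedderburn argument that $\End_\Lambda(M)$ is a field, and the $\Ext^2$-vanishing argument for hereditariness of $\Lambda(t)$, which the paper merely asserts in the surrounding discussion), but the underlying strategy is the same.
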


\begin{proof}
Since tensor product {over $\kk$} with $\kk(t)$ is exact we get:
%\begin{equation}\label{eq: Hom otimes K(t)}
\[	\Hom_\Lambda(X,Y)\otimes_\kk\kk(t)\cong \Hom_{\Lambda(t)}(X(t),Y(t))
\]%\end{equation}
\[%\begin{equation}\label{eq: Ext otimes K(t)}
	\Ext^1_\Lambda(X,Y)\otimes_\kk\kk(t)\cong \Ext^1_{\Lambda(t)}(X(t),Y(t))
\]%\end{equation}
for any two $\Lambda$-modules $X,Y$. Therefore, a $\Lambda$-module $M$ is exceptional if and only if $M(t)=M\otimes_\kk\kk(t)$ is an exceptional $\Lambda(t)$-module. The rest follows from Corollary \ref{cor: characterization of real Schur roots}.
\end{proof}

% Section:

%%\newpage
%%%%%%%%%%%%%%%%%%%%%%%%%%
%
%                Section  XXXXX
%
%%%%%%%%%%%%%%%%%%%%%%%%%%

\section{Virtual representations and semi-invariants}

Throughout this section we consider representations of a finite dimensional hereditary algebra $\Lambda$ over an infinite field $\kk$. 

\subsection{Generic decomposition theorem}\label{real schur roots}

We first recall the Happel-Ringel Lemma \cite{HR}.

\begin{lem}[Happel-Ringel]
Suppose that $T_1,T_2$ are indecomposable modules over an hereditary algebra $\Lambda$ so that $\Ext_\Lambda^1(T_1,T_2)=0$. Then any nonzero morphism $T_2\to T_1$ is either a monomorphism or an epimorphism.
\end{lem}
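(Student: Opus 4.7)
The plan is to argue by contradiction: suppose $f:T_2\to T_1$ is nonzero but is neither a monomorphism nor an epimorphism, so that $K=\ker f$ and $C=\coker f$ are both nonzero. Factor $f$ as $T_2\xrightarrow{\pi} I\xrightarrow{\iota} T_1$ where $I=\im f$, and let $q:T_1\to C$ be the quotient map. The goal is to produce a single endomorphism of $T_1$ that is neither invertible nor nilpotent, contradicting the locality of $\End_\Lambda(T_1)$ (valid because $T_1$ is indecomposable of finite length).

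First I would do some homological bookkeeping using the hereditary hypothesis $\Ext^2_\Lambda=0$. Applying $\Hom_\Lambda(T_1,-)$ to $0\to K\to T_2\to I\to 0$ together with $\Ext^1_\Lambda(T_1,T_2)=0$ gives $\Ext^1_\Lambda(T_1,I)=0$; applying $\Hom_\Lambda(-,K)$ to $0\to I\to T_1\to C\to 0$ then shows that $\iota^\ast:\Ext^1_\Lambda(T_1,K)\to \Ext^1_\Lambda(I,K)$ is surjective. Choose $\xi\in \Ext^1_\Lambda(T_1,K)$ lifting the class of the first sequence; this class is nonzero (else $T_2\cong K\oplus I$, contradicting indecomposability), hence $\xi\neq 0$. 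Represent $\xi$ by $0\to K\to T_2'\xrightarrow{p} T_1\to 0$. The pullback description of $\iota^\ast\xi$ identifies $T_2$ with $p^{-1}(I)\subset T_2'$, producing a short exact sequence $0\to T_2\to T_2'\xrightarrow{qp} C\to 0$. Applying $\Hom_\Lambda(T_1,-)$ to this last sequence and using $\Ext^1_\Lambda(T_1,T_2)=0$ yields a lift $\tilde g:T_1\to T_2'$ of $q$, i.e. $qp\tilde g=q$. Setting $g=p\tilde g\in \End_\Lambda(T_1)$, we have $qg=q$ by construction, and $\xi g=0$ because $g$ factors through $p$ and $p^\ast\xi=0$ (any extension pulled back along its own surjection splits via the diagonal).

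The contradiction then follows from locality of $\End_\Lambda(T_1)$: if $g$ were an isomorphism, then $\xi g=0$ would force $\xi=0$, contrary to $\xi\neq 0$; if $g$ were nilpotent, then iterating $qg=q$ would give $qg^n=q$ for every $n$, so $g^n=0$ for large $n$ would force $q=0$ and hence $C=0$. Both cases are excluded. The main obstacle, and the motivation for introducing the auxiliary extension $\xi$, is engineering a single $g$ that simultaneously satisfies $qg=q$ (to rule out nilpotence) and $\xi g=0$ (to rule out invertibility); the self-pullback identity $p^\ast\xi=0$ is exactly what makes both conditions hold at once.
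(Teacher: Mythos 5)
The paper does not actually prove this lemma---it quotes it from \cite{HR}---so there is nothing to compare against directly; your proof has to be judged on its own. It is correct. Each step checks out: $\Ext^1_\Lambda(T_1,I)=0$ and the surjectivity of $\iota^\ast:\Ext^1_\Lambda(T_1,K)\to\Ext^1_\Lambda(I,K)$ both come from $\Ext^2_\Lambda=0$; the class in $\Ext^1_\Lambda(I,K)$ of $0\to K\to T_2\to I\to 0$ is nonzero because $T_2$ is indecomposable and $K, I\neq 0$, hence your lift $\xi$ is nonzero; the identification $T_2\cong p^{-1}(I)\subset T_2'$ is exactly what equality of extension classes under pullback along $\iota$ provides; the lift $\tilde g$ of $q$ exists because $\Ext^1_\Lambda(T_1,T_2)=0$; and $p^\ast\xi=0$ is the standard fact that pulling a short exact sequence back along its own epimorphism splits via the diagonal. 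The final step is also sound because $\End_\Lambda(T_1)$ is a finite-dimensional local $K$-algebra, so every non-unit lies in the (nilpotent) radical. Stylistically, your argument packages the usual Happel--Ringel mechanism---heredity plus the $\Ext$-vanishing force an endomorphism of $T_1$ that is neither a unit nor nilpotent---into a single endomorphism $g$ satisfying $qg=q$ and $g^\ast\xi=0$ simultaneously, with the auxiliary extension $T_2'$ serving precisely to make both constraints compatible; textbook treatments often arrive at the same contradiction through a pair of dual constructions with separate case analyses. Your consolidation is valid and a bit slicker, at the cost of the extra bookkeeping with $T_2'$.
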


An important consequence of this lemma is the following observation of Schofield.

\begin{lem}[Schofield]\label{lem: Schofield's observation}
Suppose that $\{M_i\}$ is a set of nonisomorphic indecomposable modules so that $\Ext_\Lambda^1(M_i,M_j)=0$ for all $i,j$. Then $\{M_i\}$ can be renumbered so that $\Hom_\Lambda(M_j,M_i)=0$ for $j>i$, i.e., so that it forms an exceptional sequence.
\end{lem}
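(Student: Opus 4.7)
The plan is to show that the directed graph $G$ on the set $\{M_i\}$, with an arrow $M_i\to M_j$ (for $i\neq j$) whenever $\Hom_\Lambda(M_i,M_j)\neq 0$, is acyclic. Any topological ordering $M_1,\ldots,M_n$ of its vertices then makes $(M_1,\ldots,M_n)$ an exceptional sequence: the Ext-vanishing is hypothesis, and topological ordering gives $\Hom_\Lambda(M_j,M_i)=0$ for $j>i$. A preliminary fact, standard for finite dimensional hereditary algebras, is that each $M_i$ is exceptional in the paper's sense---i.e.\ $\End_\Lambda(M_i)$ is a division algebra---because any indecomposable rigid module over such an algebra has a division endomorphism ring.

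I would first rule out 2-cycles. Suppose nonzero morphisms $f\colon M_a\to M_b$ and $g\colon M_b\to M_a$ exist with $M_a\not\cong M_b$. By the Happel-Ringel Lemma each of $f,g$ is a monomorphism or an epimorphism. The composition $gf$ lies in the division algebra $\End_\Lambda(M_a)$, so is either zero or invertible. If invertible, $(gf)^{-1}g$ is a left inverse of $f$, making $f$ a split monomorphism into the indecomposable $M_b$, hence an isomorphism---contradicting $M_a\not\cong M_b$. So $gf=0$, and symmetrically $fg=0$. From $gf=0$ and $f\neq 0$ we get $\ker g\supseteq \im f\neq 0$, so $g$ is not a monomorphism, hence is an epimorphism; similarly $f$ is an epimorphism. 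But then $gf$ is a composition of epimorphisms, hence an epimorphism, hence (as a finite dimensional self-map) an isomorphism, contradicting $gf=0$.

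Next, suppose for contradiction there is a cycle of minimal length $k\geq 3$, say $M_{i_1}\xrightarrow{f_1}M_{i_2}\xrightarrow{f_2}\cdots\xrightarrow{f_k}M_{i_1}$ with all $f_j\neq 0$. If $f_2\circ f_1\neq 0$, the resulting arrow $M_{i_1}\to M_{i_3}$ in $G$ creates a cycle of length $k-1$ on the distinct vertices $M_{i_1},M_{i_3},\ldots,M_{i_k}$, contradicting the minimality of $k$ (2-cycles being already excluded). So $f_{j+1}\circ f_j=0$ cyclically for every $j$, which means the nonzero submodule $\im f_j\subseteq M_{i_{j+1}}$ is contained in $\ker f_{j+1}$. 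In particular $f_{j+1}$ is not a monomorphism, so by Happel-Ringel it is an epimorphism. But an epimorphism $f_j$ has $\im f_j=M_{i_{j+1}}$, while $\ker f_{j+1}$ is a proper submodule of $M_{i_{j+1}}$ (as $f_{j+1}\neq 0$), contradicting $\im f_j\subseteq\ker f_{j+1}$.

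The main obstacle is the mixed mono-epi subcase of the 2-cycle analysis: without the division algebra property of $\End_\Lambda(M_a)$, one cannot force $gf$ to be either zero or invertible and the argument breaks down. Once that is in hand, Happel-Ringel together with the minimality argument handles the remaining cases uniformly, and the topological sort of $G$ yields the desired renumbering.
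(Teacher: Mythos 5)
Your proof is correct, but it takes a genuinely different route from the paper's. The paper's argument is a single observation: in any oriented cycle of nonzero morphisms among the $M_i$, Happel--Ringel makes every arrow a proper mono or a proper epi (it cannot be an iso since the modules are nonisomorphic); not all can be monos (dimensions would strictly increase around the cycle) and not all epis, so somewhere a proper epi $f\colon M_a\onto M_b$ is immediately followed by a proper mono $g\colon M_b\into M_c$. Then $gf$ is nonzero (as $g$ is injective and $f\neq 0$) but neither mono (as $f$ is not) nor epi (as $g$ is not), contradicting Happel--Ringel. Your proof instead splits into 2-cycles and longer cycles, treating the latter by a minimal-counterexample reduction, and for 2-cycles you invoke the extra fact that $\End_\Lambda(M_a)$ is a division algebra (true, and itself a quick consequence of Happel--Ringel applied to endomorphisms of $M_a$). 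That fact is not actually needed, though: in the mixed subcase where $f$ is a proper mono and $g$ an epi, instead of analyzing $gf\in\End(M_a)$ you can look at $fg\colon M_b\to M_b$, whose image $\im(fg)=\im f$ is nonzero and proper, and which is noninjective since $g$ is; so $fg$ is nonzero and neither mono nor epi, contradicting Happel--Ringel without any appeal to the endomorphism ring. Thus the "main obstacle" you flag is circumventable, and the paper's epi-then-mono observation uniformly disposes of all cycle lengths at once, making it noticeably shorter than your case analysis.
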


\begin{proof}
If not, there is an oriented cycle of nonzero morphisms between the $M_i$ which are monomorphisms or epimorphisms. In this oriented cycle there must be an epimorphism followed by a monomorphism, hence the composition is neither a monomorphism nor an epimorphism which contradicts the Happel-Ringel Lemma.
\end{proof}

 We give a definition of the representation space $Rep(\Lambda,\alpha)$ for any finite dimensional hereditary algebra $\Lambda$ and $\alpha\in\NN^n$ which will agree with the classical definition of $Rep(\Lambda,\alpha)$ when $\Lambda=KQ$, the path algebra of a quiver, or when $\Lambda=T(Q,\cM)$, the tensor algebra of any modulated quiver. In the definition we need to choose a decomposition of the radical of each projective module, however, if different decompositions are chosen, there is an isomorphism of the affine varieties which induce isomorphisms on cokernel modules.

\begin{defn}\label{def: representation space} For each $\alpha\in\NN^n$ consider the following space
\begin{equation}\label{eq: Rep0(L,a)}
	H(\Lambda,\alpha):=\smallprod_{j\in Q_0}\Hom_\Lambda(\textstyle\bigoplus_{i} P_i^{\alpha_jd_{ji}},P_i^{\alpha_i})\subseteq \Hom_\Lambda(\textstyle\bigoplus_{i,j} P_i^{\alpha_jd_{ji}},P(\alpha))
\end{equation}
where $d_{ji}=\dim_{F_i}\Ext^1_\Lambda(S_j,S_i)$ for $i\neq j$. For each choice of decompositions
\begin{equation}\label{eq: decomposition of rad Pi}
	\{\varphi_j:\textstyle\bigoplus_{i\in Q_0} P_i^{d_{ji}}\cong rad\,P_j\}_{j\in Q_0}
\end{equation}
we get an isomorphism $\varphi_\alpha=\bigoplus \varphi_j^{\alpha_j}:\bigoplus_{ i,j} P_i^{d_{ji}}\cong rad\,P(\alpha)$ and we define the \emph{representation space} $Rep_\varphi(\Lambda,\alpha)$ to be
\[
	Rep_\varphi(\Lambda,\alpha):=\{f:rad\,P(\alpha)\to P(\alpha)\,|\, f\circ \varphi_\alpha\in H(\Lambda,\alpha)\}
\]
Let $N_f:=coker(f-inc:rad\,P(\alpha)\to P(\alpha))$ for each $f\in Rep_\varphi(\Lambda,\alpha)$.
\end{defn}

\begin{rem}\label{rem: after def of Rep(L,a)}
\begin{enumerate}
\item $\undim N_f=\alpha$ for every $f\in Rep_\varphi(\Lambda,\alpha)$.%, i.e., $f-inc:rad\,P(\alpha)\to P(\alpha)$ is always a monomorphism.
\item Every $\Lambda$-module $M$ with $\undim M=\alpha$ is isomorphic to $N_f$ for some $f\in Rep_\varphi(\Lambda,\alpha)$. %We identify $f\in Rep_\varphi(\Lambda,\alpha)$ with the $\Lambda$-module $N_f$ together with the presentation $(f-inc):rad P(\alpha)\to P(\alpha)$.%We view elements of $Rep(\Lambda,\alpha)$ as $\Lambda$-modules with  presentations $rad P(\alpha)\to P(\alpha)$.
\item If $\psi_\alpha$ is obtained by another choice of decompositions \eqref{eq: decomposition of rad Pi} then there is a linear isomorphism $\lambda:Rep_\varphi(\Lambda,\alpha)\cong Rep_\psi(\Lambda,\alpha)$ with the property that $N_f=N_{\lambda f}$.% for all $f\in Rep_\varphi(\Lambda,\alpha)$, $f-inc$ and $\lambda(f)-inc:rad\,P(\alpha)\to P(\alpha)$ have the same image (and therefore the same cokernel).
\item When $\Lambda=KQ$ is the path algebra of a quiver or $\Lambda=T(Q,\cM)$ the tensor algebra of a modulated quiver, there are canonical choices for the isomorphisms $\varphi_j$ of \eqref{eq: decomposition of rad Pi} and the resulting representation space agrees with the classical definition of $Rep(\Lambda,\alpha)$.
\item We will write $Rep_\varphi(\Lambda,\alpha)=Rep(\Lambda,\alpha)$ suppressing the choice of $\varphi$.
\end{enumerate}
\end{rem}

 The following was originally proved by Kac \cite{K} for quivers over an algebraically closed field $\kk$. However, the  proof that we give below works over any infinite field.
 
 \begin{lem}\label{lem: generic decomposition}
Let $\Lambda$ be an hereditary algebra over an infinite field. 
Suppose that $M$ is a rigid $\Lambda$-module with projective presentation $0\to P\xrarrow p P'\to M\to 0$. Then the set of all $f\in \Hom_\Lambda(P,P')$ with $coker(f)\cong M$ is a nonempty Zariski open set in $\Hom_\Lambda(P,P')$.
 \end{lem}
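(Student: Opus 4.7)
The presentation $p$ itself lies in the set, so it is nonempty. For openness, my plan is to identify the set with the orbit $G \cdot p$ of $G = \Aut_\Lambda(P) \times \Aut_\Lambda(P')$ acting on $\Hom_\Lambda(P, P')$ by $(g, h) \cdot f = h f g^{-1}$, and then prove the orbit is open via a tangent-space calculation driven by the rigidity of $M$. The orbit identification goes as follows: if $\coker f \cong M$, a dimension count in the exact sequence $0 \to \ker f \to P \xrightarrow{f} P' \to \coker f \to 0$ versus the given sequence forces $\ker f = 0$, so $0 \to P \xrightarrow{f} P' \to M \to 0$ is another projective resolution of $M$. By Krull--Schmidt applied to two-term complexes of projectives, any two projective resolutions of $M$ with the same chain objects $P, P'$ are isomorphic as complexes (both decompose as the minimal resolution of $M$ plus trivial identity summands $Q \xrightarrow{=} Q$, matched term by term), so there exist $(g, h) \in G$ with $h p = f g$, placing $f$ in $G \cdot p$.

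For openness of the orbit I would compute the cokernel of the differential of the orbit map at the identity,
\[
\alpha \colon \End_\Lambda(P) \oplus \End_\Lambda(P') \to \Hom_\Lambda(P, P'), \qquad (g, h) \mapsto hp - pg.
\]
Applying $\Hom_\Lambda(P, -)$ to $0 \to P \xrightarrow{p} P' \xrightarrow{\pi} M \to 0$ identifies $\Hom_\Lambda(P, P') / p \End_\Lambda(P)$ with $\Hom_\Lambda(P, M)$ via post-composition with $\pi$; under this identification the image of $h \mapsto hp$ becomes $\{\psi p : \psi \in \Hom_\Lambda(P', M)\}$, using surjectivity of $\End_\Lambda(P') \to \Hom_\Lambda(P', M)$ from projectivity of $P'$. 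Applying $\Hom_\Lambda(-, M)$ to the same resolution then identifies the further quotient with $\Ext^1_\Lambda(M, M)$, and combining gives $\coker \alpha \cong \Ext^1_\Lambda(M, M) = 0$ by rigidity, so $\alpha$ is surjective.

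Surjectivity of $\alpha$ means the tangent space to $G \cdot p$ at $p$ is all of $\Hom_\Lambda(P, P')$; since the orbit is smooth (being homogeneous), it has dimension $\dim_\kk \Hom_\Lambda(P, P')$, and being locally closed in the irreducible affine space $\Hom_\Lambda(P, P')$ of the same dimension it must be open. I expect the most delicate step to be the orbit identification via Krull--Schmidt for non-minimal projective resolutions, where one has to match the trivial summands explicitly before extracting automorphisms of $P$ and $P'$; the tangent-space calculation itself is essentially formal manipulation of the two long exact sequences attached to the given presentation.
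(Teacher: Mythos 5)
Your proof takes a genuinely different route from the paper's. The paper cuts out an explicit open subset $U$ of monomorphisms $f$ for which the maps $\psi(f,p)$, $\psi(p,f)$, $\psi(f,f)$ (whose cokernels compute the relevant $\Ext^1$ groups) are all surjective, so that $\coker f\oplus M$ is rigid for every $f\in U$; it then shows $\coker f\cong M$ by a module-theoretic argument combining Schofield's observation (resting on the Happel--Ringel lemma) that the indecomposable summands of $\coker f$ and $M$ can be arranged into an exceptional sequence with a dimension count and a peeling-off induction. You instead identify the target set with the $G$-orbit of $p$ through Krull--Schmidt for two-term complexes of projectives, then argue openness via the tangent-space computation $\coker\alpha\cong\Ext^1_\Lambda(M,M)=0$. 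Both the orbit identification (minimal resolution plus matched split identity summands) and the tangent-space calculation are correct; the latter is exactly what the paper packages as surjectivity of $\psi(p,p)$.

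There is, however, a genuine gap at the last step, and it matters because $K$ is only assumed infinite, not algebraically closed. Your tangent-space argument shows the scheme-theoretic orbit $O$ of $p$ is an open subscheme of the affine space $\Hom_\Lambda(P,P')$, but the set you need to be Zariski open is $W=G(K)\cdot p$, which a priori is only a subset of $O(K)$. The inclusion can be strict in general: for $\mathbb{G}_m$ acting on $\mathbb{A}^1$ over $\mathbb{R}$ by $t\cdot x=t^2x$, the differential at the identity is surjective and the orbit scheme of $1$ is $\mathbb{A}^1\setminus\{0\}$, yet $G(\mathbb{R})\cdot 1=\mathbb{R}_{>0}$ is not Zariski open. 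To repair this, you must show $O(K)\subseteq W$: for $f\in O(K)$, running your Krull--Schmidt argument over $\overline K$ gives $\coker_{\overline K}f\cong M\otimes_K\overline K$, and then the Noether--Deuring theorem (isomorphism of finite-dimensional modules over a finite-dimensional $K$-algebra descends along arbitrary field extensions) gives $\coker_Kf\cong M$. With that added, your argument is complete. The paper avoids this rationality question altogether by never leaving explicit $K$-defined open surjectivity conditions, which is one reason its argument reads as longer than a straight orbit-dimension count.
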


\begin{proof}%[Proof of Theorem \ref{thm: generic decomposition thm 1}]
Since $\Hom_\Lambda(P,P')$ contains a monomorphism, the set of all monomorphisms $P\to P'$ is open. For any two monomorphisms $f_1,f_2:P\into P'$, $\Ext^1_\Lambda(coker\,f_1,coker\,f_2)$ is the cokernel of the map
\[
	\psi(f_1,f_2):\End_\Lambda(P)\oplus \End_\Lambda(P')\to \Hom_\Lambda(P,P')
\]
which sends $(g,g')$ to $g'\circ f_1+f_2\circ g$. Since $\psi(p,p)$ is surjective, the subset $U\subseteq \Hom_\Lambda(P,P')$ of all monomorphisms $f:P\to P'$ so that $\psi(f,p),\psi(p,f)$ and $\psi(f,f)$ are all surjective is open. This implies that $coker\,f\oplus M$ is rigid for all $f\in U$. We will show that $coker\,f$ is isomorphic to $M$ for any $f\in U$. This will imply that $\{f\in \Hom_\Lambda(P,P')\,:\,coker\,f\cong M\}=U$ is open.

For any $f\in U$, let $\{N_j\}$ be the components of $coker\,f$. Let $\{M_i\}$ be the components of $M$. Then $\{M_i,N_j\}$ form a collection of indecomposable modules which do not extend each other. So, by Schofield's observation, we can arrange them into an exceptional sequence, possibly with repetitions. Take the last object in the sequence. By symmetry, suppose it is $N_j$. Since $\undim M=\undim coker\,f$ we have 
\[
	\dim_K\Hom_\Lambda(N_j,M)=\brk{\undim N_j, \undim M}=\dim_K\Hom_\Lambda(N_j,coker\,f)\neq0.
\]
So, there is a nonzero morphism $N_j\to M_i$ for some $i$. Since $N_j$ is last in the exceptional sequence, this can happen only if $N_j\cong M_i$. Then $coker\,f/N_j,M/M_i$ are rigid modules of the same dimension vector. So, $coker\,f/N_j\cong M/M_i$ by induction on dimension. We conclude that $coker\,f\cong N_j\oplus coker\,f/N_j\cong M_i\oplus M/M_i\cong M$ as claimed.
\end{proof}

\begin{thm}\label{thm: generic decomposition thm 1}
Let $\Lambda$ be an hereditary algebra over an infinite field. Suppose that $\alpha\in \NN^n$ and $M$ is a rigid module with $\undim M=\alpha$. Then the set of all $f\in Rep(\Lambda,\alpha)$ so that $N_f\cong M$ forms an open dense subset of $Rep(\Lambda,\alpha)$.
\end{thm}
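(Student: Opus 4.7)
The plan is to reduce this statement to Lemma \ref{lem: generic decomposition} via the affine-linear translation
\[
    T\colon Rep(\Lambda,\alpha)\to \Hom_\Lambda(rad\,P(\alpha),P(\alpha)),\qquad T(f):=f-inc.
\]

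First I would use Remark \ref{rem: after def of Rep(L,a)}(2) to pick some $f_0\in Rep(\Lambda,\alpha)$ with $N_{f_0}\cong M$. Since $\undim N_{f_0}=\alpha=\undim M$ by Remark \ref{rem: after def of Rep(L,a)}(1), the four-term exact sequence
\[
    0\to \ker(f_0-inc)\to rad\,P(\alpha)\xrightarrow{f_0-inc} P(\alpha)\to N_{f_0}\to 0
\]
forces $\undim \ker(f_0-inc)=0$, so $f_0-inc$ is a monomorphism and
\[
    0\to rad\,P(\alpha)\xrightarrow{f_0-inc} P(\alpha)\to M\to 0
\]
is a genuine projective resolution of $M$. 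Then I would apply Lemma \ref{lem: generic decomposition} to this resolution, with $P=rad\,P(\alpha)$ and $P'=P(\alpha)$, to get a nonempty Zariski open subset $U\subseteq \Hom_\Lambda(rad\,P(\alpha),P(\alpha))$ consisting of those maps whose cokernel is isomorphic to $M$.

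Since $T$ is a morphism of affine varieties (in fact an affine translation), the preimage $T^{-1}(U)$ is open in $Rep(\Lambda,\alpha)$; by construction it equals $\{f\in Rep(\Lambda,\alpha):N_f\cong M\}$, and it contains $f_0$, so it is nonempty. Finally, $Rep(\Lambda,\alpha)$ is a finite-dimensional $\kk$-vector space with $\kk$ infinite, hence an irreducible affine variety, and every nonempty Zariski open subset of an irreducible variety is dense.

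The delicate point is the first step: one must be certain that Remark \ref{rem: after def of Rep(L,a)}(2) really does supply a projective presentation of $M$ through the fixed pair $rad\,P(\alpha)\to P(\alpha)$, even when the projective cover of $M$ is strictly smaller than $P(\alpha)$ (so that $M$ is generated by fewer tops than $\alpha$ would suggest). Once this realization step is in hand, the rest is formal: openness of the isomorphism class is inherited from Lemma \ref{lem: generic decomposition} via the affine translation $T$, and density is automatic in the affine-space setting over an infinite field.
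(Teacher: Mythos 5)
Your proof is correct and follows the same route as the paper: use the affine embedding $f\mapsto f-inc$ to identify $\{f:N_f\cong M\}$ with the preimage of the open locus supplied by Lemma \ref{lem: generic decomposition}, and note that a nonempty Zariski-open in an affine space over an infinite field is dense. The only difference is that you spell out, via the dimension-vector count, why $f_0-inc$ is automatically a monomorphism (hence gives a genuine short projective resolution of $M$), a point the paper leaves implicit.
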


\begin{proof}
Let $\psi: Rep(\Lambda,\alpha)\into \Hom_\Lambda(rad\,P(\alpha),P(\alpha))$ be the affine linear embedding given by $\psi(f)=f-inc$. Let $V$ be the set of all $f\in Rep(\Lambda,\alpha)$ so that $N_f\cong M$. Then $V$ is open since it is the inverse image under $\psi$ of the open subset of $\Hom_\Lambda(rad\,P(\alpha),P(\alpha))$ of all maps with cokernel isomorphic to $M$.
\end{proof}

{Since exceptional modules are rigid, we have the following immediate consequence.}

\begin{cor}\label{thm: Mbeta is unique}
Suppose that $M_\alpha$ is an {exceptional $\Lambda$-module} with $\undim M_\alpha=\alpha$. Then the set of all $f\in Rep(\Lambda,\alpha)$ so that $N_f\cong M_\alpha$ forms an open and thus dense subset of $Rep(\Lambda,\alpha)$. In particular $M_\alpha$ is uniquely determined up to isomorphism by $\alpha$.
\end{cor}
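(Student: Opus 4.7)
The plan is to observe that this is essentially a direct consequence of Theorem \ref{thm: generic decomposition thm 1} combined with the irreducibility of the affine space $Rep(\Lambda,\alpha)$.

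First, I would note that by definition any exceptional module $M_\alpha$ satisfies $\Ext^1_\Lambda(M_\alpha,M_\alpha)=0$, i.e., $M_\alpha$ is rigid. So Theorem \ref{thm: generic decomposition thm 1} applies to $M=M_\alpha$, showing that the set $U_\alpha:=\{f\in Rep(\Lambda,\alpha)\,:\,N_f\cong M_\alpha\}$ is open in $Rep(\Lambda,\alpha)$.

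Next, I would establish that $U_\alpha$ is nonempty and hence dense. Nonemptiness comes from Remark \ref{rem: after def of Rep(L,a)}(2), which tells us that every $\Lambda$-module with dimension vector $\alpha$, in particular $M_\alpha$ itself, is isomorphic to $N_f$ for some $f\in Rep(\Lambda,\alpha)$. Since $Rep(\Lambda,\alpha)$ is an affine space over the infinite field $\kk$, it is irreducible, so any nonempty Zariski open set is automatically dense. This gives the first assertion.

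For the uniqueness statement, suppose $M'_\alpha$ is another exceptional module with $\undim M'_\alpha=\alpha$. Applying the same argument produces a second open dense subset $U'_\alpha\subseteq Rep(\Lambda,\alpha)$ consisting of those $f$ with $N_f\cong M'_\alpha$. Since $Rep(\Lambda,\alpha)$ is irreducible, $U_\alpha\cap U'_\alpha\neq\emptyset$, and any $f$ in this intersection yields $M_\alpha\cong N_f\cong M'_\alpha$.

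There is no real obstacle here; the corollary is essentially packaging. The only point that deserves explicit mention is the use of irreducibility of the affine space (to pass from nonempty open to dense, and to guarantee that two dense opens meet), which is where the assumption that $\kk$ is infinite enters — over a finite field the Zariski topology on $\AAA^N$ is the cofinite topology on a finite set and the density argument would fail, which is precisely why the paper defers the finite field case to Section \ref{sec: extend to finite K}.
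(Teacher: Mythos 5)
Your proposal is correct and takes exactly the route the paper intends: the corollary is presented as an immediate consequence of Theorem \ref{thm: generic decomposition thm 1} once one observes that exceptional modules are rigid, and you correctly supply the routine details (nonemptiness via Remark \ref{rem: after def of Rep(L,a)}(2), density via irreducibility of the affine space over an infinite field, uniqueness via intersection of two dense opens). Your closing remark about why the infinite-field hypothesis is essential is an accurate reading of the paper's structure.
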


Another important consequence of Theorem \ref{thm: generic decomposition thm 1} is the following.

\begin{cor}[Generic Decomposition Theorem for rigid {modules} in modulated case]\label{cor: generic decomposition thm 2}
Suppose that $\alpha_1,\cdots,\alpha_k$ are real Schur roots {so that $\Ext_\Lambda^1(M_{\alpha_i},M_{\alpha_j})=0$ for all $i,j$}. Let $\gamma=\sum_{i=1}^k n_i\alpha_i$ be a nonnegative integer linear combination of these roots. Then the generic representation with dimension vector $\gamma$ is isomorphic to $\bigoplus_{i=1}^k M_{\alpha_i}^{n_i }$.
\end{cor}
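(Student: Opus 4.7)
The plan is to apply Theorem \ref{thm: generic decomposition thm 1} directly to the module $M := \bigoplus_{i=1}^k M_{\alpha_i}^{n_i}$. The theorem guarantees that once we check $M$ is rigid and has dimension vector $\gamma$, the locus of $f \in Rep(\Lambda,\gamma)$ with $N_f \cong M$ is an open (and hence, in the affine setting, dense) subset of $Rep(\Lambda,\gamma)$. So the whole statement reduces to two bookkeeping verifications about the direct sum.

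First, I would note that the dimension vector is additive over direct sums, so
\[
\undim M = \sum_{i=1}^k n_i \,\undim M_{\alpha_i} = \sum_{i=1}^k n_i \alpha_i = \gamma,
\]
using that each $\alpha_i$ is a real Schur root realized by the exceptional module $M_{\alpha_i}$ (cf. Corollary \ref{thm: Mbeta is unique}, which makes $M_{\alpha_i}$ well-defined up to isomorphism).

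Second, I would verify rigidity of $M$. Since $\Ext^1_\Lambda(-,-)$ is additive in each argument,
\[
\Ext^1_\Lambda(M,M) = \bigoplus_{i,j} \Ext^1_\Lambda(M_{\alpha_i},M_{\alpha_j})^{n_i n_j} = 0,
\]
the last equality being exactly the hypothesis that the $M_{\alpha_i}$ do not extend each other (including $i=j$, which is covered since each $M_{\alpha_i}$ is exceptional and thus rigid on its own, and is subsumed in the blanket hypothesis $\Ext^1_\Lambda(M_{\alpha_i},M_{\alpha_j})=0$ for all $i,j$).

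With these two facts in hand, Theorem \ref{thm: generic decomposition thm 1} applied to $M$ and $\gamma$ tells us that $\{f \in Rep(\Lambda,\gamma) \,:\, N_f \cong M\}$ is open and dense, which is the statement of the corollary. There is no real obstacle here beyond ensuring that $M$ is simultaneously rigid and has the prescribed dimension vector; both follow immediately from additivity and the hypothesis.
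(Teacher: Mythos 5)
Your proposal is correct and is essentially identical to the paper's proof, which simply invokes Theorem \ref{thm: generic decomposition thm 1} on $M=\bigoplus_{i=1}^k M_{\alpha_i}^{n_i}$; you have merely spelled out the routine verifications (additivity of $\undim$ and of $\Ext^1$) that the paper leaves implicit.
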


\begin{proof}
Apply Theorem \ref{thm: generic decomposition thm 1} to the module $M=\bigoplus_{i=1}^k M_{\alpha_i}^{n_i }$ with $\undim M=\gamma$.
\end{proof}

% Section:

%\newpage

\subsection{Presentation Spaces and Semi-invariants}

Let $\gamma_0,\gamma_1$ be vectors in $\NN^n$. We define the \emph{presentation space} $Pres_\Lambda(\gamma_1,\gamma_0)$ to be
\[
	Pres_\Lambda(\gamma_1,\gamma_0 ):=\Hom_\Lambda(P(\gamma_1),P(\gamma_0))
\]
where we use the notation $P(\alpha)=\bigoplus P_i^{\alpha_i}$. Presentation spaces are affine spaces over $\kk$. They are related to representation spaces as follows. Suppose that $\alpha\in\NN^n$. Then $\varphi:rad P(\alpha)\cong  P(\gamma)$ for $\gamma\in\NN^n$ and we have the $\kk$-linear embedding:
\[
	Rep(\Lambda,\alpha)\hookrightarrow Pres_\Lambda(\gamma,\alpha)
\]
sending $f:rad\,P(\alpha)\to P(\alpha)$ to $(f-inc)\circ\varphi^{-1}$. The elements of $Rep(\Lambda,\alpha)$ and their images in $Pres_\Lambda(\gamma,\alpha)$ represent the same module $N_f$. The algebraic group $Aut(P(\gamma_1))\op\times Aut(P(\gamma_0))$ acts on presentation space by composition: $(a,b)f=bfa$. This is a generalization of what happens in the algebraically closed case.% there is a corresponding group action on $Rep(\Lambda,\alpha)$.

\begin{defn}
A \emph{semi-invariant} on $Pres_\Lambda(\gamma_1,\gamma_0)=\Hom_\Lambda(P(\gamma_1),P(\gamma_0))$ is defined to be a regular function
\[
	\sigma:Pres_\Lambda(\gamma_1,\gamma_0)\to \kk
\]
for which there exist characters $\eta_s:\Aut(P(\gamma_s))\to \kk^\ast$, $s=0,1$, so that, for all $(g_0,g_1)\in \Aut(P(\gamma_0))\times \Aut(P(\gamma_1))$ and $f\in Pres_\Lambda(\gamma_1,\gamma_0)$, we have $\sigma(g_0fg_1)=\sigma(f)\eta_0(g_0)\eta_1(g_1)$. The pair of characters $(\eta_0,\eta_1)$ is called the \emph{{weight}} of $\sigma$. 
\end{defn}

The following lemma shows that such characters are products of character on $GL(\alpha_i,F_i)$.

\begin{lem}\label{lem: characters on Aut(P) factor through components}
Every group homomorphism $\Aut_\Lambda(P(\alpha))\to\kk^\ast$ factors through the group $\prod_i \Aut_\Lambda(P_i^{\alpha_i})$.
\end{lem}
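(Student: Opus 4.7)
The strategy is to isolate the ``unipotent'' part of $\Aut_\Lambda(P(\alpha))$, identify it with a $\kk$-affine space, and then exploit the fact that units in a polynomial ring are constants.

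First I would invoke Proposition \ref{exceptional sequences} to reorder the simple modules so that $\Hom_\Lambda(P_j,P_i)=0$ whenever $j>i$. With this admissible ordering every $g\in\End_\Lambda(P(\alpha))$ decomposes as a block matrix $(g_{ij})$ with $g_{ij}\in\Hom_\Lambda(P_j^{\alpha_j},P_i^{\alpha_i})$ and $g_{ij}=0$ for $j>i$, so $g$ is block lower triangular; moreover $g$ is an automorphism if and only if each diagonal block $g_{ii}\in\Aut_\Lambda(P_i^{\alpha_i})$ is invertible. Let $U\subseteq\Aut_\Lambda(P(\alpha))$ denote the subgroup of automorphisms whose diagonal blocks are all the identity. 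A direct block-triangular computation, using $(gh)_{ii}=g_{ii}h_{ii}$, shows that ``take the diagonal blocks'' is a group homomorphism $\Aut_\Lambda(P(\alpha))\to\prod_i\Aut_\Lambda(P_i^{\alpha_i})$; it is surjective (every block-diagonal element lies in $\Aut_\Lambda(P(\alpha))$) and its kernel is precisely $U$, which is therefore normal. It thus suffices to show any character $\eta$ is trivial on $U$.

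Next I would observe that the map $u\mapsto u-\mathrm{id}$ is a $\kk$-linear bijection from $U$ onto the finite-dimensional $\kk$-vector space $V=\bigoplus_{i>j}\Hom_\Lambda(P_j^{\alpha_j},P_i^{\alpha_i})$, endowing $U$ with the structure of an affine space over $\kk$ with coordinate ring a polynomial ring $\kk[x_1,\dots,x_N]$. The restriction $\eta|_U$ is then a regular homomorphism into $\kk^\ast$, hence corresponds to a unit in $\kk[x_1,\dots,x_N]$; but the units in a polynomial ring over a field are precisely the nonzero constants. Since $\eta(\mathrm{id})=1$, this constant is $1$, so $\eta|_U\equiv 1$ and $\eta$ descends through the quotient $\Aut_\Lambda(P(\alpha))/U\cong\prod_i\Aut_\Lambda(P_i^{\alpha_i})$.

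The main (and really only) obstacle is the regularity hypothesis on $\eta$. As an abstract group homomorphism the assertion can actually fail, since $U$ has nontrivial abelianization (roughly, the ``near-diagonal'' off-diagonal blocks survive to the abelianization, and these are themselves $\kk$-vector spaces which admit many abstract homomorphisms to $\kk^\ast$). The argument above genuinely uses that $\eta$ is a morphism of algebraic varieties. In this paper this is implicit: as spelled out in the introduction, a \emph{character} is by definition a regular (polynomial) homomorphism, and only characters arising from semi-invariants are relevant. Given this convention, the only non-routine structural point is the identification of $U$ with an affine space, which the admissible ordering supplies essentially for free.
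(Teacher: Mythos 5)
Your proof is correct for \emph{regular} characters, and it takes a genuinely different route from the paper's. You identify the unipotent radical $U=\ker\pi$ with an affine space and invoke the fact that the units of a polynomial ring $\kk[x_1,\dots,x_N]$ are the nonzero constants. The paper instead argues purely group-theoretically: it shows (for $|\kk|\ge 3$) that $\ker\pi$ lies in the \emph{commutator subgroup} of the full group $\Aut_\Lambda(P(\alpha))$, by exhibiting each elementary lower-unitriangular matrix as a commutator of block-triangular matrices with nontrivial diagonal blocks (the $|\kk|=2$ case is trivial since $\kk^\ast=1$). Because $\kk^\ast$ is abelian, any homomorphism then kills $\ker\pi$. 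The paper's argument therefore proves the lemma for \emph{arbitrary} group homomorphisms, not only regular ones, and does not need any algebro-geometric input; yours buys a shorter conceptual step (units in a polynomial ring) at the cost of invoking regularity.

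There is one genuine error in your closing remark: the claim that the statement ``can actually fail'' for abstract group homomorphisms is incorrect. The paper's proof shows it does not fail. The abelianization $U/[U,U]$ being nontrivial is not the relevant obstruction, because a character on $\Aut_\Lambda(P(\alpha))$ kills not merely $[U,U]$ but the intersection of $U$ with the commutator subgroup $[\Aut_\Lambda(P(\alpha)),\Aut_\Lambda(P(\alpha))]$ of the \emph{ambient} group. The paper's explicit identity
\[
\mat{1&0\\a-b & 1}=\mat{a&0\\-ab&b}\mat{a&0\\0&b}^{-1}\mat{a&0\\-ab&b}^{-1}\mat{a&0\\0&b}
\]
uses conjugating elements with nontrivial diagonal, i.e.\ elements outside $U$, and this is exactly what makes the ``near-diagonal'' blocks you worry about disappear. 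So $U$ does lie inside the commutator subgroup of $\Aut_\Lambda(P(\alpha))$ whenever $|\kk|\ge 3$, and the lemma holds as stated without any regularity hypothesis.
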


\begin{proof} When $K$ has only two elements, the lemma holds trivially. So, we may assume $K$ has at least three elements. Then every element of $K$ can be written as $a-b$ where $a,b\neq0$. So, the elementary matrix
\[
	\mat{1&0\\a-b & 1}=\mat{a&0\\-ab&b}\mat{a&0\\0&b}^{-1}\mat{a&0\\-ab&b}^{-1}\mat{a&0\\0&b}
	%\mat{a^{-1}&0\\1&b^{-1}}\mat{a^{-1}&0\\0&b^{-1}}
\]
is a commutator. We write automorphisms of $P(\alpha)$ in block matrix form. Since $\Hom_\Lambda(P_j,P_i)=0$ for $i<j$, the matrix is lower triangular with diagonal blocks in $\Aut_\Lambda(P_i^{\alpha_i})$. So, every element in the kernel of the homomorphism $\pi:\Aut_\Lambda(P(\alpha))\onto \prod \Aut_\Lambda(P_i^{\alpha_i})$, when written in matrix form, is lower triangular with 1s on the diagonal. But all such matrices are products of elementary matrices such as the one above. So $\ker\pi$ lies in the commutator subgroup of $\Aut_\Lambda(P(\alpha))$. Since $\kk^\ast$ is abelian, any homomorphism $\varphi:\Aut_\Lambda(P(\alpha))\to \kk^\ast$ is trivial on commutators. Therefore $\ker\pi\subseteq\ker\varphi$ which implies that $\varphi$ factors through $\pi$ proving the lemma.
\end{proof}

\begin{rem}\label{rem: determinantal character}
%\begin{enumerate}
(1) By Lemma \ref{lem: characters on Aut(P) factor through components}, every character $\chi: \Aut_\Lambda(\textstyle\bigoplus P_j^{n_j})\to \kk^\ast$ is a product of component characters $\Aut_\Lambda(P_i^{n_i})\to\kk^\ast$.
%\[	\chi_i:\Aut_\Lambda(\textstyle\bigoplus P_j^{n_j})\onto \Aut_\Lambda(P_i^{n_i})\to\kk^\ast.\]

\noi (2)   Since $\End_\Lambda(P_i)=F_i$ is a division algebra, we have $\Aut_\Lambda(P_i^{n_i})\cong GL(n_i,F_i)$. This has a character given by the determinant of the induced automorphism $S_i^{n_i}\to S_i^{n_i}$ considered as a $\kk$-linear map:
\[
	\chi_i=det_\kk:\Aut_\Lambda(P_i^{n_i})\to\kk^\ast.
\]
Then $\chi_i$ is a polynomial of degree $n_if_i$ where $f_i=\dim_KS_i=\dim_KF_i$.
% of $P_i^{n_i}$. 
%\item When $F_i=\End_\Lambda(P_i)$ is a separable commutative field extension of $\kk$, it is well-known that every polynomial character $\Aut_\Lambda(P_i^{n_i})\to\kk^\ast$ is a power of $det_\kk$. In general, we will consider only those characters which are integer powers of $det_\kk$. 

\noi(3)   We will only consider special characters which we call ``determinantal'' (Definition \ref{def: determinantal character} below). There may be other characters called ``reduced norms'' which are explained in detail in {Appendix B, Sec \ref{ss: Appendix B}}.
%\end{enumerate}
\end{rem}

\begin{defn}\label{def: determinantal character}
We call a character $\chi:\Aut_\Lambda(\textstyle\bigoplus P_j^{n_j})\to \kk^\ast$ \emph{determinantal} if its components characters $\Aut_\Lambda(P_i^{n_i})\to\kk^\ast$ are integer powers of the determinant $\chi_i$, i.e., there exists a vector $\alpha\in\ZZ^n$ so that $\chi=\prod_i \chi_i^{\alpha_i}$. The coordinate $\alpha_i$ is uniquely determined by $\chi$ if and only if $n_i\neq0$ (and $\kk$ is infinite).
\end{defn}

The following proposition is analogous to Proposition 3.3.3 from \cite{IOTW1} {which was proved for simply laced quivers. But the same proof works in general.}

\begin{prop} {Let $\Lambda$ be a finite dimensional hereditary algebra over an infinite field $\kk$.} Suppose that $\sigma$ is a nonzero semi-invariant on $Pres_\Lambda(\gamma_1,\gamma_0)$ with {weight}s $\eta_0,\eta_1$ which are determinantal characters given by $\eta_0=\prod_i \chi_i^{\alpha_i}$, $\eta_1=\prod_i \chi_i^{\beta_i}$ where $\alpha,\beta\in\ZZ^n$. Then $\alpha_i=\beta_i$ whenever the $i$-th coordinates of $\gamma_0,\gamma_1$ are both nonzero.\qed
\end{prop}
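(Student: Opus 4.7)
Fix $i$ with $(\gamma_0)_i,(\gamma_1)_i\ge 1$. The plan is a stabilizer argument. First observe that $\sigma$ is invariant under the unipotent radicals $U_0\times U_1$ of $\Aut(P(\gamma_0))\times\Aut(P(\gamma_1))$: by Lemma~\ref{lem: characters on Aut(P) factor through components} the characters $\eta_0,\eta_1$ factor through the Levi, and determinantal characters are trivial on unipotents.

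Introduce the affine subspace $V_i\subset Pres_\Lambda(\gamma_1,\gamma_0)$ of those $f$ whose block decomposition satisfies $f|_{ij}=0$ for every $j\neq i$ and $f|_{ki}=0$ for every $k\neq i$; that is, the $i$-th block row and column of the matrix of $f$ are concentrated in the $(i,i)$ entry. This subspace is stable under the action of $H:=GL((\gamma_0)_i,F_i)\times GL((\gamma_1)_i,F_i)$ on the $(i,i)$-block. The key nonvanishing claim is that $\sigma|_{V_i}\not\equiv 0$. I would prove this by showing the composite $V_i\times U_0\times U_1\to Pres_\Lambda(\gamma_1,\gamma_0)$ is dominant, so that by $U_0\times U_1$-invariance $\sigma$ cannot vanish on $V_i$. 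The dominance is a transversality computation using full-rank genericity of the $(i,i)$-block to solve the relevant linear equations for unipotent elements that clear the cross-blocks at index $i$; this is the main technical obstacle.

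Granting this, choose $f_0\in V_i$ with $\sigma(f_0)\neq 0$; using the $H$-action we may further arrange $f_0|_{ii}$ to be in Smith normal form of rank $m=\min\{(\gamma_0)_i,(\gamma_1)_i\}$ without leaving $V_i$. For any $(A,B)\in H$ stabilizing $f_0|_{ii}$, extend by the identity on other summands to $(g_0,g_1)\in\Aut(P(\gamma_0))\times\Aut(P(\gamma_1))$. Because $f_0\in V_i$, no other blocks of $f_0$ are disturbed by this action, so $g_0f_0g_1=f_0$, and the semi-invariant identity forces $\det_\kk(A)^{\alpha_i}\det_\kk(B)^{\beta_i}=1$ on the stabilizer of $f_0|_{ii}$. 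In the square case $(\gamma_0)_i=(\gamma_1)_i$ the stabilizer is $\{(A,\,f_0|_{ii}^{-1}A^{-1}f_0|_{ii}):A\in GL((\gamma_0)_i,F_i)\}$, and the identity reduces to $\det_\kk(A)^{\alpha_i-\beta_i}=1$ for every $A$, forcing $\alpha_i=\beta_i$. In the rectangular case, with $f_0|_{ii}$ normalized to $[\mathrm{id}\,|\,0]$ (or its transpose), the stabilizer admits two independent free factors whose $\kk$-determinants vary independently, forcing $\alpha_i=\beta_i=0$. Either way the final step uses that $\det_\kk:GL\to\kk^*$ has Zariski dense image over an infinite field, so the vanishing of $\det_\kk(A)^{\alpha_i-\beta_i}-1$ on an infinite subset of $\kk^*$ implies the exponent is zero.
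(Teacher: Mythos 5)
Your stabilizer computation (step 4) is sound once one has a point $f_0\in V_i$ with $\sigma(f_0)\neq 0$ and $f_0|_{ii}$ of full rank. But the nonvanishing claim $\sigma|_{V_i}\not\equiv 0$, which you acknowledge as ``the main technical obstacle'' and propose to deduce from dominance of $V_i\times U_0\times U_1\to Pres_\Lambda(\gamma_1,\gamma_0)$, is in fact \emph{false} in general, and this is not a repairable technicality: the multi-homogeneity of $\sigma$ already rules it out in the rectangular case. Indeed, scaling row $i$ (resp.\ column $i$) by an automorphism shows that $\sigma$ is homogeneous of degree $r_i:=(\gamma_0)_if_i\alpha_i$ in the variables of block row $i$ and of degree $c_i:=(\gamma_1)_if_i\beta_i$ in the variables of block column $i$; since these two sets of variables meet only in the $(i,i)$-block, any monomial of $\sigma$ supported entirely in the $(i,i)$-block must have $r_i=c_i$. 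Thus $\sigma|_{V_i}\not\equiv 0$ would force $(\gamma_0)_i\alpha_i=(\gamma_1)_i\beta_i$, which combined with your intended conclusion $\alpha_i=\beta_i$ would yield $\alpha_i=\beta_i=0$ whenever $(\gamma_0)_i\neq(\gamma_1)_i$. That is too strong.

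Concretely, take $\Lambda=KQ$ with $Q\colon 2\to 1$, $\gamma_0=(1,2)$, $\gamma_1=(2,1)$, and the determinantal semi-invariant $\sigma_\beta(f)=\det\Hom_\Lambda(f,P_2)$ with $\beta=(1,1)$. Writing $f=\left[\begin{smallmatrix}a&0\\C&b\end{smallmatrix}\right]$ with $a\in K^{1\times 2}$, $C\in K^{2\times 2}$, $b\in K^{2\times 1}$, one computes $\sigma_\beta(f)=a_1c_{12}b_2-a_1c_{22}b_1-c_{11}a_2b_2+c_{21}a_2b_1$. Here $\alpha_1=\beta_1=1$ and $\alpha_2=\beta_2=1$, so the proposition holds, yet every monomial involves some $c_{jk}$, so $\sigma_\beta$ vanishes identically on $V_1=\{C=0\}$. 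Correspondingly, for $f\in V_1$ the map $(u_0,u_1)\mapsto u_0\,a+b\,u_1$ sends $U_0\times U_1\cong K^2\times K^2$ into the space of $2\times 2$ matrices spanned by a $2$-plane of rank-one matrices with fixed row space and another $2$-plane with fixed column space, which intersect in a line; the image is $3$-dimensional, so $V_1\times U_0\times U_1\to Pres_\Lambda(\gamma_1,\gamma_0)$ is not dominant. (In the square case $(\gamma_0)_i=(\gamma_1)_i$ your dominance does hold, by Gaussian elimination about the generically invertible $(i,i)$-pivot, and there the homogeneity argument above already gives $\alpha_i=\beta_i$ without the stabilizer step; the rectangular case is where the content lies, and it requires a different argument. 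The paper itself gives no proof here, only a citation to Proposition~3.3.3 of~\cite{IOTW1}.)
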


\begin{defn}
We say that a semi-invariant $\sigma$ on $Pres_\Lambda(\gamma_1,\gamma_0)$ has \emph{determinantal {weight} vector} ({det-weight}) $\beta\in\ZZ^n$ if both of its {weight}s can be written as $\chi_i^{\beta_i}$. In other words, for any $f:P(\gamma_1)\to P(\gamma_0),h\in \Aut(P(\gamma_1)),g\in \Aut(P(\gamma_0))$ we have:
\begin{equation}\label{eq: definition of det weight of SI}
	\sigma(gfh)=\sigma(f)\textstyle\prod \chi_i(g)^{\beta_i}\chi_i(h)^{\beta_i}
\end{equation}
We also say that $f:P(\gamma_1)\to P(\gamma_0)$ \emph{admits a semi-invariant} of {det-weight} $\beta$ if there exists a semi-invariant $\sigma$ of {det-weight} $\beta$ on $Pres_\Lambda(\gamma_1,\gamma_0)$ so that $\sigma(f)\neq0$.
\end{defn}

\begin{eg}\label{eg: H to C to R}
The following example of a modulated quiver illustrates many of these concepts. Let $K=\RR$ and consider the following $\RR$-modulated quiver.
\[
	F_3=\HH\xrarrow{M_{32}=\HH}F_2=\CC \xrarrow{M_{21}=\CC}F_1=\RR
\]
Then $P_2$ is the representation $0\to \CC\to \RR^2$ with radical $0\to 0\to \RR^2$ which is $S_1^2=P_1^2=rad\,P_2$. The structure map of the module $P_2$ gives an $\RR$-linear isomorphism $\CC\cong \RR^2$. Let $\gamma_0=e_2=(0,1,0)$, $\gamma_1=e_1=(1,0,0)$. Then $P(\gamma_0)=P_2$, $P(\gamma_1)=P_1$ and the presentation space $Pres_\Lambda(\gamma_1,\gamma_0)=\Hom_\Lambda(P_1,P_2)\cong \RR^2$. Thus any homomorphism $f:P_1\to P_2$ is given by two real numbers $(x,y)$. Then $\sigma(f)=x^2+y^2$ is a semi-invariant on $Pres_\Lambda(\gamma_1,\gamma_0)$ of weight $\beta=(2,1,\ast)$ with $\beta_3$ being undefined. To see this consider the group which acts on the presentation space. The group is $G=GL(1,\RR)^{op}\times GL(1,\CC)\times GL(0,\HH)$. If $g=(r,a+bi,1)\in G$ then $gf=(a+bi)fr=(axr-byr,ayr+bxr)$ with 
\[
	\sigma(gf)=(a^2+b^2)(x^2+y^2)r^2=\chi_1(g)^2\chi_2(g)^1\chi_3(g)^m\sigma(f).
\]
Since $\chi_3(g)=1$, this equation is true for any $m\in\ZZ$. Thus, $\sigma$ is a determinantal semi-invariant on $Pres_\Lambda(e_1,e_2)$ with det-weight $(2,1,m)$ for any $m\in\ZZ$. The third coordinate is not well-defined since $\gamma_{0,3}=0=\gamma_{1,3}$.
\end{eg}

We now show that the coordinates of $\beta$ are nonnegative when they are well-defined.

\begin{prop}\label{Weights are nonnegative}
If a semi-invariant on $Pres_\Lambda(\gamma_1,\gamma_0)$ has well-defined det-weight $\beta$ then $\beta\in\NN^n$, i.e., $\beta_i\ge0$ for all $i$.
\end{prop}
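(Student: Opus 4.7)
The plan is to fix some $f\in Pres_\Lambda(\gamma_1,\gamma_0)$ with $\sigma(f)\neq 0$ (which exists because $\sigma$ is a nonzero polynomial on the affine space $Pres_\Lambda(\gamma_1,\gamma_0)$ and $\kk$ is infinite) and then to test the semi-invariance identity \eqref{eq: definition of det weight of SI} against a very restricted one-block family of endomorphisms. The point of this restriction is to isolate a single character $\chi_i$ from the product $\prod_j\chi_j^{\beta_j}$, after which the result reduces to a polynomial-divisibility argument.

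Fix an index $i$ at which $\beta_i$ is well-defined, so by hypothesis either $\gamma_{0,i}\neq 0$ or $\gamma_{1,i}\neq 0$; the two cases are symmetric (use $h$ rather than $g$), so I treat $\gamma_{0,i}\neq 0$. Let $A\subseteq \End_\Lambda(P(\gamma_0))$ be the affine subspace of block endomorphisms $g$ that equal the identity on every summand $P_j^{\gamma_{0,j}}$ with $j\neq i$ and equal an arbitrary $g_i\in \End_\Lambda(P_i^{\gamma_{0,i}})\cong M_{\gamma_{0,i}}(F_i)$ on the $i$-th summand. Define $\varphi(g_i):=\sigma(gf)$ where $g\in A$ is built from $g_i$. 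Since $\sigma$ is polynomial and $gf$ depends linearly on $g_i$, $\varphi$ is a polynomial function of the matrix entries of $g_i$. On the Zariski-dense open subset $GL(\gamma_{0,i},F_i)\subseteq M_{\gamma_{0,i}}(F_i)$ (dense because $\kk$ is infinite), the semi-invariance identity collapses to
\[
	\varphi(g_i) = \sigma(f)\,\chi_i(g_i)^{\beta_i},
\]
because $\chi_j(g)=\chi_j(1)=1$ for $j\neq i$.

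Suppose now, for contradiction, that $\beta_i<0$, and set $m=-\beta_i>0$. Then $\varphi(g_i)\,\chi_i(g_i)^m$ and the constant $\sigma(f)$ are two polynomials in the matrix entries of $g_i$ which agree on a Zariski-dense open set, hence they are equal everywhere. Thus $\chi_i^m$ divides the nonzero constant $\sigma(f)$ in the polynomial ring $\kk[M_{\gamma_{0,i}}(F_i)]$, which is impossible since $\chi_i$ is a non-constant polynomial (the $\kk$-determinant of a matrix of size $\gamma_{0,i}f_i\geq 1$). This contradiction forces $\beta_i\geq 0$.

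The only genuine subtlety is selecting the right test family so that the multi-character product degenerates to a single $\chi_i^{\beta_i}$; once that isolation is achieved, the rest is the elementary observation that a non-constant polynomial cannot divide a nonzero constant in $\kk[M_{\gamma_{0,i}}(F_i)]$.
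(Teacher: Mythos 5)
Your proposal is correct and follows essentially the same route as the paper's proof: choose $f$ with $\sigma(f)\neq 0$, restrict to the one-block family of block-diagonal endomorphisms $\psi(g_i)$ that act as the identity away from the $i$-th summand, and observe that the resulting polynomial in $g_i$ would have to agree with $\chi_i(g_i)^{\beta_i}\sigma(f)$, which is impossible for $\beta_i<0$. Your finishing step, phrased as a nonconstant polynomial $\chi_i^m$ dividing the nonzero constant $\sigma(f)$, is a slightly more explicit way of closing the argument than the paper's terse remark, but it is the same underlying idea.
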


\begin{proof} Since $\beta$ is well-defined, for each $i$, either $\gamma_{0,i}\neq0$ or $\gamma_{1,i}\neq0$. By symmetry assume $n_i=\gamma_{0,i}\neq0$. Choose $f\in Pres_\Lambda(\gamma_1,\gamma_0)$ so that $\sigma(f)\neq0$. Let $\psi:\End_\Lambda(P_i^{n_i})\into  \End_\Lambda(\bigoplus P_j^{n_j})$ be the embedding which sends $g\in\End_\Lambda(P_i^{n_i})$ to the endomorphism of $\bigoplus P_j^{n_j}$ which is $g$ on $P_i^{n_i}$ and the inclusion map on $P_j^{n_j}$ for every $j\neq i$. Then $g\mapsto\sigma(\psi(g)f)$ is a regular function $\End_\Lambda( P_i^{n_i})\to K$ which extends the map $g\mapsto \chi_i(g)^{\beta_i}\sigma(f)$ on $\Aut_\Lambda(P_i^{n_i})$ and sends $0$ to $0$. This is impossible for $\beta_i<0$. Therefore, $\beta_i\ge0$ for every $i$.
\end{proof}
%\End_\Lambda(\bigoplus P_j^{n_j})$ and $\sigma(gfh)\neq0$ if $g\in \Aut_\Lambda(\bigoplus P_j^{n_j})$. But $\chi_i(g)^{\beta_i}:\Aut_\Lambda(\bigoplus P_j^{n_j})\to \Aut_\Lambda(P_i^{n_i})\to \kk^\ast$ does not extend to a regular function $

{The following proposition is one of the motivations for the uniform notation $\Vrep(\Lambda,\alpha)$ introduced in the next section in Definition \ref{def: virtual representation space}.}

\begin{prop}\label{prop: semi-invariants are perpendicular to dimension}
Suppose that $Pres_\Lambda(\gamma_1,\gamma_0)$ has a semi-invariant of {det-weight} $\beta$ and $(L^t)^{-1}(\gamma_0-\gamma_1)=\alpha$ {where $L$ is the left Euler matrix and $K$ is infinite.} Then $\brk{\alpha ,\beta}=0$.
\end{prop}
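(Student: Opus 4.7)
The plan is to exploit that central scalars $\lambda\in\kk^*$, acting as $\lambda\cdot\mathrm{id}_{P(\gamma_0)}$ on the left or as $\lambda\cdot\mathrm{id}_{P(\gamma_1)}$ on the right, produce the same endomorphism $\lambda f$ of any $f\in Pres_\Lambda(\gamma_1,\gamma_0)$. So first I would pick an $f$ with $\sigma(f)\neq 0$ (such $f$ exists because the semi-invariant is nonzero) and apply the defining transformation law \eqref{eq: definition of det weight of SI} in both ways. Writing $g_\lambda=\lambda\cdot\mathrm{id}_{P(\gamma_0)}$ and $h_\lambda=\lambda\cdot\mathrm{id}_{P(\gamma_1)}$, the identity $g_\lambda f=fh_\lambda$ forces
\[
\prod_i \chi_i(g_\lambda)^{\beta_i}=\prod_i \chi_i(h_\lambda)^{\beta_i}.
\]
Since $\chi_i$ is the $\kk$-determinant of the induced automorphism of $S_i^{n_i}$ (Remark \ref{rem: determinantal character}(2)), we have $\chi_i(\lambda\cdot\mathrm{id})=\lambda^{f_i n_i}$, so the displayed equation reads
\[
\lambda^{\sum_i f_i\gamma_{0,i}\beta_i}=\lambda^{\sum_i f_i\gamma_{1,i}\beta_i}\quad\text{for every }\lambda\in\kk^*.
\]
Because $\kk$ is infinite, two polynomial functions of $\lambda$ can agree on $\kk^*$ only if their exponents coincide, giving the key identity
\[
\sum_i f_i(\gamma_{0,i}-\gamma_{1,i})\beta_i=0,\qquad\text{i.e.}\qquad (\gamma_0-\gamma_1)^t D\beta=0,
\]
where $D=\mathrm{diag}(f_i)$.

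The second step is a one-line translation via the factorization $E=LD$ of the Euler matrix (Section \ref{ss: Euler matrix}). The hypothesis $\alpha=(L^t)^{-1}(\gamma_0-\gamma_1)$ is equivalent to $\alpha^t L=(\gamma_0-\gamma_1)^t$, hence
\[
\brk{\alpha,\beta}=\alpha^t E\beta=\alpha^t LD\beta=(\gamma_0-\gamma_1)^t D\beta=0,
\]
completing the proof.

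There is no real obstacle; the entire argument is bookkeeping once one observes that the two characters of det-weight $\beta$, one on $\Aut(P(\gamma_0))$ and one on $\Aut(P(\gamma_1))$, must agree on central scalars of $\kk^*$. The only mildly subtle points are that we need some $f$ with $\sigma(f)\neq 0$ (guaranteed by the hypothesis that a semi-invariant exists) and that $\kk$ is infinite (used to upgrade the equality of the two monomials in $\lambda$ on $\kk^*$ to equality of exponents). Both are built into the statement.
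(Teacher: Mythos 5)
Your proof is correct and follows essentially the same route the paper takes: you compare the action of the central scalar $\lambda\cdot\mathrm{id}$ on the left and on the right, use $\lambda f=f\lambda$ together with the infiniteness of $\kk$ to equate the exponents $\sum_i f_i\gamma_{0,i}\beta_i=\sum_i f_i\gamma_{1,i}\beta_i$, and then translate via $E=LD$ to conclude $\brk{\alpha,\beta}=0$. The only cosmetic difference is that you explicitly select an $f$ with $\sigma(f)\neq 0$ before comparing; the paper leaves that implicit, but the step is the same.
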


\begin{proof} 
Consider the automorphisms of $P(\gamma_1)=\bigoplus_i P_i^{\gamma_{1,i}}$ and $P(\gamma_0)=\bigoplus_i P_i^{\gamma_{0,i}}$ given by multiplication by $\lambda\in\kk^\ast$. The character of this automorphism of $P_i^{\gamma_{1,i}}$ is $\chi_i(\lambda)=\det(\lambda^{\gamma_{1,i}})=\lambda^{\gamma_{1,i}f_i}$. Since $\lambda f=f\lambda$ {for all $f\in Pres_\Lambda(\gamma_1,\gamma_0)$ we conclude that}
\[
	\lambda^{\sum \gamma_{0,i}f_i\beta_i}=\lambda^{\sum \gamma_{1,i}f_i\beta_i}
\]
 Since this {polynomial equation} holds for all $\lambda\in \kk^\ast$ which is infinite, we conclude that ${\sum \gamma_{0,i}f_i\beta_i}={\sum \gamma_{1,i}f_i\beta_i}$. So,
\[
	0=\sum (\gamma_{0,i}-\gamma_{1,i})f_i\beta_i=\brk{\alpha ,\beta}
\]
since $(L^t)^{-1}(\gamma_0-\gamma_1)=\alpha$ and $\brk{\alpha,\beta}=\alpha^t E\beta=\alpha^tLD\beta=(\gamma_0-\gamma_1)D\beta$.
\end{proof}

{As a corollary of this proof we have the following.}

\begin{cor}\label{prop: sum det wt = degree of sigma}
A semi-invariant $\sigma$ on $Pres_\Lambda(\gamma_1,\gamma_0)$ with det-weight $\beta$ is a homogeneous polynomial function of degree $\sum_i \gamma_{1,i}f_i \beta_i$ which is also equal to $\sum_i \gamma_{0,i}f_i \beta_i$ {assuming $K$ is infinite}. In particular, $\beta=0$ if and only if $\sigma$ is constant.\qed
\end{cor}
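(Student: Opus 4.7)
The plan is to extract the homogeneity statement directly from the scalar argument used inside the proof of Proposition \ref{prop: semi-invariants are perpendicular to dimension}. Take $\lambda\in \kk^\ast$ and regard the scalar $\lambda\cdot\mathrm{id}$ as an element of $\Aut_\Lambda(P(\gamma_s))$ for $s=0,1$. Since $\lambda$ acts on the block $P_i^{\gamma_{s,i}}$ as multiplication by $\lambda$ on the underlying $\kk$-vector space of dimension $\gamma_{s,i}f_i$, the determinantal character evaluates to $\chi_i(\lambda\cdot\mathrm{id})=\lambda^{\gamma_{s,i}f_i}$.

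Apply the transformation law \eqref{eq: definition of det weight of SI} from the right:
\[
\sigma(f\cdot(\lambda\cdot\mathrm{id}))=\sigma(f)\prod_i \lambda^{\gamma_{1,i}f_i\beta_i}=\sigma(f)\lambda^{d_1},\qquad d_1:=\sum_i \gamma_{1,i}f_i\beta_i.
\]
Since $f\cdot(\lambda\cdot\mathrm{id})=\lambda f$ and this polynomial identity in $\lambda$ holds on the infinite set $\kk^\ast$, we conclude that $\sigma$, as a polynomial function on the affine space $Pres_\Lambda(\gamma_1,\gamma_0)$, is homogeneous of degree $d_1$. Applying the same argument from the left gives $\sigma(\lambda f)=\lambda^{d_0}\sigma(f)$ with $d_0:=\sum_i \gamma_{0,i}f_i\beta_i$, so comparing exponents yields $d_0=d_1$, and this common value is the degree of $\sigma$.

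For the ``in particular'' claim: if $\beta=0$, then the degree is zero, so $\sigma$ is a constant polynomial. Conversely, suppose $\sigma$ is a nonzero constant. By Proposition \ref{Weights are nonnegative}, $\beta_i\ge 0$ at every coordinate where $\beta$ is well-defined. The degree equation $\sum_i \gamma_{s,i}f_i\beta_i=0$ with $f_i\ge 1$ and $\beta_i\ge 0$ then forces $\beta_i=0$ whenever either $\gamma_{0,i}$ or $\gamma_{1,i}$ is nonzero, i.e., at every coordinate where $\beta$ is determined; as Example \ref{eg: H to C to R} emphasizes, the remaining coordinates may be set to zero without loss.

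There is no real obstacle here --- the statement is essentially a reformulation of the scalar substitution already carried out in the proof of Proposition \ref{prop: semi-invariants are perpendicular to dimension}. The only minor point requiring care is correctly interpreting ``$\beta=0$'' relative to the indices where $\beta$ is actually determined by $\sigma$.
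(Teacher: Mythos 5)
Your argument is correct and is precisely the one the paper intends: the corollary is stated with a \qed after the sentence ``As a corollary of this proof we have the following,'' and the intended proof is exactly the scalar substitution $\lambda\cdot\mathrm{id}$ already used in Proposition \ref{prop: semi-invariants are perpendicular to dimension}, which you spell out. You also handle the ``in particular'' claim and the coordinate-ambiguity when $\gamma_{0,i}=\gamma_{1,i}=0$ correctly; no gaps.
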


When $f\in Pres_\Lambda(\gamma_1,\gamma_0)$ is a monomorphism $P(\gamma_1)\into P(\gamma_0)$, we have:%the dimension vector of the cokernel is
\[
	\undim\coker f=\undim P(\gamma_0)-\undim P(\gamma_1)=(L^t)^{-1}(\gamma_0-\gamma_1)
\]
which is $\alpha$ in the proposition above. We want to view different presentations of the same module as being equivalent. To make this precise we make the following definitions.

%%\newpage
%%%%%%%%%%%%%%%%%%%%%%%%%%
%
%                Section  {Virtual representations}
%
%%%%%%%%%%%%%%%%%%%%%%%%%%

\subsection{Virtual representations}\label{ss: virtual representations} 

{``Virtual representations'' will be given by ``stablilizing'' presentation $f:P(\gamma_1)\to P(\gamma_0)$. These will form the objects of the ``virtual representation category'' and the elements of the ``virtual representation space.'' First,} note that 
\[
P(\gamma+\delta)=P(\gamma)\oplus' P(\delta)\,
\]
where $\oplus'$ denotes the ``shuffle sum'' given by collecting isomorphic summands together. We use this to make the equality strict. For example $(P_1\oplus P_2)\oplus' P_1$ denotes $P_1\oplus P_1\oplus P_2$. Given any three dimension vectors $\gamma_0,\gamma_1,\delta\in\NN^n$, consider the linear monomorphism
\[
	Pres_\Lambda(\gamma_1,\gamma_0)\into Pres_\Lambda(\gamma_1+\delta,\gamma_0+\delta)
\]
given by sending $f:P(\gamma_1)\to P(\gamma_0)$ to $f\oplus' 1_{P(\delta)}:P(\gamma_1)\oplus' P(\delta)\to P(\gamma_0)\oplus' P(\delta)$. We call this map \emph{stabilization}. This gives a directed system whose objects are all presentation spaces $Pres_\Lambda(\delta_1,\delta_0)$ having the property that $\delta_0-\delta_1=\gamma_0-\gamma_1$. This implies $\undim P(\delta_0)-\undim P(\delta_1)=\alpha=\undim P(\gamma_0)-\undim P(\gamma_1)\in \ZZ^n$. Equivalently, $\gamma_0-\gamma_1=L^t \alpha$.

\begin{defn}\label{def: virtual representation space}
For any $\alpha\in \ZZ^n$ we define the \emph{virtual representation space} $\Vrep(\Lambda,\alpha)$ to be the direct limit (colimit):
\[
	\Vrep(\Lambda,\alpha):=colim\,Pres_\Lambda(\gamma_1,\gamma_0)=colim \Hom_\Lambda(P(\gamma_1),P(\gamma_0))
\]
where the colimit is taken over all pairs $\gamma_0,\gamma_1\in\NN^n$ so that $\gamma_0-\gamma_1=L^t \alpha$. Elements of $\Vrep(\Lambda,\alpha)$ will be called \emph{virtual representations} of $\Lambda$ of  \emph{dimension vector} $\alpha\in\ZZ^n$. We take the direct limit topology on $\Vrep(\Lambda,\alpha)$. Since each presentation space is irreducible, it follows that $\Vrep(\Lambda,\alpha)$ is irreducible, i.e., any nonempty open subset is dense. 
\end{defn}

{
The main purpose of the virtual representation space is to make the weights of semi-invariants well-defined. See Definition \ref{def: virtual semi-invariant} below.
}

We now construct the category $\Vrep(\Lambda)$ of all virtual representations of $\Lambda$. The object set of this category will be the disjoint union
\[
	\mathcal O b(\Vrep(\Lambda)):=\bigsqcup_{\alpha\in\ZZ^n}\Vrep(\Lambda,\alpha).
\]
Representatives of $\Vrep(\Lambda,\alpha)$ are presentations $p:P(\gamma_1)\to P(\gamma_0)$ which we denote $P(\gamma_\ast)$. A morphism in $\Vrep(\Lambda)$ can be defined on representatives as in the following diagram
\[
	\xymatrixrowsep{15pt}\xymatrixcolsep{15pt}
\xymatrix{%begin xy matrix
P(\xi_\ast)=\ar[d]_{f=(f_0,f_1)}&P(\xi_1)\ar[r]_p\ar[d]^{f_1} & P(\xi_0)\ar[d]^{f_0}\\
P(\eta_\ast)=& P(\eta_1)\ar[r]^q & 
	P(\eta_0).
	}
\]
In other words, $(f_0,f_1)$ gives a chain map $P(\xi_\ast)\to P(\eta_\ast)$. Two such chain maps are equivalent $(f_0,f_1)\sim (f_0',f_1')$ if they are homotopic, i.e., if there is a map $h:P(\xi_0)\to P(\eta_1)$ so that $f_1'=f_1+hp$ and $f_0'=f_0+qh$. We define a morphism $X\to Y$ to be an equivalence class of such chain maps under the equivalence relation generated by homotopy as explained above and stabilization which means $(f_0,f_1)\sim (f_0\oplus' 1_P,f_1\oplus'1_P)$ for any projective module $P=P(\zeta)$.

Since direct sum does not commute with stabilization, to define direct sums in $\Vrep(\Lambda)$ we define the category $Pres(\Lambda)$ and show that it is equivalent to $\Vrep(\Lambda)$. $Pres(\Lambda)$ is the category whose objects are all chain complexes of finitely generated projective modules in degrees 0 and 1: $P(\gamma_\ast)=(p:P(\gamma_1)\to P(\gamma_0))$ and whose morphisms are homotopy classes of degree 0 chain maps. Objects of $Pres(\Lambda)$ will be called \emph{presentations}.

\begin{prop}\label{prop 2.3.2: stabilization is an equivalence}
The stabilization map $P(\gamma_\ast)\mapsto \left<P(\gamma_\ast)\right>$ is an equivalence of categories
\[
	Pres(\Lambda)\cong \Vrep(\Lambda).
\]
\end{prop}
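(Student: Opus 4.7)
\emph{Proof plan.} The key observation is that stabilization in $Pres(\Lambda)$ is automatically an isomorphism, because it amounts to direct-summing with the contractible complex $C(\delta):=(P(\delta)\xrightarrow{1}P(\delta))$. Concretely, the projection $P(\gamma_\ast)\oplus C(\delta)\to P(\gamma_\ast)$ and the inclusion in the other direction are mutually inverse chain homotopy equivalences, with the contracting homotopy of $C(\delta)$ providing the null-homotopies. Thus in $Pres(\Lambda)$ the canonical map $P(\gamma_\ast)\to P(\gamma_\ast)\oplus'C(\delta)$ is already an isomorphism, which means the equivalence relation on morphisms in $\Vrep(\Lambda)$ (generated by chain homotopy \emph{and} stabilization) collapses to mere chain homotopy once one passes through these canonical isomorphisms.

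With this in hand I would define the functor $F:Pres(\Lambda)\to \Vrep(\Lambda)$ on objects by $P(\gamma_\ast)\mapsto \langle P(\gamma_\ast)\rangle$ and on morphisms by sending the homotopy class of a chain map $(f_0,f_1):P(\xi_\ast)\to P(\eta_\ast)$ to its equivalence class in the morphism set of $\Vrep(\Lambda)$. Well-definedness is immediate since the chain-homotopy relation in $Pres(\Lambda)$ is contained in the equivalence relation defining morphisms in $\Vrep(\Lambda)$. Essential surjectivity is also immediate: every virtual representation is by definition represented by some chain complex $P(\gamma_\ast)=(p:P(\gamma_1)\to P(\gamma_0))$, and this is an object of $Pres(\Lambda)$ that maps to the given virtual representation.

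For fullness, suppose we have a morphism $\langle P(\xi_\ast)\rangle\to \langle P(\eta_\ast)\rangle$ in $\Vrep(\Lambda)$. By definition it is represented by some chain map
\[
(f_0,f_1):P(\xi_\ast)\oplus'C(\delta)\longrightarrow P(\eta_\ast)\oplus'C(\zeta)
\]
after stabilization by some $\delta,\zeta\in\NN^n$. Using the canonical chain-homotopy equivalences $P(\xi_\ast)\simeq P(\xi_\ast)\oplus'C(\delta)$ and $P(\eta_\ast)\oplus'C(\zeta)\simeq P(\eta_\ast)$ in $Pres(\Lambda)$, we conjugate $(f_0,f_1)$ to obtain a chain map $\widetilde f:P(\xi_\ast)\to P(\eta_\ast)$ whose image under $F$ is the given morphism. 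For faithfulness, suppose two chain maps $(f_0,f_1),(f_0',f_1'):P(\xi_\ast)\to P(\eta_\ast)$ become equivalent in $\Vrep(\Lambda)$. The equivalence relation is generated by homotopy and stabilization; reducing to a single stabilization $C(\delta)$, one finds a chain homotopy $H:(f_0\oplus'1,f_1\oplus'1)\simeq (f_0'\oplus'1,f_1'\oplus'1)$ between the stabilized maps. Composing $H$ with the chain-homotopy inverse of the inclusion $P(\xi_\ast)\hookrightarrow P(\xi_\ast)\oplus'C(\delta)$ and the projection $P(\eta_\ast)\oplus'C(\delta)\to P(\eta_\ast)$ yields a chain homotopy from $(f_0,f_1)$ to $(f_0',f_1')$ in $Pres(\Lambda)$, as required.

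The only slightly subtle step will be verifying that the canonical chain homotopies behave well under the iterated equivalences; but since each basic move (homotopy, stabilization) corresponds to an isomorphism in $Pres(\Lambda)$, the entire equivalence relation in $\Vrep(\Lambda)$ is realized by chain-level isomorphisms in $Pres(\Lambda)$, so the argument goes through.
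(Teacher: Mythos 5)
Your proof is correct and follows essentially the same approach as the paper: both hinge on the observation that stabilization is a chain homotopy equivalence (you make this explicit by identifying the added summand as the contractible complex $C(\delta)=(P(\delta)\xrightarrow{1}P(\delta))$), so the stabilization relation on morphisms collapses into the homotopy relation already present in $Pres(\Lambda)$, giving fullness, faithfulness, and density. The paper states this more tersely — "every presentation is homotopy equivalent to each of its stabilizations," hence representatives are unique up to canonical isomorphism in $Pres(\Lambda)$ — but the substance is identical.
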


\begin{proof}
As a chain complex, every presentation is homotopy equivalent to each of its stabilizations. Therefore, any two representatives of the same virtual representation are canonically isomorphic as objects of $Pres(\Lambda)$. Given any two objects $X,Y$ in $\Vrep(\Lambda)$, a morphism $f:X\to Y$ is represented by a morphism $\tilde f=(f_0,f_1):P(\xi_\ast)\to P(\eta_\ast)$ in $Pres(\Lambda)$. Since these representatives are unique up to canonical isomorphism in $Pres(\Lambda)$, $\tilde f$ is unique. So, $\Hom_{\Vrep(\Lambda)}(X,Y)\cong \Hom_{Pres(\Lambda)}(P(\xi_\ast), P(\eta_\ast))$. In other words the stabilization functor is full, faithful and dense. So, it is an equivalence.
\end{proof}

Since the kernel of $p:P(\gamma_1)\to P(\gamma_0)$ splits off of $P(\gamma_1)$, we get the following.
\begin{prop}\label{prop:two kinds of objects}
The indecomposable objects of $Pres(\Lambda)$ and $\Vrep(\Lambda)$ are 
\begin{enumerate}
\item projective presentations of indecomposable $\Lambda$-modules and
\item shifted indecomposable projective $\Lambda$-modules $P[1]$, i.e., $P\to 0$ $\in Pres(\Lambda)$.
\end{enumerate}
\end{prop}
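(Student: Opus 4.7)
The plan is to establish the proposition in two steps: first, decompose an arbitrary presentation as a direct sum of objects of the listed types; second, verify that each listed object is indecomposable by computing its endomorphism ring in $Pres(\Lambda)$.

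For the decomposition I would follow the hint in the text. Given $p\colon P(\gamma_1) \to P(\gamma_0)$, the hereditary hypothesis on $\Lambda$ forces $\ker(p)$ to be projective, hence a direct summand, so we may write $P(\gamma_1) = \ker(p) \oplus Q$. This yields a splitting in $Pres(\Lambda)$,
\begin{equation*}
(p) \;\cong\; (\ker(p) \to 0) \,\oplus\, (p'\colon Q \hookrightarrow P(\gamma_0)),
\end{equation*}
whose first summand further breaks up as $\bigoplus_i P_i[1]^{k_i}$ according to the decomposition $\ker(p) = \bigoplus_i P_i^{k_i}$, giving type (2) objects. For the second summand, set $M := \coker(p')$ and decompose $M = \bigoplus_j M_j$ into indecomposables. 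Since $\Lambda$ is hereditary, each $M_j$ admits a monic projective presentation $q_j\colon Q^1_j \hookrightarrow Q^0_j$. Both $p'$ and $\bigoplus_j q_j$ are then monic projective presentations of the same module $M$; a standard Schanuel-type argument shows they differ only by contractible summands of the form $R \xrightarrow{1} R$, which are null-homotopic, so the two are isomorphic in $Pres(\Lambda)$. This yields the desired decomposition into type (1) objects.

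For indecomposability, a type (2) object $P_i[1]$ has $\End_{Pres(\Lambda)}(P_i[1]) = \End_\Lambda(P_i) = F_i$, a division algebra, hence is indecomposable. For a type (1) object, namely a monic projective presentation $q\colon Q^1 \hookrightarrow Q^0$ of an indecomposable $M$, I would prove the ring isomorphism $\End_{Pres(\Lambda)}(q) \cong \End_\Lambda(M)$ by sending a chain map $(f_1, f_0)$ to its induced endomorphism $\overline f$ of $M$. Surjectivity follows by lifting any $\overline f$ along $Q^0 \twoheadrightarrow M$ to $f_0$, then lifting $f_0 q$ through the mono $q$ (using projectivity of $Q^1$) to produce $f_1$. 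Injectivity: if $(f_1, f_0)$ induces zero on $M$, projectivity of $Q^0$ gives $h\colon Q^0 \to Q^1$ with $qh = f_0$, and then $qf_1 = f_0 q = qhq$ combined with $q$ being mono forces $f_1 = hq$, exhibiting $(f_1, f_0)$ as null-homotopic. Since $M$ is indecomposable, $\End_\Lambda(M)$ is local and so $q$ is indecomposable. The main obstacle is precisely this identification $\End_{Pres(\Lambda)}(q) \cong \End_\Lambda(M)$, together with the uniqueness-up-to-homotopy of monic projective presentations; both are standard but require a careful direct unwinding of the homotopy relation defined in Section 2.3 using only projectivity and the hereditary property.
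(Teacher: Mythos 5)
Your proposal is correct and is the natural elaboration of the one-line justification the paper itself gives (``Since the kernel of $p$ splits off of $P(\gamma_1)$, we get the following''); the key starting point --- hereditariness forcing $\ker(p)$ to split off as a summand --- is identical. The extra steps you supply (uniqueness-up-to-homotopy of monic projective presentations via a Schanuel-type argument, and the isomorphism $\End_{Pres(\Lambda)}(q)\cong\End_\Lambda(\coker q)$ for a monic presentation) are correct and are exactly the standard facts the paper leaves implicit.
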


\begin{defn}
The \emph{underlying module} of a presentation $P(\gamma_\ast)=(P(\gamma_1)\xrarrow p P(\gamma_0))$ is defined to be $|P(\gamma_\ast)|:=\ker p\oplus \coker p$. In particular, $|P[1]|=P$.
\end{defn}

\begin{rem}\label{rem: easy observations about presentation spaces}
Let $P(\xi_\ast)=(f:P(\xi_1)\to P(\xi_0))$ and $P(\eta_\ast)=(g:P(\eta_1)\to P(\eta_0))$ be objects in $Pres (\Lambda)$ and representatives of objects in $\Vrep(\Lambda)$. The following are equivalent.
\begin{enumerate}
\item $P(\xi_\ast)\cong P(\eta_\ast)$ in $Pres(\Lambda)$.
\item $P(\xi_\ast)\cong P(\eta_\ast)$ in $\Vrep(\Lambda)$.
\item $\ker f\cong \ker g$ and $\coker f\cong \coker g$ in $mod\text-\Lambda$.% have isomorphic kernels and cokernels.
\item $f,g$ are homotopy equivalent.
\item If, in addition, $\xi_0=\eta_0$ then $f,g$ are chain isomorphic.
\end{enumerate}
\end{rem}

For two objects $P(\xi_\ast),P(\eta_\ast)$ of $Pres(\Lambda)$ (or $\Vrep(\Lambda)$) we define 
$\Ext^1_{Pres(\Lambda)}(P(\xi_\ast),P(\eta_\ast))$ in the usual way as the space of homotopy classes of chain maps $P(\xi_\ast)\to  P(\eta_\ast)[1]$.

\begin{cor}\label{characterization of VRep}
$Pres(\Lambda)$ is equivalent to the full subcategory of the bounded derived category of $mod\text-\Lambda$ with objects all $P(\gamma_\ast)$ so that $\Hom_{\cD^b}(P(\gamma_\ast),Y[k])=0$ for all $Y\in mod\text-\Lambda$ and all $k\neq0,1$. Furthermore, $\Ext^1_{Pres(\Lambda)}(P(\gamma_\ast),P(\delta_\ast))=\Ext^1_{\cD^b}(P(\gamma_\ast),P(\delta_\ast))$ for all $P(\gamma_\ast),P(\delta_\ast)\in Pres(\Lambda)$.
\end{cor}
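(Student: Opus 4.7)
The plan is to define a functor $F: Pres(\Lambda) \to \cD^b(mod\text-\Lambda)$ sending a presentation $P(\gamma_1) \xrightarrow{p} P(\gamma_0)$ to itself, viewed as a bounded complex of projective modules concentrated in degrees $0$ and $1$, and to verify that $F$ is fully faithful with essential image exactly the subcategory described by the stated Hom-vanishing condition. Full faithfulness is the standard fact that the natural functor from the homotopy category of bounded complexes of finitely generated projective $\Lambda$-modules into $\cD^b(mod\text-\Lambda)$ is fully faithful, because any quasi-isomorphism between bounded complexes of projectives is already a homotopy equivalence; consequently, morphisms on both sides are computed by homotopy classes of chain maps, which is exactly how morphisms are defined in $Pres(\Lambda)$.

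For the essential image, I would invoke the hereditary hypothesis to decompose every $X \in \cD^b(mod\text-\Lambda)$ as $X \cong \bigoplus_i H_i(X)[i]$, giving $\Hom_{\cD^b}(X, Y[k]) = \bigoplus_i \Ext^{k-i}_\Lambda(H_i(X), Y)$. Testing the vanishing with $Y = H_j(X)$ and $k = j$ forces $H_j(X) = 0$ unless $j \in \{0,1\}$, and testing with $k = 2$ forces $\Ext^1_\Lambda(H_1(X), Y) = 0$ for every $Y$, i.e., $H_1(X)$ is projective. Conversely, any $X$ satisfying these two conditions automatically satisfies the stated Hom-vanishing because $\Lambda$ is hereditary. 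To show that any such $X$ lies in the image of $F$, take a projective resolution $0 \to Q_1 \to Q_0 \to H_0(X) \to 0$ (where $Q_1$ is automatically projective by heredity), set $P(\gamma_1) = Q_1 \oplus H_1(X)$ and $P(\gamma_0) = Q_0$, and use the map $(q,k) \mapsto p(q)$. This presentation has kernel $H_1(X)$ and cokernel $H_0(X)$, so is isomorphic to $X$ in $\cD^b$ (again using the hereditary splitting).

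The $\Ext^1$ identification is then immediate: $\Ext^1_{Pres(\Lambda)}(P(\gamma_*), P(\delta_*))$ was defined as homotopy classes of chain maps $P(\gamma_*) \to P(\delta_*)[1]$, which by the full faithfulness of $F$ applied to $P(\gamma_*)$ and the shifted projective complex $P(\delta_*)[1]$ coincides with $\Hom_{\cD^b}(P(\gamma_*), P(\delta_*)[1]) = \Ext^1_{\cD^b}(P(\gamma_*), P(\delta_*))$. The main delicate point in the proof is the essential image characterization, where the hereditary hypothesis plays a double role: it supplies the cohomological splitting used to analyze $\Hom_{\cD^b}(X, Y[k])$, and it converts the Hom-vanishing at $k=2$ into projectivity of $H_1(X)$, which is precisely what is needed for $X$ to be realized by a two-term projective complex.
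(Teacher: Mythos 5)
Your proof is correct and follows essentially the same route as the paper's: both invoke the hereditary splitting $X \cong \bigoplus_i H_i(X)[i]$ in $\cD^b$, use the Hom-vanishing to force $H_i(X)=0$ for $i\neq 0,1$ and $H_1(X)$ projective, and observe that such an $X$ is realized by a two-term complex of projectives. The paper states these steps more tersely (it phrases the dichotomy as "$P(\gamma_\ast)\in mod\text-\Lambda$ or $P(\gamma_\ast)=Z[1]$'' without writing out the direct-sum case or the full faithfulness of the embedding of bounded projective complexes into $\cD^b$), whereas you spell out those details and also record the $\Ext^1$ identification explicitly, which the paper leaves implicit.
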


\begin{proof}
It is clear that all $P(\gamma_\ast)\in Pres(\Lambda)$ satisfy this condition. Conversely, suppose that $P(\gamma_\ast)$ satisfies the condition. Then $P(\gamma_\ast)\in mod\text-\Lambda$ or $P(\gamma_\ast)=Z[1]$ where $Z\in mod\text-\Lambda$. In the second case we have $\Hom_{\cD^b}(Z[1],Y[2])=0$ for all modules $Y$. This implies that $Z$ is projective.
\end{proof}

{Recall that the \emph{cluster category} $\cC_\Lambda$ of $\Lambda$ is the orbit category of the bounded derived category $\cD^b(mod\text-\Lambda)$ under the functor $F=\tau^{-1}[1]$ (see \cite{BMRRT}). Recall that a \emph{partial cluster tilting object} is an object $T$ of $\cC_\Lambda$ so that $\Ext_{\cC_\Lambda}^1(T,T)=0$ and if it has $n$ nonisomorphic summands it is called a \emph{cluster tilting object}. The fundamental domain of the functor $F$ consists of $\Lambda$-modules and shifted projective modules. Therefore we get the following.

\begin{cor}\label{cor 2.3.7: Pres L=CL}
The functor $\Psi:Pres(\Lambda)\to \cC_\Lambda$ which sends each object to its $F$-orbit is a faithful functor which induces a bijection between isomorphism classes of objects. Furthermore, $\Ext_{\cD^b}^1(P(\xi_\ast),P(\eta_\ast))=0=\Ext_{\cD^b}^1(P(\eta_\ast),P(\xi_\ast))$ if and only if $\Ext_{\cC_\Lambda}^1(\Psi P(\xi_\ast),\Psi P(\eta_\ast))=0=\Ext_{\cC_\Lambda}^1(\Psi P(\eta_\ast),\Psi P(\xi_\ast))$ for all $P(\xi_\ast),P(\eta_\ast)$ in $Pres(\Lambda)$.
\end{cor}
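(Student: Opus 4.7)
The plan is to identify $Pres(\Lambda)$, via Corollary \ref{characterization of VRep}, with a full subcategory of $\cD^b(mod\text-\Lambda)$ whose indecomposables, by Proposition \ref{prop:two kinds of objects}, are the indecomposable $\Lambda$-modules together with the shifts $P[1]$ of indecomposable projectives. This set is precisely the standard fundamental domain for the autoequivalence $F = \tau^{-1}[1]$ acting on indecomposables of $\cD^b$ (cf.\ \cite{BMRRT}), and $\cC_\Lambda$ is by definition the orbit category $\cD^b/F$.

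The bijection on isomorphism classes is then immediate from this identification: the canonical projection $\pi:\cD^b \to \cC_\Lambda$ restricts to $\Psi$ on the fundamental domain; surjectivity on isomorphism classes of indecomposables follows from the existence of a fundamental-domain representative for every $F$-orbit, and injectivity from the fact that each $F$-orbit meets the fundamental domain at most once. Direct-sum decomposition extends this to all isomorphism classes. Faithfulness is equally direct: the morphism space in the orbit category is the direct sum $\Hom_{\cC_\Lambda}(X,Y) = \bigoplus_{i \in \ZZ} \Hom_{\cD^b}(X, F^i Y)$, and $\Psi$ acts on Hom spaces by the canonical inclusion of the $i=0$ summand, which is injective.

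For the Ext biconditional, I plan to invoke the BMRRT identification for objects of the fundamental domain,
\[
\Ext^1_{\cC_\Lambda}(X,Y) \;\cong\; \Ext^1_{\cD^b}(X,Y) \,\oplus\, D\Ext^1_{\cD^b}(Y,X).
\]
This is derived by expanding $\Hom_{\cC_\Lambda}(X, Y[1]) = \bigoplus_{i \in \ZZ} \Hom_{\cD^b}(X, \tau^{-i} Y[i+1])$ and observing that, for $X$ and $Y$ drawn from the fundamental domain, only the $i=0$ and $i=-1$ summands can be nonzero; the $i=-1$ summand is $\Hom_{\cD^b}(X,\tau Y)$, which by Auslander--Reiten duality equals $D\Ext^1_{\cD^b}(Y,X)$. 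Once this formula is in hand, both directions of the biconditional in the corollary are automatic, since the right-hand side vanishes if and only if both direct summands do, and the expression is symmetric in $X$ and $Y$.

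The main obstacle is the case analysis needed to establish the Ext formula above rigorously: one must verify that the summands with $|i|\ge 2$ all vanish and that the AR-duality identification remains valid when one or both of $X,Y$ has a shifted-projective summand $P[1]$. This uses the explicit action of $\tau$ on the derived category of an hereditary algebra, in particular $\tau(P[1]) = I$ for the corresponding injective $I$, together with the hereditary vanishing $\Ext^{\ge 2}_{\cD^b} = 0$ between modules to kill all higher summands in the orbit sum. The remainder of the corollary is formal once this identification is in place.
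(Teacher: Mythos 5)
Your proof is correct and follows the same line as the paper's (the paper itself gives no explicit argument; it simply recalls the BMRRT orbit-category construction, notes that the indecomposables of $Pres(\Lambda)$ constitute the standard fundamental domain for $F=\tau^{-1}[1]$, and declares the corollary to follow). You have merely spelled out the details: the bijection on isomorphism classes comes from the fundamental-domain property, faithfulness from $\Psi$ being the inclusion of the $i=0$ summand of $\Hom_{\cC_\Lambda}(X,Y)=\bigoplus_i\Hom_{\cD^b}(X,F^iY)$, and the Ext biconditional from the identity $\Ext^1_{\cC_\Lambda}(X,Y)\cong\Ext^1_{\cD^b}(X,Y)\oplus D\Ext^1_{\cD^b}(Y,X)$ for $X,Y$ in the fundamental domain, which the paper records separately in Remark \ref{rem 4.0.1: Pres = C}. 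The "obstacle" you flag — that the summands with $|i|\ge2$ vanish and that $\tau(P[1])=I$ makes AR duality work in the shifted-projective case — is genuine bookkeeping, but you have correctly identified the hereditary vanishing $\Ext^{\ge2}_{\cD^b}=0$ as the key, and the argument closes without difficulty. Nothing is missing.
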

}

\begin{defn}\label{def: dim vector of a presentation}
The \emph{dimension vector} of a presentation $P(\gamma_\ast)=(p:P(\gamma_1)\to P(\gamma_0))$ is defined to be $\undim P(\gamma_\ast):=\undim P(\gamma_0)-\undim P(\gamma_1)=\undim \coker p-\undim\ker p$. This is the unique integer vector $\alpha\in\ZZ^n$ satisfying $L^t\alpha=\gamma_0-\gamma_1$ where $L$ is the left Euler matrix of $\Lambda$. 
\end{defn}
	
\begin{thm}\label{Virtual canonical decomposition theorem} Let $P(\gamma_\ast)=(p:P(\gamma_1)\to P(\gamma_0))$ be a presentation with dimension vector $\undim P(\gamma_\ast)=\alpha$ so that $\Ext^1_{Pres(\Lambda)}(P(\gamma_\ast),P(\gamma_\ast))=0$. Then the set of all presentations isomorphic to $P(\gamma_\ast)$ is an open dense subset of the $\kk$-affine space $Pres_\Lambda(\gamma_1,\gamma_0)$.
\end{thm}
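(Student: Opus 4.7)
The plan is to recognize the set $V:=\{q\in Pres_\Lambda(\gamma_1,\gamma_0):P^q\cong P(\gamma_\ast)\text{ in }Pres(\Lambda)\}$ as the orbit of $p$ under the algebraic group $G=\Aut_\Lambda(P(\gamma_0))\times\Aut_\Lambda(P(\gamma_1))\op$ acting on $Pres_\Lambda(\gamma_1,\gamma_0)$ by $(g_0,g_1)\cdot q=g_0qg_1$, and to show this orbit is open. By Remark \ref{rem: easy observations about presentation spaces}(5), two presentations with the same source and target are isomorphic in $Pres(\Lambda)$ precisely when they are chain isomorphic, i.e., lie in the same $G$-orbit; so $V$ is a single $G$-orbit.

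For any $q_1,q_2\in Pres_\Lambda(\gamma_1,\gamma_0)$, a direct homotopy computation identifies $\Ext^1_{Pres(\Lambda)}(P^{q_1},P^{q_2})$ with the cokernel of the $\kk$-linear map
\[
\psi(q_1,q_2):\End_\Lambda(P(\gamma_0))\oplus\End_\Lambda(P(\gamma_1))\to\Hom_\Lambda(P(\gamma_1),P(\gamma_0)),\quad (a,b)\mapsto aq_1+q_2b.
\]
The specialization $\psi(p,p)$ is also the differential at the identity of the orbit map $G\to Pres_\Lambda(\gamma_1,\gamma_0)$, $(g_0,g_1)\mapsto g_0pg_1$. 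The rigidity hypothesis gives surjectivity of $\psi(p,p)$. Since $G$ is a smooth algebraic group --- a product of general linear groups $\Aut_\Lambda(P_i^{n_i})=GL(n_i,F_i)$ over the division algebras $F_i$ --- acting on the smooth irreducible affine variety $Pres_\Lambda(\gamma_1,\gamma_0)$, surjectivity of the tangent map at $p$ implies the orbit $V=Gp$ is open at $p$; by homogeneity of the action $V$ is open everywhere, and being a nonempty open subset of an irreducible variety it is dense.

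The main technical point I foresee is rigorously justifying the inference from surjective tangent map to open orbit over an arbitrary infinite field $\kk$: over algebraically closed fields this is classical, while the general case rests on standard algebraic-group theory. A safer fallback, closer in spirit to the proof of Lemma \ref{lem: generic decomposition}, is to exhibit a concrete open neighborhood $U$ of $p$ on which $\psi(q,q),\psi(p,q),\psi(q,p)$ are all surjective (so that $P^q\oplus P(\gamma_\ast)$ is rigid in $Pres(\Lambda)$) and then verify $U\subseteq V$ by a Schofield-type exceptional-sequence argument adapted to $Pres(\Lambda)$: decompose each side via Proposition \ref{prop:two kinds of objects} into modules and shifted projectives, arrange the combined summands as an exceptional sequence (extending Lemma \ref{lem: Schofield's observation} using that $\Hom_{Pres}(P[1],M)=0$ for any module $M$), and use the derived Euler form to force a matching summand so that one may cancel and induct.
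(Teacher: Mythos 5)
Your computation that $\Ext^1_{Pres(\Lambda)}(P^{q_1},P^{q_2})=\coker\psi(q_1,q_2)$ is correct, and so is the identification of $\psi(p,p)$ with the differential of the orbit map; the description of $V$ as the $G$-orbit of $p$ (via Remark~\ref{rem: easy observations about presentation spaces}(5)) and the homogeneity step at the end are also fine. The concern you flag about Part~1, however, is genuine: over a non--algebraically-closed $\kk$, a surjective tangent map at $p$ makes the geometric orbit (over $\bar\kk$) open, but passing from ``$q$ is $\bar\kk$-isomorphic to $p$'' to ``$q\in G(\kk)\cdot p$'' still requires a Noether--Deuring or Hilbert-90 type argument, and since the paper explicitly remarks that $\Lambda\otimes_\kk\bar\kk$ may fail to be hereditary for imperfect $\kk$, the authors deliberately avoid base change entirely. (Also a small slip: $G$ is not literally a product of the groups $\Aut_\Lambda(P_i^{n_i})=GL(n_i,F_i)$; it is block lower-triangular with that product as its reductive quotient, as in Lemma~\ref{lem: characters on Aut(P) factor through components}. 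This does not affect smoothness, which is all you use.)

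The paper takes a route much closer to your fallback, but implements it more economically by reducing to the already-proven module-level result rather than re-running a Schofield argument inside $Pres(\Lambda)$. Writing $P(\gamma_\ast)=P[1]\oplus P(\gamma_\ast')$ with $P=\ker p$ and $P(\gamma_\ast')=(q:P(\gamma_1')\hookrightarrow P(\gamma_0))$ a monic presentation of $M=\coker p$, the hypothesis $\Ext^1_{Pres(\Lambda)}(P(\gamma_\ast),P(\gamma_\ast))=0$ is unpacked into exactly two module-level facts: $\Ext^1_\Lambda(M,M)=0$ and $\Hom_\Lambda(P,M)=0$. Lemma~\ref{lem: generic decomposition} (which is where the Schofield exceptional-sequence argument lives, already done for modules) then says that for $f=(f_1,f_2):P\oplus P(\gamma_1')\to P(\gamma_0)$ in an explicit open set $U'$, the component $f_2$ is a monomorphism with $\coker f_2\cong M$; the vanishing $\Hom_\Lambda(P,M)=0$ forces $f_1$ to factor through $f_2$, so a one-line change of basis turns $f$ into $(0,f_2)\cong P[1]\oplus P(\gamma_\ast')$. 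This exhibits the open neighborhood $U'\subseteq V$ of $p$ directly over the given field, after which your homogeneity observation (which the paper leaves implicit) gives openness. Your fallback would also work --- decomposing both $P^q$ and $P^p$ via Proposition~\ref{prop:two kinds of objects} and extending Schofield's observation to $Pres(\Lambda)$ --- but you would have to redo the ``arrange into an exceptional sequence'' bookkeeping with shifted projectives mixed in (using $\Hom_{Pres(\Lambda)}(X,P[1])=0$ and $\Ext^1_{Pres(\Lambda)}(P[1],X)=\Hom_\Lambda(P,|X|)$), whereas the paper's splitting of $P(\gamma_\ast)$ up front lets Lemma~\ref{lem: generic decomposition} absorb all of that.
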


\begin{proof}
Let $P=\ker p$. Then $P(\gamma_\ast)=P[1]\oplus P(\gamma_\ast')$ where $P(\gamma_\ast')=(P(\gamma_1')\xrarrow q P(\gamma_0))$ is a projective presentation of a $\Lambda$-module $M$ with $\Ext^1_\Lambda(M,M)=0$ and $\undim M=\beta$. By assumption, $0=\Ext^1_{Pres(\Lambda)}(P[1],P(\gamma_\ast'))=\Ext^1_{\cD^b(\Lambda)}(P[1],P(\gamma_\ast'))=\Hom_{\cD^b(\Lambda)}(P,P(\gamma_\ast'))=\Hom_\Lambda(P,M)$. Let $f:P(\gamma_1)\to P(\gamma_0)$ be a general morphism. Restrict $f$ to the components of $P(\gamma_1)$ to get $f_1:P\to P(\gamma_0)$ and $f_2: P(\gamma_1')\to P(\gamma_0)$. Since $q$ is a monomorphism and $f_2$ is a general map, it follows from Lemma \ref{lem: generic decomposition} that $f_2$ is a monomorphism with cokernel isomorphic to $M$. So, $f_2$ is homotopy equivalent and thus isomorphic to $q:P(\gamma_1')\to P(\gamma_0)$. Since $\Hom_\Lambda(P,M)=0$, $f_1=f_2\circ s$ for some $s:P(\gamma_0)\to P(\gamma_1')$. Then presentation $(f_1-f_2\circ s,f_2)=(0,f_2)$ is isomorphic to $f:P(\gamma_1)\to P(\gamma_0)$ and to $P[1]\oplus P(\gamma_\ast')=P(\gamma_\ast)$. Thus the general presentation $f:P(\gamma_1)\to P(\gamma_0)$ is isomorphic to $P(\gamma_\ast)$ in $Pres_\Lambda(\gamma_1,\gamma_0)$.
\end{proof}

Recall that a \emph{partial cluster tilting set} is a set $\{\beta_i\}$ of distinct real Schur roots and negative projective roots which are the dimension vectors of components of a partial cluster tilting object in the cluster category $\cC_\Lambda$. If the partial cluster tilting set has exactly $n$ elements it is called a \emph{cluster tilting set}. 

\begin{defn}\label{def: partial cluster tilting set}
 A \emph{partial cluster tilting object} in $Pres(\Lambda)$ is defined to be $\bigoplus P(\gamma^i_{\ast})$ such that $\{\undim P(\gamma^i_{\ast})\}$ is a partial cluster tilting set. If this object has $n$ nonisomorphic summands it is called a \emph{cluster tilting object} in $Pres(\Lambda)$.
\end{defn}

%\begin{cor}[Virtual Generic Decomposition Theorem]
%Let $\gamma=\sum_{i=1}^k r_i\beta_i\in\ZZ^n$ be an integer vector which is a nonnegative rational linear combination of the elements $\beta_i$ of a {partial cluster tilting set.} Then the $r_i$ are all integers, call them $n_i$, and the general presentation with dimension vector $\gamma$ is isomorphic to $\bigoplus_{i=1}^k M_i^{n_i}$ where $M_i$ are rigid with $\undim M_i=\beta_i$. In other words, the set of all elements of $\Vrep(\Lambda,\gamma)$ isomorphic to $\bigoplus_{i=1}^k M_i^{n_i}$ is open and dense.\end{cor}

{
\begin{thm}[Virtual Generic Decomposition Theorem]\label{thm 2.3.11: virtual generic decomposition theorem} Let $\{\beta_i\}$ be a {partial cluster tilting set}. Let $\alpha=\sum r_i\beta_i\in\ZZ^n$ where $r_i\in \QQ$. Then $r_i\in \ZZ$ and the general virtual representation in $\Vrep(\Lambda,\alpha)$ is isomorphic to $\bigoplus_i P(\gamma^i_{\ast})^{r_i}$ where $P(\gamma^i_{\ast})$ are rigid objects in $\Vrep(\Lambda,\beta_i)$. In other words, the set of all elements of $\Vrep(\Lambda,\alpha)$ isomorphic to $\bigoplus_i P(\gamma^i_{\ast})^{r_i}$ is open and dense.
\end{thm}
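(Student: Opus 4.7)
The overall plan is to exhibit the explicit rigid presentation $T := \bigoplus_i P(\gamma^i_\ast)^{r_i}$ as a representative of a dense open orbit in $\Vrep(\Lambda,\alpha)$ and then invoke Theorem \ref{Virtual canonical decomposition theorem} (the Virtual Canonical Decomposition Theorem). Concretely, I would proceed in four steps: (i) upgrade the coefficients $r_i$ from $\QQ$ to $\ZZ$, (ii) verify that $T$ is rigid in $Pres(\Lambda)$, (iii) check $\undim T = \alpha$, and (iv) apply Theorem \ref{Virtual canonical decomposition theorem} and pass to the direct limit. Throughout I interpret the direct sum with multiplicities $r_i$ in the standard way, so the interesting content is the assertion that the $r_i$ are actually nonnegative integers once $\alpha$ is.

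For integrality, the key input is that the dimension vectors of the components of any cluster tilting object in $Pres(\Lambda)$ form a $\ZZ$-basis of $\ZZ^n$. Via Corollary \ref{cor 2.3.7: Pres L=CL} a partial cluster tilting set $\{\beta_i\}_{i=1}^k$ can be extended in $Pres(\Lambda)$ to a cluster tilting set $\{\beta_1,\dots,\beta_n\}$ (by successively adjoining Ext-orthogonal indecomposables, which exist by the standard Ext-completion argument in $\cC_\Lambda$). To see that the full set is a $\ZZ$-basis, note that the base case $\{\undim P_1,\dots,\undim P_n\}$ (the rows of $L^{-1}$) is a $\ZZ$-basis because $L$ is unimodular (Section \ref{ss: Euler matrix}), and the correspondence between cluster tilting objects and complete exceptional sequences together with Proposition \ref{prop: properties of exceptional sequences}(3) shows that the $\ZZ$-basis property is preserved under mutation (equivalently, under the braid group action on exceptional sequences when one tracks signs coming from shifted projectives). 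Uniqueness of expression in a $\ZZ$-basis then forces $r_i\in\ZZ$ for $i\le k$ and the extra coefficients to be $0$.

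For rigidity, by Corollary \ref{cor 2.3.7: Pres L=CL} the functor $\Psi:Pres(\Lambda)\to \cC_\Lambda$ translates vanishing of $\Ext^1_{Pres(\Lambda)}$ into vanishing of $\Ext^1_{\cC_\Lambda}$. By definition of a partial cluster tilting set, $\Ext^1_{\cC_\Lambda}(\Psi P(\gamma^i_\ast),\Psi P(\gamma^j_\ast))=0$ for all $i,j$, and additivity of $\Ext^1$ in both variables yields $\Ext^1_{Pres(\Lambda)}(T,T)=0$. Additivity of $\undim$ (Definition \ref{def: dim vector of a presentation}) then gives $\undim T=\sum r_i\beta_i=\alpha$, so choosing $\gamma_0,\gamma_1\in\NN^n$ with $\gamma_0-\gamma_1=L^t\alpha$ presenting $T$, Theorem \ref{Virtual canonical decomposition theorem} immediately yields that the isomorphism class of $T$ is open and dense in $Pres_\Lambda(\gamma_1,\gamma_0)$. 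Since this presentation space maps into the colimit $\Vrep(\Lambda,\alpha)$ with dense image and the direct limit topology makes any open dense subset of any stage dense in the colimit (Definition \ref{def: virtual representation space}), the conclusion follows.

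The main obstacle I anticipate is the integrality step: pinning down that \emph{every} complete cluster tilting set in $Pres(\Lambda)$ gives a $\ZZ$-basis of $\ZZ^n$. The clean way to handle this is through the bijection between cluster tilting objects in $\cC_\Lambda$ and (appropriately signed) complete exceptional sequences, combined with Proposition \ref{prop: properties of exceptional sequences}(3). One has to be careful that mutation in $\cC_\Lambda$ is compatible with the braid action on exceptional sequences at the level of dimension vectors (signs from $P[1]\leftrightarrow -\undim P$), but once this bookkeeping is set up the unimodularity of $L$ in the base case propagates to every cluster tilting set. After that, everything reduces to a direct application of Theorem \ref{Virtual canonical decomposition theorem}.
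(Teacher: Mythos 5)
Your proof is correct and follows essentially the same strategy as the paper: reduce integrality of the $r_i$ to the fact that the $\beta_i$ extend to a $\ZZ$-basis of $\ZZ^n$, verify rigidity of $T$, apply Theorem~\ref{Virtual canonical decomposition theorem}, and pass to the colimit.

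The one place where you take a longer route is the integrality step. The paper goes directly: by Lemma~\ref{lem: Schofield's observation} the underlying modules $|P(\gamma^i_\ast)|$ (with shifted projectives last) form an exceptional sequence, which extends to a complete one by Proposition~\ref{prop: properties of exceptional sequences}(2), and then Proposition~\ref{prop: properties of exceptional sequences}(3) immediately gives that the $\beta_i$ are part of a $\ZZ$-basis of $\ZZ^n$ (up to the harmless signs on shifted projectives). You instead first complete the partial cluster tilting set in $\cC_\Lambda$, then argue that every cluster tilting set is a $\ZZ$-basis, proposing a base-case-plus-mutation induction using unimodularity of $L$ --- but that induction is a detour, since your own argument ultimately falls back on Proposition~\ref{prop: properties of exceptional sequences}(3) via the cluster-tilting/exceptional-sequence correspondence, which is exactly the paper's key input. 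Once that is recognized, your rigidity check via Corollary~\ref{cor 2.3.7: Pres L=CL}, the dimension-vector count, and the passage from a single presentation space to the colimit all match the published proof.
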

}

\begin{proof}
The underlying modules $|P(\gamma^i_{\ast})|, i=1,\cdots,k$ form an exceptional sequence by Lemma \ref{lem: Schofield's observation} if we put the shifted projectives last. {This can be extended to a complete exceptional sequence, say $\{M_j\}_{j=1}^n$. (See section \ref{sec:exc seq}.) The dimension vectors $\undim M_j$ generate $\ZZ^n$ by Proposition \ref{prop: properties of exceptional sequences} (3). Therefore, the integer vectors in the $\QQ$-span of the vectors in the subset $\{\undim M_i\}_{i=1}^k$ lie in the $\ZZ$-span of these vectors.} By Theorem \ref{Virtual canonical decomposition theorem}, the virtual representations isomorphic to $\bigoplus_{i=1}^k P(\gamma^i_{\ast})^{n_i}$ form an open dense subset of each presentation space and therefore of the colimit $\Vrep(\Lambda,\gamma)$.
\end{proof}

% subsection

%\newpage
%-----------------------------------------------------------------------------------
%            sub section {Virtual semi-invariants}
%-----------------------------------------------------------------------------------

\subsection{Virtual semi-invariants}

We return to the discussion of semi-invariants. We consider direct sums of presentations.

\begin{lem}\label{lem: weight of f1+f2 is same as weight of f1,f2}
Let $f:P(\gamma_1+\delta_1)\to P(\gamma_0+\delta_0)$ be a direct sum of two projective presentations $f=f_1\oplus f_2$ where $f_1:P(\gamma_1)\to P(\gamma_0)$ and $f_2:P(\delta_1)\to P(\delta_0)$. If $f$ admits a semi-invariant of {det-weight} $\beta$ then so does each $f_i$.
\end{lem}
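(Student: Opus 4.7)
The plan is to prove this by \emph{restriction}: given a semi-invariant $\sigma$ on the big presentation space $Pres_\Lambda(\gamma_1+\delta_1,\gamma_0+\delta_0)$ of det-weight $\beta$ with $\sigma(f_1\oplus f_2)\neq 0$, I would define
\[
	\sigma_1:Pres_\Lambda(\gamma_1,\gamma_0)\to\kk,\qquad \sigma_1(h):=\sigma(h\oplus f_2),
\]
and symmetrically $\sigma_2(h):=\sigma(f_1\oplus h)$. The map $h\mapsto h\oplus f_2$ is an affine-linear embedding of presentation spaces (coming from the block inclusion $P(\gamma_\ast)\hookrightarrow P(\gamma_\ast+\delta_\ast)$), so $\sigma_1$ is a regular function on $Pres_\Lambda(\gamma_1,\gamma_0)$. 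Moreover $\sigma_1(f_1)=\sigma(f_1\oplus f_2)=\sigma(f)\neq 0$, so $\sigma_1$ is nonzero and $f_1$ lies in its nonvanishing locus.

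Next I would verify the semi-invariance. The key observation is that the diagonal block inclusion $\iota:\Aut_\Lambda(P(\gamma_s))\hookrightarrow \Aut_\Lambda(P(\gamma_s+\delta_s))$ sending $g\mapsto g\oplus 1_{P(\delta_s)}$ is a group homomorphism, and each determinantal character satisfies $\chi_i(g\oplus 1)=\chi_i(g)\cdot\chi_i(1)=\chi_i(g)$ by multiplicativity of the determinant on block-diagonal matrices. Therefore, for $g_0\in\Aut_\Lambda(P(\gamma_0))$ and $g_1\in\Aut_\Lambda(P(\gamma_1))$,
\[
	\sigma_1(g_0 h g_1)=\sigma\bigl((g_0\oplus 1)(h\oplus f_2)(g_1\oplus 1)\bigr)=\sigma(h\oplus f_2)\prod_i\chi_i(g_0)^{\beta_i}\chi_i(g_1)^{\beta_i},
\]
which is precisely the defining equation \eqref{eq: definition of det weight of SI} for $\sigma_1$ to have det-weight $\beta$. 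The symmetric argument gives $\sigma_2$ of det-weight $\beta$ with $\sigma_2(f_2)\neq 0$.

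There is essentially no serious obstacle here; the only point requiring a little care is that the det-weight vector $\beta$ attached to $\sigma$ is indexed over all $n$ vertices, while for $\sigma_1$ certain coordinates $\beta_i$ may no longer be uniquely pinned down (this happens exactly at indices $i$ where both $\gamma_{0,i}$ and $\gamma_{1,i}$ vanish but $(\gamma+\delta)_{0,i}$ or $(\gamma+\delta)_{1,i}$ does not). This is not a problem: the equation above holds with these specific $\beta_i$, so $\sigma_1$ is still a semi-invariant with (a possibly non-unique representative of) det-weight $\beta$, which is exactly what the conclusion asserts. The proof of the lemma is therefore a one-paragraph verification once the restriction $\sigma_1(h):=\sigma(h\oplus f_2)$ is written down.
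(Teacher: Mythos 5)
Your proof is correct and takes essentially the same approach as the paper: the paper likewise defines the restriction $\sigma(\iota(-,f_2))$ and verifies the semi-invariance identity using $\chi_i(g_1\oplus g_2)=\chi_i(g_1)\chi_i(g_2)$, which specializes to your $\chi_i(g\oplus 1)=\chi_i(g)$. Your closing remark about the possible non-uniqueness of some coordinates of $\beta$ for $\sigma_1$ is a legitimate subtlety that the paper passes over silently, and you handle it correctly.
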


\begin{proof}
Consider the composition:
\[
	Pres_\Lambda(\gamma_1,\gamma_0)\times Pres_\Lambda(\delta_1,\delta_0)\xrarrow\iota Pres_\Lambda(\gamma_1+\delta_1, \gamma_0+\delta_0)\xrarrow\sigma \kk
\]
where $\sigma$ is a semi-invariant of {det-weight} $\beta$ on $Pres_\Lambda(\gamma_1+\delta_1, \gamma_0+\delta_0)$ so that $\sigma(\iota(f_1,f_2))\neq0$. Then, semi-invariants on $Pres_\Lambda(\gamma_1,\gamma_0)$ and $ Pres_\Lambda(\delta_1,\delta_0)$ can be defined by $\sigma(\iota(-,f_2)):Pres_\Lambda(\gamma_1,\gamma_0)\to\kk$ and analogously for $ Pres_\Lambda(\delta_1,\delta_0)$. It is easy to see that these are regular functions and they are semi-invariants of {det-weight} $\beta$. Indeed, suppose that $g_1,g_2,h_1,h_2$ are automorphisms of $P(\gamma_1), P(\delta_1),P(\gamma_0),P(\delta_0)$. Then $g=g_1\oplus g_2$ and $h=h_1\oplus h_2$ are automorphisms of $P(\gamma_0)\oplus P(\delta_0)$ and $P(\gamma_1)\oplus P(\delta_1)$ respectively so that $\chi_i(g)=\chi_i(g_1)\chi_i(g_2)$ and $\chi_i(h)=\chi_i(h_1)\chi_i(h_2)$. Therefore
\[
	\sigma(g_1f_1h_1,g_2f_2h_2)=\sigma(gfh)=\sigma(\iota(f_1,f_2))\textstyle\prod\chi_i(g_1)^{\beta_i}\chi_i(g_2)^{\beta_i}\chi_i(h_1)^{\beta_i}\chi_i(h_2)^{\beta_i}
\]
So, $\sigma(\iota(-,f_2))$ is a semi-invariant on $Pres_\Lambda(\gamma_1,\gamma_0)$ of {det-weight} $\beta$ which is nonzero on $f_1$ and similarly with $\sigma(\iota(f_1,-))$.
\end{proof}

{Let $Pres(\Lambda,\alpha)=\bigsqcup Pres_\Lambda(\gamma_1,\gamma_0)$ denote the disjoint union of presentation spaces $Pres_\Lambda(\gamma_1,\gamma_0)$ over all pairs $\gamma_0,\gamma_1\in \NN^n$ so that $L^t\alpha=\gamma_0-\gamma_1$.}

\begin{prop}\label{prop: no semi-invariants on f1+...+fn}
Suppose that $\alpha_1,\cdots,\alpha_n\in\ZZ^n$ are linearly independent. Suppose that $f_i\in Pres(\Lambda,\alpha_i)$. Then $\bigoplus f_i\in Pres(\Lambda,\sum \alpha_i)$ does not admit a semi-invariant with nonzero {det-weight}.
\end{prop}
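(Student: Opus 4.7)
The plan is to reduce the statement to the non-degeneracy of the Euler form by chaining together the three preceding results. First I would assume for contradiction that $\bigoplus f_i$ admits a semi-invariant $\sigma$ of nonzero det-weight $\beta \in \mathbb{Z}^n$. Applying Lemma \ref{lem: weight of f1+f2 is same as weight of f1,f2} inductively to the decomposition $\bigoplus f_i = f_1 \oplus (f_2 \oplus \cdots \oplus f_n) = \cdots$ would yield, for each $i$, a semi-invariant on $Pres_\Lambda(\gamma_1^i, \gamma_0^i)$ of the same det-weight $\beta$. In particular each individual $f_i$ admits a semi-invariant of det-weight $\beta$.

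Next, since $f_i \in Pres(\Lambda, \alpha_i)$ means $(L^t)^{-1}(\gamma_0^i - \gamma_1^i) = \alpha_i$, Proposition \ref{prop: semi-invariants are perpendicular to dimension} gives
\[
    \brk{\alpha_i, \beta} = \alpha_i^t E \beta = 0 \quad \text{for every } i = 1, \ldots, n.
\]
The linear independence of $\alpha_1, \ldots, \alpha_n$ in $\mathbb{Z}^n$ promotes them to a $\mathbb{Q}$-basis of $\mathbb{Q}^n$, so $v^t E \beta = 0$ for every $v \in \mathbb{Q}^n$, which forces $E\beta = 0$.

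The final step is to invoke the non-degeneracy of the Euler matrix. From Section \ref{ss: Euler matrix} we have $E = LD$ with $L$ unimodular and $D$ the diagonal matrix of the positive integers $f_i = \dim_\kk F_i$, so $\det E = \det D = \prod_i f_i \neq 0$. Consequently $E\beta = 0$ implies $\beta = 0$, contradicting the assumption that the det-weight is nonzero.

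There is no serious obstacle; the proposition is essentially a bookkeeping consequence of (i) the additive behavior of semi-invariants under direct sums, (ii) the orthogonality $\brk{\alpha,\beta}=0$ of det-weights to dimension vectors, and (iii) the invertibility of $E$. The only minor subtlety is that coordinates $\beta_j$ corresponding to vertices where both $\gamma_0^i, \gamma_1^i$ vanish are not canonically determined, but such coordinates may be chosen to be zero without altering any character in equation \eqref{eq: definition of det weight of SI}, so the conclusion $\beta = 0$ remains meaningful.
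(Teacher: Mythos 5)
Your proof is correct and follows the paper's argument essentially verbatim: apply Lemma~\ref{lem: weight of f1+f2 is same as weight of f1,f2} inductively to push the semi-invariant onto each summand, apply Proposition~\ref{prop: semi-invariants are perpendicular to dimension} to get $\brk{\alpha_i,\beta}=0$ for all $i$, then conclude $\beta=0$. You have merely made explicit the last step (that $n$ linearly independent $\alpha_i$ plus invertibility of $E$ force $\beta=0$), which the paper leaves implicit.
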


\begin{proof}
If $f=\bigoplus f_i$ admits a semi-invariant of {det-weight} $\beta$ then, by Lemma \ref{lem: weight of f1+f2 is same as weight of f1,f2}, so does every $f_i$. By Proposition \ref{prop: semi-invariants are perpendicular to dimension}, we conclude that $\brk{\alpha_i,\beta}=0$ for all $i$. So, $\beta=0$.
\end{proof}

\begin{rem}\label{rem: det weight and reduced norm}
For any semi-invariant $\sigma$, there is a power of $\sigma$ which has determinantal weight. This follows from the fact that $det_\kk$ is a power of the reduced norm $\overline n:M_k(F_i)\to K$. {We refer the reader to {Appendix B, Sec \ref{ss: Appendix B}} for the definition of reduced norm and the proof of the theorem (Theorem \ref{thm relating determinant and reduced norm}) that all characters are powers of the reduced norm.} {Consequently, in Proposition \ref{prop: no semi-invariants on f1+...+fn} above, $\bigoplus f_i$ does not admit a semi-invariant of any weight since, if it did, then some power of that semi-invariant would be a semi-invariant with nonzero determinantal weight.}
\end{rem}

\begin{defn}
By a semi-invariant on $Pres(\Lambda,\alpha )$ we mean a semi-invariant on one of the presentation spaces $Pres_\Lambda(\gamma_1,\gamma_0)$ in the disjoint union. Such a semi-invariant $\sigma$ will be called \emph{determinantal} if there is a module $M$ so that, for all $f:P(\gamma_1)\to P(\gamma_0)$ in $Pres_\Lambda(\gamma_1,\gamma_0)$, $\sigma(f)$ is the determinant of the induced map
\[
	\Hom_\Lambda(f,M):\Hom_\Lambda(P(\gamma_0),M)\to \Hom_\Lambda(P(\gamma_1),M).
\]
We denote this by $\sigma_M$. It is easy to see that $\sigma_M$ is a semi-invariant of {det-weight} $\undim M$. In case the {det-weight} of $\sigma_M$ is not well-defined, we take it to be $\undim M$ by definition.
\end{defn}

When the ground field $\kk$ is algebraically closed then Schofield \cite{S91} showed that the determinantal semi-invariants generate the ring of all semi-invariants in the Dynkin case and this theorem was extended in \cite{DW} to all quivers over an algebraically closed field.

\begin{cor}\label{cor 2.4.5}
Let $\alpha=\sum n_i\beta_i$ be an integer linear combination of the vectors $\beta_i$ in a {cluster tilting set.} If $n_i>0$ for all $1\le i\le n$, then $Pres(\Lambda,\alpha)$ has no semi-invariant with nonzero determinantal {weight}.
\end{cor}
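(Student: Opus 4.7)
The plan is to reduce this corollary to Proposition \ref{prop: no semi-invariants on f1+...+fn} by exhibiting, on the presentation space carrying $\sigma$, a generic element that literally decomposes as a direct sum indexed by the cluster tilting set.

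Suppose for contradiction that there is a nonzero semi-invariant $\sigma$ on some $Pres_\Lambda(\gamma_1,\gamma_0)\subseteq Pres(\Lambda,\alpha)$ with nonzero determinantal weight $\beta$. First I would choose representatives $p_i:P(\gamma^i_1)\to P(\gamma^i_0)$ for each rigid presentation $P(\gamma^i_\ast)$ and, if necessary, enlarge $(\gamma_1,\gamma_0)$ by stabilizing each $p_i$ so that $\gamma_s=\sum_i n_i\gamma^i_s$ for $s=0,1$ (possibly absorbing the slack coordinate-wise into a summand $1_{P(\delta)}$, which has dimension vector $0$ and hence sits in $Pres(\Lambda,0)$). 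After this normalization, the element $f_0:=\bigoplus_i p_i^{n_i}\oplus 1_{P(\delta)}$ lies in $Pres_\Lambda(\gamma_1,\gamma_0)$.

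Next I would argue that $\sigma(f_0)\neq 0$. Since $\{\beta_i\}$ is a cluster tilting set, $f_0$ is a rigid presentation in the sense of Theorem \ref{Virtual canonical decomposition theorem}, so its chain-isomorphism orbit is open and dense in $Pres_\Lambda(\gamma_1,\gamma_0)$. The vanishing locus of $\sigma$ is a proper Zariski closed subset (as $\sigma$ is nonzero and the presentation space is irreducible), so $\{\sigma\neq 0\}$ is open and dense. These two dense opens intersect, producing some $f=g_0 f_0 g_1^{-1}$ with $\sigma(f)\neq 0$; by the semi-invariant identity $\sigma(g_0 f_0 g_1^{-1})=\sigma(f_0)\prod_i \chi_i(g_0)^{\beta_i}\chi_i(g_1^{-1})^{\beta_i}$, it follows that $\sigma(f_0)\neq 0$.

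Now I would invoke Proposition \ref{prop: no semi-invariants on f1+...+fn} with the decomposition $f_0=\bigoplus_i p_i^{n_i}\oplus 1_{P(\delta)}$, taking the dimension vectors of the summands to be $n_1\beta_1,\ldots,n_n\beta_n,0$. The key linear independence input is that $\{\beta_i\}_{i=1}^n$ are linearly independent over $\QQ$: the underlying modules $\{|P(\gamma^i_\ast)|\}$ form a complete exceptional sequence (Lemma \ref{lem: Schofield's observation} together with Corollary \ref{cor 2.3.7: Pres L=CL}), whose $n$ dimension vectors $\pm\beta_i$ generate $\ZZ^n$ by Proposition \ref{prop: properties of exceptional sequences}(3). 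Hence $\{n_i\beta_i\}_{i=1}^n$ are also linearly independent (since each $n_i>0$), so Proposition \ref{prop: no semi-invariants on f1+...+fn} forbids any semi-invariant on $f_0$ of nonzero det-weight. This contradicts $\sigma(f_0)\neq 0$ with $\beta\neq 0$.

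The main obstacle is the normalization in the first step: a given $(\gamma_1,\gamma_0)$ need not a priori satisfy $\gamma_s=\sum n_i\gamma^i_s$ for any allowed choice of representatives of $P(\gamma^i_\ast)$. The resolution is that we may stabilize each $p_i$ (this does not change its class in $\Vrep$ or in the conclusion of Proposition \ref{prop: no semi-invariants on f1+...+fn}, since semi-invariants on $f_i$ and $f_i\oplus 1_{P(\eta^i)}$ determine each other by Lemma \ref{lem: weight of f1+f2 is same as weight of f1,f2} applied to the identity summand). Once this compatibility of stabilization with the direct-sum structure is in place, the rest of the argument is essentially a packaging of Theorem \ref{thm 2.3.11: virtual generic decomposition theorem} and Proposition \ref{prop: no semi-invariants on f1+...+fn}.
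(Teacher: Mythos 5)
Your proof follows the same route as the paper: cite the Virtual Generic Decomposition Theorem (Theorem \ref{thm 2.3.11: virtual generic decomposition theorem}) to decompose the generic element and then invoke Proposition \ref{prop: no semi-invariants on f1+...+fn} for the contradiction. The paper compresses this to two sentences; your version makes explicit the intermediate object $f_0$, the density/intersection argument for $\sigma(f_0)\neq 0$, and the linear independence of the $\beta_i$ via Proposition \ref{prop: properties of exceptional sequences}(3). All of that is sound.

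The one spot that needs care is your ``resolution'' of the normalization obstacle, which as written goes the wrong direction. Stabilizing $p_i$ replaces $\gamma^i_s$ by $\gamma^i_s+\eta$, which makes $\sum_i n_i\gamma^i_s$ \emph{larger}; that cannot help you achieve $\sum_i n_i\gamma^i_s\le\gamma_s$ for a fixed $(\gamma_1,\gamma_0)$ on which $\sigma$ lives (and you cannot freely enlarge $(\gamma_1,\gamma_0)$ either, since $\sigma$ need not extend to a stabilization). What actually closes the gap is the opposite choice: take the $p_i$ \emph{minimal}. Because $\bigoplus_i P(\gamma^i_\ast)^{n_i}$ is rigid (Ext-orthogonal summands), its minimal presentation $P(\gamma^{\min}_1)\to P(\gamma^{\min}_0)$ has $\gamma^{\min}_0$ and $\gamma^{\min}_1$ with disjoint supports; since $\gamma_0-\gamma^{\min}_0=\gamma_1-\gamma^{\min}_1$ and in each coordinate one of $\gamma^{\min}_{0,j},\gamma^{\min}_{1,j}$ vanishes, the common difference is nonnegative, so $\delta:=\gamma_s-\gamma^{\min}_s\ge 0$ and $f_0=\bigoplus p_i^{n_i}\oplus 1_{P(\delta)}$ lies in $Pres_\Lambda(\gamma_1,\gamma_0)$ as you want. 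The paper sidesteps this by simply asserting that the generic element of $\Hom_\Lambda(P,Q)$ splits as the stated direct sum, so your argument is actually more explicit about where the claim is being used; just fix the direction of the stabilization remark as above.
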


\begin{proof} If there is a nonzero semi-invariant $\sigma$ on $\Hom_\Lambda(P,Q)$ with $\undim(Q)-\undim(P)=\alpha$ then $\sigma$ will be nonzero on the generic element of $\Hom_\Lambda(P,Q)$. By Corollary \ref{thm 2.3.11: virtual generic decomposition theorem}, the generic element splits as a direct sum of $n$ objects with linearly independent dimension vectors. But this contradicts Proposition \ref{prop: no semi-invariants on f1+...+fn}. Our Corollary follows.
\end{proof}

\begin{defn}\label{def: virtual semi-invariant}
A \emph{virtual semi-invariant} of {det-weight} $\beta$ on $\Vrep(\Lambda,\alpha)$ is a mapping
\[
	\sigma:\Vrep(\Lambda,\alpha)\to K
\]
whose restriction to each $Pres_\Lambda(\gamma_1,\gamma_0)\subset \Vrep(\Lambda,\alpha)$ is a semi-invariant of {det-weight} $\beta$. 
\end{defn}

By definition of direct limit, a virtual semi-invariant on $\Vrep(\Lambda,\alpha)$ is the same as a system of semi-invariants one on each $Pres_\Lambda(\gamma_1,\gamma_0)$ which are compatible with stabilization. One example is the determinantal semi-invariant $\sigma_M$ defined above. {Since each coordinate of $\gamma_0$ and $\gamma_1$ become arbitrarily large, the weight of a virtual semi-invariant is well-defined when $K$ is infinite.}

\section{Virtual stability theorem}\label{ss: virtual stability theorem}

In this section we will prove the Virtual Stability Theorem (\ref{thm 3.1.1: virtual stability theorem}) which states that the domain $D_\ZZ(\beta)$ of the semi-invariant with {det-weight} $\beta$ defined in \ref{defn: D(b), Dss(b), DZ(b), DZss(b)} is the subset of $\ZZ^n$ given by the stability conditions of \ref{thm 3.1.1: virtual stability theorem}(2). We also give a description of all elements of this set (Proposition \ref{prop 3.5.2: extended stability theorem}).

\subsection{Statements of the theorem}

Let $\beta,\beta'$ be real Schur roots.
We say that $\beta'$ is a real Schur subroot of $\beta$ if $M_\beta$ contains a submodule isomorphic to $M_{\beta'}$.

 \begin{thm}[Virtual Stability Theorem]\label{thm 3.1.1: virtual stability theorem}Let $K$  be any field. Let $\Lambda$ be a finite dimensional hereditary $K$-algebra with $n$ non-isomorphic simple modules. Let $\alpha\in \mathbb Z^n$ and $\beta$ a real Schur root. Then, the following are equivalent:
\begin{enumerate}
\item  There exists a virtual representation $f:P\to Q$ so that $\ \undim Q-\undim P=\alpha$ and $f$ induces an isomorphism
\[
f^\ast:\Hom_\Lambda(Q,M_\beta)\xrightarrow{\approx} \Hom_\Lambda(P,M_\beta).
\]
\item Stability conditions for $\alpha$ and $\beta$ hold:
 $\brk{\alpha,\beta}=0$ and
 $\brk{\alpha,\beta'}\le 0$ for all real Schur subroots $\beta'\subseteq \beta$.
\item There is a nonzero determinantal semi-invariant of {det-weight} $\beta$ on the virtual representation space $\Vrep(\Lambda,\alpha)$.
\end{enumerate}
\end{thm}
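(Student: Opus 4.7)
The plan is to establish the cycle of implications (1)$\Leftrightarrow$(3), (1)$\Rightarrow$(2), and (2)$\Rightarrow$(1), where the first two are essentially formal and the real content lies in the last. For (1)$\Leftrightarrow$(3), I introduce the determinantal semi-invariant $\sigma_{M_\beta}(f):=\det_K\Hom_\Lambda(f,M_\beta)$ and use Proposition~\ref{Euler-Ringel form} together with projectivity to identify $\dim_K\Hom_\Lambda(P(\gamma),M_\beta)$ with $\brk{\undim P(\gamma),\beta}$, so the source and target of $f^\ast$ share the same $K$-dimension precisely when $\brk{\alpha,\beta}=0$; the determinant is then nonzero iff $f^\ast$ is an isomorphism, and compatibility with stabilization is immediate because adding an identity map contributes a determinant factor of~$1$. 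For (1)$\Rightarrow$(2), I apply $\Hom_\Lambda(f,-)$ to the short exact sequence $0\to M_{\beta'}\to M_\beta\to M_\beta/M_{\beta'}\to 0$ associated to any real Schur subroot $\beta'\subseteq\beta$; projectivity of $P,Q$ kills the $\Ext^1$ terms, yielding a commutative diagram of two short exact sequences whose middle vertical arrow is the isomorphism $f^\ast$. A snake-lemma chase forces the left vertical map $\Hom_\Lambda(Q,M_{\beta'})\to \Hom_\Lambda(P,M_{\beta'})$ to be injective, whence $\brk{\alpha,\beta'}=\dim_K\Hom_\Lambda(Q,M_{\beta'})-\dim_K\Hom_\Lambda(P,M_{\beta'})\le 0$; the equality $\brk{\alpha,\beta}=0$ follows directly from $f^\ast$ being an isomorphism.

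For (2)$\Rightarrow$(1) I follow the paper's three-stage reduction. First, by Theorem~\ref{thm: extension to finite fields} the real Schur roots, their subroots and the Euler form all transfer unchanged from $\Lambda$ to $\Lambda(t)$, so a presentation over $\Lambda(t)$ realizing the isomorphism specializes to a $K$-rational one by a standard denominator-clearing argument, reducing to the infinite-field case. Second, if $\beta$ is not sincere, let $J$ be its support and observe that $M_\beta$ is a sincere module over the hereditary algebra $e_J\Lambda e_J$ with the same real Schur subroots of $\beta$; the stability inequalities for $\alpha$ only involve these $\beta'$, so the coordinates of $\alpha$ outside $J$ are unconstrained and can be realized by adding shifted projectives $P_i[1]$ with $i\notin J$ which pair trivially with $M_\beta$. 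Third, in the sincere case, I invoke Schofield's perpendicular category $\beta^\perp=\{X:\Hom_\Lambda(X,M_\beta)=0=\Ext^1_\Lambda(X,M_\beta)\}$, which is a hereditary abelian subcategory with $n-1$ simple objects; the stability conditions are precisely the numerical criterion for $\alpha$ to be the dimension vector of a presentation with kernel and cokernel in $\beta^\perp$, and the Virtual Generic Decomposition Theorem~\ref{thm 2.3.11: virtual generic decomposition theorem} applied inside $\beta^\perp$ produces such a presentation $f:P\to Q$. The long exact sequence obtained by applying $\Hom_\Lambda(-,M_\beta)$ to $0\to \ker f\to P\to Q\to \coker f\to 0$ then collapses to an isomorphism $f^\ast$.

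The main obstacle will be this last step: the numerical identification that integer vectors satisfying the stability inequalities are exactly those realized inside $\beta^\perp$, which requires pairing the braid-group characterization of real Schur roots (Corollary~\ref{cor: characterization of real Schur roots}) with a careful computation of the Euler form restricted to $\beta^\perp$, and then checking that the virtual decomposition inside $\beta^\perp$ lifts to one in $\Vrep(\Lambda,\alpha)$ whose kernel and cokernel generically lie in $\beta^\perp$ rather than merely in its extension closure by $M_\beta$. Making this lift work uniformly in the presentation category, and verifying that the resulting $f$ really does induce an isomorphism on $\Hom_\Lambda(-,M_\beta)$ on a dense open subset (so that the semi-invariant $\sigma_{M_\beta}$ is nonzero), is where the bulk of the technical work is concentrated.
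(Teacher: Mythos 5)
Your treatment of the equivalences (1)$\Leftrightarrow$(3) and the implication (1)$\Rightarrow$(2) matches the paper's argument and is fine.  The structural outline of (2)$\Rightarrow$(1) is also essentially the paper's: infinite field first, reduce to the sincere case, then work inside the left perpendicular category $^\perp M_\beta$.  However, at the technical heart of the sincere case there is a genuine gap that you acknowledge but do not close.  Your assertion that ``the stability conditions are precisely the numerical criterion for $\alpha$ to be the dimension vector of a presentation with kernel and cokernel in $\beta^\perp$'' is exactly what must be proved.  The paper's Lemma~\ref{lem: stability theorem hold when beta is sincere} handles this by expanding $\alpha=\sum a_i\undim E_i$ in the simple objects $E_1,\dots,E_{n-1}$ of $^\perp M_\beta$ and then \emph{constructing, for each $k$, a proper real Schur subroot $\gamma_k\subsetneq\beta$} with $\brk{\undim E_i,\gamma_k}=0$ for $i\ne k$ and $\brk{\undim E_k,\gamma_k}<0$, from which the stability inequality $\brk{\alpha,\gamma_k}\le0$ forces $a_k\ge0$.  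Building these $\gamma_k$ requires the rank-2 wide subcategories $\cW_k=(\bigoplus_{i\ne k}E_i)^\perp$ and the delicate Lemma~\ref{lem: Mb is not simple} relating the simplicity of $M_\beta$ in $\cW_k$ to the projectivity of the projective cover $P_k'$ of $E_k$ in $^\perp M_\beta$; none of this apparatus appears in your proposal.  Invoking the Virtual Generic Decomposition Theorem ``inside $\beta^\perp$'' does not substitute for this --- once one knows $a_k\ge0$, one can take $N=\bigoplus E_i^{a_i}\in\,^\perp M_\beta$ and its projective presentation directly without any genericity argument, so the decomposition theorem is not even needed there; the obstacle is proving $a_k\ge0$, and your proposal leaves it open.

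Two secondary issues.  First, your non-sincere reduction via $e_J\Lambda e_J$ (where $J$ is the support of $\beta$) is not the same as the paper's reduction, and it is not obviously correct: $e_J\Lambda e_J$ retains arrows coming from $\Lambda$-paths through the deleted vertices, whereas the paper uses the quotient $\Lambda_{(j)}=\Lambda/\Lambda e_j\Lambda$, whose quiver is exactly $Q$ with vertex $j$ removed.  These algebras have different Euler forms and different module categories in general (e.g.\ $\Lambda=K(1\to2\to3)$, $J=\{1,3\}$), so the Euler-form computations and the identification of real Schur subroots do not transfer across to $e_J\Lambda e_J$ without further argument.  Second, the passage to finite fields in the paper is not a denominator-clearing specialization of a $\Lambda(t)$-presentation: the paper instead uses the bijection between exceptional $\Lambda$- and $\Lambda(t)$-modules (Theorem~\ref{thm: extension to finite fields}) to transport the exceptional modules $E_i',P_j'$ back to $\Lambda$ and then builds the presentation directly as a sum of their presentations.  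Specialization over a finite field need not hit the dense open locus where the determinant is nonzero, so your proposed argument can fail as stated.
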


\begin{rem}
In the previous paper \cite{IOTW1}, the authors proved the Virtual Stability Theorem for hereditary algebras over an algebraically closed field and vectors $\alpha\in \mathbb Z^n$, which may have negative coordinates.
This was an extension of the results of \cite{DW} and \cite{King} from $\alpha\in \mathbb N^n$ to $\alpha\in \mathbb Z^n$. Here we extend this theorem to hereditary algebras over any field. We also note that Condition (2) is weaker and thus the theorem is stronger than the original theorems of \cite{DW} and \cite{King} since the condition $\brk{\alpha,\beta'}\le0$ is only required for real Schur subroots $\beta'$ of $\beta$ and not for all subroots of $\beta$.
\end{rem}

We now restate the theorem in terms of the following sets, usually referred  to as various ``domains of virtual semi-invariants" or ``supports of virtual semi-invariants".
 \begin{defn}\label{defn: D(b), Dss(b), DZ(b), DZss(b)} Let $\beta$ be a real Schur root. We define the following:
$$
D_\ZZ(\beta):=\{\alpha\in \ZZ^n\,:\, \text{Condition(1) holds}\} = \text{\emph{integral support} of {det-weight} } \beta,
$$
$$
D(\beta):=\text{convex hull of } D_\ZZ(\beta) \text{ in } \RR^n = \text{real  support of {det-weight} } \beta,
$$
\[
D^{ss}(\beta):=\{x\in\RR^n\,:\, \text{$\brk{x,\beta}=0$ and
 $\brk{x,\beta'}\le 0$ for all real Schur subroots $\beta'\subseteq \beta$}\} \]\[= \text{\emph{support} of real semi-stability conditions},
 \]
\[D_\ZZ^{ss}(\beta):=\{\alpha\in\ZZ^n\,:\, \text{$\brk{\alpha,\beta}=0$ and
 $\brk{\alpha,\beta'}\le 0$ for all real Schur subroots $\beta'\subseteq \beta$}\}\]\[=D^{ss}(\beta)\cap \ZZ^n = 
\text{\emph{support} of integral semi-stability conditions.}\]
\end{defn}
 
%Note that $D_\ZZ(\beta)$ is an additive monoid. It contains 0 and is closed under sum.

The Virtual Stability Theorem \ref{thm 3.1.1: virtual stability theorem} can now be restated as:
\begin{thm}[Virtual Stability Theorem']\label{virtual stability theorem'}Let $K$  be any field, $\Lambda$  a finite dimensional hereditary $K$-algebra with $n$  simple modules. Let $\beta$ be a real Schur root. Then 
 \[
 	D^{ss}_\ZZ(\beta)=D_\ZZ(\beta).
 \]
\end{thm}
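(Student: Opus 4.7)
The statement is equivalent to the three-way equivalence (1) $\iff$ (2) $\iff$ (3) of Theorem \ref{thm 3.1.1: virtual stability theorem}. My plan is to prove the cycle (1) $\Rightarrow$ (3) $\Rightarrow$ (2) $\Rightarrow$ (1), with the hard direction (2) $\Rightarrow$ (1) handled in the three-stage stratification indicated by the paper: (i) $\beta$ sincere and $\kk$ infinite, (ii) general $\beta$ and $\kk$ infinite, (iii) arbitrary $\kk$.

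For (1) $\Rightarrow$ (3), given $f:P\to Q$ satisfying (1), the determinantal semi-invariant $\sigma_{M_\beta}(g)=\det_\kk\Hom_\Lambda(g,M_\beta)$ is a virtual semi-invariant on $\Vrep(\Lambda,\alpha)$ of det-weight $\beta$ with $\sigma_{M_\beta}(f)=\det f^\ast\neq 0$. For (3) $\Rightarrow$ (2), the equation $\brk{\alpha,\beta}=0$ follows from Proposition \ref{prop: semi-invariants are perpendicular to dimension} (the finite-$\kk$ case is reduced to the infinite case by tensoring with $\kk(t)$, using Theorem \ref{thm: extension to finite fields}). For a real Schur subroot $\beta'\subseteq\beta$, the inclusion $M_{\beta'}\hookrightarrow M_\beta$ induces the commutative square
\[
  \xymatrix{
    \Hom_\Lambda(Q,M_{\beta'})\ar[r]\ar[d] & \Hom_\Lambda(Q,M_\beta)\ar[d]^{\cong} \\
    \Hom_\Lambda(P,M_{\beta'})\ar[r] & \Hom_\Lambda(P,M_\beta)
  }
\]
at a general $f$ with $\sigma_{M_\beta}(f)\neq 0$; the horizontal maps are injective by left-exactness of $\Hom$, while the right vertical is an isomorphism, forcing the left vertical to be injective. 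The resulting inequality $\dim_\kk\Hom_\Lambda(Q,M_{\beta'})\le\dim_\kk\Hom_\Lambda(P,M_{\beta'})$ rewrites as $\brk{\alpha,\beta'}\le 0$ via the standard four-term exact sequence for the projective presentation $P\to Q\to\coker f\to 0$.

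For the hard direction (2) $\Rightarrow$ (1), in Stage (i) embed $M_\beta$ as the first term of a complete exceptional sequence $(M_\beta,X_2,\dots,X_n)$ by Proposition \ref{prop: properties of exceptional sequences}(2); then each $X_j$ lies in $M_\beta^\perp=\{Y:\Hom_\Lambda(Y,M_\beta)=\Ext^1_\Lambda(Y,M_\beta)=0\}$, and the vectors $\undim X_j$ together with $\beta$ form a $\ZZ$-basis of $\ZZ^n$ with $\brk{\undim X_j,\beta}=0$ for $j\ge 2$ and $\brk{\beta,\beta}=f_\beta>0$. Hence $\brk{\alpha,\beta}=0$ forces $\alpha\in\bigoplus_{j\ge 2}\ZZ\cdot\undim X_j$. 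The subroot inequalities $\brk{\alpha,\beta'}\le 0$ then localize $\alpha$ to a specific chamber which, after suitable braid-group mutation (Theorem \ref{Thm: braid group acts transitively on exc seqs}) of the inherited exceptional sequence $(X_2,\dots,X_n)$ of $M_\beta^\perp$, can be identified with the cone generated by a partial cluster tilting set $\{\beta^{(j)}\}\subset M_\beta^\perp$. Writing $\alpha=\sum r_j\beta^{(j)}$ with $r_j\in\ZZ$ and applying the Virtual Generic Decomposition Theorem \ref{thm 2.3.11: virtual generic decomposition theorem} produces a virtual representation $f=\bigoplus P(\gamma^j_\ast)^{r_j}\in\Vrep(\Lambda,\alpha)$ whose summands all lie in $M_\beta^\perp$; perpendicularity then forces $f^\ast_{M_\beta}$ to be an isomorphism. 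Stage (ii) reduces to Stage (i) by restricting to the support subalgebra $\Lambda_I$ of $\beta$ and lifting the resulting virtual rep to $\Lambda$ via shifted projectives $P_i[1]$ for $i\notin I$, which contribute trivially to $\Hom_\Lambda(-,M_\beta)$ since $\Hom_\Lambda(P_i,M_\beta)=0$ for $i\notin I$. Stage (iii) uses Theorem \ref{thm: extension to finite fields}: the constructions in Stages (i)-(ii) are canonical in the exceptional-sequence data, the integer coefficients $r_j$, and the valued quiver, all of which are preserved under $\Lambda\to\Lambda(t)$, so the construction descends to a virtual representation over $\Lambda$.

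The main obstacle is the combinatorial step in Stage (i) passing from the inequalities $\brk{\alpha,\beta'}\le 0$ over all real Schur subroots $\beta'\subseteq\beta$ to the existence of a partial cluster tilting set inside $M_\beta^\perp$ realizing $\alpha$; this encodes the chamber structure of the semi-invariant picture of $M_\beta^\perp$ and is where the restriction to \emph{real Schur} subroots (as opposed to arbitrary subvectors, as in the classical statement over algebraically closed fields) becomes essential.
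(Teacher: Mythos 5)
Your outline of the cycle $(1)\Rightarrow(3)\Rightarrow(2)\Rightarrow(1)$ and the stratification into sincere / non-sincere / arbitrary-field cases matches the paper, and the arguments for $(1)\Rightarrow(3)$, for $(3)\Rightarrow(2)$ (the commutative square with $M_{\beta'}\hookrightarrow M_\beta$ and left exactness of $\Hom$), and for Stages (ii) and (iii) are essentially correct and parallel to Lemmas \ref{lem 3.1.6: D(b) in Dss(b)}, \ref{lem: reduction to sincere case} and Section \ref{sec: extend to finite K}. However, there is a genuine gap at the heart of Stage (i), and you name it yourself at the end without resolving it.

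Concretely: after observing that $\brk{\alpha,\beta}=0$ forces $\alpha$ into the $\ZZ$-lattice spanned by $\undim X_j$ for $X_j\in{}^{\perp}M_\beta$, you assert that the stability inequalities $\brk{\alpha,\beta'}\le 0$ for real Schur subroots $\beta'\subseteq\beta$ ``localize $\alpha$ to a chamber'' which can, after braid-group mutation, be matched with a partial cluster tilting set in $^\perp M_\beta$. No argument is offered for why such a mutation exists, or why the coefficients of $\alpha$ in that basis must be nonnegative. This is precisely the content of the hard direction, and it is not a matter of invoking transitivity of the braid action; it requires producing, for each simple object $E_k$ of $^\perp M_\beta$, a \emph{witness subroot} $\gamma_k\subsetneq\beta$ with $\brk{\undim E_i,\gamma_k}=0$ for $i\ne k$ and $\brk{\undim E_k,\gamma_k}<0$ (Claim 1 in the proof of Lemma \ref{lem: stability theorem hold when beta is sincere}). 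The paper constructs $\gamma_k$ inside the rank-two wide subcategory $\cW_k=(E_1\oplus\cdots\oplus\widehat{E_k}\oplus\cdots\oplus E_{n-1})^\perp$, using Lemma \ref{lem: Mb is not simple} to show that $M_\beta$ is not simple in $\cW_k$ (which is where sincerity enters, via non-projectivity of the projective covers $P'_k$ in $^\perp M_\beta$), and then takes $\gamma_k=\undim S_k$ for an appropriate simple object of $\cW_k$. Once those inequalities are in hand, $\alpha=\sum a_iE_i$ has $a_k\ge 0$ immediately, and $\bigoplus E_i^{a_i}\in{}^\perp M_\beta$ realizes $\alpha$ directly in $D_\ZZ(\beta)$ --- no cluster tilting set or generic decomposition is actually needed. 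Your detour through a ``partial cluster tilting set'' also introduces a secondary wrinkle: the simple objects $E_i$ of $^\perp M_\beta$ need not be $\Ext$-orthogonal, so $\Delta^+(\beta)$ is not the cone of a cluster tilting set, and the Virtual Generic Decomposition Theorem as stated (Theorem \ref{thm 2.3.11: virtual generic decomposition theorem}) only produces the open dense stratum when the coefficients $r_j$ are already known to be nonnegative, which is again the thing to be proven. So the proposal is not wrong in its architecture, but it leaves the essential representation-theoretic lemma unproved and misidentifies the combinatorial object (simple objects of $^\perp M_\beta$, not a cluster tilting set) through which the paper closes the argument.
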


The proof of the theorem occupies the rest of this section. We will first prove the theorem for infinite fields and in  subsection \ref{sec: extend to finite K} we will extend the proof to all fields. We start with the simple lemma showing  the equivalence of conditions (1) and (3) in the Virtual Stability Theorem \ref{thm 3.1.1: virtual stability theorem}, hence reducing the proof to showing that $D^{ss}_\ZZ(\beta)=D_\ZZ(\beta)$, i.e. Virtual Stability Theorem' \ref{virtual stability theorem'}.

\begin{lem} Let $K$  be an infinite field, $\Lambda$  a finite dimensional hereditary $K$-algebra with $n$  simple modules, $\alpha\in \mathbb Z^n$ and $\beta$ a real Schur root. The following are equivalent:
\begin{enumerate}
\item  There exists a virtual representation $f:P\to Q$ with $\undim Q-\undim P=\alpha$ so that $f$ induces an isomorphism
$
f^\ast:\Hom_\Lambda(Q,M_\beta)\xrightarrow{\approx} \Hom_\Lambda(P,M_\beta).
$
\item There is a nonzero determinantal semi-invariant of {det-weight} $\beta$ on virtual representation space $\Vrep(\Lambda,\alpha)$.
\end{enumerate}
\end{lem}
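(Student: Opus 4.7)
The plan is to produce the semi-invariant $\sigma_{M_\beta}(f) := \det{}_K \Hom_\Lambda(f,M_\beta)$ directly and observe that (1) and (2) are two ways of saying this determinant is not identically zero on $\Vrep(\Lambda,\alpha)$.

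First, I would verify that both conditions implicitly force $\brk{\alpha,\beta}=0$. For (1), an isomorphism $f^\ast$ requires $\dim_K\Hom_\Lambda(P(\gamma_0),M_\beta)=\dim_K\Hom_\Lambda(P(\gamma_1),M_\beta)$; since $\dim_K\Hom_\Lambda(P(\gamma_s),M_\beta)=\sum_i\gamma_{s,i}f_i\beta_i$ and $\gamma_0-\gamma_1=L^t\alpha$, the difference of these dimensions equals $(L^t\alpha)^tD\beta=\alpha^tE\beta=\brk{\alpha,\beta}$, so this must vanish. For (2), Proposition \ref{prop: semi-invariants are perpendicular to dimension} gives $\brk{\alpha,\beta}=0$ whenever a nonzero semi-invariant of det-weight $\beta$ exists on $\Vrep(\Lambda,\alpha)$. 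So I may assume $\brk{\alpha,\beta}=0$ throughout.

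Next, assuming the dimension match, I would define on each $Pres_\Lambda(\gamma_1,\gamma_0)$ with $\gamma_0-\gamma_1=L^t\alpha$ the regular function
\[
	\sigma_{M_\beta}(f)=\det{}_K\!\bigl(\Hom_\Lambda(f,M_\beta)\colon \Hom_\Lambda(P(\gamma_0),M_\beta)\to \Hom_\Lambda(P(\gamma_1),M_\beta)\bigr),
\]
the determinant being computed after fixing $K$-bases of both sides (so $\sigma_{M_\beta}$ is determined up to a nonzero scalar, which is irrelevant for the nonvanishing condition). Three checks: (a) polynomiality in the entries of $f$, which is immediate; (b) the semi-invariant identity with det-weight $\beta$, since $\Hom_\Lambda(gfh,M_\beta)=\Hom(h,M_\beta)\circ\Hom(f,M_\beta)\circ\Hom(g,M_\beta)$ and the induced action of $g\in\Aut_\Lambda(P_i^{n_i})=GL(n_i,F_i)$ on $\Hom_\Lambda(P_i^{n_i},M_\beta)\cong(F_i^{\beta_i})^{n_i}$ has $K$-determinant $\chi_i(g)^{\beta_i}$ (the space is $\beta_i$ copies of the natural $F_i^{n_i}$-representation); (c) compatibility with stabilization $f\mapsto f\oplus' 1_{P(\delta)}$, since applying $\Hom(-,M_\beta)$ yields a block-diagonal map with an identity block on $\Hom(P(\delta),M_\beta)$, whose determinant equals that of $\Hom(f,M_\beta)$.

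Having verified these, $\sigma_{M_\beta}$ is a determinantal virtual semi-invariant of det-weight $\beta$ on $\Vrep(\Lambda,\alpha)$. The equivalence is then immediate from the basic linear algebra fact that a $K$-linear map between finite-dimensional $K$-vector spaces of equal dimension has nonzero determinant if and only if it is an isomorphism: $\sigma_{M_\beta}$ is nonzero on $\Vrep(\Lambda,\alpha)$ iff some representative $f:P\to Q$ has $\sigma_{M_\beta}(f)\ne 0$, iff $f^\ast=\Hom_\Lambda(f,M_\beta)$ is an isomorphism. I do not anticipate a genuine obstacle here; the only mildly technical point is the character computation in (b), which one reduces to the identification $\Hom_\Lambda(P_i,M_\beta)\cong F_i^{\beta_i}$ as an $F_i$-module and the standard fact that $\det_K$ on $GL(m,F_i)$ is compatible with taking direct sums of copies of the natural representation.
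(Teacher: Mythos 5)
Your construction of $\sigma_{M_\beta}(f)=\det_K\Hom_\Lambda(f,M_\beta)$, the verification of its det-weight $\beta$ via the block computation, and the compatibility with stabilization are all correct and match what the paper does implicitly. This cleanly gives $(1)\Rightarrow(2)$, since $\sigma_{M_\beta}$ is then a nonzero determinantal semi-invariant of det-weight $\beta$.

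There is, however, a gap in $(2)\Rightarrow(1)$. By the paper's definition, a determinantal semi-invariant of det-weight $\beta$ is $\sigma_M$ for \emph{some} module $M$ with $\undim M=\beta$, not necessarily $M\cong M_\beta$. Your final paragraph only establishes the equivalence $(1)\iff(\sigma_{M_\beta}\text{ nonzero})$; it does not explain why the existence of a nonzero $\sigma_M$ for an arbitrary $M$ of dimension vector $\beta$ forces $\sigma_{M_\beta}$ to be nonzero. The missing step is the open/density argument the paper supplies: fix $f:P\to Q$ with $\Hom_\Lambda(f,M)$ an isomorphism; the function on the irreducible affine space $Rep(\Lambda,\beta)$ sending a module structure $N$ to $\det_K\Hom_\Lambda(f,N)$ is regular, so its nonvanishing locus is open and, being nonempty, dense. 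Since $M_\beta$ is exceptional (hence rigid) and $K$ is infinite, Corollary \ref{thm: Mbeta is unique} says the locus of $N\cong M_\beta$ is also open dense, so the two loci meet and $\Hom_\Lambda(f,M_\beta)$ is an isomorphism. Without this you have only shown $(1)\iff(\sigma_{M_\beta}\neq 0)\Rightarrow(2)$, not the full equivalence.
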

\begin{proof}(1)$\implies$(2) It follows from (1) that 
$dim_K\Hom_\Lambda(Q,M_\beta)=dim_K\Hom_\Lambda(P,M_\beta)$  and therefore determinant of $h^*$ is defined for all $h\in Pres(\Lambda,\alpha)$, is non-zero for $h=f$ and is compatible with stabilization. Hence $\sigma =$ determinant is a (determinantal) semi-invariant on $\Vrep(\Lambda,\alpha)$.

(2)$\implies$(1) Given a determinantal virtual semi-invariant of {det-weight} $\beta$, we have an isomorphism $f^\ast:\Hom_\Lambda(Q,M)\cong \Hom_\Lambda(P,M)$ for some $M$ with $\undim M=\beta$. Since being an isomorphism is an open condition, $f^\ast$ must also be an isomorphism for $M=M_\beta$.
\end{proof}

\begin{lem}\label{lem 3.1.6: D(b) in Dss(b)}
 Let $\beta$ be a real Schur root. Then
$D_\ZZ(\beta)\subseteq D^{ss}_\ZZ(\beta)$.
\end{lem}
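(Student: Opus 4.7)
The plan is a short diagram-chase argument using the definition of $D_\ZZ(\beta)$ and the short exact sequence defining the real Schur subroot $\beta'$ inside $\beta$.

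Suppose $\alpha \in D_\ZZ(\beta)$, so there is a presentation $f\!:\!P\to Q$ with $\undim Q-\undim P=\alpha$ such that $f^\ast\!:\Hom_\Lambda(Q,M_\beta)\xrightarrow{\approx}\Hom_\Lambda(P,M_\beta)$ is an isomorphism. First I would verify $\brk{\alpha,\beta}=0$. Since $P$ and $Q$ are projective, $\Ext^1_\Lambda(P,M_\beta)=\Ext^1_\Lambda(Q,M_\beta)=0$, so by Proposition \ref{Euler-Ringel form},
\[
\brk{\undim P,\beta}=\dim_K\Hom_\Lambda(P,M_\beta),\qquad \brk{\undim Q,\beta}=\dim_K\Hom_\Lambda(Q,M_\beta),
\]
and since $f^\ast$ is an isomorphism these two dimensions agree. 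Subtracting gives $\brk{\alpha,\beta}=0$.

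For the second stability condition, let $\beta'\subseteq\beta$ be a real Schur subroot, realized by a submodule inclusion $M_{\beta'}\hookrightarrow M_\beta$ with quotient $M'':=M_\beta/M_{\beta'}$. Applying $\Hom_\Lambda(P,-)$ and $\Hom_\Lambda(Q,-)$ to the short exact sequence $0\to M_{\beta'}\to M_\beta\to M''\to 0$ gives two short exact sequences (exactness on the right uses projectivity of $P$ and $Q$), and $f$ induces a commutative diagram
\[
\xymatrix@R=12pt@C=16pt{
0\ar[r]&\Hom_\Lambda(Q,M_{\beta'})\ar[r]\ar[d]^{f^\ast}&\Hom_\Lambda(Q,M_\beta)\ar[r]\ar[d]^{f^\ast}_{\cong}&\Hom_\Lambda(Q,M'')\ar[r]\ar[d]^{f^\ast}&0\\
0\ar[r]&\Hom_\Lambda(P,M_{\beta'})\ar[r]&\Hom_\Lambda(P,M_\beta)\ar[r]&\Hom_\Lambda(P,M'')\ar[r]&0
}
\]
whose middle vertical arrow is an isomorphism. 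The snake lemma then forces the left vertical arrow to be injective (and identifies its cokernel with the kernel of the right vertical arrow). In particular,
\[
\dim_K\Hom_\Lambda(Q,M_{\beta'})\le \dim_K\Hom_\Lambda(P,M_{\beta'}).
\]
Using projectivity of $P$ and $Q$ once more, these dimensions equal $\brk{\undim Q,\beta'}$ and $\brk{\undim P,\beta'}$ respectively, so
\[
\brk{\alpha,\beta'}=\brk{\undim Q,\beta'}-\brk{\undim P,\beta'}\le 0,
\]
which is the required inequality. Combined with $\brk{\alpha,\beta}=0$, this shows $\alpha\in D^{ss}_\ZZ(\beta)$.

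There is no real obstacle here: everything follows from the projectivity of $P,Q$ (turning the Euler form into a $\Hom$-dimension) and a one-step snake lemma. The substantive content of the Virtual Stability Theorem lies in the reverse inclusion, to be addressed in the following subsections.
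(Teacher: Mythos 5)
Your proof is correct and follows the same route as the paper's: compute $\brk{\alpha,\beta}=0$ from the isomorphism $f^\ast$ on $\Hom(-,M_\beta)$, then deduce that $f^\ast$ is injective on $\Hom(-,M_{\beta'})$ to get $\brk{\alpha,\beta'}\le 0$; the paper simply asserts the monomorphism, while you supply the detail. (A minor simplification: the snake lemma is not needed — injectivity of the left vertical map already follows from commutativity of the left square, since $\Hom_\Lambda(Q,M_{\beta'})\into\Hom_\Lambda(Q,M_\beta)\xrightarrow{\cong}\Hom_\Lambda(P,M_\beta)$ is injective and factors through $\Hom_\Lambda(P,M_{\beta'})$.)
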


\begin{proof} Let $\alpha\in D_\ZZ(\beta)$. Then there is $f: P\to Q$ in $Pres(\Lambda,\alpha)$ such that $f^*: \Hom (Q, M_{\beta})\to \Hom(P,M_{\beta})$ is an isomorphism. So, $\brk{\alpha,\beta}=\dim_K\Hom_\Lambda(Q,M_\beta)-\dim_K\Hom_\Lambda(P,M_\beta)=0$. The induced map  $\Hom (Q, M_{\beta'})\to \Hom(P,M_{\beta'})$ is a monomorphism for all real Schur subroots $\beta'\subseteq \beta$. Therefore $\brk{\alpha,\beta'}\leq0$, i.e. stability condition (2) holds and $\alpha\in D^{ss}_\ZZ(\beta)$.
\end{proof}

\subsection{Perpendicular categories of $M_\beta$ and associated exceptional sequences}
To a real Schur root $\beta$ we associate an exceptional sequence $(M_\beta,E_1,\cdots\!,E_{n-1})$ which will play a crucial role in the proof of the theorem. We identify the $\Lambda$-module $M_\beta$ with its projective presentation in $\Vrep(\Lambda,\beta)$.

\begin{defn}\label{def 3.2.1: Vperp}
(a) For any $X$ in $\Vrep(\Lambda)$ let $^{\perp_V}\!X$ be the left $\Hom_{\Vrep(\Lambda)}$-, $\Ext^1_{\Vrep(\Lambda)}$-perpendicular category of $X$ in $\Vrep(\Lambda)$, i.e., 
$^{\perp_V}\!X$ is the full subcategory of $\Vrep(\Lambda)$ with objects $Y$ so that $\Hom_{\Vrep(\Lambda)}(Y,X)=0=\Ext_{\Vrep(\Lambda)}^1(Y,X)$. $X^{\perp_V}$ is defined similarly.

(b) For any $M\in mod\text-\Lambda$, let $^{\perp}\!M$ be the left $\Hom_{\Lambda}$-, $\Ext^1_{\Lambda}$-perpendicular category of $M$ in $mod\text-\Lambda$ with objects $N$ so that $\Hom_\Lambda(N,M)=0=\Ext^1_\Lambda(N,M)$. $M^\perp$ is defined similarly.
\end{defn}

The following lemma will relate perpendicular categories in $\Vrep(\Lambda)$ and in $mod\text-\Lambda$ allowing us to use some well known theorems for module categories.

%\begin{lem}  Let $^{\perp_V}\!M_\beta$ be the perpendicular category in $\Vrep(\Lambda)$ and let  $^{\perp}M_\beta$ be the left $\Hom_{\Lambda}$-, $\Ext^1_{\Lambda}$-perpendicular category of $M_\beta$ in $mod\text-\Lambda$. Then: $^{\perp_V}\!M_\beta \cap mod\text-\Lambda=\,^{\perp}\!M_\beta$.\end{lem}

\begin{lem}\label{lem 3.2.2: X-Vperp=|X|perp}
\emph{(a)} For any $\Lambda$-module $M$, $^{\perp_V}\!M \cap mod\text-\Lambda=\,^{\perp}\!M$. 

\emph{(b)} For any $X\in \Vrep(\Lambda)$, $X^{\perp_V}\cap mod\text-\Lambda=|X|^\perp$.
\end{lem}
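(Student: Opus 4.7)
The plan is to reduce both statements to direct computations inside the bounded derived category $\cD^b(mod\text-\Lambda)$, which is possible because Corollary \ref{characterization of VRep} exhibits $\Vrep(\Lambda)\cong Pres(\Lambda)$ as a full subcategory of $\cD^b(mod\text-\Lambda)$ on which both $\Hom$ and $\Ext^1$ are computed by the corresponding groups in $\cD^b$. A $\Lambda$-module $N$ sits in $\Vrep(\Lambda)$ by identification with its projective presentation, which in $\cD^b$ is just $N$ itself placed in degree zero.

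For part (a), both $N$ and $M$ are in degree zero of $\cD^b$, so
\[
\Hom_{\Vrep}(N,M)=\Hom_{\cD^b}(N,M)=\Hom_\Lambda(N,M),\qquad \Ext^1_{\Vrep}(N,M)=\Hom_{\cD^b}(N,M[1])=\Ext^1_\Lambda(N,M).
\]
So the defining condition of $^{\perp_V}\!M$ restricted to modules is literally the defining condition of $^{\perp}\!M$, giving (a).

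For part (b), write $X=(P_1\xrightarrow{p}P_0)$, and set $K=\ker p$, $M=\coker p$, so $|X|=K\oplus M$. Since $\Lambda$ is hereditary, the submodule $K$ of the projective $P_1$ is itself projective and splits off: $P_1\cong K\oplus P_1/K$. Consequently in $Pres(\Lambda)$ there is an isomorphism $X\cong K[1]\oplus M$, where $K[1]$ denotes the complex $K\to 0$ and $M$ is identified with the projective presentation $P_1/K\hookrightarrow P_0$. For any module $N$ we compute in $\cD^b$
\[
\Hom_{\Vrep}(X,N)=\Hom_{\cD^b}(K[1],N)\oplus\Hom_\Lambda(M,N)=0\oplus\Hom_\Lambda(M,N),
\]
\[
\Ext^1_{\Vrep}(X,N)=\Hom_{\cD^b}(K[1],N[1])\oplus\Hom_{\cD^b}(M,N[1])=\Hom_\Lambda(K,N)\oplus\Ext^1_\Lambda(M,N),
\]
using that $\Hom_{\cD^b}(K[1],N)=\Ext^{-1}(K,N)=0$ and $\Hom_{\cD^b}(K[1],N[1])=\Hom_\Lambda(K,N)$. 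Hence $N\in X^{\perp_V}\cap mod\text-\Lambda$ iff $\Hom_\Lambda(K,N)=\Hom_\Lambda(M,N)=\Ext^1_\Lambda(M,N)=0$. On the other hand $N\in|X|^\perp$ requires the same three vanishings plus $\Ext^1_\Lambda(K,N)=0$, but this last condition is automatic because $K$ is projective. Thus $X^{\perp_V}\cap mod\text-\Lambda=|X|^\perp$.

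The only substantive point, rather than an obstacle, is the passage $X\cong K[1]\oplus M$ in $Pres(\Lambda)$, which relies on $\Lambda$ being hereditary so that the kernel of any map between projectives is again projective and splits off. Once this decomposition is available, the proof is a bookkeeping exercise with shifts in $\cD^b$, and the identity $\Ext^1_\Lambda(K,N)=0$ is precisely the reason the extra summand appearing in $|X|^\perp$ does not create a discrepancy.
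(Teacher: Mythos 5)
Your proof is correct, and it fills in a step the paper states without argument (the lemma appears with no proof in the text). Part (a) is immediate from Corollary \ref{characterization of VRep}, exactly as you say. For part (b), the decisive observation is the decomposition $X\cong K[1]\oplus M$ in $Pres(\Lambda)$, which is available because $\Lambda$ hereditary forces $K=\ker p$ to be projective and to split off $P_1$; this is the same structural fact the paper records in Proposition \ref{prop:two kinds of objects}. The derived-category bookkeeping that follows --- $\Hom_{\cD^b}(K[1],N)=0$, $\Hom_{\cD^b}(K[1],N[1])=\Hom_\Lambda(K,N)$, and the disappearance of the extra condition $\Ext^1_\Lambda(K,N)=0$ because $K$ is projective --- is exactly right and is the content of the lemma. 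There is nothing to add or repair.
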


\begin{defn}\label{def: 3.2.3: wide subcategory}
A \emph{wide subcategory} of $mod\text-\Lambda$ for any hereditary algebra $\Lambda$ is defined to be an extension closed full subcategory $\cW\subseteq mod\text-\Lambda$ which is abelian and exactly embedded (a sequence in $\cW$ is exact in $\cW$ if and only if it is exact in $mod\text-\Lambda$). A wide subcategory is said to have \emph{rank} $k$ if it is isomorphic to the module category of an hereditary algebra with $k$ simple objects.
\end{defn}

\begin{rem}\label{rem: properties of wide subcategories}
We need the following well-known properties of wide subcategories. \cite{Ingalls-Thomas}
\begin{enumerate}
\item Every finitely generated wide subcategory of $mod\text-\Lambda$ is isomorphic to the module category of an hereditary algebra.
\item If $M$ is a $\Lambda$-module whose components form an exceptional sequence with $k$ terms, the right $\Hom_\Lambda$- $\Ext_\Lambda^1$- perpendicular category $M^\perp$ of $M$ is a finitely generated wide subcategory of $mod\text-\Lambda$ of rank $n-k$. The same holds for $\,^\perp\!M$.
\item For any finitely generated wide subcategory $\cW$ of $mod\text-\Lambda$ we have $(^\perp\cW)^\perp=\cW$ and $\,^\perp(\cW^\perp)=\cW$.
\end{enumerate}
\end{rem}

In our case $^\perp M_\beta$ is a wide subcategory of rank $n-1$ since $M_\beta$ is indecomposable. Let $E_1,\!\cdots\!,E_{n-1}$ be the simple objects of $^\perp M_\beta$. These objects are exceptional and using Proposition \ref{exceptional sequences}(1) can be ordered in such a way that the following sequence is an exceptional sequence in $mod\text-\Lambda$:
\begin{equation}\label{eq: exceptional}
	(M_\beta,E_1,\cdots,E_{n-1}).
\end{equation}
By Lemma \ref{lem 3.2.2: X-Vperp=|X|perp} we also have:
%Using similar observations about the right perpendicular categories, we notice: 
$$(E_1\oplus\cdots\oplus\widehat{E_k}\oplus\cdots\oplus E_{n-1})^{\perp_V}\cap mod\text-\Lambda = 
(E_1\oplus\cdots\oplus\widehat{E_k}\oplus\cdots\oplus E_{n-1})^\perp,$$
and we use this to define, for each $k=1,\dots\!,n\!-\!1$, the following subcategories of $mod\text-\Lambda$ , 
\begin{equation}\label{eq: Wk}
	\cW_k:=(E_1\oplus\cdots\oplus\widehat{E_k}\oplus\cdots\oplus E_{n-1})^\perp \subset mod\text-\Lambda.
\end{equation}
These are wide subcategories of $mod\text-\Lambda$ of rank 2 which contains $M_\beta$ by definition of the $E_is$.

\begin{lem}\label{lem: Mb is not simple} Let $\beta$ be a real Schur root. Let $(E_1,\cdots,E_{n-1})$ be an exceptional sequence of simple objects of $^\perp M_\beta$ and let $P_k'$ be the projective cover of $E_k$ in $^\perp M_\beta\subset mod\text-\Lambda$. Then $P_k'$ is a projective $\Lambda$-module if and only if $M_\beta$ is a simple object in $\cW_k$.
\end{lem}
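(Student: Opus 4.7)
The plan is to set up the short exact sequence
\[
0 \to R_k \to P_k' \to E_k \to 0
\]
in ${}^\perp M_\beta$, where $R_k := \mathrm{rad}_{{}^\perp M_\beta}(P_k')$ is filtered by the simples $E_j$ with $j \neq k$. From this I would first extract a key reduction isomorphism: for every $N \in \cW_k$, since $N$ is right-perpendicular to each $E_j$ with $j \neq k$ (by definition of $\cW_k$) and $R_k$ is iteratively built from such $E_j$, one has $\Hom(R_k,N) = 0 = \Ext^1(R_k,N)$. The long exact sequence from $\Hom(-,N)$ then yields
\[
\Hom(E_k,N) \cong \Hom(P_k',N), \qquad \Ext^1(E_k,N) \cong \Ext^1(P_k',N)
\]
for all $N \in \cW_k$. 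This is the tool that converts statements about $\Lambda$-projectivity of $P_k'$ into statements internal to $\cW_k$.

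For the forward direction, I would argue by contradiction. Suppose $P_k' = P_i$ is $\Lambda$-projective, and let $X$ be a proper nonzero submodule of $M_\beta$ lying in $\cW_k$. Then $Y = M_\beta/X$ also lies in $\cW_k$ because wide subcategories are closed under quotients. Projectivity gives $\Ext^1(P_k',X)=0$, so by the reduction iso $\Ext^1(E_k,X)=0$. Applying $\Hom(E_k,-)$ to $0 \to X \to M_\beta \to Y \to 0$ and using $\Hom(E_k,M_\beta) = 0 = \Ext^1(E_k,M_\beta)$ (since $E_k \in {}^\perp M_\beta$), I would read off $\Hom(E_k,X) = 0$, $\Hom(E_k,Y)=0$ and $\Ext^1(E_k,Y)=0$. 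So $X$ is right-perpendicular to $E_k$ and (as a member of $\cW_k$) to every $E_j$ for $j \neq k$; i.e.\ $X$ lies in the right perpendicular of the full list $\{E_1,\dots,E_{n-1}\}$. Since these are the simples generating the wide subcategory ${}^\perp M_\beta$, Remark \ref{rem: properties of wide subcategories}(3) gives $X \in ({}^\perp M_\beta)^\perp = \mathrm{add}(M_\beta)$. As $M_\beta$ is indecomposable and $X \subseteq M_\beta$, this forces $X=0$ or $X=M_\beta$, contradicting the choice of $X$.

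For the converse direction, assume $M_\beta$ is simple in $\cW_k$. The reduction iso tells us $\Ext^1_\Lambda(P_k',N)$ vanishes on all of $\cW_k$ precisely when $\Ext^1(E_k,N)$ does, and it already vanishes on ${}^\perp M_\beta$ since $P_k'$ is projective there. The plan is to show that these two subcategories, together with the simplicity of $M_\beta$ in $\cW_k$, cover all obstructions to projectivity, so that $\Ext^1_\Lambda(P_k',-) = 0$. Concretely, I would try the contrapositive: if $P_k'$ is not $\Lambda$-projective, there is some simple $S$ with $\Ext^1(P_k',S)\neq 0$, and the non-split extension produces a submodule of a suitable module in $\cW_k$ which, after an argument using the simples of the wide subcategory, embeds as a proper nonzero $\cW_k$-submodule of $M_\beta$, contradicting simplicity.

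The forward direction is clean thanks to the identification $({}^\perp M_\beta)^\perp = \mathrm{add}(M_\beta)$. The main obstacle is the backward direction: producing a genuine proper $\cW_k$-submodule of $M_\beta$ from a nonzero $\Ext^1(P_k',S)$ requires tracking how the extension interacts with the rank-$2$ structure of $\cW_k$ and the position of $E_k$ inside it, and that is where the argument needs to be done most carefully.
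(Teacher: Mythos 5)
Your forward direction (projective $\Rightarrow$ simple) is complete, correct, and genuinely different from the paper's argument. The paper proves the contrapositive (``$M_\beta$ not simple $\Rightarrow$ $P_k'$ not projective'') by producing the $\cW_k$--almost split sequence $X \into M_\beta^m \onto I_k'$ and computing $\Ext^1_\Lambda(P_k',X) \cong \Hom_\Lambda(P_k',I_k') \neq 0$, which needs the exceptional sequences (a)--(f) and the AR-theory of $\cW_k$. Your route --- the reduction isomorphism $\Ext^1(P_k',N) \cong \Ext^1(E_k,N)$ on $\cW_k$ (valid because $R_k = \mathrm{rad}\, P_k'$ inside $^\perp M_\beta$ is filtered by the $E_j$, $j \neq k$, as hereditary algebras have acyclic quivers), combined with $(E_1 \oplus \cdots \oplus E_{n-1})^\perp = (^\perp M_\beta)^\perp = \add(M_\beta)$ --- is cleaner and avoids AR theory entirely. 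That is a genuine simplification for that half.

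The backward direction, however, is a real gap, and you flag it yourself. Saying that a nonzero $\Ext^1_\Lambda(P_k',S)$ ``produces a submodule of a suitable module in $\cW_k$'' is not an argument: the simple $S$ witnessing non-projectivity has no reason to lie in $\cW_k$, in $^\perp M_\beta$, or in any category where your reduction isomorphism or perpendicularity constraints apply, so there is no mechanism yet to funnel the obstruction back into a $\cW_k$-submodule of $M_\beta$. This is precisely where the paper does its real work: it splits into Case 2a ($M_\beta$ simple injective in $\cW_k$) and Case 2b ($M_\beta$ simple projective in $\cW_k$), and in each case it uses the exceptional sequence $(X, M_\beta, E_1,\dots,\widehat{E_k},\dots,E_{n-1})$ from Proposition \ref{prop: properties of exceptional sequences}(6), where $X = \tau_\Lambda P_k'$ exactly when $P_k'$ is not projective. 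The dichotomy in that proposition (either $\tau X_n$ or the relevant injective appears) is the structural input that identifies $X$ and rules out $X = \tau_\Lambda P_k'$ when $M_\beta$ is simple --- in Case 2a via $\Ext^1_\Lambda(P_k',P_k') \neq 0$ being impossible, and in Case 2b via a nonzero map $P_k' \to X$. Without some replacement for that dichotomy (or an equivalent computation of $\Ext^1_\Lambda(P_k',-)$ on all of $mod\text-\Lambda$, not just $\cW_k \cup\, ^\perp M_\beta$), your backward direction does not close.
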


\begin{proof}  Several exceptional sequences will be created out of $(E_1,\cdots,E_{n-1})$ and will be used in the proof. Since all $E_i$ are simple objects and $P_k'$ is projective in  $^\perp M_\beta\subset mod\text-\Lambda$ it follows that $\Hom_{(^\perp M_\beta)}(P_k',E_i)=0$ for $i\neq k$ and also $Ext^1_{(^\perp M_\beta)}(P'_k,E_i)=0$. Therefore:\\
(a) $\quad\quad (E_1,\cdots,\widehat{E_k},\cdots,E_{n-1},P_k') \text{ is an exceptional sequence in } ^\perp\!M_\beta.$\\
Since $^\perp\!M_\beta
\hookrightarrow mod\text-\Lambda$ 
is exact embedding it follows that $\Hom_\Lambda(P_k',E_i)=0=\Ext^1_\Lambda(P_k',E_i)$ for all $i\neq k$. This
 together with the fact that $\{E_1,\cdots,\widehat{E_k},\cdots,E_{n-1},P_k'\} \subset ^\perp\!M_\beta$ implies:\\
(b) $\quad\quad (M_{\beta}, E_1,\cdots,\widehat{E_k},\cdots,E_{n-1}, P_k') \text{ is an exceptional sequence in }mod\text-\Lambda.$\\
After applying Proposition  \ref{prop: properties of exceptional sequences}(6) to the exceptional sequence (a), one obtains:\\
(c) $\quad\quad (I_k', E_1,\cdots,\widehat{E_k},\cdots,E_{n-1}) \text{ is an exceptional sequence in } ^\perp M_\beta,$ and \\
(d) $\quad\quad (M_{\beta}, I_k', E_1,\cdots,\widehat{E_k},\cdots,E_{n-1}) \text{ is an exceptional sequence in }mod\text-\Lambda.$\\
After applying Proposition  \ref{prop: properties of exceptional sequences}(6) to the exceptional sequence (b), one obtains:\\
(e) $\quad\quad (X, M_{\beta}, E_1,\cdots,\widehat{E_k},\cdots,E_{n-1}) \text{ is an exceptional sequence in } mod\text-\Lambda,$ 
where \\$X=\tau_{\Lambda} P'_k$ if and only if $P'_k$ is not a projective $\Lambda$-module, and $X$ is the injective envelope of $P_k'/rad P_k'$ if and only if $P_k'$ is a projective $\Lambda$-module. \\
(f) $\quad\quad (M_{\beta}, I_k')$ and $(X, M_{\beta})$ are exceptional sequences in $\cW_k$. This follows from (d), (e) and definition of $\cW_k$. \\
(g)  There is a $\cW_k$-irreducible map $M_{\beta} \to I_k'$ if and only if $M_{\beta} \oplus I_k'$ is not semi-simple, since rank$(\cW_k)=2$. \\
(h) There is a $\cW_k$-irreducible map  $X \to M_{\beta}$ if and only if $X \oplus M_{\beta}$ is not semi-simple, since rank$(\cW_k)=2$.\vs2

\underline{Claim 1:} If $M_\beta$ is not simple in $\cW_k$ then $P_k'$ is not a projective $\Lambda$-module.\\
Proof: Since $M_\beta$ is not simple it follows from (g) and (h) that there is an almost split sequence $X\into M_\beta^m\onto I_k'$ in $\cW_k$. Since $P_k'\in\,^\perp M_\beta$ we have $\Ext^1_\Lambda(P_k',X)\cong \Hom_\Lambda(P_k',I_k')\neq0$. So, $P_k'$ is not projective in $mod\text-\Lambda$.\vs2

\underline{Claim 2:}  If $M_\beta$ is simple in $\cW_k$ then $P_k'$ is a projective $\Lambda$-module.\\
Proof: If $M_\beta$ is simple in $\cW_k$, either $M_\beta$ is simple injective or simple projective. \vs2
\underline{Case 2a:} If $M_\beta$ is a simple injective object in $\cW_k$, then there is no $\cW_k$-irreducible map $M_{\beta} \to I_k'$. So it follows by (g) that $I_k'$ is a simple projective object in $\cW_k$. Since $\cW_k$ has rank=2, there are only two simple objects. Therefore $X$ is not simple and by (h) there is a $\cW_k$-irreducible map  $X \to M_{\beta}$. Since  $M_{\beta}$ is simple injective in $\cW_k$ it follows that $X$ is injective envelope of the simple object $I'_k$ in $\cW_k$. If $X$ is injective $\Lambda$-module, it follows by (e) that $P_k'$ is a projective $\Lambda$-module. If $X$ is not injective $\Lambda$-module, then $X=\tau_{\Lambda}P'_k$. But in this case there is a non-zero composition of $\cW_k$-, and therefore $\Lambda$-maps $P'_k \to S_k=I'_k \hookrightarrow X =\tau_{\Lambda}P'_k$. However this would imply $\Ext^1_{\Lambda}(P'_k,P'_k)\neq 0$ giving a contradiction to the fact that $P'_k$ is rigid. Hence, $P_k'$ is a projective $\Lambda$-module.\vs2
\underline{Case 2b} If $M_\beta$ is simple projective in $\cW_k$ then there is no $\cW_k$-irreducible map $X\to M_\beta$ and therefore $X$ must be simple $\cW_k$ object by (h), hence simple injective. Since the rank of $\cW_k$ is 2, it follows that $I'_k$ is not simple, hence there is a $\cW_k$ irreducible map $M_{\beta} \to I_k'$. Therefore $I'_k$ is projective $\cW_k$ object, projective cover of $X$. So, there is a short exact sequence in $\cW_k$: $0\to M_{\beta}^m\to I_k'\to X\to 0$ where $m\ge1$. Since $\Hom_\Lambda(P_k',M_\beta)=0$ and $\Hom_\Lambda(P_k',I_k')\neq 0$ we have a nonzero $\Lambda$ morphism $P_k'\to X$. Therefore, $P_k'$ is a projective $\Lambda$-module by (e). This proves the proof of the lemma.
\end{proof}

\subsection{Subsets $\Delta^+(\beta)\subseteq \Delta(\beta)\subseteq D_\ZZ(\beta)\subseteq D^{ss}_\ZZ(\beta)$} 
In this subsection we define two new subsets, which will be used in the proof of the Virtual Stability Theorem' \ref{virtual stability theorem'}, i.e., we will prove $D_\ZZ(\beta)= D^{ss}_\ZZ(\beta)$.

\begin{defn} Let $\beta$ be a real Schur root and let $(M_\beta,E_1,\cdots,E_{n-1})$ be the exceptional sequence as defined in Equation \eqref{eq: exceptional}. Let:
\[
	\Delta^+(\beta):=\{{\,}\sum_{1\le i\le n-1}  k_i\undim E_i\,:\, k_i\in\NN\,\}.
\]
\end{defn}
\begin{lem}\label{lem: D+ contains all integer points in its convex hull}
The set $\Delta^+(\beta)$ contains all integer points in its convex hull in $\RR^n$.
\end{lem}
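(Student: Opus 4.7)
The plan is to use the fact that $(M_\beta, E_1, \dots, E_{n-1})$ is a complete exceptional sequence of length $n$, so by Proposition~\ref{prop: properties of exceptional sequences}(3) the $n$ dimension vectors $\undim M_\beta, \undim E_1, \dots, \undim E_{n-1}$ form a $\ZZ$-basis of $\ZZ^n$, and therefore also an $\RR$-basis of $\RR^n$. In particular $\undim E_1, \dots, \undim E_{n-1}$ are $\RR$-linearly independent. This uniqueness of coefficients is the engine of the argument.

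First I would identify the convex hull of $\Delta^+(\beta)$ in $\RR^n$ as the real cone
\[
C := \Bigl\{\sum_{i=1}^{n-1} t_i\,\undim E_i : t_i \in \RR_{\geq 0}\Bigr\}.
\]
The inclusion of the convex hull in $C$ is immediate since every generator of $\Delta^+(\beta)$ lies in $C$ and $C$ is convex. For the reverse inclusion, a point $\sum t_i\,\undim E_i$ with $t_i \geq 0$ may be exhibited as a convex combination of the $2^{n-1}$ lattice points $\sum_i k_i\,\undim E_i$ with $k_i \in \{\lfloor t_i \rfloor, \lceil t_i \rceil\}$, by using the standard barycentric expansion of $(t_1, \dots, t_{n-1})$ over the corners of its enclosing unit box in $\RR^{n-1}$ and applying the linear map $(t_i) \mapsto \sum t_i\,\undim E_i$.

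Now suppose $\alpha$ is an integer point of the convex hull, i.e.\ $\alpha \in C \cap \ZZ^n$. Because $\undim E_1, \dots, \undim E_{n-1}$ are $\RR$-linearly independent, the real nonnegative coefficients $t_i$ in the expansion $\alpha = \sum t_i\,\undim E_i$ are unique. On the other hand, the $\ZZ$-basis property gives a unique integer expansion $\alpha = s_0\,\undim M_\beta + \sum_{i=1}^{n-1} s_i\,\undim E_i$ with $s_j \in \ZZ$. Uniqueness of coordinates in the $\RR$-basis forces $s_0 = 0$ and $s_i = t_i$ for each $i$, so every $t_i$ is simultaneously a nonnegative real number and an integer, hence lies in $\NN$. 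This exhibits $\alpha$ as an element of $\Delta^+(\beta)$ and completes the proof.

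The only step requiring care is identifying the convex hull of the semigroup $\Delta^+(\beta)$ with the cone $C$; after that, the result is essentially bookkeeping that exploits the linear independence coming from Proposition~\ref{prop: properties of exceptional sequences}(3). I do not expect either step to be a genuine obstacle.
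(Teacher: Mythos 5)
Your proof is correct and follows essentially the same approach as the paper: both rely on Proposition~\ref{prop: properties of exceptional sequences}(3) to get that $\undim M_\beta, \undim E_1,\dots,\undim E_{n-1}$ form a $\ZZ$-basis of $\ZZ^n$, hence an $\RR$-basis of $\RR^n$, and then observe that an integer point in the convex hull has unique real coordinates (nonnegative, with $\undim M_\beta$-coordinate $0$) which must coincide with its unique integer coordinates. The one extra thing you did — verifying that the convex hull equals the full real cone via the unit-box argument — is not actually needed, since the paper only uses the easy containment that every point of the convex hull is a nonnegative real combination of the $\undim E_i$.
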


\begin{proof}
Since $(M_\beta,E_1,\cdots,E_{n-1})$ is an exceptional sequence, by Proposition \ref{prop: properties of exceptional sequences}, every vector in $\ZZ^n$ can be expressed uniquely as an integer linear combination of the vectors $\undim E_i$ and $\beta$. Since all elements in the convex hull of $\Delta^+(\beta)$ can be written as nonnegative real linear combinations of the vectors $\undim E_i$ it follows that when the integer points in this convex hull are written in this way, the nonnegative coefficients are necessarily integers. So, they are nonnegative integers.
\end{proof}

\begin{rem}\label{rem 3.3.3}
The following are simple useful facts about perpendicular categories and the set $D_\ZZ(\beta)$.
\begin{enumerate}
\item Let $P(\gamma_\ast)\in \Vrep(\Lambda)$. If $P(\gamma_\ast)\in\, ^{\perp_V}\!M_\beta$ then $\undim P(\gamma_\ast)\in D_\ZZ(\beta)$ 
\item Let $N\in mod\text-\Lambda$. If $N\in\, ^\perp M_\beta$ then $\undim N\in D_\ZZ(\beta)$.
\item Let $P_j$ be an indecomposable projective $\Lambda$-module. Then\\ $P_j\in\,^\perp M_\beta\iff P_j\in\,^{\perp_V}\! M_\beta\iff P_j[1]\in\,^{\perp_V}\! M_\beta\iff \beta_j=0$.
\end{enumerate}
\end{rem}

\begin{defn}\label{def: Delta beta}
Let $J_\beta=\{j\in \ZZ\,|\, \beta_j=0\}$. Then% the following set of vectors is contained in $D_\ZZ(\beta)$.
\[%begin{equation}\label{eq: Delta beta}
	\Delta(\beta):=\{\sum_{1\le i\le n-1} k_i\undim E_i+\sum_{j\in J_\beta} \ell_j \undim P_j\,:\, k_i\in\NN, \ell_j\in \ZZ\}\subseteq D_\ZZ(\beta).
\]%end{equation}
\end{defn}

%We have the following easy observation.
\begin{prop}\label{prop 3.3.5: Delta subset D}
$%begin{equation}\label{eq: Delta subset D}
	\Delta^+(\beta)\subseteq \Delta(\beta)\subseteq D_\ZZ(\beta)\subseteq D^{ss}_\ZZ(\beta).
$ %end{equation}
\end{prop}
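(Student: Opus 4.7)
The strategy is to handle the three inclusions separately, with the middle inclusion being the only one requiring genuine work.

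The first inclusion $\Delta^+(\beta)\subseteq \Delta(\beta)$ is immediate from the definitions: setting $\ell_j=0$ for all $j\in J_\beta$ in the expression defining $\Delta(\beta)$ recovers $\Delta^+(\beta)$. The third inclusion $D_\ZZ(\beta)\subseteq D^{ss}_\ZZ(\beta)$ is precisely Lemma \ref{lem 3.1.6: D(b) in Dss(b)}.

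For the middle inclusion $\Delta(\beta)\subseteq D_\ZZ(\beta)$ the plan is first to observe that $D_\ZZ(\beta)$ is closed under addition. Indeed, if $f_s:P^s\to Q^s$ are virtual representations with $\undim Q^s-\undim P^s=\alpha_s$ for $s=1,2$ and both induce isomorphisms $\Hom_\Lambda(Q^s,M_\beta)\xrightarrow{\approx}\Hom_\Lambda(P^s,M_\beta)$, then $f_1\oplus f_2:P^1\oplus P^2\to Q^1\oplus Q^2$ realizes $\alpha_1+\alpha_2$ and induces the direct sum of these isomorphisms, hence an isomorphism. Thus $\alpha_1+\alpha_2\in D_\ZZ(\beta)$.

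Granted this, it suffices to show that the generators of $\Delta(\beta)$ lie in $D_\ZZ(\beta)$. For each $i=1,\dots,n-1$, the module $E_i$ is in $\,^\perp M_\beta$, so $\Hom_\Lambda(E_i,M_\beta)=0=\Ext^1_\Lambda(E_i,M_\beta)$; applying $\Hom_\Lambda(-,M_\beta)$ to any projective presentation $P_1^i\to P_0^i$ of $E_i$ yields an isomorphism $\Hom_\Lambda(P_0^i,M_\beta)\xrightarrow{\approx}\Hom_\Lambda(P_1^i,M_\beta)$, so $\undim E_i\in D_\ZZ(\beta)$ (this is Remark \ref{rem 3.3.3}(2)). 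For each $j\in J_\beta$ we have $\beta_j=0$, so $\Hom_\Lambda(P_j,M_\beta)=0$; the presentations $0\to P_j$ and $P_j\to 0$ are virtual representations of dimension vectors $\undim P_j$ and $-\undim P_j$ respectively, and the induced maps between the (vanishing) $\Hom$-groups are trivially isomorphisms. Hence both $\pm\undim P_j\in D_\ZZ(\beta)$. Combining these with the closure under addition established above shows that any vector of the form $\sum k_i\undim E_i+\sum_{j\in J_\beta}\ell_j\undim P_j$ with $k_i\in\NN$ and $\ell_j\in\ZZ$ lies in $D_\ZZ(\beta)$, proving the second inclusion. The whole argument is routine; there is no real obstacle, since each of the three pieces uses only an already-established fact (Remark \ref{rem 3.3.3}, the direct-sum construction, and Lemma \ref{lem 3.1.6: D(b) in Dss(b)}).
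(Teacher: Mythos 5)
Your proof is correct and follows the same approach as the paper, which dispatches the middle inclusion with a single reference to Remark \ref{rem 3.3.3}. You usefully unpack what that reference leaves implicit: that $D_\ZZ(\beta)$ is closed under addition (via direct sums of presentations), and that each generator $\undim E_i$ and $\pm\undim P_j$ (for $j\in J_\beta$) individually lies in $D_\ZZ(\beta)$, which together give the claim for arbitrary $\NN$- and $\ZZ$-linear combinations.
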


\begin{proof}
The first inclusion follows from the definitions. The second follows from Remark \ref{rem 3.3.3}. The third is Lemma \ref{lem 3.1.6: D(b) in Dss(b)}.
\end{proof}
To prove the stability theorem we will show that $D^{ss}_\ZZ(\beta)\subseteq \Delta(\beta)$ and therefore the last three sets in \ref{prop 3.3.5: Delta subset D} are equal. To do this we first consider  the case when $J_\beta$ is empty, i.e., when $\beta$ is sincere.

\subsection{Sincere case}\label{ss: sincere case} When $\beta$ is sincere we have $\Delta^+(\beta)=\Delta(\beta)$. Thus we are reduced to showing that $D^{ss}_\ZZ(\beta)\subseteq \Delta^+(\beta)$. Note that $^\perp M_\beta\subseteq mod\text-\Lambda$ when $\beta$ is sincere.

\begin{lem}\label{lem: stability theorem hold when beta is sincere}
If $\beta$ is sincere then $D^{ss}_\ZZ(\beta)= \Delta^+(\beta)$ and therefore $D^{ss}_\ZZ(\beta)=D_\ZZ(\beta)$.
\end{lem}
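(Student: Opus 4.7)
The plan is to establish $D^{ss}_\ZZ(\beta) \subseteq \Delta^+(\beta)$; the reverse inclusion is Proposition \ref{prop 3.3.5: Delta subset D}. Take $\alpha \in D^{ss}_\ZZ(\beta)$. Since $(M_\beta, E_1, \ldots, E_{n-1})$ is a complete exceptional sequence, Proposition \ref{prop: properties of exceptional sequences}(3) provides a $\ZZ$-basis $\{\beta, \undim E_1, \ldots, \undim E_{n-1}\}$ of $\ZZ^n$, so one can write $\alpha = c\beta + \sum_{i=1}^{n-1} k_i \undim E_i$ with $c, k_i \in \ZZ$. Using $\brk{\alpha,\beta}=0$, $\brk{\undim E_i, \beta} = 0$ (because $E_i \in {}^\perp M_\beta$), and $\brk{\beta,\beta} = \dim_\kk \End_\Lambda(M_\beta) > 0$, one immediately obtains $c = 0$.

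It remains to show $k_k \geq 0$ for every $k$. The strategy is to produce, for each $k$, a real Schur subroot $\beta_k^* \subseteq \beta$ with $\brk{\undim E_j, \beta_k^*} = 0$ for $j \neq k$ and $\brk{\undim E_k, \beta_k^*} < 0$; the stability condition $\brk{\alpha, \beta_k^*} \leq 0$ then collapses to $k_k \brk{\undim E_k, \beta_k^*} \leq 0$, forcing $k_k \geq 0$. Consider the rank-$2$ wide subcategory $\cW_k = (\bigoplus_{i\neq k} E_i)^\perp$ from \eqref{eq: Wk}, which contains $M_\beta$. Sincerity of $\beta$ gives $\Hom_\Lambda(P_j, M_\beta) \neq 0$ for every $j$, while $\Hom_\Lambda(P_k', M_\beta) = 0$ since $P_k' \in {}^\perp M_\beta$; as $P_k'$ is indecomposable (being the projective cover of $E_k$ in $^\perp M_\beta$), it cannot be any $P_j$, hence is not $\Lambda$-projective, and Lemma \ref{lem: Mb is not simple} then says $M_\beta$ is not simple in $\cW_k$. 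Let $S_k^*$ be a simple subobject of $M_\beta$ in $\cW_k$ and set $\beta_k^* := \undim S_k^*$; exactness of the embedding $\cW_k \hookrightarrow mod\text-\Lambda$ gives $S_k^* \hookrightarrow M_\beta$ in $mod\text-\Lambda$, so $\beta_k^*$ is a real Schur subroot of $\beta$.

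The perpendicularity $\brk{\undim E_j, \beta_k^*}=0$ for $j\neq k$ is immediate from $S_k^* \in \cW_k$. For $j=k$, applying $\Hom_\Lambda(E_k, -)$ to $0 \to S_k^* \to M_\beta \to M_\beta/S_k^* \to 0$ and using $\Hom_\Lambda(E_k, M_\beta)=0=\Ext^1_\Lambda(E_k, M_\beta)$ gives $\Hom_\Lambda(E_k, S_k^*)=0$ and $\Ext^1_\Lambda(E_k, S_k^*) \cong \Hom_\Lambda(E_k, M_\beta/S_k^*)$, so $\brk{\undim E_k, \beta_k^*} = -\dim_\kk \Ext^1_\Lambda(E_k, S_k^*)$. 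The main obstacle is showing this is strictly negative, equivalently that $\Ext^1_\Lambda(E_k, S_k^*) \neq 0$. The key observation is that if it vanished, then combined with $\Hom_\Lambda(E_k, S_k^*)=0$ and the perpendicularity to $E_j$ for $j \neq k$, the module $S_k^*$ would be right-perpendicular to every simple of $^\perp M_\beta$, hence to the whole wide subcategory $^\perp M_\beta$ they generate. By Remark \ref{rem: properties of wide subcategories}(3), $({}^\perp M_\beta)^\perp = \add(M_\beta)$, so indecomposability of $S_k^*$ would force $S_k^* \cong M_\beta$, contradicting that $S_k^*$ is simple in $\cW_k$ while $M_\beta$ is not. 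Hence $\brk{\undim E_k, \beta_k^*} < 0$, completing the argument.
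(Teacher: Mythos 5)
Your proof is correct and follows the paper's strategy closely: decompose $\alpha$ in the basis $\{\beta,\undim E_1,\dots,\undim E_{n-1}\}$, show the $\beta$-coefficient vanishes, and construct a real Schur subroot $\gamma_k$ that is Euler-perpendicular to $\undim E_j$ for $j\neq k$ and strictly negatively paired with $\undim E_k$, forcing each coefficient $k_k\ge 0$. The only genuine deviation is in establishing $\brk{\undim E_k,\beta_k^*}\neq 0$: the paper argues by linear algebra (the subspace $\{z: \brk{\undim E_i,z}=0\ \forall i\}$ is spanned by $\beta$, which cannot contain the proper subroot $\gamma_k$, as $M_\beta$'s dimension vector is a genuine positive combination of both simples of $\cW_k$), whereas you argue by perpendicular categories, observing that vanishing of $\Ext^1_\Lambda(E_k,S_k^*)$ would put $S_k^*$ into $(\,^\perp M_\beta)^\perp=\add M_\beta$ by Remark~\ref{rem: properties of wide subcategories}(3) and hence force $S_k^*\cong M_\beta$, contradicting that $S_k^*$ is a proper subobject. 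Your variant is a clean representation-theoretic substitute for the paper's numerical argument and is equally valid; it also sidesteps having to justify carefully why $\gamma_k$ cannot be a scalar multiple of $\beta$.
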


\begin{proof}
Since $\beta$ is sincere, it follows that there are no projective $\Lambda$-modules in $^\perp M_\beta$ and therefore none of the $P'_k$ are projective $\Lambda$-modules. This implies that $M_\beta$ is not a simple object in $\cW_k$ (as defined in equation \ref{eq: Wk}) for each $k=1,\dots\!,n\!-\!1$ by Lemma \ref{lem: Mb is not simple}. We will use this fact to construct certain Schur subroots $\gamma_k\subseteq \beta$. \vs2
\underline{Claim 1}. For each $k=1,2,\cdots,n-1$ there is a real Schur subroot $\gamma_k$ of $\beta$ so that
\begin{enumerate}
\item $\brk{\undim E_i,\gamma_k}=0$ if $i\neq k$
\item $\brk{\undim E_k,\gamma_k}< 0$.
\end{enumerate}
\underline{Construction of $\gamma_k$:}
For each $k$, the category $\cW_k=(E_1\oplus\cdots\oplus\widehat{E_k}\oplus\cdots\oplus E_{n-1})^\perp\subset mod\text-\Lambda$ is a wide subcategory of rank 2 which contains $M_\beta$ by definition of the $E_i$s.
Let $R_k,S_k$ be the simple objects of $\cW_k$. Since $M_\beta$ is not simple, there exists a nontrivial extension of $S_k$ by $R_k$ (or $R_k$ by $S_k$): \[
	S_k^p\into M_\beta\onto R_k^q
\]
where $p,q\ge1$. Let $\gamma_k=\undim S_k$. Then $\gamma_k$ is a proper real Schur subroot of $\beta$.\vs2
\noi \underline{Properties of $\gamma_k$:}\\
(1) $\brk{\undim E_i,\gamma_k}=\brk{\undim E_i,\undim S_k}=0\text{ for all }i\neq k$
since $S_k\in \cW_k=(\oplus_{i\neq k} E_i)^\perp$. \\
(2) $\brk{\undim E_k,\gamma_k}< 0$. We prove this in two steps:

Step 1:
Since $\Hom_\Lambda(E_k,M_\beta)=0$ we must also have $\Hom_\Lambda(E_k,S_j)=0$. Therefore
$
	\brk{\undim E_k,\gamma_k}=\brk{\undim E_k,\undim S_k}\le0. 
$

Step 2: $\brk{\undim E_k,\undim S_k}\neq 0$ since all vectors $z$ satisfying $\brk{\undim E_i,z}=0$ for all $i$ are scalar multiples of $\beta$ which is not possible since $\gamma_k\subsetneq \beta$. This finishes the proof of Claim 1.\vs2
\noi\underline{Claim 2:}
$D^{ss}_\ZZ(\beta)\subseteq \Delta^+(\beta)$. \\
Proof of claim 2:
 Since $(M_\beta,E_1,\cdots,E_{n-1})$ is an exceptional sequence, by Proposition \ref{prop: properties of exceptional sequences}, every vector in $\ZZ^n$ can be expressed uniquely as an integer linear combination of $\beta$ and the vectors $\undim E_i$. Let $\alpha\in D^{ss}_\ZZ(\beta)$. Then $\alpha$ is an integer linear combination of the roots $\undim E_i$, say    $\alpha=\sum_{i=1}^{n-1} a_i\undim E_i$. The stability conditions which define $D^{ss}_\ZZ(\beta)$ (Definition \ref{defn: D(b), Dss(b), DZ(b), DZss(b)}) and $\brk{\undim E_i,\gamma_k}=0$   imply
\[
	\brk{\alpha,\gamma_k}=a_k\brk{\undim E_k,\gamma_k}\le0.
\]
Since $\brk{\undim E_k,\gamma_k}<0$ by (2), this implies that $a_k\ge0$ for each $k$. Since all $a_i$ are integers it follows that $\alpha\in \Delta^+(\beta)$. 
 
 This finishes the proof that $D^{ss}_\ZZ(\beta)=D_\ZZ(\beta)$ for $\beta$ a sincere Schur root.
\end{proof}

\subsection{Non-sincere case}\label{ss: non-sincere case} (Proof of Theorem \ref{virtual stability theorem'}) In this subsection we will prove $D^{ss}_\ZZ(\beta)=D_\ZZ(\beta)$ for all real Schur roots $\beta$. In order to deal with non-sincere roots, we need the following lemma. 
\begin{lem}\label{lem: reduction to sincere case} Let $Q$ be a quiver, $\beta$ a real Schur root which is not sincere, with $\beta_j=0$. Let $Q_{(j)}$ be the quiver obtained from $Q$ by deleting vertex $j$ and all adjacent edges. Then:\begin{enumerate}
\item $D^{ss}_\ZZ(Q,\beta)=\{\alpha+m\undim P_j\,|\, \alpha\in D^{ss}_\ZZ(Q_{(j)},\beta), m\in\ZZ\}=D^{ss}_\ZZ(Q_{(j)},\beta)+\ZZ \undim P_j$, 
\item 
$D_\ZZ(Q,\beta)=\{\alpha+m\undim P_j\,|\,\alpha\in D_\ZZ(Q_{(j)},\beta), m\in\ZZ\}=D_\ZZ(Q_{(j)},\beta)+\ZZ \undim P_j$.
\end{enumerate}
\end{lem}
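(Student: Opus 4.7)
The plan is to reduce everything to two observations. First, since $\beta_j=0$, every real Schur subroot $\beta'\subseteq\beta$ arises from a submodule of $M_\beta$ and thus has $\beta'_j=0$; hence $\brk{\undim P_j,\beta'}=f_j\beta'_j=0$, so translation by $\ZZ\undim P_j$ preserves every Euler pairing in the stability conditions defining $D^{ss}_\ZZ(Q,\beta)$. Second, $M_\beta$ is annihilated by the idempotent $e_j$ and is canonically a $\Lambda_{(j)}$-module. Consequently the real Schur subroots of $\beta$ computed over $\Lambda$ and over $\Lambda_{(j)}$ coincide, and the Euler matrix of $\Lambda_{(j)}$ is the principal submatrix of that of $\Lambda$ obtained by deleting row and column $j$, so the $\Lambda$- and $\Lambda_{(j)}$-stability conditions agree on vectors with zero $j$-coordinate.

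Part (1) is then almost immediate. The inclusion $\supseteq$ is the translation invariance above. For $\subseteq$, decompose any $\alpha\in D^{ss}_\ZZ(Q,\beta)$ as $\alpha=(\alpha-\alpha_j\undim P_j)+\alpha_j\undim P_j$; since $(\undim P_j)_j=1$, the first summand has zero $j$-coordinate and represents an element of $D^{ss}_\ZZ(Q_{(j)},\beta)$ by the coincidence of Euler forms and Schur subroots.

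For part (2), I would first establish that $D_\ZZ(Q,\beta)$ is invariant under $\ZZ\undim P_j$, using $P_j\in\,^{\perp_V}\!M_\beta$ (Remark \ref{rem 3.3.3}(3)): given $f:P\to Q$ realizing $\alpha$ with $f^\ast$ an isomorphism, the stabilized presentation $(f,0):P\to Q\oplus P_j^m$ realizes $\alpha+m\undim P_j$ and induces the same isomorphism on $\Hom(-,M_\beta)$ since $\Hom_\Lambda(P_j,M_\beta)=0$; subtracting $\undim P_j$ is analogous. For the core comparison, tensor any $\Lambda$-presentation $f:P\to Q$ with $\Lambda_{(j)}=\Lambda/\Lambda e_j\Lambda$ to obtain a $\Lambda_{(j)}$-presentation $f':P'\to Q'$. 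The tensor-hom adjunction yields $\Hom_\Lambda(-,M_\beta)\cong\Hom_{\Lambda_{(j)}}(-\otimes_\Lambda\Lambda_{(j)},M_\beta)$, so $(f')^\ast=f^\ast$ is an isomorphism. Identifying $\ker(P\twoheadrightarrow P')$ with $P_j^{(\undim P)_j}$ (and similarly for $Q$) then yields $\undim Q'-\undim P'=\alpha-\alpha_j\undim P_j\in D_\ZZ(Q_{(j)},\beta)$. The reverse inclusion lifts a $\Lambda_{(j)}$-presentation along $\Lambda$-projective covers (chain lifts exist by projectivity) and combines with translation invariance.

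The main technical hurdle is the identification $\ker(P\twoheadrightarrow P\otimes_\Lambda\Lambda_{(j)})\cong P_j^{(\undim P)_j}$, which boils down to showing that the $P_j$-generated submodule $(Pe_j)\Lambda$ of a projective $P$ is a free $P_j$-module of the expected rank; hereditariness (so every submodule of a projective is projective) combined with Krull-Schmidt supplies this. The rest is bookkeeping about dimension vectors and the identification of $D^{ss}_\ZZ(Q_{(j)},\beta)\subseteq\ZZ^{n-1}$ with the subset $\{\alpha'\in\ZZ^n:\alpha'_j=0\}$ of $\ZZ^n$ used in the lemma statement.
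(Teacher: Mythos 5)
Your proof is correct and follows the same overall strategy as the paper's: translate by $\ZZ\,\undim P_j$ to reduce to the zero $j$-coordinate case, then identify that slice with $\Lambda_{(j)}$. Where you add real content is in part (2): the paper simply asserts that the $\Lambda$-presentation $P(\gamma_\ast)\oplus P_j^{|\alpha_j|}$ (or its shift) ``has dimension $\alpha-\alpha_j\undim P_j$ which lies in $D_\ZZ(Q_{(j)},\beta)$'' without explaining the passage from a $\Lambda$-presentation to a $\Lambda_{(j)}$-presentation, whereas you make this explicit via $-\otimes_\Lambda\Lambda_{(j)}$ and the adjunction $\Hom_\Lambda(N,M_\beta)\cong\Hom_{\Lambda_{(j)}}(N\otimes_\Lambda\Lambda_{(j)},M_\beta)$ (valid because $M_\beta e_j=0$), and you handle the reverse inclusion by projective lifting. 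This is a genuine clarification of a step the paper treats as immediate.

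One small point worth tightening: your assertion that ``hereditariness combined with Krull--Schmidt'' pins down $P\Lambda e_j\Lambda\cong P_j^{(\undim P)_j}$ needs one more ingredient to fix the rank. Hereditariness makes $P\Lambda e_j\Lambda$ projective, and the surjection $P_j^{\otimes}\colon (Pe_j)\otimes_{F_j}e_j\Lambda\cong P_j^d\twoheadrightarrow P\Lambda e_j\Lambda$ (with $d=(\undim P)_j$) plus the top/Krull--Schmidt argument shows it is $P_j^m$ with $m\le d$, but to conclude $m=d$ you should also note $\Hom_\Lambda(P_j,P\Lambda e_j\Lambda)=P\Lambda e_j\Lambda e_j=Pe_j$ (using $e_j\Lambda e_j=F_j$ since there are no oriented cycles), which has $F_j$-dimension $d$. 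With that observation the identification is complete and the dimension-vector bookkeeping in both inclusions goes through exactly as you describe.
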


\begin{proof} (1) Since $P_j$ is one dimensional (over $F_j$) at vertex $j$, for any integer vector $\alpha\in \ZZ^n$, $\alpha-\alpha_j\undim P_j$ lies in $\ZZ^{n-1}$. Since $\brk{\undim P_j,\beta'}=0$ for all subroots $\beta'\subseteq \beta$, it follows that  $\alpha\in D^{ss}_\ZZ(\beta)$ if and only if $\alpha-\alpha_j\undim P_j$ lies in $D^{ss}_\ZZ(Q_{(j)},\beta)$. 

(2) Since $P_j$ and $P_j[1]$ are in the perpendicular category $^{\perp_V}\!M_\beta$, the same is true for $D_\ZZ(Q,\beta)$: $\alpha\in D_\ZZ(Q,\beta)$ iff there is a virtual representation $P(\gamma_\ast)$ of dimension $\alpha$ which lies in $^{\perp_V}\!M_\beta$. Then $P(\gamma_\ast)\oplus P_j^{|\alpha_j|}$ and $P(\gamma_\ast)\oplus P_j[1]^{|\alpha_j|}$ are virtual representations in $^{\perp_V}\! M_\beta$ and one of them has dimension $\alpha-\alpha_j\undim P_j$ which lies in $D_\ZZ(Q_{(j)},\beta)$. Conversely, $D_\ZZ(Q_{(j)},\beta)+\ZZ\undim P_j$ is contained in $D_\ZZ(\beta)$. So, they are equal.
\end{proof}

\begin{proof}[Proof of Virtual Stability Theorem \ref{virtual stability theorem'}] (when the field $K$ is infinite). The proof is by induction on the number of vertices of the quiver $Q$. Let $\beta$ be a real Schur root. If $\beta$ is sincere then $D^{ss}_\ZZ(\beta)=D_\ZZ(\beta)$ by Lemma \ref{lem: stability theorem hold when beta is sincere}.

If $\beta$ is not sincere then by Lemma \ref{lem: reduction to sincere case}(1): $D^{ss}_\ZZ(Q,\beta)$, is the set of all integer vectors of the form $\alpha+m\undim P_j$ where $\alpha$ lies in $D^{ss}_\ZZ(Q_{(j)},\beta)$ and $m\in\ZZ$. Since $Q_{(j)}$ has $n\!-\!1$ vertices, by induction  $D^{ss}_\ZZ(Q_{(j)},\beta)=D_\ZZ(Q_{(j)},\beta)$. Then by Lemma \ref{lem: reduction to sincere case}(2) it follows that $D^{ss}_\ZZ(\beta)=D_\ZZ(\beta)$. This finishes the proof of Theorem \ref{virtual stability theorem'}.
\end{proof} 
The extended version of the stability theorem includes also the equality $D^{ss}_\ZZ(\beta)=\Delta(\beta)$ which was proved for $\beta$ sincere in Lemma \ref{lem: stability theorem hold when beta is sincere}, which we now extend to the general real Schur root $\beta$.

\begin{prop}\label{prop 3.5.2: extended stability theorem}
Let  $\beta$ be a real Schur root. Then
\[
	D^{ss}_\ZZ(\beta)=D_\ZZ(\beta)=\Delta(\beta).
\]
%where the terms are defined in Definition \ref{defn: D(b), Dss(b), DZ(b), DZss(b)} and Definition \ref{def: Delta beta}.
\end{prop}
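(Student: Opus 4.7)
My plan is to show $D^{ss}_\ZZ(\beta)\subseteq\Delta(\beta)$, as the other inclusions $\Delta(\beta)\subseteq D_\ZZ(\beta)\subseteq D^{ss}_\ZZ(\beta)$ already come from Proposition \ref{prop 3.3.5: Delta subset D}, and the equality $D_\ZZ(\beta)=D^{ss}_\ZZ(\beta)$ is Theorem \ref{virtual stability theorem'}. I would induct on the number $n$ of vertices of the underlying quiver. The base case is the sincere case, already done in the proof of Lemma \ref{lem: stability theorem hold when beta is sincere}: when $\beta$ is sincere, $J_\beta=\emptyset$ so $\Delta(\beta)=\Delta^+(\beta)$, and that lemma establishes $D^{ss}_\ZZ(\beta)=\Delta^+(\beta)$.

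For the inductive step, suppose $\beta$ is not sincere and pick $j\in J_\beta$. By Lemma \ref{lem: reduction to sincere case}(1) and the inductive hypothesis applied to $Q_{(j)}$, which has $n-1$ vertices, I obtain
\[
D^{ss}_\ZZ(Q,\beta)=\Delta(Q_{(j)},\beta)+\ZZ\undim P_j.
\]
Since $j\in J_\beta$, the term $\ZZ\undim P_j$ is already built into $\Delta(Q,\beta)$, so the latter is closed under translation by $\ZZ\undim P_j$; hence it suffices to prove $\Delta(Q_{(j)},\beta)\subseteq\Delta(Q,\beta)$. I would check this on the two kinds of generators of $\Delta(Q_{(j)},\beta)$, namely $\undim E_i^{(j)}$ for $E_i^{(j)}$ a simple of the perpendicular category $^\perp M_\beta$ computed inside $mod\text-\Lambda_{(j)}$, and $\ell\,\undim P_{j'}^{(j)}$ for $j'\in J_\beta\setminus\{j\}$ and $\ell\in\ZZ$.

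For the first kind of generator, the idea is that $E_i^{(j)}$ remains simple when viewed inside the larger perpendicular category $^\perp M_\beta$ in $mod\text-\Lambda$, and hence is isomorphic to some $E_k$. Since both $E_i^{(j)}$ and $M_\beta$ are zero at vertex $j$, every $\Lambda$-extension of $E_i^{(j)}$ by $M_\beta$ (and every $\Lambda$-submodule of $E_i^{(j)}$) automatically lies in $mod\text-\Lambda_{(j)}$, so the relevant $\Hom$ and $\Ext^1$ groups computed over $\Lambda$ and over $\Lambda_{(j)}$ coincide. Consequently $E_i^{(j)}\in{^\perp M_\beta}$ in $mod\text-\Lambda$ with the same subobject lattice, simplicity transfers, and therefore $\undim E_i^{(j)}=\undim E_k$ lies in the $\NN$-span of $\{\undim E_\ell\}$, hence in $\Delta(Q,\beta)$.

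The harder step, and the main obstacle, is handling the generators $\ell\,\undim P_{j'}^{(j)}$ with $\ell<0$: merely noting that $\undim P_{j'}^{(j)}$ lies in the $\NN$-span of the $\undim E_k$'s is not enough to produce negative multiples inside $\Delta(Q,\beta)$. I plan to use hereditariness as follows: the short exact sequence $0\to N\to P_{j'}\to P_{j'}^{(j)}\to 0$, where $N$ is the trace of $P_j$ in $P_{j'}$, presents $N$ as a submodule of the projective $P_{j'}$, hence itself projective; a top calculation identifies $N\cong P_j^c$ with $c=(\undim P_{j'})_j$. This yields the identity $\undim P_{j'}^{(j)}=\undim P_{j'}-c\,\undim P_j$, and since $j,j'\in J_\beta$ both belong to the $\ZZ$-index set of $\Delta(Q,\beta)$, the full $\ZZ$-multiples $\ZZ\,\undim P_{j'}^{(j)}\subseteq\ZZ\,\undim P_{j'}+\ZZ\,\undim P_j$ stay inside $\Delta(Q,\beta)$. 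Combining the two kinds of generators completes the induction.
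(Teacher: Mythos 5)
Your proof is correct and follows the same inductive scheme as the paper (base case the sincere Lemma \ref{lem: stability theorem hold when beta is sincere}, inductive step via Lemma \ref{lem: reduction to sincere case} and restriction to $Q_{(j)}$). The one place where you go beyond what the paper spells out is the translation of the generators of $\Delta(Q_{(j)},\beta)$ into generators of $\Delta(Q,\beta)$: the paper's displayed rewriting of $\Delta(Q_{(j)},\beta)+\ZZ\,\undim P_j$ tacitly replaces the $\Lambda_{(j)}$-projectives $P_{j'}^{(j)}$ by the $\Lambda$-projectives $P_{j'}$, and this substitution (needed because the $b_k$ may be negative) is exactly what your identity $\undim P_{j'}^{(j)}=\undim P_{j'}-c\,\undim P_j$, obtained from hereditariness, justifies. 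Your sharper observation that each $E_i^{(j)}$ is actually one of the $E_k$'s (using that $\Hom$, $\Ext^1$, and subobjects of a $\Lambda_{(j)}$-module agree over $\Lambda$ and $\Lambda_{(j)}$) is also valid, though the paper's weaker statement that $\undim E_i^{(j)}$ is an $\NN$-combination of the $\undim E_k$'s already suffices; both versions close the argument.
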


\begin{proof}
When $\beta$ is sincere, this is Lemma \ref{lem: stability theorem hold when beta is sincere}. So, suppose $\beta$ is not sincere. Let $j\in J_\beta$ and let $\Lambda_{(j)}=\Lambda/\Lambda e_j\Lambda$. Then the quiver $Q_{(j)}$ as in Lemma \ref{lem: reduction to sincere case} is the quiver of $\Lambda_{(j)}$. Then, by induction on $n$ we have
\[
	D^{ss}_\ZZ(Q_{(j)},\beta)=\Delta(Q_{(j)},\beta)=\{\sum_{1\le i\le n-2} a_i\undim E_i'+\sum_{k\in J_\beta,k\neq j} b_k \undim P_k\,:\, a_i\in \NN, b_k\in \ZZ\},
\]
where $E_i'$ are the simple objects of $^\perp M_\beta$ in $mod\text-\Lambda_{(j)}$. By Lemma \ref{lem: reduction to sincere case} we conclude that
\[
	D^{ss}_\ZZ(\beta)=\Delta(Q_{(j)},\beta)+\ZZ \undim P_j=\{\sum_{1\le i\le n-2} a_i\undim E_i'+\sum_{k\in J_\beta} b_k \undim P_k\,:\, a_i\in \NN, b_k\in \ZZ\}.
\]
Since each $E_i'$ and each $P_k$ is a module in $^\perp M_\beta$, their dimension vectors are nonnegative $\ZZ$-linear combinations of the dimension vectors of the simple objects $E_i$ of $^\perp M_\beta$. Therefore,
\[
	D^{ss}_\ZZ(\beta)\subseteq \{\sum_{1\le i\le n-1} a'_i\undim E_i+\sum_{k\in J_\beta} b'_k \undim P_k\,:\, a'_i\in \NN, b'_k\in \ZZ\}=\Delta(\beta).
\]
By Proposition \ref{prop 3.3.5: Delta subset D}, the opposite inclusion $\Delta(\beta)\subseteq D_\ZZ(\beta)\subset D^{ss}_\ZZ(\beta)$ holds. %This finishes the proof of the proposition.
\end{proof}

\subsection{Extension to arbitrary fields $K$}\label{sec: extend to finite K} (Proof of Theorem \ref{thm 3.1.1: virtual stability theorem}) Suppose that the ground field $K$ is finite. Then, we still have the trivial implication $(1)_\Lambda\then (3)_\Lambda$. Since $-\otimes_KK(t)$ is an exact functor, $(3)_\Lambda\then (3)_{\Lambda(t)}$ which we have shown to be equivalent to $(1)_{\Lambda(t)}$ and $(2)$ which does not refer to $K$. It remains to show that these imply $(1)_\Lambda$.

Recall that, for every real Schur root $\beta$, the simple objects $E_i$ and projective objects $P_j$ of $^\perp M_\beta\cap mod\text-\Lambda(t)$ are exceptional $\Lambda(t)$-modules. By Theorem \ref{thm: extension to finite fields}, these are isomorphic to $E_i'(t),P_j'(t),M_\beta'(t)$ for unique exceptional $\Lambda$-modules $E_i',P_j',M_\beta'$. Then, for any $\alpha\in D^{ss}_\ZZ(\beta)=\Delta_{\Lambda(t)}(\beta)$, we have
\[
	\alpha=\sum k_i\undim E_i'(t)+\sum \ell_j\undim P_j'(t)=\sum k_i\undim E_i'+\sum \ell_j\undim P_j'
\]
where $k_i\in\NN$ and $\ell_j\in\ZZ$. So, the direct sum of the $\Lambda$-modules $E_i'^{k_i}$, projective modules $P_j'^{\ell_j}$ for $\ell_j\ge0$ and the shifted projective modules $P_j'^{|\ell_j|}[1]$ for $\ell_j\le0$ give a virtual representation $P(\gamma_\ast)=(f:P(\gamma_1)\to P(\gamma_0))$ so that, $\Hom_\Lambda(f,M_\beta'):\Hom_\Lambda(P(\gamma_0),M_\beta')\cong \Hom_\Lambda(P(\gamma_1),M_\beta')$. This shows that $(2)\then (1)_\Lambda$. So, Theorem \ref{thm 3.1.1: virtual stability theorem} holds for finite $K$. This completes the proof of Theorem \ref{thm 3.1.1: virtual stability theorem} for all finite dimensional hereditary algebras over any field.

\begin{rem} When $K$ is any perfect field, $\Lambda\otimes_K\overline K$ is an hereditary algebra over the algebraically closed field $\overline K$ and it should be possible to extend the Virtual Stability Theorem from \cite{IOTW1} to $\Lambda$. However, if $K$ is not perfect and $F_i=K(a^{1/p})$ is the division algebra at vertex $i$ then the socle of $S_i\otimes_K\overline K$ has infinite projective dimension. So, $\Lambda\otimes_K\overline K$ is not hereditary in that case. That is the reason we did not take this approach.
\end{rem}

%\newpage

\section{$c$-vectors and semi-invariants}\label{sec: c-vector theorem}

\setcounter{subsection}{-1}

In this section we use the Virtual Stability Theorem for semi-invariants (\ref{thm 3.1.1: virtual stability theorem}) to prove two fundamental theorems relating {determinantal weight}s of semi-invariants, {cluster tilting objects} and $c$-vectors corresponding to a cluster tilting object. Theorem \ref{thm 4.1.5: clusters and SIs} gives the relation between semi-invariants and {cluster tilting objects}. Theorem \ref{c-vector thm}, relates the semi-invariants of a cluster tilting object to the corresponding $c$-vectors. (Section \ref{ss: def of c-vectors}) These theorems (\ref{thm 4.1.5: clusters and SIs} and \ref{c-vector thm}) are inspired by work of Speyer and Thomas \cite{ST}. In type $A$ these theorems are re-interpreted in terms of finite and infinite trees in \cite{IOs}, \cite{ITW}.

\subsection{Preview} We illustrates Theorems \ref{thm 4.1.5: clusters and SIs} and \ref{c-vector thm} in an example.

\begin{rem}\label{rem 4.0.1: Pres = C}
Since $c$-vectors come from cluster theory, we use the well-known language of cluster categories \cite{BMRRT}. We recall from Corollary \ref{cor 2.3.7: Pres L=CL} that there is a bijection between presentations in $Pres(\Lambda)$ and objects of the cluster category $\cC_\Lambda$ so that the presentation of any $\Lambda$-module is sent to that module, and the shifted projective $P[1]$ is sent to the same shifted projective object. Objects of $Pres(\Lambda)$ are Ext-orthogonal if and only if the corresponding objects of the cluster category $\cC_\Lambda$ are Ext-orthogonal since:
\[
	\Ext^1_{\cC_\Lambda}(X,Y)%=\Ext^1_{Pres(\Lambda)}(X,Y)\oplus \Ext^1_{Pres(\Lambda)}(X,\tau^{-1}Y[1])
	\cong \Ext^1_{Pres(\Lambda)}(X,Y)\oplus D\Ext^1_{Pres(\Lambda)}(Y,X)
\]
where $D=\Hom_K(\cdot,K)$.
%that  as follows.  This bijection has the property that $\Ext_{\cC_\Lambda}^1(X',Y')=0=\Ext_{\cC_\Lambda}^1(Y',X')$ if and only if $\Ext_{\Vrep(\Lambda)}^1(X,Y)=0=\Ext_{\Vrep(\Lambda)}^1(Y,X)$, where $X',Y'$ are objects in $\cC_\Lambda$ corresponding to $X,Y\in \Vrep(\Lambda)$. This means we can use properties of cluster tilting objects in a cluster category to work with partial cluster tilting objects in the virtual representation category.
\end{rem}

\begin{eg}\label{eg 4.0.2: A3 three circles}
The figure below illustrates the relation between cluster tilting objects, domains of semi-invariants and $c$-vectors for the quiver $Q=1\ot 2\ot 3$ of type $A_3$. The picture indicates the unit sphere $S^2$ in $\RR^3$ stereographically projected to $\RR^2$ with the center being the dimension vector of $\Lambda=P_1\oplus P_2\oplus P_3$. The 9 vertices are the (normalized dimension vectors of) indecomposable objects of the cluster category $\cC_\Lambda$ labeled as objects of $Pres(\Lambda)\cong \Vrep(\Lambda)$. The 6 lines are $D(\beta)\cap S^2$, where $\beta$ are the 6 positive roots, dimension vectors of indecomposable modules. There are 14 regions which are spherical triangles whose vertices are components of cluster tilting objects. For example, the upper left triangle has vertices $T_1=S_1=P_1$, $T_2=P_2[1]$, $T_3=P_3[1]$ with walls $D(\beta_i)$, where $\beta_i$ are given by Theorem \ref{thm 4.1.5: clusters and SIs}. The second theorem of this section, Theorem \ref{c-vector thm}, shows that the $c$-vectors corresponding to each cluster tilting object $\bigoplus T_i$ are, up to sign, equal to the det-weights $\beta_i$ of the semi-invariants defined on the walls $D(\beta_i)$ of the conical simplex spanned by $\undim T_i$. For example, in the upper left spherical simplex, the $c$-vectors are $c_1=-\beta_1=-e_1$, $c_2=\beta_2=e_2$, $c_3=\beta_3=e_3$. The sign of $c$-vectors is positive on the outside of each curve and negative on the inside.
\end{eg}

\begin{center}
\begin{tikzpicture}[scale=.9] % Huge YYYYYYYY
%\draw[help lines=.2,color=red] (-5.75,-5) grid (5,5);	\draw[help lines=1, thick,color=blue] (-5.75,-5) grid (5,5);	\foreach \x in {-5,...,5} \draw (\x,-.2) node{\x};	\foreach \y in {-5,...,5} \draw (-.2,\y) node{\y};	\clip (-7.75,-5.5) rectangle (6.25,4.8);
%	\draw[fill] (0,0) circle [radius=2pt];
%	\draw[thick] (-.5,-5);% node{Figure 1: A picture defines a group.};
		\draw[fill=gray!05!white,draw=black] (-2.25,1.3) circle [radius=3cm]; % circle 1
		\draw[ fill=white] (.75,1.3) circle [radius=3cm]; % circle 2
		\draw[ fill=white] (-.75,-1.3) circle [radius=3cm]; % circle 3
		\draw[ thick] (-2.25,1.3) circle [radius=3cm]; % circle 1
		\draw[ thick] (.75,1.3) circle [radius=3cm]; % circle 2
		\draw[ thick] (-.75,-1.3) circle [radius=3cm]; % circle 3
		\begin{scope}
		\clip (-.75,-2) rectangle (4.25,4.5);
		\draw[ thick] (-.75,1.3) ellipse [x radius=3cm,y radius=2.6cm]; % 21 semicircle
		\end{scope}
%	\draw (-2.25,-2.8) rectangle (3.75,2.8);
%\draw[fill, color=blue] (0,0) circle [radius=2pt];
		\begin{scope}[rotate=60]
		\clip (0,-2.7) rectangle (-3.1,2.7);
		\draw[ thick] (0,0) ellipse [x radius=3cm,y radius=2.6cm]; % 32 semicircle
		\end{scope}
%		\draw[thick, color=red] (-.75,.43) circle [radius=2.68cm];
\begin{scope}
\clip (-2.4,-3) rectangle (2,.1);
		\draw[ thick] (-.75,.43)  circle [radius=2.68cm]; % 321 arc
		\end{scope}
%	\draw[font=\huge,color=red] (0,0) node{C};
\draw (-5.2,2) node[above]{\tiny$  D(\beta_1)=D[100]$} node[right]{\tiny$ c_1=-[100]$};%\mat{1\\0\\0}$};
\draw (3.3,3.5) node{\tiny$ D[010]$};
%(-2,2.2)node[left]{\tiny $c_2=[010]$};%\mat{0\\1\\0}$};
\draw (-3.2,-3) node[left]{\tiny$  D[001]$};% (-3,.8)node[left]{\tiny$ c_3=[001]$};%\mat{0\\0\\1}$};
\draw (2.3,2) node{\tiny$ D[110]$};
\draw (-1.2,-3.1) node{\tiny$ D[011]$};% (-1.9,-.8)node[left]{$\tiny c_2'=\mat{0\\1\\1}$};
\draw (-.7,-2.1) node{\tiny$ D[111]$};
\draw (-1.3,1.1) node{\tiny$T_1=S_1=P_1$};
\draw (1,1.4) node{\tiny$P_2$};
\draw (2.2,0.1) node{\tiny$S_2$};
\draw (-.8,-.9) node{\tiny$P_3$};
\draw (-2.5,-2) node{\tiny$S_3$};
\draw (-3.8,-1.5) node[left]{\tiny$T_2=P_2[1]$};
\draw (2.7,-1.5) node{\tiny$P_1[1]$};
\draw (-.8,4.4) node{\tiny$T_3=P_3[1]$};
\draw (1,-2) node{\tiny$I_2$};
\draw (4,-3.5) node{\small$Q=1\leftarrow 2\leftarrow 3$};
\draw(-1.8,2.7) node[above]{\tiny$D(\beta_2)=D[010]$}node[left]{\tiny $c_2=[010]$} ;
\draw(-3.4,-.1) node[above]{\tiny$D(\beta_3)=D[001]$} (-3.4,-.1) node[left]{\tiny$ c_3=[001]$};
\end{tikzpicture}
\end{center}

\subsection{Structure of semi-invariant domains}

If $v_1,\cdots,v_k$ are vectors in $\RR^n$, the \emph{conical polyhedron} spanned by the $v_i$ is the set of all nonnegative linear combinations of the $v_i$. We will denote it by $C(v_1,\cdots,v_k)$. If the vectors $v_i$ are linearly independent we call the conical polyhedron a \emph{conical simplex}.

{For each real Schur root $\beta$, the domain $D(\beta)$ of the determinantal semi-invariant with {det-weight} $\beta$} is equal to the codimension-one conical polyhedron $\Delta(\beta)$ in $\RR^n$ by Proposition \ref{prop 3.5.2: extended stability theorem}. This is a conical polyhedron since it is the set of nonnegative linear combinations of the vectors $\undim E_i$, $\undim P_j$ and $-\undim P_j$.

\begin{rem}\label{rem 4.1.1: consider objects T to be in Pres}
(1) We consider objects of $\cC_\Lambda$ as objects in $Pres(\Lambda)\cong \Vrep(\Lambda)$ following Remark \ref{rem 4.0.1: Pres = C}, Corollary \ref{cor 2.3.7: Pres L=CL} and Proposition \ref{prop 2.3.2: stabilization is an equivalence}, and, for $X$ in $Pres(\Lambda)$, denote by $X^{\perp_V}$, $^{\perp_V}\!X$ perpendicular categories of $X$ in $Pres(\Lambda)$. \\
(2) Recall that $|X|^\perp=X^{\perp_V}\cap\, mod\text-\Lambda$ by Lemma \ref{lem 3.2.2: X-Vperp=|X|perp}, where  $M^\perp$  and $^\perp\!M$ are $\Hom_\Lambda$- $\Ext_\Lambda^1$-  perpendicular categories of $\Lambda$-module $M$ in $mod\text-\Lambda$ where $|M|=M$ for modules $M$ and $|P[1]|=P$ for shifted projective modules $P[1]$.\\
(3) Let $R$ be a rigid object in $Pres(\Lambda)$. Then 
\[
M_\beta\in |R|^{\perp}\iff M_\beta\in R^{\perp_V}\iff\undim R\in D(\beta).
\]
This follows from Theorem \ref{thm 2.3.11: virtual generic decomposition theorem}, the definition of $D(\beta)$ and (2) above.\\
(4)  Let $v\in D(\beta)$. Then, by the Virtual Stability Theorem \ref{thm 3.1.1: virtual stability theorem}, $v$ lies in the interior of $D(\beta)$ if and only if $\brk{v,\beta'}<0$ for all proper real Schur subroots $\beta'\subsetneq \beta$.
\end{rem}

If $T_0=T_1\oplus \cdots\oplus T_k$ is a partial cluster tilting object in $\cC_\Lambda$, the dimension vectors $\undim T_i$ are linearly independent. So they span the conical simplex $C(\undim T_1,\cdots,\undim T_k)$ which we abbreviate by $C(T_0)$.

\begin{lem}\label{lemma when T0 lies in D(b)}
Let $T_0=T_1\oplus \cdots \oplus T_k$ be any partial cluster tilting object. Then \\
\emph{(a)} $|T_0|^\perp$ is isomorphic to $mod\text-\Gamma$ where $\Gamma$ is an hereditary algebra with $n-k$ simple objects. \\
\emph{(b)} The conical simplex $C(T_0)$ is contained in $D(\beta)$ if and only if the single vector $\undim T_0=\sum_{i=1}^k \undim T_i$ lies in $D(\beta)$. \\
\emph{(c)} Let $M_\beta$ be an exceptional module in $|T_0|^\perp$. Then $\undim T_0$ lies in the interior of $D(\beta)$ if and only if $M_\beta$ is a simple object of $|T_0|^\perp$.
\end{lem}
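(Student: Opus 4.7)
I would prove the three parts in order, leaning on the wide subcategory machinery of Section 3.2 and the characterizations of $D(\beta)$ and its interior from Remark \ref{rem 4.1.1: consider objects T to be in Pres}.

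For (a), the plan is to arrange the summands of $T_0$ into an exceptional sequence. Since $T_0$ is partial cluster tilting, the components satisfy $\Ext^1_{\cC_\Lambda}(T_i,T_j)=0$, which by Remark \ref{rem 4.0.1: Pres = C} (equivalently Corollary \ref{cor 2.3.7: Pres L=CL}) forces pairwise $\Ext^1_\Lambda$-vanishing in both directions among the underlying modules $|T_i|$. Schofield's observation (Lemma \ref{lem: Schofield's observation}) then reorders these into an exceptional sequence of length $k$ in $mod\text-\Lambda$. By Remark \ref{rem: properties of wide subcategories}(2), $|T_0|^\perp=\bigcap_i|T_i|^\perp$ is a finitely generated wide subcategory of rank $n-k$, and by part (1) of the same remark is equivalent to $mod\text-\Gamma$ for an hereditary algebra $\Gamma$ with $n-k$ simple objects.

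For (b), one direction is immediate since $\undim T_0\in C(T_0)$. For the converse, Remark \ref{rem 4.1.1: consider objects T to be in Pres}(3) translates $\undim T_0\in D(\beta)$ into $M_\beta\in|T_0|^\perp=\bigcap_i|T_i|^\perp$, which is the same as $M_\beta\in|T_i|^\perp$ for every $i$, i.e.\ $\undim T_i\in D(\beta)$ for each $i$. Since $D(\beta)=\Delta(\beta)$ is a convex cone by Proposition \ref{prop 3.5.2: extended stability theorem}, any nonnegative real linear combination of the $\undim T_i$ remains in $D(\beta)$, which gives $C(T_0)\subseteq D(\beta)$.

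Part (c) uses Remark \ref{rem 4.1.1: consider objects T to be in Pres}(4): $\undim T_0$ lies in the interior of $D(\beta)$ iff $\brk{\undim T_0,\beta'}<0$ for every proper real Schur subroot $\beta'\subsetneq\beta$. For the ``only if'' direction, if $M_\beta$ is not simple in $|T_0|^\perp$, I pick a simple subobject $N$ of $M_\beta$ there; by (a), $N$ is an exceptional $\Lambda$-module, so $N=M_{\beta'}$ with $\beta'\subsetneq\beta$ a proper Schur subroot. Because $M_{\beta'}\in|T_i|^\perp$ for every $i$, the pairing $\brk{\undim T_i,\beta'}$ vanishes (for module summands both $\Hom$ and $\Ext^1$ vanish; for a shifted projective $T_i=P_j[1]$, the condition $\Hom(P_j,M_{\beta'})=0$ forces $\beta'_j=0$), hence $\brk{\undim T_0,\beta'}=0$, contradicting the interior condition. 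For the ``if'' direction, assume $M_\beta$ is simple in $|T_0|^\perp$ but $\undim T_0$ is not in the interior; then some proper Schur subroot $\beta'\subsetneq\beta$ satisfies $\brk{\undim T_0,\beta'}=0$. Each $\undim T_i\in D(\beta)=D^{ss}(\beta)$ gives $\brk{\undim T_i,\beta'}\le 0$, and since the sum is zero each term must vanish. Every Schur subroot $\beta''\subseteq\beta'$ is also a Schur subroot of $\beta$ via $M_{\beta''}\subseteq M_{\beta'}\subseteq M_\beta$, so $\brk{\undim T_i,\beta''}\le 0$ as well; Theorem \ref{thm 3.1.1: virtual stability theorem} then places $\undim T_i\in D(\beta')$, and Remark \ref{rem 4.1.1: consider objects T to be in Pres}(3) yields $M_{\beta'}\in|T_i|^\perp$ for each $i$, i.e.\ $M_{\beta'}\in|T_0|^\perp$. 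But then $M_{\beta'}$ is a nonzero proper subobject of $M_\beta$ in $|T_0|^\perp$, contradicting simplicity.

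The main obstacle will be the ``if'' direction of (c): one must upgrade the merely numerical statement $\brk{\undim T_0,\beta'}=0$ to a categorical one, $M_{\beta'}\in|T_0|^\perp$. This upgrade is precisely where the Virtual Stability Theorem for $\beta'$ is needed, together with the geometric observation that Schur subroots of $\beta'$ descend from Schur subroots of $\beta$; parts (a) and (b) are comparatively direct consequences of the wide subcategory formalism and the convex-cone description of $D(\beta)$.
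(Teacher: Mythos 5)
Your proof is correct and follows essentially the same route as the paper: part (a) reduces to Schofield's observation plus the wide subcategory facts in Remark \ref{rem: properties of wide subcategories}, part (b) uses the Virtual Stability Theorem (via Remark \ref{rem 4.1.1: consider objects T to be in Pres}(3)) together with convexity of $D(\beta)$, and part (c) goes through the boundary characterization and the observation that real Schur subroots of $\beta'$ remain real Schur subroots of $\beta$, so that $\undim T_0\in D(\beta')$. The only stylistic difference is that in the ``if'' direction of (c) you descend to each $\undim T_i$ before re-assembling, whereas the paper applies the argument directly to $\undim T_0$ and invokes (b); both are equivalent, and your more explicit handling of the shifted-projective summands $P_j[1]$ is a clean added detail.
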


\begin{proof} (a) follows from Remark \ref{rem: properties of wide subcategories}.

(b) If $\undim T_i\in D(\beta)$ then $\undim T_0\in D(\beta)$ since $D(\beta)$ is convex. Conversely, suppose $\undim T_0\in D(\beta)$. Then, by the Virtual Stability Theorem \ref{thm 3.1.1: virtual stability theorem} there exists an object $V\in Pres(\Lambda)$ with $\undim V=\undim T_0$ which admits a determinantal semi-invariant of det weight $\beta$. Since this is an open conditions, the generic object of this dimension has the same property. This is $T_0$. So, $T_0\in \,^{\perp_V}M_\beta$ which implies that each $T_i\in \,^{\perp_V}M_\beta$. So, $C(T_0)\subseteq D(\beta)$.

(c) Suppose $M_\beta$ is a simple object of $|T_0|^\perp$. Suppose that $\undim T_0$ does not lie in the interior of $D(\beta)$. Then $\undim T_0\in\partial D(\beta)$. So, $\brk{\undim T_0,\beta'}=0$ for some proper real Schur subroot $\beta'\subsetneq\beta$. Then, by the Virtual Stability Theorem \ref{thm 3.1.1: virtual stability theorem}, $\undim T_0$ lies in $D(\beta')$ and therefore $M_{\beta'}$ is an object of $|T_0|^\perp$ by (b). But $M_{\beta'}$ is a subobject of $M_\beta$ contradicting the assumption that $M_\beta$ is simple in $|T_0|^\perp$. So, $\undim T_0$ is in the interior of $D(\beta)$.

Suppose $M_\beta$ is not simple in $|T_0|^\perp$. Then $M_\beta$ contains a simple subobject $M_{\beta''}\in |T_0|^\perp$. Any such ${\beta''}$ is a real Schur root. So, $\brk{\undim T_0,{\beta''}}=0$. Therefore, $\undim T_0\in D(\beta)\cap D({\beta''})\subset \partial D(\beta)$. So, $\undim T_0\in\partial D(\beta)$ when $M_\beta$ is not simple in $|T_0|^\perp$.
\end{proof}

\begin{rem}\label{rem 4.1.3}
Let $M_{\alpha_1},\cdots,M_{\alpha_{n-k}}$ be the simple objects of $|T_0|^\perp$. Then, the vector $\undim T_0$ lies in the interior of exactly $n-k$ semi-invariant domains $D(\alpha_1),\cdots,D(\alpha_{n-k})$.
\end{rem}

The following proposition will be used in the proof of the $c$-vector theorem.

\begin{prop}\label{prop 4.1.4: every D(beta) has T(beta)}
Let $T_0=T_1\oplus \cdots \oplus T_{n-2}$ be a partial cluster tilting object with $n-2$ summands. Let $M_\beta$ be an exceptional object of $|T_0|^\perp$. Then
\begin{enumerate}
\item If $M_\beta$ is a nonsimple exceptional object of $|T_0|^\perp$ there is, up to isomorphism, only one object $T(\beta)$ so that $T_0\oplus T(\beta)$ is a partial cluster tilting object and $\undim T(\beta)\in D(\beta)$.
\item If $M_\beta$ is a simple object in $|T_0|^\perp$ there are two nonisomorphic objects $T',T''$ in the cluster category $\cC_\Lambda$ of $mod\text-\Lambda$ so that $T_0\oplus T'$ and $T_0\oplus T''$ are partial {cluster tilting objects} and so that $T', T''$ lie in $^{\perp_V}\! M_\beta$.
\end{enumerate}
\end{prop}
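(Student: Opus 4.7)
The plan is to reduce to the rank $2$ situation. By Lemma~\ref{lemma when T0 lies in D(b)}(a), the wide subcategory $|T_0|^\perp$ is equivalent to $mod\text-\Gamma$ for some rank $2$ hereditary algebra $\Gamma$, and by Lemma~\ref{lemma when T0 lies in D(b)}(b), $T_0$ already lies in $\,^{\perp_V}\! M_\beta$. I would use a cluster-tilted (Iyama--Yoshino type) reduction at $T_0$: the indecomposable rigid objects $T \in \cC_\Lambda$ for which $T_0 \oplus T$ is partial cluster tilting are in bijection with the indecomposable rigid objects of the rank $2$ cluster category $\cC_\Gamma$, and $M_\beta$ corresponds to the same exceptional $\Gamma$-module under this bijection. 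Via Remark~\ref{rem 4.1.1: consider objects T to be in Pres}(3), the condition $T \in \,^{\perp_V}\! M_\beta$ is equivalent to $\undim T \in D(\beta)$, and this transfers to the analogous semi-invariant condition in $\cC_\Gamma$ because the Euler--Ringel pairing $\brk{-,\beta}$ restricts compatibly to the rank $2$ dimension vector sublattice spanned by $T_0$-compatible objects.

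Inside $\cC_\Gamma$ the rigid indecomposables are the exceptional $\Gamma$-modules together with the two shifted projectives $P_1^\Gamma[1]$ and $P_2^\Gamma[1]$. For a $\Gamma$-module $N$, membership in $\,^{\perp_V}\! M_\beta$ reduces to $N \in \,^\perp M_\beta \cap mod\text-\Gamma$, a wide subcategory of rank $1$ by Remark~\ref{rem: properties of wide subcategories}, which contains a unique indecomposable exceptional module. For a shifted projective $P_j^\Gamma[1]$, the $\Hom$ part of perpendicularity is automatic, and the $\Ext^1$ condition is $\Hom_\Gamma(P_j^\Gamma, M_\beta) = 0$, i.e.\ the $j$-th coordinate of $\undim_\Gamma M_\beta$ vanishes. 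In Case~(1), $M_\beta$ is non-simple in $\Gamma$-mod, hence sincere in $\Gamma$, so neither shifted projective qualifies and the unique candidate is the unique indecomposable in $\,^\perp M_\beta$, giving $T(\beta)$. In Case~(2), $M_\beta = S_i^\Gamma$ is simple, so the shifted projective $P_j^\Gamma[1]$ with $j \neq i$ qualifies (its top $S_j^\Gamma \neq S_i^\Gamma$ forces $\Hom_\Gamma(P_j^\Gamma, S_i^\Gamma)=0$) while $P_i^\Gamma[1]$ does not; together with the unique indecomposable exceptional module in $\,^\perp S_i^\Gamma$ this produces exactly two distinct rigid indecomposables $T'$ (a module) and $T''$ (a shifted projective).

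The main obstacle is making the reduction step rigorous. The cluster-tilted reduction provides an equivalence between the $T_0$-compatible subcategory of $\cC_\Lambda$ modulo $\add T_0$ and $\cC_\Gamma$, but this equivalence is not literally the inclusion on objects: a $\Gamma$-module $N \in |T_0|^\perp$ may be sent to a different $\Lambda$-object in $\cC_\Lambda$, and the interpretation of the shifted projectives of $\cC_\Gamma$ on the $\cC_\Lambda$-side is subtle (for instance, a ``shifted projective'' in $\cC_\Gamma$ may correspond to an honest module in $\cC_\Lambda$ outside the fundamental domain). One must verify that $\Hom$ and $\Ext^1$ against $M_\beta$ transfer cleanly under the equivalence; the key input that makes this work is that any morphism factoring through $\add T_0$ vanishes against $M_\beta$ since $M_\beta \in |T_0|^\perp$, so homotopy classes of morphisms in the reduction capture the full perpendicularity data, and the semi-invariant characterization of $\,^{\perp_V}\! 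M_\beta$ via $D(\beta)$ provides a clean way to transfer the condition across the equivalence.
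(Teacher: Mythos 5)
Your plan is a genuinely different route from the paper. The paper does not reduce to the rank~$2$ algebra $\Gamma$ of $|T_0|^\perp$ at all. Instead it works inside the cluster category of the rank-$(n-1)$ wide subcategory $^{\perp}M_\beta$: there $T_0$ is an almost complete cluster tilting object, hence has exactly two completions $T',T''$, and the uniqueness in Case~(1) comes from the geometry ($T_0$ lies on $\partial D(\beta)$, so at most one completion can lie in $D(\beta)$). This avoids having to set up any reduction equivalence.

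Your approach has a genuine gap, and it is precisely the one you flag at the end without resolving. First, the claimed bijection ``indecomposable rigid $T\in\cC_\Lambda$ compatible with $T_0$ $\leftrightarrow$ indecomposable rigid objects of $\cC_\Gamma$'' is an Iyama--Yoshino-type reduction that is not established anywhere in this paper, and its compatibility with the perpendicularity condition (and with the Euler--Ringel form) needs a real argument, not an appeal to symmetry. Second, and more seriously, the delicate part of Case~(2) is exactly the point you defer: a ``shifted projective'' in the reduced rank-$2$ picture need not come from a shifted projective of $\Lambda$, and one must show it nonetheless yields an honest object $T''$ of $\cC_\Lambda$. The paper closes this gap with a nontrivial argument using the Virtual Stability Theorem~\ref{thm 3.1.1: virtual stability theorem}: it shows that when the second completion has the form $P[1]$ with $P$ projective in $^{\perp}M_\beta$, the module $P$ is in fact projective in $mod\text{-}\Lambda$, so $P[1]\in\cC_\Lambda$. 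Your write-up acknowledges ``one must verify that $\Hom$ and $\Ext^1$ against $M_\beta$ transfer cleanly'' and sketches a heuristic, but never produces the argument; as written, the proposal does not establish that $T''$ lies in $\cC_\Lambda$, which is the whole content of Case~(2).

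A smaller point: your Case~(1) uniqueness is derived from counting in $\cC_\Gamma$, which again leans on the unestablished bijection. The paper's route to uniqueness (boundary of $D(\beta)$) uses only Lemma~\ref{lemma when T0 lies in D(b)} and is self-contained. If you want to keep the reduction-to-rank-$2$ strategy, you should at minimum (i) either cite and verify the hypotheses of a concrete IY-type reduction theorem, or prove the relevant bijection directly in this setting, and (ii) supply the stability-theorem argument that identifies the image of $P_j^\Gamma[1]$ with a genuine object of $\cC_\Lambda$.
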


\begin{proof}
In Case (1), by the lemma, the partial cluster tilting object $T_0$ lies on the boundary of the polyhedral region $D(\beta)$. Therefore, there is at most one way to complete it to a cluster tilting object in $D(\beta)$. Thus, it suffices to show the existence of a nonzero object $T(\beta)\in\,^{\perp_V}\!M_\beta$ so that $T_0\oplus T(\beta)$ is a partial cluster tilting object and, in Case (2), we need to show that there are two objects $T',T''$, where either $T',T''\in\,^{\perp_V}\! M_\beta$ or $T'\in\,^{\perp_V}\! M_\beta$ and $T''=P[1]$, where $P\in\,^\perp\! M_\beta$ is a projective $\Lambda$-module.

The existence of $T(\beta)$ is straightforward using basic properties of {cluster tilting objects}. Since $T_0$ is an almost complete cluster tilting object in the cluster category of $^{\perp_V}\! M_\beta$, there are two objects $T',T''$ in this cluster category which complete the cluster tilting object. At least one of them, say $T'$, is a module in $^{\perp_V}\! M_\beta$. Letting $T(\beta)=T'$, this proves Case (1).

Case (2) $M_\beta$ is simple in $|T_0|^\perp$. If both $T',T''$ are modules we are done. So, suppose that $T''=P[1]$ for some projective object $P\in\,^\perp\! M_\beta$. Then we claim that $P$ is projective in $mod\text-\Lambda$ making $T''=P[1]$ an object of the cluster category of $mod\text-\Lambda$.

Suppose that $P$ is the projective cover of the simple object $E_k\in\,^\perp\! M_\beta$. Then the dimension vectors of the objects $T_1,\cdots,T_{n-2}$ lie on the face of the positive simplex $\Delta^+(\beta)$ opposite the vertex $E_k$. By Lemma \ref{lemma when T0 lies in D(b)}(c), $D(\beta)$ contains a small neighborhood of the point $\undim T_0$ inside the hyperplane $H_\beta$. After rescaling, any such neighborhood contains an integer point having negative $E_k$-coordinate. By the Virtual Stability Theorem \ref{thm 3.1.1: virtual stability theorem} such a point has the form $\sum k_i \undim E_i+\sum \ell_j\undim P_j$, where $E_i$ are the simple objects of $^\perp\! M_\beta$ and $P_j$ are the projective objects of $^\perp\! M_\beta$ which are also projective in $mod\text-\Lambda$. By construction, at least one of these $P_j$ must have $E_k$ in its composition series. But then the projective cover $P$ of $E_k$ in $^\perp\! M_\beta$ is a submodule of $P_j$ which is projective in both $^\perp\! M_\beta$ and $mod\text-\Lambda$. So, $T''=P[1]$ lies in the cluster category of $mod\text-\Lambda$ as claimed.
\end{proof}

\begin{thm}\label{thm 4.1.5: clusters and SIs}
Let $T=T_1\oplus \cdots\oplus T_n$ be a cluster tilting object for $\Lambda$. Then:
\begin{enumerate}
\item[(a)] The dimension vectors $\undim T_i$ span a conical simplex in $\RR^n$ whose walls are $D(\beta_i)$ for uniquely determined real Schur roots $\beta_i$. 
\item[(b)] $\End_\Lambda(M_{\beta_i})\cong \End_{\cC_\Lambda}(T_i)$ for each i.
\item[(c)] The interior of the conical simplex spanned by $\{\undim T_i\}_{i=1}^n$ does not meet any $D(\beta)$. 
\item[(d)] The objects $T_i$ can be numbered in such a way that $\End_{\cC_\Lambda}(T_i)\cong\End_\Lambda(S_i)=F_i$ where $S_i$ are the simple $\Lambda$-modules.
\item[(e)] Furthermore, 
$
	\brk{\undim T_i,\beta_j}= \delta_{ij} \varepsilon_jf_j
$, where $f_j=\dim_KF_j=\dim_K\End_{\cC_\Lambda}(T_j)=f_{\beta_j}$ with the notation $f_\beta=\dim_K \End_\Lambda(M_{\beta})$ and $\varepsilon_j=\pm1$ is the sign of $\brk{\undim T_j,\beta_j}$.%, more precisely, $\varepsilon_j=f_j/\brk{\undim T_j,\beta_j}$.
\end{enumerate}
\end{thm}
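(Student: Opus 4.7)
The plan is to analyze, for each $k$, the rank-$1$ wide subcategory $|T_{(k)}|^\perp$ where $T_{(k)} := \bigoplus_{i \neq k} T_i$, and to deduce (a)--(e) from properties of its unique simple object. For (a), $T_{(k)}$ is a partial cluster tilting object with $n-1$ summands, so Lemma~\ref{lemma when T0 lies in D(b)}(a) gives $|T_{(k)}|^\perp \cong \mathrm{mod}\text-\Gamma_k$ for an hereditary algebra $\Gamma_k$ of rank~$1$; its unique simple $M_{\beta_k}$ is exceptional, so $\beta_k$ is a real Schur root. Parts (b)--(c) of that same lemma place the wall $C(T_{(k)})$ inside $D(\beta_k)$ with $\undim T_{(k)}$ in its relative interior, so comparing codimensions identifies this facet of $C(T)$ with a facet of $D(\beta_k)$; uniqueness of $\beta_k$ follows from Remark~\ref{rem 4.1.3}. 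Part (c) falls out of the same framework: if an integer vector $v = \sum c_i \undim T_i$ with all $c_i > 0$ lay in $D(\beta)$, the Virtual Generic Decomposition Theorem~\ref{thm 2.3.11: virtual generic decomposition theorem} identifies the generic element of $\Vrep(\Lambda, v)$ with the rigid object $\bigoplus T_i^{c_i}$, forcing it, and hence every $T_i$, into ${}^{\perp_V}\! M_\beta$; this places $M_\beta$ in $|T|^\perp$, which has rank $n-n=0$, a contradiction.

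For (b), the rank-$1$ wide subcategory $|T_{(k)}|^\perp \cong \mathrm{mod}\text-F_{\beta_k}$ with $F_{\beta_k} := \End_\Lambda(M_{\beta_k})$ has a rank-$1$ cluster category with exactly two indecomposables, $M_{\beta_k}$ and its shift. By Proposition~\ref{prop 4.1.4: every D(beta) has T(beta)}, $T_k$ is one of two completions of $T_{(k)}$ to a cluster tilting object; these correspond to the two indecomposables above under Calabi--Yau reduction at $T_{(k)}$, and since the reduction preserves endomorphism rings we obtain $\End_{\cC_\Lambda}(T_k) \cong F_{\beta_k}$. For (d), Schofield's Lemma~\ref{lem: Schofield's observation} orders the summands of $T$ so their underlying modules form an exceptional sequence in $\mathrm{mod}\text-\Lambda$ (shifted projectives last), and Corollary~\ref{cor: End(Xi) is End(S_i)} provides a permutation $\pi$ with $\End_\Lambda(|T_i|) \cong F_{\pi(i)}$; after renumbering, $\End_{\cC_\Lambda}(T_i) \cong F_i$, using that $\End_{\cC_\Lambda}(T_i) = \End_\Lambda(|T_i|)$ for exceptional modules and shifted projectives.

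For (e), when $i \neq j$ the vector $\undim T_i$ lies on the wall $C(T_{(j)}) \subseteq D(\beta_j)$, and the Virtual Stability Theorem~\ref{thm 3.1.1: virtual stability theorem} yields $\brk{\undim T_i, \beta_j} = 0$. For $i = j$, the argument of (c) applied to $\undim T_j$ itself forces $\undim T_j \notin D(\beta_j)$ (else $T \in {}^{\perp_V}\! M_{\beta_j}$ contradicting $|T|^\perp = 0$), hence $\brk{\undim T_j, \beta_j} \neq 0$; Proposition~\ref{prop: pairing is divisible by fM} combined with (b) and (d) shows this integer is divisible by $f_{\beta_j} = f_j$. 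The main obstacle is to show the value is exactly $\pm f_j$, not just a multiple. For this I would apply $\Hom_\Lambda(-, M_{\beta_j})$ to the mutation exchange triangle $T_j^* \to B \to T_j \to T_j^*[1]$ supplied by Proposition~\ref{prop 4.1.4: every D(beta) has T(beta)}, with $B \in \add T_{(j)}$: since $B \in {}^\perp\! M_{\beta_j}$ makes both $\Hom_\Lambda(B, M_{\beta_j})$ and $\Ext^1_\Lambda(B, M_{\beta_j})$ vanish, the long exact sequence collapses to show that exactly one of $\Hom_\Lambda(T_j, M_{\beta_j})$, $\Ext^1_\Lambda(T_j, M_{\beta_j})$ is nonzero and is isomorphic to a $\Hom$-group computable in the rank-$1$ cluster category of $|T_{(j)}|^\perp$, which has $K$-dimension exactly $f_j$. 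The sign $\varepsilon_j$ records which of $\Hom$ or $\Ext^1$ is nonzero, and the cases in which $T_j$, $T_j^*$, or $B$ is a shifted projective rather than a module are handled separately but reduce to the same rank-$1$ computation.
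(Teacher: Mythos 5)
Your arguments for (a), (c), and (d) match the paper's in substance. For (b), you use Calabi--Yau reduction at $T_{(k)}$ to compare $\End_{\cC_\Lambda}(T_k)$ with the endomorphism ring of an indecomposable in a rank-one cluster category; the paper instead renumbers the $T_i$ into an exceptional sequence $(T_1,\dots,T_n)$, notes that $(M_{\beta_j},T_1,\dots,\widehat{T_j},\dots,T_n)$ is also exceptional, and quotes Proposition~\ref{prop: properties of exceptional sequences}(5). Both routes give (b), but the paper's choice is not just cosmetic: it simultaneously delivers the linear-algebra fact (via Proposition~\ref{prop: properties of exceptional sequences}(3)) that $\undim T_j=\pm\undim M_{\beta_j}$ plus an integer combination of $\undim T_i$ for $i\neq j$, which makes (e) immediate.

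For (e) your argument has a real gap. You propose to apply $\Hom_\Lambda(-,M_{\beta_j})$ to an exchange triangle $T_j^\ast\to B\to T_j\to T_j^\ast[1]$. But this triangle lives in $\cC_\Lambda$, where $\Hom_{\cC_\Lambda}(X,Y)$ is \emph{not} $\Hom_\Lambda(|X|,|Y|)$; it has an extra summand $\Hom_{\cD^b}(X,\tau^{-1}Y[1])$ coming from the orbit construction (cf.\ Remark~\ref{rem 4.0.1: Pres = C}). So the long exact sequence you invoke does not exist at the module level, and even lifting to $\cD^b$ or $Pres(\Lambda)$ one must separately track $\tau^{-1}[1]$-shifted terms and the shifted-projective cases. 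You acknowledge these as "handled separately," but this is exactly where the computation becomes delicate, and it is far from clear the cases collapse to one rank-one $\Hom$-group of dimension $f_j$. The paper avoids all of this: once (a) gives $\brk{\undim T_i,\beta_j}=0$ for $i\neq j$ and the exceptional-sequence observation gives $\undim T_j\equiv\pm\beta_j$ modulo $\mathrm{span}(\undim T_i:i\neq j)$, one reads off $\brk{\undim T_j,\beta_j}=\pm\brk{\beta_j,\beta_j}=\pm f_j$ with no category-level bookkeeping at all. I recommend replacing your argument for (e) with this linear-algebra reduction; it also removes the need for the Calabi--Yau reduction machinery in (b), since the same exceptional-sequence comparison gives both.
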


\begin{proof}
(a) Each face of the conical simplex is spanned by $\undim T_i$ with one $T_j$ deleted. Then $|T/T_j|^\perp$ has a unique simple object, say, $M_{\beta_j}$ and $\undim T_i\in D(\beta_j)$ for $i\neq j$ by Remark \ref{rem 4.1.3}.

(b) By Schofield (\ref{lem: Schofield's observation}), the $T_i$ can be renumbered to form an exceptional sequence $(T_1,\cdots,T_n)$. For each $j$ we have another exceptional sequence $(M_{\beta_j},T_1,\cdots,\widehat{T_j},\cdots,T_n)$. By Proposition \ref{prop: properties of exceptional sequences} (5), this implies (b). By \ref{prop: properties of exceptional sequences} (3), this also implies that $\undim T_j=\pm \undim M_{\beta_j}$ plus a linear combination of $\undim T_i$ for $i\neq j$. We need this to prove (e).

(c) The interior of the conical simplex $\sigma$ cannot lie in any $D(\alpha)$. If it did, then $D_\ZZ(\alpha)$ would contain a integer point, say $v$, in the interior of $\sigma$. But then the general virtual representation $P\to Q$ with dimension vector $v$ would lie in $D_\ZZ(\alpha)$. However, by the virtual canonical decomposition theorem this representation is a direct sum of the representations $T_i$ and each $T_i$ occurs. So, $\undim T_i\in D_\ZZ(\alpha)$ for all $i$. This would make $D_\ZZ(\alpha)$ $n$ dimensional contradicting the fact that it has codimension one.

(d) follows from Corollary \ref{cor: End(Xi) is End(S_i)}.

(e) follows from (a) and (b): Since $T_i\in D(\beta_j)$ for $i\neq j$, we have $\brk{\undim T_i,\beta_j}=0$ for $i\neq j$. And $f_j=f_{\beta_j}$ by (b). %And $\brk{\undim T_j,\beta_j}\neq0$ since otherwise $\undim T_i$ would be $n$ linearly independent vectors in the same hyperplane $H(\beta_j)$. And $\brk{\undim T_j,\beta_j}\neq0$ is a multiple of $f_j$ by Proposition \ref{prop: pairing is divisible by fM}. It remains to show that this multiple is $\pm1$. 
In the proof of (b) we observed that $\undim T_j=\pm \undim M_{\beta_j}$ plus a linear combination of $\undim T_i$ for $i\neq j$. Since $\brk{\undim T_i,\beta_j}=0$ for all $i\neq j$, this implies
%This follows from the properties of exceptional sequences. Namely, by Proposition \ref{prop: properties of exceptional sequences} (3) and a little linear algebra we can conclude that $\undim T_k=\pm \undim M_{\beta_k}$ plus a linear combination of $\undim T_j$ for $j\neq k$. Since $\brk{\undim T_j,\beta_k}=0$ for all $j\neq k$, this implies that
\[
	\brk{\undim T_j,\beta_j}=\pm \brk{\beta_j,\beta_j}=\pm f_{\beta_j}=\pm f_j
\]
We denote the sign by $\varepsilon_j$. This completes the proof of (e).
\end{proof}

Using this theorem we can now define the matrices $\Gamma_T$ whose columns are equal, by definition, to det-weights of semi-invariants up to sign and will be shown to be equal to the $c$-vectors of the cluster tilting object $T$, up to sign, by Theorem \ref{c-vector thm} below.

\begin{defn}\label{def 4.1.6: Gamma-T}
For any cluster tilting object $T=\bigoplus_{i=1}^n T_i$ for $\Lambda$, let $\Gamma_T$ be the $n\times n$ integer matrix with columns $\gamma_i=\varepsilon_i\beta_i$ where $\beta_i$ are the unique real Schur roots so that $\undim T_i\in D(\beta_j)$ for $i\neq j$ and $\varepsilon_i=\pm1$ is the sign of $\brk{\undim T_i,\beta_i}$.
\end{defn}
%satisfying $\brk{\undim T_i,\beta_j}=0$ for $i\neq j$ and$f_i=\dim_K\End_{\cC_\Lambda}(T_i)=\brk{\beta_i,\beta_i}$

\begin{cor}\label{cor: HT equation}
Let $V$ be the $n\times n$ matrix with columns $\undim T_i$. Then
\[
	V^tE\Gamma_T=D
\]
where $E$ is the Euler matrix and $D$ is the diagonal matrix with diagonal entries $f_i$.
\end{cor}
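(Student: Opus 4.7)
The plan is to compute the $(i,j)$ entry of the matrix product $V^{t}E\Gamma_{T}$ directly and match it against the diagonal matrix $D$, using Theorem \ref{thm 4.1.5: clusters and SIs} as a black box.

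By the definition of $V$ (columns $\undim T_i$) and $\Gamma_T$ (columns $\gamma_j = \varepsilon_j \beta_j$), the $(i,j)$ entry of $V^{t}E\Gamma_{T}$ is
\[
  (V^{t}E\Gamma_{T})_{ij} \;=\; (\undim T_i)^{t} E (\varepsilon_j \beta_j) \;=\; \varepsilon_j \brk{\undim T_i,\beta_j},
\]
where $\brk{\cdot,\cdot}$ is the Euler--Ringel form (cf.\ Proposition \ref{Euler-Ringel form}). By Theorem \ref{thm 4.1.5: clusters and SIs}(e),
\[
  \brk{\undim T_i,\beta_j} \;=\; \delta_{ij}\,\varepsilon_j f_j,
\]
so substituting this in gives
\[
  (V^{t}E\Gamma_{T})_{ij} \;=\; \varepsilon_j \cdot \delta_{ij}\varepsilon_j f_j \;=\; \delta_{ij}\,\varepsilon_j^{2} f_j \;=\; \delta_{ij} f_j,
\]
since $\varepsilon_j = \pm 1$. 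Hence the product is a diagonal matrix with $(j,j)$ entry $f_j$, i.e.\ it equals $D$.

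Finally, to ensure that this diagonal matrix is literally the $D$ from Section \ref{ss: Euler matrix} (diagonal entries $f_i = \dim_{\kk}F_i$ for the simple module $S_i$), we use Theorem \ref{thm 4.1.5: clusters and SIs}(d), which permits us to renumber the $T_i$ so that $\End_{\cC_\Lambda}(T_i)\cong \End_\Lambda(S_i)=F_i$; combined with Theorem \ref{thm 4.1.5: clusters and SIs}(b) this gives $f_j = \dim_\kk\End_{\cC_\Lambda}(T_j) = \dim_\kk F_j$, matching the entries of $D$. There is no real obstacle here: the content is entirely packaged in Theorem \ref{thm 4.1.5: clusters and SIs}(e), and the corollary is a two-line matrix computation once that theorem is in hand.
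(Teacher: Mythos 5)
Your proof is correct and is essentially the computation the paper has in mind (the corollary is stated without a written proof, precisely because it is a two-line matrix computation from Theorem \ref{thm 4.1.5: clusters and SIs}(e)). Computing the $(i,j)$ entry as $\varepsilon_j\brk{\undim T_i,\beta_j}=\varepsilon_j\cdot\delta_{ij}\varepsilon_jf_j=\delta_{ij}f_j$ is exactly the point. Your closing remark about invoking parts (b) and (d) to identify the $f_j$ appearing in (e) with the diagonal entries of $D$ is a fair observation; the paper treats the numbering of Theorem \ref{thm 4.1.5: clusters and SIs}(d) as implicitly fixed (and indeed part (e) already declares $f_j=\dim_K\End_{\cC_\Lambda}(T_j)$ with that identification), but spelling it out does no harm and shows you noticed where the definitions have to match up.
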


One very important observation about the significance of the sign $\varepsilon_i$ is the following.

\begin{prop}
Suppose that $\varepsilon_k=\sgn\brk{\undim T_k,\beta_k}>0$. Then $T_k$ is a module and there does not exist any epimorphism $B\onto T_k$ where $B$ is a module in $add\,T/T_k$.
\end{prop}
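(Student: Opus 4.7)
The plan is to derive both claims from a single computation: apply $\Hom_\Lambda(-,M_{\beta_k})$ to the relevant objects and read off the Euler pairing $\brk{\undim T_k,\beta_k}$, which is positive by hypothesis.

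First I would rule out $T_k$ being a shifted projective. If $T_k=P[1]$ for an indecomposable projective $P$, then $\undim T_k=-\undim P$ and $\Ext^1_\Lambda(P,M_{\beta_k})=0$, so
\[
\brk{\undim T_k,\beta_k}\;=\;-\dim_K\Hom_\Lambda(P,M_{\beta_k})\;\le\;0.
\]
Theorem \ref{thm 4.1.5: clusters and SIs}(e) guarantees this pairing is nonzero (equal to $\pm f_k$), so it must be strictly negative, forcing $\varepsilon_k=-1$ and contradicting the hypothesis $\varepsilon_k>0$. Hence $T_k$ must be a module.

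Next, assume for contradiction that there is an epimorphism $\pi\colon B\onto T_k$ with $B\in\add(T/T_k)$ a module, and write $B\cong\bigoplus_{i\neq k} T_i^{m_i}$, in which only those $T_i$ that are honest modules appear with nonzero multiplicity. For each such $i$, Theorem \ref{thm 4.1.5: clusters and SIs}(a) gives $\undim T_i\in D(\beta_k)$, and since $T_i$ is rigid, Remark \ref{rem 4.1.1: consider objects T to be in Pres}(3) yields $M_{\beta_k}\in T_i^\perp$, i.e.\ $\Hom_\Lambda(T_i,M_{\beta_k})=0=\Ext^1_\Lambda(T_i,M_{\beta_k})$. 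Taking direct sums gives $\Hom_\Lambda(B,M_{\beta_k})=0=\Ext^1_\Lambda(B,M_{\beta_k})$.

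The final step is to apply $\Hom_\Lambda(-,M_{\beta_k})$ to the short exact sequence $0\to\ker\pi\to B\to T_k\to 0$. Using the vanishing just established for $B$, the long exact sequence collapses to give $\Hom_\Lambda(T_k,M_{\beta_k})=0$ and $\Hom_\Lambda(\ker\pi,M_{\beta_k})\cong\Ext^1_\Lambda(T_k,M_{\beta_k})$. Therefore
\[
\brk{\undim T_k,\beta_k}\;=\;-\dim_K\Ext^1_\Lambda(T_k,M_{\beta_k})\;\le\;0,
\]
contradicting $\varepsilon_k>0$. The only subtle point is the perpendicularity step: one must use that every module summand of $B$ is rigid with dimension vector in $D(\beta_k)$ to get $\Hom$ and $\Ext^1$ vanishing simultaneously, for which Remark \ref{rem 4.1.1: consider objects T to be in Pres}(3) is tailor-made. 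No serious obstacle is anticipated.
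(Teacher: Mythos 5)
Your proof is correct and essentially follows the paper's line of reasoning, establishing both claims by testing against $M_{\beta_k}$ and using the Euler pairing. For the first part (ruling out $T_k=P[1]$) your argument — that $\brk{-\undim P,\beta_k}=-\dim_K\Hom_\Lambda(P,M_{\beta_k})\le 0$ since $P$ is projective, contradicting $\varepsilon_k>0$ — is in fact cleaner and more explicit than what the paper writes in the proposition itself, and it is exactly the reasoning the paper invokes a few lines later in the proof of the subsequent corollary.

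For the second part the paper's route is a bit shorter: from $\brk{\undim T_k,\beta_k}>0$ it directly concludes $\Hom_\Lambda(T_k,M_{\beta_k})\neq 0$, so composing a nonzero $T_k\to M_{\beta_k}$ with any epimorphism $B\onto T_k$ yields a nonzero element of $\Hom_\Lambda(B,M_{\beta_k})$, which vanishes because $B\in\add(T/T_k)$ and $T/T_k\in D(\beta_k)$. You instead run the long exact sequence for $\Hom_\Lambda(-,M_{\beta_k})$ on $0\to\ker\pi\to B\to T_k\to 0$ to force $\Hom_\Lambda(T_k,M_{\beta_k})=0$ and then contradict the sign. Both are valid; the long exact sequence is slightly more machinery than needed since only the left-exactness of $\Hom$ (applied to a surjection) is required, but it is not wrong, and it uses exactly the same perpendicularity input from Remark \ref{rem 4.1.1: consider objects T to be in Pres}(3).
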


\begin{proof}
If $T_k=P[1]$ then $\Ext^1_{Pres(\Lambda)}(P[1],X)=0$ for any $X\in Pres(\Lambda)$. So, $T_k$ is a module. If $\brk{\undim T_k,\beta_k}>0$ then $\Hom_\Lambda(T_k,M_\beta)\neq 0$. For any epimorphism $B\onto T_k$ we get $\Hom_\Lambda(B,M_\beta)\neq0$. This is impossible for $B\in add\,T/T_k$ since $T/T_k\in D(\beta)$.
\end{proof}

\begin{cor}
Suppose that $\varepsilon_k=+$ and let $T_k'$ be the mutation of $T_k$ so that $T_k'\oplus T/T_k$ is a cluster tilting object. Then $\Ext_{Pres(\Lambda)}^1(T_k',T_k)\neq0$ and $\Ext_{Pres(\Lambda)}^1(T_k,T_k')=0$.
\end{cor}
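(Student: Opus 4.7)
The plan is to combine the previous proposition (which under $\varepsilon_k=+$ tells us that $T_k$ is a module and that no morphism $B\to T_k$ with $B\in\add\,T/T_k$ is an epimorphism) with the cluster-tilting exchange triangle and the splitting
$$\Ext^1_{\cC_\Lambda}(X,Y)\cong \Ext^1_{Pres(\Lambda)}(X,Y)\oplus D\Ext^1_{Pres(\Lambda)}(Y,X)$$
from Remark \ref{rem 4.0.1: Pres = C}. First I dispose of the case $T_k'=P'[1]$: here $\Ext^1_{Pres(\Lambda)}(T_k,T_k')=\Hom_{\cD^b}(T_k,P'[2])=\Ext^2_\Lambda(T_k,P')=0$ by hereditarity, while the non-vanishing of $\Ext^1_{Pres(\Lambda)}(T_k',T_k)$ will follow from Step~2 below.

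So assume $T_k'$ is a module, making $\Ext^1_{Pres(\Lambda)}(T_k,T_k')=\Ext^1_\Lambda(T_k,T_k')$. Suppose toward a contradiction that this is nonzero. The cluster-tilting exchange triangle
$$T_k'\to B'\to T_k\to T_k'[1]$$
in $\cC_\Lambda$, with $B'\in\add\,T/T_k$ the minimal right $\add\,T/T_k$-approximation of $T_k$, represents a nonzero class $\eta\in \Ext^1_{\cC_\Lambda}(T_k,T_k')$; standard mutation theory in the $2$-Calabi--Yau category $\cC_\Lambda$ gives that this Ext is $1$-dimensional as a left module over the division algebra $\End_{\cC_\Lambda}(T_k)$. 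The splitting above is a decomposition into sub-bimodules, so each summand is either zero or the entire Ext; the assumption $\Ext^1_\Lambda(T_k,T_k')\neq 0$ therefore forces $\Ext^1_\Lambda(T_k,T_k')=\Ext^1_{\cC_\Lambda}(T_k,T_k')$. A non-split extension in $\Ext^1_\Lambda(T_k,T_k')$ thus realizes $\eta$, up to a unit of $\End_{\cC_\Lambda}(T_k)$, as an honest short exact sequence of $\Lambda$-modules $0\to T_k'\to Y\to T_k\to 0$. Comparing this with the exchange triangle in $\cC_\Lambda$ (same endpoints, same Ext class up to a unit), the middle terms are isomorphic: $Y\cong B'$ in $\cC_\Lambda$, hence in $\cD^b$ since both lie in the fundamental domain. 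Then $B'\in\add\,T/T_k$ is the module $Y$ and $B'\to T_k$ is a module epimorphism from $\add\,T/T_k$ onto $T_k$, contradicting the previous proposition. Hence $\Ext^1_{Pres(\Lambda)}(T_k,T_k')=0$.

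Step 2 is immediate: $T_k\oplus T_k'\oplus T/T_k$ has $n+1$ indecomposable summands while $T_k$, $T_k'$, and $T/T_k$ are all rigid, so it is not cluster tilting; this forces $\Ext^1_{\cC_\Lambda}(T_k,T_k')\neq 0$. Since the first summand in the splitting formula has just been shown to vanish, the second summand $D\Ext^1_{Pres(\Lambda)}(T_k',T_k)$ equals the full nonzero $\Ext^1_{\cC_\Lambda}(T_k,T_k')$, so $\Ext^1_{Pres(\Lambda)}(T_k',T_k)\neq 0$.

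The main obstacle is Step 1, specifically the identification $Y\cong B'$. The input this requires beyond what is already in the paper is the $1$-dimensionality of $\Ext^1_{\cC_\Lambda}(T_k,T_k')$ over $\End_{\cC_\Lambda}(T_k)$, a standard result from cluster-tilting mutation theory in $2$-Calabi--Yau triangulated categories; without it one only learns that $\eta$ and the class of the short exact sequence are both nonzero in $\Ext^1_{\cC_\Lambda}(T_k,T_k')$, not that they are proportional, and the identification of middles collapses.
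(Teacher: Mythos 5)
Your proof is correct but travels a longer road than the paper's. The paper simply cites \cite{BMRRT} for two facts at once: that exactly one of $\Ext^1_{Pres(\Lambda)}(T_k,T_k')$ and $\Ext^1_{Pres(\Lambda)}(T_k',T_k)$ is nonzero, and that a nonzero class in $\Ext^1_\Lambda(T_k,T_k')$ (when both are modules) is realized by an exchange short exact sequence $0\to T_k'\to B\to T_k\to0$ with $B\in\add\,T/T_k$, which then directly contradicts the preceding proposition. You instead reconstruct that exchange sequence by matching a non-split module extension against the cluster exchange triangle, which needs the one-dimensionality of $\Ext^1_{\cC_\Lambda}(T_k,T_k')$ over $\End_{\cC_\Lambda}(T_k)$ --- you flag this as an extra input, but it is the very BMRRT-style ingredient behind the paper's cited dichotomy, so the two arguments run on equivalent cluster-theoretic black boxes packaged differently. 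Your Step~2 then recovers the ``one nonzero, one zero'' dichotomy from the splitting formula of Remark~\ref{rem 4.0.1: Pres = C} together with rigidity of the summands; the paper takes it as given. A minor further difference: you rule out $T_k$ being a shifted projective by invoking the previous proposition directly, whereas the paper re-derives $T_k\neq P[1]$ inside the proof from $\varepsilon_k=\sgn\brk{\undim T_k,\beta_k}>0$. The paper's version is shorter and leans more heavily on the citation; yours makes the interaction between the $2$-Calabi--Yau splitting and the exchange triangle explicit at the cost of importing the one-dimensionality of the cluster extension space.
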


\begin{proof}
It is well-known \cite{BMRRT} that one of these two extension groups is zero and the other is nonzero. So, suppose that $\Ext_{Pres(\Lambda)}^1(T_k,T_k')\neq0$. If $T_k,T_k'$ are both modules, there would be a short exact sequence $0\to T_k'\to B\to T_k\to0$ where $B\in add\,T/T_k$. This is not possible by the Proposition. So, either $T_k$ or $T_k'$ is a shifted projective. But $\Ext_{Pres(\Lambda)}(T_k,P[1])=0$. So, we must have $T_k=P[1]$. Then $\brk{T_k,\beta_k}\le0$ contradicting the assumption that $\varepsilon_k>0$.
%The other possibility is excluded by the Proposition.
\end{proof}

\subsection{Definition of $c$-vectors}\label{ss: def of c-vectors}

An $n\times n$ integer matrix $B$ is called \emph{skew symmetrizable} if there is a diagonal matrix $D$ with positive integer diagonal entries so that $DB$ is skew-symmetric. $D$ is called the \emph{symmetrizer} of $B$. An \emph{extended exchange matrix} is defined to be a $2n\times n$ matrix $\tilde B=\mat{B\\C}$ whose top half $B$ is skew-symmetrizable.

\begin{defn}\cite{FZ} For any extended exchange matrix $\tilde B=(b_{ij})$ and any $1\le k\le n$, the \emph{mutation $\mu_k \tilde B$ of $\tilde B$ in the $k$-direction} is defined to be the matrix $\tilde B'=(b_{ij}')$ defined by
\begin{equation}\label{eq: mutation of B}
	b_{ij}'=\begin{cases}  -b_{ij}& \text{if } i=k \text{ or } j=k\\
   b_{ij}+b_{ik}|b_{kj}| & \text{if } b_{ik}b_{kj}>0\\
  b_{ij} & \text{otherwise}
    \end{cases}
\end{equation}
For any finite sequence of positive integers $k_1,k_2,\cdots,k_r\le n$ we have the \emph{iterated mutation} $\mu_{k_r}\cdots \mu_{k_1}\tilde B$ of $\tilde B$.
\end{defn}

\begin{defn}\cite{FZ}
Let $B_0$ be a fixed skew-symmetrizable matrix which we call the \emph{initial exchange matrix}. Then $\tilde B_0=\mat{B_0\\I_n}$ is called the \emph{initial extended exchange matrix}. Consider the set of all $n\times n$ matrices $C$ which appear at the bottom of matrices $\tilde B_C=\mat{B_C\\C}$ given by iterated mutation of the {initial extended exchange matrix}. The columns of all such matrices $C$ are called the \emph{$c$-vectors} of $B_0$. The matrices $C$ are called the \emph{$c$-matrices} of $B_0$.
\end{defn}

We recall that a vector $v$ is called \emph{sign coherent} if its nonzero coordinates have the same sign. We write $v>0$ if this sign is positive and $v\neq0$. We will use a theorem of Nakanishi and Zelevinsky which can be phrased as follows.

\begin{thm}\cite{NZ}\label{thm 4.2.3: NZ on candidate c-vectors} Let $B_0$ be a skew-symmetrizable matrix with symmetrizer $D$ and let $\cX$ be a set of $n\times n$ integer matrices $C$ with the following properties.
\begin{enumerate}
\item $I_n\in\cX$
\item For any $C\in \cX$, the columns of $C$ are sign coherent and nonzero.
\item For any $C\in \cX$, the matrix $
	B_C:=D^{-1}C^tDB_0C
$ has integer entries $b_{ij}$.
\item Let $C\in\cX$ and $1\le k\le n$, then $\cX$ contains the matrix $C'=\mu_kC$ with columns $c_j'$ given as follows, where $b_{kj}$ are entries of $B_C$.
\[%\begin{equation}\label{eq: mutation of C}
	c_j'=\begin{cases} -c_k & \text{if } j=k\\
	c_j +|b_{kj}|c_k & \text{if } b_{kj}c_k>0\\
   c_j  & \text{otherwise}
    \end{cases}
\]%\end{equation}
\item $\cX$ is minimal with the above properties.
\end{enumerate}
Then $\cX$ is the set of $c$-matrices of $B_0$ and the columns of $C\in\cX$ are the $c$-vectors of $B_0$.
\end{thm}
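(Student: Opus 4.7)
The plan is to identify $\cX$ with the set $\cC$ of $c$-matrices of $B_0$ by showing that $\cC$ itself satisfies properties (1)--(5); since these properties single out a unique set, this forces $\cX = \cC$.

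First I would verify properties (1)--(4) for $\cC$. Property (1) is immediate from the definition of the initial extended exchange matrix $\tilde B_0 = \mat{B_0 \\ I_n}$. Property (2), sign coherence of $c$-vectors, is the deep result cited earlier in the paper, proved in many cases by Derksen--Weyman--Zelevinsky and Plamondon and in full generality by Gross--Hacking--Keel--Kontsevich; nonvanishing of each column holds because every $c$-matrix is invertible over $\QQ$, having been produced from $I_n$ by invertible mutations. Property (3) is the \emph{tropical duality} identity $B_C = D^{-1} C^t D B_0 C$ of Nakanishi--Zelevinsky; it is established by induction on the length of a mutation sequence producing $C$, combining the explicit mutation rule (4.2.1) for $\tilde B$ with the sign coherence from (2). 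Property (4) then drops out by restricting that mutation rule to the bottom $n$ rows of $\tilde B_C = \mat{B_C \\ C}$: sign coherence of the $k$-th column $c_k$ collapses the entrywise condition $c_{ik} b_{kj} > 0$ to the single columnwise condition $b_{kj} c_k > 0$, reproducing the displayed formula for $c_j'$.

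Next I would verify the minimality condition (5) for $\cC$. Suppose $\cY \subseteq \cC$ satisfies (1) and (4). By (1), $I_n \in \cY$, and by (4), $\cY$ is closed under every $\mu_k$. But by the very definition of $c$-matrices, every element of $\cC$ is obtained from $I_n$ by a finite sequence of mutations, so $\cC \subseteq \cY$. Hence $\cY = \cC$, proving minimality. A standard intersection argument then shows the minimal set with properties (1)--(4) is unique: if $\cX, \cX'$ both satisfy (1)--(5), their intersection still satisfies (1)--(4), and minimality of each forces $\cX = \cX' = \cX \cap \cX'$. Applying this to $\cX$ and $\cC$ yields $\cX = \cC$.

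The main obstacle is property (2). Sign coherence of $c$-vectors was a celebrated conjecture of Fomin--Zelevinsky and remains by far the deepest ingredient; all other steps are either direct consequences of definitions or straightforward inductive calculations using the mutation rule for $\tilde B$. In particular, the tropical duality of step (3) is a routine induction once sign coherence is in hand, and the equivalence between the columnwise rule in (4) and the entrywise rule from (4.2.1) restricted to the lower block is an elementary sign analysis.
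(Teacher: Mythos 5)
The paper does not prove this theorem: it is quoted from Nakanishi--Zelevinsky \cite{NZ} with the gloss ``which can be phrased as follows,'' so there is no internal proof to compare your argument against. With that caveat, your reduction is sound and is essentially the argument one would reconstruct from the cited literature. The two substantive inputs you isolate---sign coherence of $c$-vectors and the tropical duality identity $B_C = D^{-1}C^tDB_0C$---are indeed the nontrivial content. Once both are in hand, checking (1)--(4) for the set $\cC$ of $c$-matrices is a direct computation: sign coherence turns the entrywise rule \eqref{eq: mutation of B}, applied to the lower block of $\tilde B_C$, into the columnwise rule displayed in (4), and tropical duality identifies the $b_{kj}$ appearing there with the entries of $D^{-1}C^tDB_0C$; nonvanishing of columns follows because each mutation multiplies $\det C$ by $-1$. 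Your minimality/uniqueness step via intersections is also correct, though it is worth making explicit why it works: property (3) lets one compute $B_C$ from $C$ and $B_0$ alone, so the operation $\mu_k$ in (4) is a well-defined map on individual matrices, and hence the intersection of two sets closed under all $\mu_k$ is again closed under them. No gap; the only thing to fix is the reference ``(4.2.1),'' which in this paper is equation \eqref{eq: mutation of B}.
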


To specify the $c$-vectors corresponding to a cluster tilting object, we need to choose an initial cluster tilting object. Let $B_0=L^t-R$, where $L$, $R$ are the left and right Euler matrices (Section \ref{sec1.1}.). Then $B_0$ is an $n\times n$ skew-symmetrizable matrix since $DB_0=E^t-E$ is skew-symmetric, where $E$ is the Euler matrix. We will use $B_0$ as the {initial exchange matrix} and $\widetilde B_0=\mat{B_0\\ I_n}$ as the {initial extended exchange matrix}. The following easy observation will be useful.

\begin{lem}\label{why B0 is correct}
For any two vectors $x,y\in\RR^n$ we have
$
	\brk{y,x}-\brk{x,y}=x^t DB_0 y.
$
\end{lem}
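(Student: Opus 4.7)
The plan is to unpack both sides using only the definitions already set up in subsection~\ref{ss: Euler matrix} and observe that the identity reduces to a one-line matrix manipulation. Recall that the Euler–Ringel form is given by $\brk{x,y}=x^tEy$, that the left and right Euler matrices satisfy $E=LD=DR$, and that $B_0$ has been defined as $B_0=L^t-R$.

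First I would rewrite $DB_0$ purely in terms of $E$. Since $DR=E$ and $DL^t=(LD)^t=E^t$, we obtain
\[
	DB_0 \;=\; DL^t-DR \;=\; E^t-E,
\]
which is the same skew-symmetric matrix that appeared in the worked example and in the discussion immediately after the definition of the exchange matrix. Therefore
\[
	x^t DB_0\, y \;=\; x^tE^t y - x^tE y.
\]

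Next I would use the fact that $x^tE^ty$ is a $1\times 1$ matrix and hence equals its own transpose: $x^tE^ty=(x^tE^ty)^t=y^tEx=\brk{y,x}$. Combining this with $x^tEy=\brk{x,y}$ gives exactly
\[
	x^tDB_0\, y \;=\; \brk{y,x}-\brk{x,y},
\]
as desired. There is no real obstacle here; the lemma is a direct bookkeeping consequence of the relations $E=LD=DR$ already established, so the only thing to be careful about is correctly transposing $LD$ to identify $DL^t$ with $E^t$.
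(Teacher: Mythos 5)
Your proof is correct and follows the only natural route: rewriting $DB_0 = E^t - E$ via $E = LD = DR$ and then using the fact that a $1\times 1$ matrix equals its own transpose. The paper states this lemma as an "easy observation" without supplying a proof, and your argument is exactly the bookkeeping the authors had in mind.
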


\subsection{$c$-vector theorem}\label{ss: c-vector theorem}

%We can now state the main theorem of this section. %The definition of $c$-vectors is given in section \ref{ss: def of c-vectors} below.

\begin{thm}[c-vector theorem]\label{c-vector thm} Let $\Lambda$ be any finite dimensional hereditary algebra over any field. Let $\cC_\Lambda$ be the cluster category of $\Lambda$.
Let the initial cluster tilting object {in $\cC_\Lambda$} be $\Lambda[1]=\bigoplus_{i=1}^n P_i[1]$. Then the $c$-vectors associated to the cluster tilting object $T=\bigoplus_{i=1}^n T_i$ are $c_i=-\varepsilon_i\beta_i$ where $\beta_i$ are the associated det-weights and each $\varepsilon_i$ is the sign of $\brk{\undim T_i,\beta_i}$.
\end{thm}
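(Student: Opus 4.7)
The strategy is to apply the Nakanishi--Zelevinsky characterization (Theorem \ref{thm 4.2.3: NZ on candidate c-vectors}) to the set
\[
\cX := \{-\Gamma_T : T \text{ a cluster tilting object of } \cC_\Lambda\}
\]
with initial exchange matrix $B_0 = L^t - R$. Since Definition \ref{def 4.1.6: Gamma-T} sets the $i$-th column of $\Gamma_T$ to be $\varepsilon_i\beta_i$, proving that $\cX$ is the set of $c$-matrices will identify $c_i = -\varepsilon_i\beta_i$. I first check conditions (1) and (2). For $T = \Lambda[1]$ we have $\undim T_i = -\undim P_i$, and the orthogonality $\brk{\undim T_i,\beta_j} = 0$ for $i \neq j$ from Theorem \ref{thm 4.1.5: clusters and SIs}(e) forces $\beta_j = \undim S_j$; then $\brk{\undim T_j,\beta_j} = -f_j$, so $\varepsilon_j = -1$ and $-\Gamma_{\Lambda[1]} = I_n$. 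Condition (2) is immediate, since every $\beta_i$ is a real Schur root, hence a nonzero vector in $\NN^n$.

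For condition (3), Lemma \ref{why B0 is correct} gives the $(k,j)$-entry
\[
(B_{-\Gamma_T})_{kj} = \frac{\varepsilon_k\varepsilon_j}{f_k}\bigl(\brk{\beta_j,\beta_k} - \brk{\beta_k,\beta_j}\bigr),
\]
which is an integer by Proposition \ref{prop: pairing is divisible by fM} applied to the exceptional module $M_{\beta_k}$; here $f_k = \dim_K\End_\Lambda(M_{\beta_k})$ by Theorem \ref{thm 4.1.5: clusters and SIs}(b).

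Condition (4) is the heart of the argument. Let $T' = \mu_k T$ arise from the exchange triangle $T_k \to B \to T_k^* \to T_k[1]$ in the derived category with $B = \bigoplus_{i\neq k} T_i^{m_i} \in \add(T/T_k)$. For the $k$-th column, the new wall opposite $\undim T_k^*$ is the old wall opposite $\undim T_k$, so $\beta_k' = \beta_k$; applying $\Hom_\Lambda(-, M_{\beta_k})$ to the triangle and using $B \in \,^\perp\! M_{\beta_k}$ gives $\brk{\undim T_k^*,\beta_k} = -\brk{\undim T_k,\beta_k}$, so $\varepsilon_k' = -\varepsilon_k$ and $c_k' = -c_k$. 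For $j \neq k$, both $\beta_j$ and $\beta_k$ lie in the 2-dimensional Euler-orthogonal complement of $\{\undim T_i : i \neq j,k\}$, and so must $\beta_j'$; writing $\beta_j' = \lambda\beta_j + \mu\beta_k$ and imposing $\brk{\undim T_k^*,\beta_j'} = 0$, together with $\brk{\undim T_k^*,\beta_j} = m_j\varepsilon_j f_j$ obtained by applying $\Hom_\Lambda(-, M_{\beta_j})$ to the exchange triangle, determines $\mu/\lambda$. Comparing against the $b_{kj}$ formula from (3) yields both cases of the mutation rule: when $b_{kj}\varepsilon_k < 0$, we have $m_j > 0$ and $\varepsilon_j'\beta_j' = \varepsilon_j\beta_j + |b_{kj}|\varepsilon_k\beta_k$; when $b_{kj}\varepsilon_k \geq 0$, we have $m_j = 0$ and $\beta_j' = \beta_j$, $\varepsilon_j' = \varepsilon_j$. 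Condition (5) then follows since every cluster tilting object is reached from $\Lambda[1]$ by iterated mutation.

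The main obstacle is the rank-two matching at the end of condition (4): precisely identifying the multiplicity $m_j$ from the exchange triangle with $|b_{kj}|$ computed from the Euler form, and verifying that $\sgn(b_{kj}\varepsilon_k)$ correctly distinguishes the additive from the trivial case. This reduces to a computation in the rank-two wide subcategory $(T/(T_j\oplus T_k))^\perp$ whose simple objects are $M_{\beta_j}$ and $M_{\beta_k}$, where the cluster mutation becomes a two-variable cluster-algebra exchange.
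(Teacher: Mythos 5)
Your skeleton is the same as the paper's: apply the Nakanishi--Zelevinsky criterion (Theorem \ref{thm 4.2.3: NZ on candidate c-vectors}) to $\cX = \{-\Gamma_T\}$ with initial exchange matrix $B_0 = L^t - R$, and conditions (1), (2), (3), (5) are disposed of essentially as in Lemmas \ref{lem: conditions (1) and (2)}, \ref{lem: condition (3)} and by transitivity of mutation. The substance of the theorem, and the gap in your argument, is condition (4), i.e., Proposition \ref{prop: Gamma matrix transforms correctly}. You attack it directly through an exchange triangle $T_k \to B \to T_k^\ast \to T_k[1]$ and Euler-form bookkeeping, whereas the paper reformulates the mutation rule entirely in the language of ``consecutive roots'' (Lemma \ref{lem 4.4.6}, Lemma \ref{lem: reformulation in terms of consecutive roots}), verifies the resulting formula by explicit tabulation along the transjective component of a rank-2 hereditary algebra (subsection \ref{ss: c-vector theorem} following), and then reduces the general case to rank 2 by the isometric projection $\theta$ and the object-level map $\eta$ of Proposition \ref{prop: pairs to pairs}. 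Your route is not unreasonable, but it stops exactly where the paper's hard work begins and contains two specific errors on top of the gap you yourself flag.

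First, the relation $\undim T_k^\ast = \undim B - \undim T_k$ is not automatic ``in the derived category'': exchange triangles are a cluster-category phenomenon, and of the two triangles $T_k \to B \to T_k^\ast$ and $T_k^\ast \to B' \to T_k$ only one lifts to a short exact sequence (equivalently, to a distinguished triangle in $Pres(\Lambda)$ with the correct dimension-vector additivity), and which one that is depends on the sign $\varepsilon_k$ --- see Corollary \ref{cor: HT equation} and the Proposition and Corollary immediately following Definition \ref{def 4.1.6: Gamma-T}. This is precisely why the paper carries out the argument in $Pres(\Lambda)\cong\Vrep(\Lambda)$ rather than in $\cC_\Lambda$, and why Lemma \ref{lem 4.4.6} derives $\gamma_k' = -\gamma_k$ by a purely geometric ``opposite sides of $D(\beta_k)$'' argument rather than by applying $\Hom$ to a triangle. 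Second, your closing claim that the simple objects of the rank-2 wide subcategory $|T/(T_j\oplus T_k)|^\perp$ are $M_{\beta_j}$ and $M_{\beta_k}$ is false in general: as the proof of Proposition \ref{prop: vertices and opposite walls} records, $M_{\beta_j}$ and $M_{\beta_k}$ are merely \emph{consecutive} objects in the AR quiver of that rank-2 category, not its simples, so the ``two-variable cluster exchange'' you hand off to is incorrectly set up. Finally, the identification $m_j = |b_{kj}|$ together with the sign discrimination (your stated ``main obstacle'') is genuinely the crux; the paper's chart in the rank-2 proof and the $\pi$--$\theta$--$\eta$ reduction in the general proof are exactly the machinery that establishes it, and without an analogous computation your proposal remains an outline rather than a proof.
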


The plan for the proof of this theorem is as follows. Let $\cX$ denote the set of matrices
\[
	\cX:=\{-\Gamma_T=-[\gamma_1,\cdots,\gamma_n]\,|\,\gamma_i=\varepsilon_i\beta_i\}.
\] 
where $\varepsilon_i$ is the sign of $\brk{\undim T_i,\beta_i}$.
We will show that $\cX$ satisfies the conditions of Theorem \ref{thm 4.2.3: NZ on candidate c-vectors} ((1), (2) are Lemma \ref{lem: conditions (1) and (2)}, (3) is Lemma \ref{lem: condition (3)}, (4) is Proposition \ref{prop: Gamma matrix transforms correctly} and (5) follows from the fact that mutation acts transitively on the set of cluster tilting objects \cite{Hubery}). Therefore, by Theorem \ref{thm 4.2.3: NZ on candidate c-vectors}, $\cX$ is equal to the set of all $c$-matrices of the initial exchange matrix $B_0=L^t-R$.

\begin{lem}\label{lem: conditions (1) and (2)}
\emph{(1)} $\cX$ contains the identity matrix $I_n$.

\emph{(2)} The columns of $\cX$ are sign coherent.
\end{lem}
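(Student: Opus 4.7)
The plan is to verify the two conditions by direct computation for the initial cluster tilting object in (1), and by invoking the definition of real Schur roots in (2).

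For part (1), I would compute $\Gamma_T$ when $T = \Lambda[1] = \bigoplus_{i=1}^n P_i[1]$. Since $\undim T_i = -\undim P_i$, I claim the associated det-weights are $\beta_j = \undim S_j$ (the unit vectors). Indeed, each $S_j$ is an exceptional module and hence $\undim S_j$ is a real Schur root. Using the formula $\brk{\undim P_i,\undim M} = f_i (\undim M)_i$ established in Section \ref{sec1.1}, one has
\[
\brk{\undim T_i,\undim S_j} = -\brk{\undim P_i,\undim S_j} = -f_i\delta_{ij},
\]
which vanishes whenever $i\neq j$. Since $\undim S_j$ is a simple root, its only real Schur subroot is itself, so the stability conditions of Theorem \ref{thm 3.1.1: virtual stability theorem} are automatic, giving $\undim T_i \in D(\undim S_j)$ for $i\neq j$. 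By the uniqueness statement in Theorem \ref{thm 4.1.5: clusters and SIs}(a), these $\beta_j = \undim S_j$ are the det-weights of $T=\Lambda[1]$. Next, $\brk{\undim T_j,\beta_j} = -f_j < 0$, so $\varepsilon_j = -1$ and $\gamma_j = \varepsilon_j\beta_j = -\undim S_j$. Hence $\Gamma_{\Lambda[1]} = -I_n$, and therefore $-\Gamma_{\Lambda[1]} = I_n \in \cX$.

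For part (2), each column of $-\Gamma_T$ has the form $-\gamma_i = -\varepsilon_i\beta_i$. Since $\beta_i$ is a real Schur root, its entries all lie in $\NN$, so $\pm\beta_i$ has entries all of one sign (all $\ge 0$ or all $\le 0$). This is precisely sign coherence.

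The main obstacle is purely in part (1): one needs to know that the $\beta_j$ assigned by Theorem \ref{thm 4.1.5: clusters and SIs} are indeed unique, so that verifying $\undim T_i \in D(\undim S_j)$ for $i\neq j$ pins down the weight uniquely as $\undim S_j$. Once this is in hand, the sign computation $\brk{\undim T_j,\beta_j}=-f_j$ is immediate, and part (2) is a direct consequence of the definition of a real Schur root.
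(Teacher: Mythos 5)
Your proof is correct and follows essentially the same route as the paper's: both identify the det-weights for $T=\Lambda[1]$ as the unit vectors $e_j=\undim S_j$, verify $\undim T_i=-\undim P_i\in D(e_j)$ for $i\neq j$, compute the sign of $\brk{\undim T_j,\beta_j}$ to conclude $\Gamma_{\Lambda[1]}=-I_n$, and deduce sign coherence in (2) from $\beta_i\in\NN^n$. The only cosmetic difference is that you verify $\undim T_i\in D(e_j)$ via the stability conditions of Theorem \ref{thm 3.1.1: virtual stability theorem} (observing that a unit vector has no proper real Schur subroots), while the paper invokes the module-theoretic fact $S_i\in P_j^\perp$; these are interchangeable, and both give the same conclusion. (Incidentally, your $\brk{\undim T_j,\beta_j}=-f_j$ is the accurate value; the paper writes $-1$, which is a minor imprecision that does not affect the sign argument.)
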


\begin{proof}
(1) For $T=\Lambda[1]$, $\Gamma_{\Lambda[1]}=-I_n$. This follows from Definition \ref{def 4.1.6: Gamma-T} since $S_i\in P_j^\perp$ for $i\neq j$ which implies that $-\undim P_j\in D(e_i)$ where $e_i=\undim S_i$ is the $i$-th unit vector and $\brk{\undim P_i[1],e_i}=-1$. Therefore $I_n\in\cX$.

(2) Since the columns $\gamma_i$ of $\Gamma_T$ are, up to sign, dimension vectors of indecomposable modules $M_{\beta_i}$, they are sign coherent.
\end{proof}

%Corollary \ref{cor: HT equation} using the fact that the dimension vectors of the projective modules gives the columns of $L^{-1}=DE^{-1}$ where $L$ is the left Euler matrix and therefore the dimension vectors of the shifted projective objects gives the columns of $V=-DE^{-1}$. The unique solution of the equation $VE\Gamma_{\Lambda[1]}=D$ from Corollary \ref{cor: HT equation} is therefore $\Gamma_{\Lambda[1]}=-I_n$. 

\begin{lem}\label{lem: condition (3)}
Let $T=T_1\oplus\cdots\oplus T_n$ be a cluster tilting object and $\Gamma_T$ the associated matrix with columns $\gamma_i=\varepsilon_i\beta_i$. Then the matrix $B_\Gamma=B_{-\Gamma}=D^{-1}\Gamma^t DB_0\Gamma$ has integer entries. Hence $\cX$ satisfies Condition (3) in Theorem \ref{thm 4.2.3: NZ on candidate c-vectors}.
\end{lem}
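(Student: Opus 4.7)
The plan is to reduce the integrality of the entries of $B_\Gamma$ to divisibility statements about values of the Euler--Ringel form, and then invoke Proposition \ref{prop: pairing is divisible by fM}.

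First I would compute the $(i,j)$ entry of $B_\Gamma = D^{-1}\Gamma^t DB_0\Gamma$ directly. Writing $\gamma_i = \varepsilon_i\beta_i$ for the $i$-th column of $\Gamma$, and using that $D$ is diagonal with entries $f_i$,
\[
(B_\Gamma)_{ij} = \tfrac{1}{f_i}\,\gamma_i^{\,t}(DB_0)\gamma_j = \tfrac{\varepsilon_i\varepsilon_j}{f_i}\bigl(\beta_i^{\,t}DB_0\,\beta_j\bigr).
\]
By Lemma \ref{why B0 is correct}, $\beta_i^{\,t}DB_0\,\beta_j = \brk{\beta_j,\beta_i} - \brk{\beta_i,\beta_j}$. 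Hence proving integrality of $(B_\Gamma)_{ij}$ reduces to showing that $f_i$ divides $\brk{\beta_j,\beta_i} - \brk{\beta_i,\beta_j}$.

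Next I would match indices. By Theorem \ref{thm 4.1.5: clusters and SIs}(d), the summands $T_i$ of the cluster tilting object are numbered so that $\End_{\cC_\Lambda}(T_i)\cong \End_\Lambda(S_i)=F_i$, so $f_i = \dim_K F_i$. By Theorem \ref{thm 4.1.5: clusters and SIs}(b), $\End_\Lambda(M_{\beta_i})\cong \End_{\cC_\Lambda}(T_i)$, and therefore
\[
f_{\beta_i} := \dim_K \End_\Lambda(M_{\beta_i}) = f_i.
\]
Since $M_{\beta_i}$ is exceptional, $\End_\Lambda(M_{\beta_i})$ is a division algebra, so Proposition \ref{prop: pairing is divisible by fM} applies with $M=M_{\beta_i}$ and $N=M_{\beta_j}$. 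It gives that both $\brk{\beta_j,\beta_i}$ and $\brk{\beta_i,\beta_j}$ are divisible by $f_{\beta_i} = f_i$. Therefore their difference is divisible by $f_i$, and $(B_\Gamma)_{ij}\in\ZZ$ as required.

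The main (and essentially only) subtlety here is the index bookkeeping: one must know that the $i$-th diagonal entry of $D$, namely $f_i = \dim_K\End_\Lambda(S_i)$, equals $f_{\beta_i} = \dim_K\End_\Lambda(M_{\beta_i})$ associated to the $i$-th column $\gamma_i = \varepsilon_i\beta_i$ of $\Gamma_T$. Once the renumbering from Theorem \ref{thm 4.1.5: clusters and SIs}(d) is applied, this matching holds and the computation is immediate from Proposition \ref{prop: pairing is divisible by fM} and Lemma \ref{why B0 is correct}. I would then note that this completes the verification of Condition~(3) in Theorem \ref{thm 4.2.3: NZ on candidate c-vectors} for the family $\cX$.
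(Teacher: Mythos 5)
Your proof is correct and takes essentially the same route as the paper: expanding $(B_\Gamma)_{ij}$ via Lemma \ref{why B0 is correct} into $f_i^{-1}(\brk{\gamma_j,\gamma_i}-\brk{\gamma_i,\gamma_j})$ and then invoking Proposition \ref{prop: pairing is divisible by fM} after identifying $f_i = \dim_K\End_\Lambda(M_{\beta_i})$ through Theorem \ref{thm 4.1.5: clusters and SIs}(b),(d). Your version merely spells out the index bookkeeping a bit more explicitly than the paper does.
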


\begin{proof}
By Lemma \ref{why B0 is correct}, the entries of $B_\Gamma$ are
$%begin{equation}\label{eq: entries of B}
	b_{ij}=f_i^{-1}(\left<\gamma_j,\gamma_i\right>-\left<\gamma_i,\gamma_j\right>)
$. %end{equation}
The columns $\gamma_i$ of $\Gamma_T$ are, up to sign, dimension vectors of exceptional modules $M_{\beta_i}$ and $\End_\Lambda(M_{\beta_i})\cong \End_{\cC_\Lambda}(T_i)\cong F_i$. Also, $f_i=\dim_KF_i$. Therefore, $b_{ij}$ are integers by Proposition \ref{prop: pairing is divisible by fM}. 
\end{proof}

We need to show that the set $\cX$ satisfies condition (4) in Theorem \ref{thm 4.2.3: NZ on candidate c-vectors} which is the following proposition whose proof will occupy the rest of this section.

\begin{prop}\label{prop: Gamma matrix transforms correctly}
Under the mutation $\mu_k$ of $T$, the matrix $\Gamma_T$ changes to $\Gamma_T'$ with columns $\gamma_j'$ given as follows where $b_{ij}$ are the entries of $B_\Gamma$.
\[
	\gamma_j'=\begin{cases}
	-\gamma_k & \text{ if } j=k\\
\gamma_j +|b_{kj}|\gamma_k & \text{ if } b_{kj}\gamma_k<0\\
	\gamma_j & \text{ otherwise}
	\end{cases}
\] 
\end{prop}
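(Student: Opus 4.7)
The plan is to exploit the exchange triangle $T_k \to B \to T_k^* \to T_k[1]$ in the cluster category (equivalently, in $Pres(\Lambda)$ by Corollary \ref{cor 2.3.7: Pres L=CL}) together with the identity $V^t E \Gamma_T = D$ from Corollary \ref{cor: HT equation}, where $V$ has columns $\undim T_i$. Writing $B = \bigoplus_{i \neq k} T_i^{m_i}$ with $m_i \in \NN$, the dimension-vector relation $\undim T_k^* = \sum_i m_i \undim T_i - \undim T_k$ combined with $\brk{\undim T_i, \gamma_l} = f_l \delta_{il}$ yields
\[
\brk{\undim T_k^*, \gamma_l} = m_l f_l \ (l \neq k), \qquad \brk{\undim T_k^*, \gamma_k} = -f_k.
\]

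For the case $j=k$, I would argue that $\beta_k' = \beta_k$, since both real Schur roots are characterized by Theorem \ref{thm 4.1.5: clusters and SIs}(a) via the common face of the old and new conical simplices spanned by $\{\undim T_i\}_{i \neq k} = \{\undim T_i'\}_{i \neq k}$. The sign $\varepsilon_k$ then flips because $\brk{\undim T_k^*, \beta_k} = -\varepsilon_k f_k$, giving $\gamma_k' = -\varepsilon_k \beta_k = -\gamma_k$.

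For the case $j \neq k$, since $\gamma_j$ and $\gamma_k$ both satisfy $\brk{\undim T_i, \cdot} = 0$ for all $i \neq j, k$, the new weight $\gamma_j'$ must lie in $\operatorname{span}_{\mathbb{Q}}(\gamma_j, \gamma_k)$. Writing $\gamma_j' = a\gamma_j + b\gamma_k$ and imposing the two remaining constraints $\brk{\undim T_j, \gamma_j'} = f_j$ (forcing $a = 1$) and $\brk{\undim T_k^*, \gamma_j'} = 0$ (forcing $b = m_j f_j / f_k$), I obtain $\gamma_j' = \gamma_j + (m_j f_j / f_k)\gamma_k$, a nonnegative integer combination.

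The main obstacle is showing that the coefficient $c_j := m_j f_j / f_k$ equals $|b_{kj}|$ precisely when $b_{kj}\gamma_k < 0$, and vanishes otherwise. Using $b_{kj} f_k = \brk{\gamma_j, \gamma_k} - \brk{\gamma_k, \gamma_j} = \varepsilon_j \varepsilon_k(\brk{\beta_j, \beta_k} - \brk{\beta_k, \beta_j})$ from Lemma \ref{why B0 is correct}, this antisymmetric difference is computable inside the rank-2 wide subcategory $\cW = |T/(T_j \oplus T_k)|^\perp$ (Remark \ref{rem: properties of wide subcategories}), which contains $M_{\beta_j}$ and $M_{\beta_k}$ as exceptional modules. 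Since exactly one of $(M_{\beta_j}, M_{\beta_k})$ or $(M_{\beta_k}, M_{\beta_j})$ is an exceptional pair in $\cW$, at most one of the four $\Hom/\Ext$ groups between them is nonzero; a case analysis matching the sign of $\varepsilon_k b_{kj}$ against which direction of morphism is present, and comparing with the multiplicity $m_j$ extracted independently as $\brk{\undim T_k^*, \beta_j}/(\varepsilon_j f_j)$ from the exchange triangle, yields the desired identity and completes the verification of the Nakanishi--Zelevinsky mutation formula.
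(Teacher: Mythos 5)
Your linear-algebra setup is sound and the computation up to $\gamma_j' = \gamma_j + (m_j f_j / f_k)\gamma_k$ is correct: the constraints $\brk{\undim T_i, \gamma_j'} = 0$ for $i\neq j,k$ and $\brk{\undim T_j, \gamma_j'} = f_j$ (using $\brk{\undim T_i,\gamma_l}=f_l\delta_{il}$, where the $\varepsilon$'s cancel) do force $a=1$, and the constraint from $T_k^*$ fixes $b$. The $j=k$ case is also handled correctly. This is a genuinely different route from the paper, which never touches exchange triangles; instead it reformulates the claim in terms of "consecutive roots," proves it in rank two by direct computation with the Auslander--Reiten quiver of a rank-two hereditary algebra $H$, and then reduces the general case to rank two using an isometric projection $\pi\theta:\RR^n\to\RR^2$ built from the two injective objects of the rank-two wide subcategory $|T_0|^\perp$.

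The genuine gap is exactly where you flag it, and it is not minor. Your derivation gives $\gamma_j' = \gamma_j + (m_j f_j / f_k)\gamma_k$ unconditionally, while the proposition says the coefficient of $\gamma_k$ is $|b_{kj}|$ when $b_{kj}\gamma_k<0$ and zero otherwise. So your argument forces you to prove a sharp dichotomy: $m_j>0$ if and only if $b_{kj}\gamma_k<0$, and in that case $m_jf_j = |b_{kj}|f_k$ exactly. Your sketch — ``case analysis matching the sign of $\varepsilon_kb_{kj}$ against which direction of morphism is present, comparing with $m_j = \brk{\undim T_k^*, \beta_j}/(\varepsilon_jf_j)$'' — does not close this gap: the formula $\brk{\undim T_k^*,\beta_j}/(\varepsilon_jf_j)$ is just $m_j$ again by the exchange-triangle relation, so it is not an ``independent'' computation, and since $T_k^*$ itself does not lie in $\cW = |T/(T_j\oplus T_k)|^\perp$ you cannot read off its pairing with $\beta_j$ from within $\cW$ directly. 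To link the cluster-category multiplicity $m_j$ to the antisymmetrized Euler form you would need to transport the exchange triangle into the rank-two category $\cW$ and identify it with the almost-split sequence there — which is in effect the projection device $\pi\theta$ that the paper constructs, along with the explicit Auslander--Reiten recursion $\undim Y_i = d_{i-1}\undim Y_{i-1}-\undim Y_{i-2}$ that encodes the middle-term multiplicities $d_1, d_2$ and verifies the divisibility $f_k\mid m_jf_j$. Filling your gap would essentially require redoing that reduction, so the exchange-triangle framing buys you a cleaner start but does not avoid the rank-two work.
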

The inequality $b_{kj}\gamma_k<0$ is reversed from Theorem \ref{thm 4.2.3: NZ on candidate c-vectors}(4) since $\gamma_j,\gamma_k$ will turn out to be negative $c$-vectors. We will prove Proposition \ref{prop: Gamma matrix transforms correctly} first in the special case when $n=2$. We will then show that the general case follows from the special case. 

\subsection{Consecutive roots} In order to set up the reduction to the rank 2 case, we need to rephrase Proposition \ref{prop: Gamma matrix transforms correctly} in terms of the ``consecutive roots'' $-\gamma_j, \gamma_k,\gamma_j'$.

\begin{defn}\label{def: ordered pairs}
Let $T_0\in Pres(\Lambda)$ be a partial cluster tilting object with $n-2$ summands. Define $\cS_\Lambda(T_0)$ to be the set of all ordered pairs $(\gamma,U)$ where
\begin{enumerate}
\item $U$ is an exceptional object of $Pres(\Lambda)$ so that $T_0\oplus U$ is rigid.%a partial cluster tilting object.
\item $\gamma=\pm\beta$ where $M_\beta$ is the unique exceptional module in $|T_0\oplus U|^\perp$.
\end{enumerate}
\end{defn}

\begin{rem}\label{rem: U is unique}
Note that, by Proposition \ref{prop 4.1.4: every D(beta) has T(beta)}, $U$ is uniquely determined by $\gamma$ except when $M_{|\gamma|}$ is a simple object of $|T_0|^\perp$ in which case there are exactly two possibilities for $U$.
\end{rem}

\begin{prop}\label{prop: next function}
For each $(\gamma,U)\in\cS_\Lambda(T_0)$, there is a unique $(\gamma',U')\in\cS_\Lambda(T_0)$ so that
\begin{enumerate}
\item[$\cS1.$] $T_0\oplus U\oplus U'$ is a cluster tilting object in $Pres(\Lambda)$.
\item[$\cS2.$] $\brk{\undim U,\gamma'}>0$.
\item[$\cS3.$] $\brk{\undim U',\gamma}<0$.
\end{enumerate}
\end{prop}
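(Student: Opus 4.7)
The plan is to reduce the problem to a rank-2 computation by passing to the wide subcategory $\cW := |T_0|^\perp$, which by Lemma \ref{lemma when T0 lies in D(b)}(a) is equivalent to $mod\text-\Gamma$ for some rank-2 hereditary algebra $\Gamma$. Under this reduction $U$ becomes a rigid indecomposable of $\cC_\Gamma$ (either an exceptional $\Gamma$-module or a shifted projective), and any $U'$ for which $T_0\oplus U\oplus U'$ is cluster tilting in $\cC_\Lambda$ must correspond to one of the two complements of $U$ in $\cC_\Gamma$. These two complements exist and are unique up to the binary choice by standard cluster-tilting theory (Iyama-Yoshino); call them $X_+$ and $X_-$.

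The first key step is to show that $\brk{\undim X_+,\beta}$ and $\brk{\undim X_-,\beta}$ are nonzero and of opposite signs. The sign relation $\brk{\undim X_+,\beta}+\brk{\undim X_-,\beta}=0$ follows from the exchange triangle $X_-\to B\to X_+\to X_-[1]$ with $B\in\add(T_0\oplus U)$, whose $K_0$-identity $\undim X_+ + \undim X_- = \undim B$ pairs against $\beta$ to zero via the identities $\brk{\undim T_i,\beta}=0=\brk{\undim U,\beta}$, which hold because $M_\beta\in |T_0\oplus U|^\perp$. Non-vanishing is then a computation inside $\cW$: the orthogonal line $\beta^\perp\subset\RR^2$ is spanned by the $\Gamma$-dimension vector of $U$, so the only lattice points on this line which arise as dimension vectors of rigid indecomposables of $\cC_\Gamma$ are $\pm\undim U$, corresponding to $U$ itself and, if $U$ is $\Gamma$-projective, the shifted object $U[1]$; since neither of these can be a complement of $U$, we conclude $\brk{\undim X_\pm,\beta}\neq 0$.

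Given this, define $U':=X_\varepsilon$ for the unique $\varepsilon\in\{+,-\}$ satisfying condition $\cS3$; let $M_{\beta'}$ be the unique simple exceptional of $|T_0\oplus U'|^\perp$; and set $\gamma':=\epsilon'\beta'$ with $\epsilon'\in\{+1,-1\}$ chosen to enforce $\cS2$. This last choice is unambiguous provided $\brk{\undim U,\beta'}\neq 0$, which follows by the same rank-2 argument applied to the almost complete cluster tilting object $T_0\oplus U'$ in place of $T_0\oplus U$. Uniqueness is then immediate: any $(\gamma'',U'')\in\cS_\Lambda(T_0)$ satisfying $\cS1$ has $U''\in\{X_+,X_-\}$; condition $\cS3$ selects $U''=U'$; and $\cS2$ pins down the sign of $\gamma''$. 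The principal obstacle is the non-vanishing step, which requires a careful case analysis in $\cW$ distinguishing the module and shifted-projective possibilities for $U,U'$ and $X_\pm$, and in particular ruling out that a complement coincides (up to sign of dimension vector) with $U$ itself or with a shifted $\Gamma$-projective whose dimension vector is proportional to $\undim U$.
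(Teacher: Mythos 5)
Your proof is correct in outline but takes a substantially heavier route than the paper's, and one of your steps has a gap that needs to be filled.

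The paper's own argument is purely fan-theoretic and very short: the two complements $U',U''$ of $T_0\oplus U$ give $n$-dimensional simplicial cones $C(T_0\oplus U\oplus U')$ and $C(T_0\oplus U\oplus U'')$ that share the facet $C(T_0\oplus U)$, which lies in the hyperplane $H_\gamma=\{x:\brk{x,\gamma}=0\}$ because $M_{|\gamma|}\in|T_0\oplus U|^\perp$. By the Virtual Generic Decomposition Theorem, distinct cluster cones have disjoint interiors, so the two new vertices $\undim U',\undim U''$ lie strictly on opposite sides of $H_\gamma$. That immediately gives non-vanishing and the sign dichotomy in one stroke, and then $\cS3$ picks out $U'$ and $\cS2$ fixes the sign of $\gamma'$. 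Your approach instead reduces to the rank-2 wide subcategory $|T_0|^\perp$, invokes an exchange triangle $X_-\to B\to X_+\to X_-[1]$ to obtain the antisymmetry $\brk{\undim X_+,\beta}+\brk{\undim X_-,\beta}=0$, and then argues non-vanishing by a lattice-point count on $\beta^\perp$ inside $\RR^2$. This is a genuinely different decomposition of the problem; the paper saves all of that work by appealing to the fan structure directly, and in fact only uses the rank-2 reduction much later (in the proof of Proposition \ref{prop: Gamma matrix transforms correctly}), not here.

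The gap is in your $K_0$-identity $\undim X_++\undim X_-=\undim B$. Exchange triangles live in $\cC_\Lambda$, where $\undim$ is not additive over triangles (the dimension vector here is really the class in $K_0(\cD^b)$, and it is only additive over triangles that lift to $\cD^b$ with all three terms in $Pres(\Lambda)$). You need to justify that at least one of the two exchange triangles has this lifting property. This can be done: exactly one of $\Ext^1_{Pres(\Lambda)}(X_+,X_-)$, $\Ext^1_{Pres(\Lambda)}(X_-,X_+)$ is nonzero (see the corollary after Corollary \ref{cor: HT equation}), so one connecting map comes from $\cD^b$; and for $\Lambda$ hereditary a cone of a map between objects of $Pres(\Lambda)$ has $H^{-1}$ a submodule of a projective, hence projective, hence is again in $Pres(\Lambda)$. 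With that patch your sign relation holds. The rank-2 non-vanishing step is also stated rather loosely -- you should at least note that $\pi\theta(\undim U)\neq 0$ (which follows from the existence of the object $\eta(U)$ of Proposition \ref{prop: pairs to pairs}) before asserting it spans $\beta^\perp$ -- but the idea is sound. In short: valid but over-engineered, and the exchange-triangle additivity needs the lifting argument spelled out.
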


\begin{proof}
There are two objects $U',U''$ so that $T_0\oplus U\oplus U'$, $T_0\oplus U\oplus U''$ are cluster tilting objects. These objects must lie on opposite sides of the hyperplane $H_\gamma=\{x\in\RR^n\,|\, \brk{x,\gamma}=0\}$. Therefore, up to reordering, we have: $
	\brk{\undim U',\gamma}<0,\quad \brk{\undim U'',\gamma}>0
$. So, $U'$ is uniquely determined by $\cS1$ and $\cS3$.

Let $M_{\beta'}$ be the unique exceptional object in $|T_0\oplus U'|^\perp$. Then $(\beta',U')$, $(-\beta',U')$ are the elements of $\cS_\Lambda(T_0)$ with second entry $U'$. Let $\gamma'=\sgn\brk{\undim U,\beta'}\beta'$. Then $(\gamma',U')$ is the unique pair satisfying $\cS1,\cS2,\cS3$.
\end{proof}

\begin{defn}
Let $\rho(\gamma,U)$ denote the unique pair $(\gamma',U')$ given by Proposition \ref{prop: next function}. A sequence of pairs $(\gamma_1,U_1), (\gamma_2,U_2),(\gamma_3,U_3),\cdots\in\cS_\Lambda(T_0)$ will be called \emph{consecutive pairs} if $\rho(\gamma_i,U_i)=(\gamma_{i+1},U_{i+1})$. And $\gamma_1,\gamma_2,\gamma_3,\cdots$ will be called \emph{consecutive roots} if there exist $\{U_i\}$ so that $\{(\gamma_i,U_i)\}$ is a sequence of consecutive pairs.
\end{defn}

\begin{cor}
$\rho:\cS_\Lambda(T_0)\to \cS_\Lambda(T_0)$ is a bijection.
\end{cor}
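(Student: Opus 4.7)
The plan is to exhibit an explicit two-sided inverse $\sigma:\cS_\Lambda(T_0)\to\cS_\Lambda(T_0)$ by mirroring the construction of $\rho$, but with the roles of the input and output pairs swapped. Concretely, given $(\gamma',U')\in\cS_\Lambda(T_0)$, I would first choose $U$ to be the completion of the almost-complete cluster tilting object $T_0\oplus U'$ to a cluster tilting object of $Pres(\Lambda)$ satisfying $\brk{\undim U,\gamma'}>0$, and then select the sign of $\gamma=\pm\beta$ (where $M_\beta$ is the unique exceptional module in $|T_0\oplus U|^\perp$) forced by $\brk{\undim U',\gamma}<0$. After showing $\sigma$ is well-defined, I would check that $\rho\circ\sigma=\sigma\circ\rho=\mathrm{id}$.

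The well-definedness of $\sigma$ will follow the pattern of Proposition \ref{prop: next function}. Since $T_0\oplus U'$ is almost complete cluster tilting, it has precisely two completions $U, U^\sharp$ to a cluster tilting object, and by the argument already used in Proposition \ref{prop: next function} these two completions lie on opposite sides of the hyperplane $H_{\gamma'}=\{x\in\RR^n:\brk{x,\gamma'}=0\}$. Hence exactly one of them satisfies the strict inequality, which determines $U$ uniquely. With $U$ chosen, $T_0\oplus U\oplus U'$ is a cluster tilting object and $\beta$ is, up to sign, the det-weight of the wall opposite $U'$; Theorem \ref{thm 4.1.5: clusters and SIs}(e) then gives $\brk{\undim U',\beta}=\pm f_\beta\neq 0$, so the sign of $\gamma$ is uniquely forced by the strict inequality $\brk{\undim U',\gamma}<0$.

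The verification that $\sigma$ inverts $\rho$ is then immediate from the uniqueness statements. Given $(\gamma,U)$ with $(\gamma',U'):=\rho(\gamma,U)$, the original pair $(\gamma,U)$ itself satisfies the defining conditions of $\sigma(\gamma',U')$: $\cS1$ is symmetric in $U$ and $U'$, $\cS2$ for $\rho$ gives $\brk{\undim U,\gamma'}>0$, and $\cS3$ for $\rho$ gives $\brk{\undim U',\gamma}<0$. Uniqueness in the construction of $\sigma$ therefore yields $\sigma(\gamma',U')=(\gamma,U)$, and the reverse composition $\rho\circ\sigma$ is entirely symmetric. The only subtle point in the whole argument is the nonvanishing $\brk{\undim U',\beta}\neq 0$ needed for $\sigma$ to be well-defined; without it the sign of $\gamma$ would not be forced. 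This is handled cleanly by Theorem \ref{thm 4.1.5: clusters and SIs}(e), so I do not expect any serious obstacle beyond assembling the pieces above.
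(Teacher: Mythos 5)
Your proof is correct and is essentially the paper's argument unpacked: your explicitly constructed inverse $\sigma$ coincides with $s\circ\rho\circ s$, where $s(\gamma,U)=(-\gamma,U)$, and your verification that $\sigma$ and $\rho$ are mutually inverse is just a spelled-out version of the paper's one-line observation that $\rho(\gamma,U)=(\gamma',U')$ implies $\rho(-\gamma',U')=(-\gamma,U)$ (since negating $\gamma$ swaps conditions $\cS2$ and $\cS3$ while $\cS1$ is symmetric in $U,U'$).
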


\begin{proof}
If $\rho(\gamma,U)=(\gamma',U')$ then one sees easily that $\rho(-\gamma',U')=(-\gamma,U)$. So, $s\circ \rho\circ s$ is the inverse of $\rho$ where $s(\gamma,U)=(-\gamma,U)$.
\end{proof}

\begin{lem}\label{lem 4.4.6}
Suppose that $T=T_0\oplus T_j\oplus T_k$ and $T'=\mu_kT=T_0\oplus T_j\oplus T_k'$. Let $\gamma_j=\varepsilon_j\beta_j$, $\gamma_k=\varepsilon_k\beta_k$ be the $j$th and $k$th $\gamma$-vectors of $T$. Let $\gamma_j'=\varepsilon_j'\beta_j',\gamma_k'=\varepsilon_k' \beta_k'$ be the corresponding $\gamma$ vectors of $T'$. Then $\gamma_k'=-\gamma_k$ and $
	(-\gamma_j,T_k),(\gamma_k,T_j),(\gamma_j',T_k')
$ are consecutive pairs in $\cS_\Lambda(T_0)$. In particular, $-\gamma_j,\gamma_k,\gamma_j'$ are consecutive roots.% and all triples of consecutive roots are given in this way.
\end{lem}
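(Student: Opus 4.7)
The plan is to establish the two conclusions in order and then read off the ``consecutive roots'' statement as a corollary.

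First I would prove $\gamma_k'=-\gamma_k$. The codimension-one wall of the conical simplex spanned by $T=T_0\oplus T_j\oplus T_k$ opposite the vertex $\undim T_k$ is $D(\beta_k)$ and it contains $\undim T_0$ and $\undim T_j$. For the mutated cluster tilting object $T'=T_0\oplus T_j\oplus T_k'$, the wall opposite $\undim T_k'$ is $D(\beta_k')$ and it also contains $\undim T_0$ and $\undim T_j$. By Lemma~\ref{lemma when T0 lies in D(b)}(a) the category $|T_0\oplus T_j|^\perp$ has rank $1$, so its unique simple object determines a unique real Schur root, forcing $\beta_k=\beta_k'$. The signs differ because $T_k,T_k'$ are the two completions of $T_0\oplus T_j$ to a cluster tilting object, and hence lie on opposite sides of the hyperplane $H_{\beta_k}=\{x:\brk{x,\beta_k}=0\}$; thus $\varepsilon_k'=-\varepsilon_k$ and $\gamma_k'=-\gamma_k$.

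Next I would verify that each of $(-\gamma_j,T_k),(\gamma_k,T_j),(\gamma_j',T_k')$ lies in $\cS_\Lambda(T_0)$ by checking Definition~\ref{def: ordered pairs}: in each case the second coordinate is an exceptional summand of the rigid object $T$ or $T'$, and by the same rank-$1$ argument the first coordinate is $\pm$ the real Schur root associated to the unique simple object of $|T_0\oplus T_k|^\perp$, $|T_0\oplus T_j|^\perp$, $|T_0\oplus T_k'|^\perp$ respectively, namely $\beta_j$, $\beta_k$, $\beta_j'$.

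Then I would verify $\rho(-\gamma_j,T_k)=(\gamma_k,T_j)$ via Proposition~\ref{prop: next function}. Condition $\cS 1$ is automatic since $T_0\oplus T_k\oplus T_j=T$ is cluster tilting. For $\cS 2$ one computes $\brk{\undim T_k,\gamma_k}=\varepsilon_k\brk{\undim T_k,\beta_k}=|\brk{\undim T_k,\beta_k}|>0$ by definition of $\varepsilon_k$ (using Theorem~\ref{thm 4.1.5: clusters and SIs}(e) to see nonvanishing). Condition $\cS 3$ is the identical calculation with $j$ in place of $k$: $\brk{\undim T_j,-\gamma_j}=-|\brk{\undim T_j,\beta_j}|<0$. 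By the uniqueness part of Proposition~\ref{prop: next function} this identifies $\rho(-\gamma_j,T_k)$.

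Finally I would verify $\rho(\gamma_k,T_j)=(\gamma_j',T_k')$ in the same way. $\cS 1$ says $T_0\oplus T_j\oplus T_k'=T'$, which is cluster tilting by construction. For $\cS 3$, use Step~1: since $\gamma_k=-\gamma_k'=-\varepsilon_k'\beta_k'$, one gets $\brk{\undim T_k',\gamma_k}=-|\brk{\undim T_k',\beta_k'}|<0$. The only slightly nontrivial point, and the main obstacle, is $\cS 2$: showing $\brk{\undim T_j,\gamma_j'}>0$, which amounts to showing that $\brk{\undim T_j,\beta_j'}$ is nonzero so that $\varepsilon_j'=\sgn\brk{\undim T_j,\beta_j'}$ gives a positive product. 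This nonvanishing follows from the linear independence of the dimension vectors of a cluster tilting object: if $\brk{\undim T_j,\beta_j'}$ were zero then $\undim T_0$, $\undim T_k'$ and $\undim T_j$ would all lie on the codimension-one hyperplane $H_{\beta_j'}$, contradicting the fact that these are the $n$ linearly independent dimension vectors of the summands of $T'$ (a cluster tilting object has $n$ summands whose dimension vectors span $\RR^n$). Again uniqueness in Proposition~\ref{prop: next function} concludes $\rho(\gamma_k,T_j)=(\gamma_j',T_k')$. Combining the two $\rho$-computations shows that $(-\gamma_j,T_k),(\gamma_k,T_j),(\gamma_j',T_k')$ is a sequence of consecutive pairs, and in particular $-\gamma_j,\gamma_k,\gamma_j'$ are consecutive roots.
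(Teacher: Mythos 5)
The proposal is correct and follows essentially the same route as the paper: establish $\gamma_k'=-\gamma_k$ from the uniqueness of the real Schur root determined by $T_0\oplus T_j$ together with the fact that $T_k,T_k'$ lie on opposite sides of $D(\beta_k)$, then identify $\rho(-\gamma_j,T_k)=(\gamma_k,T_j)$ and $\rho(\gamma_k,T_j)=(\gamma_j',T_k')$ via Proposition~\ref{prop: next function}. The only small difference is that the paper gets the second $\rho$-computation by applying the first to $T'$ and invoking the reversal property $\rho(-\gamma',U')=(-\gamma,U)$, whereas you verify $\cS1$--$\cS3$ directly (and your nonvanishing argument for $\cS2$ could also simply cite Theorem~\ref{thm 4.1.5: clusters and SIs}(e), which gives $\brk{\undim T_j,\beta_j'}=\varepsilon_j'f_j\neq0$); both are valid.
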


\begin{proof}
By definition of $\varepsilon_j,\varepsilon_k$ we have $\brk{\undim T_k,\gamma_k}>0$ and $\brk{\undim T_j,\gamma_j}>0$. Therefore, $\rho(-\gamma_j,T_k)=(\gamma_k,T_j)$. For $T'$, $\beta_k'$ is by definition the unique positive root so that $T_0\oplus T_j\in D(\beta_k')$. So, we must have $\beta_k'=\beta_k$. The signs $\varepsilon_k,\varepsilon_k'$ must be opposite since $T_k,T_k'$ must lie on opposite sides of the set $D(\beta_k)$. (If they were on the same side, the cones spanned by $T$ and $T'$ would overlap.) So, $\gamma_k'=-\gamma_k$.
Then, $\rho(\gamma_k,T_j)=\rho(-\gamma_k',T_j)=(\gamma_j',T_k')$.
\end{proof}

\begin{lem}\label{lem: reformulation in terms of consecutive roots}
Proposition \ref{prop: Gamma matrix transforms correctly} follows from the following equation for all triples of consecutive roots: $\gamma,\gamma',\gamma''$:
\[%\begin{equation}\label{eq: formula for gamma''}
	\gamma''=\begin{cases}
	%-\gamma_k & \text{ if } j=k\\
-\gamma +|b|\gamma' & \text{ if } b\gamma'<0\\
	-\gamma & \text{ otherwise}
	\end{cases}
\]%\end{equation}
where $b=f_{\gamma'}^{-1}(\brk{\gamma',\gamma}-\brk{\gamma,\gamma'})$ and $f_{\gamma'}=\dim_K\End_\Lambda(M_{|\gamma'|})$.
\end{lem}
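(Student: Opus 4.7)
The plan is to show that the lemma is essentially a bookkeeping exercise: Proposition \ref{prop: Gamma matrix transforms correctly} breaks into one column at a time, and each column is controlled by a triple of consecutive roots supplied by Lemma \ref{lem 4.4.6}. I would first dispense with the column $j=k$: Lemma \ref{lem 4.4.6} already asserts $\gamma_k'=-\gamma_k$, which matches the $j=k$ case of Proposition \ref{prop: Gamma matrix transforms correctly} without any further input.

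For $j\neq k$, I would fix $j$ and set $T_0=\bigoplus_{i\neq j,k}T_i$, a partial cluster tilting object with $n-2$ summands. Lemma \ref{lem 4.4.6} then says that $(-\gamma_j,T_k)$, $(\gamma_k,T_j)$, $(\gamma_j',T_k')$ are consecutive pairs in $\cS_\Lambda(T_0)$, so in particular $-\gamma_j,\gamma_k,\gamma_j'$ is a triple of consecutive roots. I would now apply the hypothesized formula of the lemma to this triple, with $\gamma=-\gamma_j$, $\gamma'=\gamma_k$, and $\gamma''=\gamma_j'$.

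The main computation is then to identify the scalar $b$ appearing in the formula with the entry $b_{kj}$ of $B_\Gamma$. Using bilinearity,
\[
b=f_{\gamma_k}^{-1}\bigl(\brk{\gamma_k,-\gamma_j}-\brk{-\gamma_j,\gamma_k}\bigr)=f_k^{-1}\bigl(\brk{\gamma_j,\gamma_k}-\brk{\gamma_k,\gamma_j}\bigr),
\]
where $f_{\gamma_k}=f_k=\dim_K\End_{\cC_\Lambda}(T_k)$ by Theorem \ref{thm 4.1.5: clusters and SIs}(b). Comparing with the formula for the entries of $B_\Gamma$ given in the proof of Lemma \ref{lem: condition (3)} (itself obtained from Lemma \ref{why B0 is correct}), this is exactly $b_{kj}$. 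So the sign condition $b\gamma'<0$ becomes $b_{kj}\gamma_k<0$, and the formula yields
\[
\gamma_j'=\begin{cases}-(-\gamma_j)+|b_{kj}|\gamma_k=\gamma_j+|b_{kj}|\gamma_k & \text{if } b_{kj}\gamma_k<0,\\ -(-\gamma_j)=\gamma_j & \text{otherwise,}\end{cases}
\]
which is precisely Proposition \ref{prop: Gamma matrix transforms correctly} for $j\neq k$.

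I do not expect any serious obstacle in this lemma: all the substantive content — showing that $-\gamma_j,\gamma_k,\gamma_j'$ actually are consecutive roots and computing $\gamma_k'$ — is already packaged in Lemma \ref{lem 4.4.6}, and the matching of $b$ with $b_{kj}$ is a one-line manipulation of the Euler–Ringel form. The real difficulty is of course the hypothesized rank-2 identity for consecutive roots, which the lemma assumes and which will be established separately.
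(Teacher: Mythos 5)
Your proposal is correct and follows the same route as the paper: apply Lemma 4.4.6 (to get $\gamma_k'=-\gamma_k$ and to establish that $-\gamma_j,\gamma_k,\gamma_j'$ are consecutive roots), then substitute $\gamma=-\gamma_j$, $\gamma'=\gamma_k$, $\gamma''=\gamma_j'$ into the hypothesized identity. The paper's proof simply says ``substituting these values transforms the given equation into the formula of Proposition \ref{prop: Gamma matrix transforms correctly}'' without carrying out the arithmetic; you have usefully spelled out that $b=b_{kj}$ and verified the signs, which is a legitimate and slightly more complete version of the same argument.
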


\begin{proof} Suppose that the formula above for $\gamma''$ holds for all triples of consecutive roots. Then it holds in the particular case $\gamma=-\gamma_j$, $\gamma'=\gamma_k$ and $\gamma''=\gamma_j'$. Substituting these values of $\gamma,\gamma',\gamma''$ transforms the given equation into the formula for $\gamma_j'$ given in Proposition \ref{prop: Gamma matrix transforms correctly} except for the missing statement $\gamma_k'=-\gamma_k$ which was shown in Lemma \ref{lem 4.4.6} above. 
\end{proof}

\subsection{Proof of Proposition \ref{prop: Gamma matrix transforms correctly} in rank 2 case} The results of this section are well-known. We include them for clarity. Let $H$ be a finite dimensional hereditary algebra of rank 2. Then $H$ will be Morita equivalent to the tensor algebra of a modulated quiver. (See Appendix A.) So, we assume that
\[
	H=\mat{F_1 & 0 \\ M & F_2}
\]
where $F_1,F_2$ are division algebras over $K$ and $M$ is an $F_2\text-F_1$-bimodule. A (right) $H$-module can be viewed as a representation $V=(V_1,V_2,f:V_2\otimes_{F_2}M\to V_1)$ of the modulated quiver
\[
	F_1\xleftarrow{M}F_2.
\]
Recall that $\undim V=(\dim_{F_1}V_1,\dim_{F_2}V_2)$ and $f_i=\dim_KF_i$. Let $d_i=\dim_{F_i}M$. Then
\[
	\dim_\kk M=m=f_1d_1=f_2d_2.
\]
The projective $H$-modules are $P_1^H=(F_1,0,0)$ and $P_2^H=(M,F_2,id:F_2\otimes M\to M)$. The injective $H$-modules are given by a dual construction $I_2^H=(0,F_2,0)$, $I_1^H=(F_1,M^\ast,ev)$ where $M^\ast=\Hom_{F_1}(M,F_1)$ and $ev:M^\ast\otimes M\to F_1$ is the evaluation map. The simple $H$ modules are $P_1^H,I_2^H$. These have dimension vectors
\begin{equation}\label{eq: equation for dim Y2, dim Z2}
	\undim P_1^H=(1,0),\quad \undim P_2^H=(d_1,1),\quad 
	\undim I_1^H=(1,d_2), \quad \undim I_2^H=(0,1)
\end{equation}

It is well-known that $H$ has \emph{finite type}, i.e., has only finitely many exceptional representations up to isomorphism, if and only if $d_1d_2\le 3$. These are the quivers $A_1\times A_1,A_2,B_2,G_2$. We let $s$ denote the number of indecomposable modules. So, $s=2,3,4,6$ or $\infty$.

All exceptional $H$-modules are either preprojective or preinjective. We denote the preprojective modules $Y_i$ and the preinjective modules $Z_j$ keeping in mind that $Z_j=Y_{s-j+1}$ in the finite case. The preprojective and preinjective component(s) of the Auslander-Reiten quiver of $H$ is given by:

\begin{center}
\begin{tikzpicture}[scale=.8]
\begin{scope}
%\draw (6.7,3) node{Auslander-Reiten quiver (preprojective and preinjective terms) of the category $\cH$:};
\foreach \x in {1.5,4.5,7.75,10.75}
{\draw[thick,->] (\x cm,1.9cm)--+(.9,-.9) ;} % arrow, upper sequence to lower
\foreach \x/\xtext in {1.25/$Y_2=P_2^H$,4.25/$Y_4$,10.5/$Z_3$,13.7/$Z_1=I_2^H$}
\draw (\x cm,2.3cm) node{\xtext} ; % Upper sequence
\draw (6.725,1.5) node{$\cdots$};
\foreach \x/\xtext in {-.25/$Y_1=P_1^H$,2.75/$Y_3$,9.25/$Z_4$,12.1/$Z_2=I_1^H$, 14.4/.}
\draw (\x cm,.7cm) node{\xtext} ; % Lower sequence
\foreach \x in {0,3,9.5,12.5}
\draw[thick,->] (\x cm,1.1cm)--+(.9,.9) ; % arrow, lower sequence to upper
\end{scope}
\end{tikzpicture}
\end{center}
The arrows denote irreducible maps $Y_i\to Y_{i+1}$ and $Z_j\to Z_{j-1}$. The Auslander-Reiten translation $\tau_H$ acts by ``shifting two spaces to the left'' and we have $H$-almost split sequences (subscripts of $d$ should be taken modulo 2).
\[%\begin{equation}\label{eq: ass for H}
	Y_i\cof Y_{i+1}^{d_{i+1}}\onto Y_{i+2},\quad Z_{j+2}\cof Z_{j+1}^{d_j}\onto Z_j\quad (i,j\ge 1).
\]%\end{equation}
So, the dimension vectors of $Y_i, Z_j$ are given recursively using \eqref{eq: equation for dim Y2, dim Z2} by%, for $i\ge2$ (or, for $2\le i\le s$ if $\cH$ has finite type and $Y_s=Z_1$),
\begin{equation}\label{eq: recursion for dim Yi}
	\undim Y_i=d_{i-1}\undim Y_{i-1}-\undim Y_{i-2}\quad i\ge3
\end{equation}
%The vectors $\undim Z_j$ are given by $\undim Z_1=\alpha_2$, $\undim Z_2=\alpha_1+d_2\alpha_2$ and, for $j\ge2$ (or, for $2\le j\le s$ if $\cH$ has finite type),
\begin{equation}\label{eq: recursion for dim Zj}
	\undim Z_j=d_i\undim Z_{j-1}-\undim Z_{j-2}\quad j\ge3.
\end{equation}

The Auslander-Reiten quiver of the cluster category  $\cC_H$ of $H$ \cite{BMRRT} has two more exceptional objects, $Y_1[1]=P_1^H[1]$ and $Y_2[1]=P_2^H[1]$, which come between $Z_1$ and $Y_1$: %before the $P_i$ and after the $I_i$:
\begin{center}
\begin{tikzpicture}[scale=.8]
\begin{scope}
\foreach \x in {1.5,4.5,7.75,10.85}
{\draw[thick,->] (\x cm,1.95cm)--+(.9,-.9) ;} % arrow, upper sequence to lower
\foreach \x/\xtext in {1.25/$Z_3$,4.25/$Z_1$,7.25/$Y_2[1]$,10.5/$Y_2$}
\draw (\x cm,2.3cm) node{\xtext} ; % Upper sequence
\draw (14,1.5) node{$\cdots$};
\draw (-.5,1.5) node{$\cdots$};
\foreach \x/\xtext in {2.75/$Z_2$,5.75/$Y_1[1]$,9/$Y_1$,12/$Y_3$, 15/.}
\draw (\x cm,.7cm) node{\xtext} ; % Lower sequence
\foreach \x in {0,3,6,9.25,12.25}
\draw[thick,->] (\x cm,1.05cm)--+(.9,.9) ; % arrow, lower sequence to upper
\end{scope}
\end{tikzpicture}
\end{center}
This is called the \emph{transjective component} of the Auslander-Reiten quiver of $\cC_H$. 

\begin{prop} The {cluster tilting objects} of $\cC_H$ are sums of pairs of consecutive objects in the above quiver: $
	Z_{i+1}\oplus Z_i,\quad Z_1\oplus Y_1[1],\quad Y_1[1]\oplus Y_2[1],\quad  Y_2[1]\oplus Y_1,\quad Y_i\oplus Y_{i+1}$.
\end{prop}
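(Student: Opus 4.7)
The plan is to exploit the rank-two structure: a cluster tilting object in $\cC_H$ must be a direct sum $X\oplus Y$ of two nonisomorphic indecomposable objects satisfying $\Ext^1_{\cC_H}(X,Y)=\Ext^1_{\cC_H}(Y,X)=0$. First I would apply Remark \ref{rem 4.0.1: Pres = C} together with Corollary \ref{characterization of VRep} to reduce Ext in $\cC_H$ to Hom and Ext computations in $mod\text-H$. Explicitly, for modules $M,N$ one has $\Ext^1_{\cC_H}(M,N)\cong \Ext^1_H(M,N)\oplus D\Ext^1_H(N,M)$; for a module $M$ and a projective $P$, $\Ext^1_{\cC_H}(P[1],M)\cong \Hom_H(P,M)$ and $\Ext^1_{\cC_H}(M,P[1])\cong D\Hom_H(P,M)$; and two shifted projectives satisfy $\Ext^1_{\cC_H}(P[1],Q[1])=0$ because $H$ is hereditary.

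Next I would use the explicit shape of the transjective component drawn above. Along the preprojective ray, $\tau Y_i=Y_{i-2}$ for $i\ge 3$, $\tau Y_1=\tau Y_2=0$, and $\Hom_H(Y_a,Y_b)=0$ whenever $a>b$; analogously $\tau Z_j=Z_{j+2}$, $\Hom_H(Z_a,Z_b)=0$ for $a<b$, and $\Hom_H(Z_j,Y_i)=0$ always. Auslander--Reiten duality $\Ext^1_H(X,Y)\cong D\Hom_H(Y,\tau X)$ then turns every remaining Ext computation into a Hom computation inside the AR quiver.

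A case analysis by pair type completes the classification. For two preprojectives $Y_i\oplus Y_j$ with $i<j$, $\Ext^1_H(Y_i,Y_j)=D\Hom_H(Y_j,Y_{i-2})$ vanishes automatically, while $\Ext^1_H(Y_j,Y_i)=D\Hom_H(Y_i,Y_{j-2})$ vanishes iff $j-2<i$, forcing $j=i+1$. Two preinjectives are treated identically, giving $Z_i\oplus Z_{i+1}$. A mixed pair $Y_i\oplus Z_j$ fails in infinite representation type because $\Hom_H(Y_i,Z_{j+2})\ne 0$ makes $\Ext^1_H(Z_j,Y_i)\ne 0$. For a module $M$ together with $P_k[1]$, Ext-orthogonality reduces to $\Hom_H(P_k,M)=0$, i.e., to the $k$-th coordinate of $\undim M$ being zero; by \eqref{eq: equation for dim Y2, dim Z2} and the recursions \eqref{eq: recursion for dim Yi}--\eqref{eq: recursion for dim Zj} this forces $(M,k)=(Y_1,2)$ or $(Z_1,1)$, producing $Y_1\oplus Y_2[1]$ and $Z_1\oplus Y_1[1]$. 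Finally, $Y_1[1]\oplus Y_2[1]$ is Ext-orthogonal from the shifted-projective formula.

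The main obstacle is establishing $\Hom_H(Y_i,Z_j)\ne 0$ for all $i,j\ge 1$ when $d_1d_2\ge 4$, which is what excludes every preprojective--preinjective pair at once. Since $\Ext^1_H(Y_i,Z_j)=0$ by the AR formula, Proposition \ref{Euler-Ringel form} reduces this to $\brk{\undim Y_i,\undim Z_j}>0$, which follows inductively from the recursions \eqref{eq: recursion for dim Yi}--\eqref{eq: recursion for dim Zj}. In the Dynkin cases ($d_1d_2\le 3$) the preprojective and preinjective components coincide, so this case is absorbed into the preprojective analysis, and the enumeration simply wraps around the cyclic transjective component of $\cC_H$ to produce the same list.
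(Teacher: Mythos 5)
The paper states this proposition without proof, prefacing the subsection with ``The results of this section are well-known,'' so there is no in-paper argument to compare against; I will evaluate your proof on its own terms.

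Your strategy --- reduce $\Ext^1_{\cC_H}$ to $\Hom_H$/$\Ext^1_H$ via the cluster category formula, then use Auslander--Reiten duality and the explicit shape of the transjective component to run a case analysis on the pair type --- is the right one and is essentially complete. The $\Ext$ formulas for module/module, module/shifted-projective, and two-shifted-projectives pairs are all correct, and the reduction of the module-with-$P_k[1]$ case to a vanishing coordinate of $\undim M$ is exactly what is needed.

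There is, however, a genuine (if small) error in the stated key lemma. You assert that $\Hom_H(Y_i,Z_j)\neq 0$ for all $i,j\ge 1$ in infinite type, but this fails at $(i,j)=(1,1)$: $Y_1=P_1=S_1$ and $Z_1=I_2=S_2$ are nonisomorphic simples, so $\Hom_H(Y_1,Z_1)=0$ (equivalently, $\brk{(1,0),(0,1)}=0$ since the Euler matrix is lower-triangular). What the argument actually requires --- and what is true --- is only $\Hom_H(Y_i,Z_{j+2})\neq 0$ for $i,j\ge 1$, i.e.\ the nonvanishing for $j'\ge 3$, coming from $\Ext^1_H(Z_j,Y_i)\cong D\Hom_H(Y_i,\tau Z_j)=D\Hom_H(Y_i,Z_{j+2})$. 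You should state the lemma as $\Hom_H(Y_i,Z_j)\neq 0$ for all $(i,j)\neq(1,1)$ (or just for $j\ge 3$), and then the inductive Euler-form computation from the recursions \eqref{eq: recursion for dim Yi}--\eqref{eq: recursion for dim Zj} goes through: with $a_i:=\brk{\undim Y_i,\undim Z_j}=\dim_K\Hom_H(Y_i,Z_j)\ge 0$, one has $a_1=f_1(\undim Z_j)_1$ and $a_2=f_2(\undim Z_j)_2$, both positive for $j\ge 2$, and the recursion $a_i=d_{?}a_{i-1}-a_{i-2}$ with $d_1d_2\ge 4$ keeps the sequence strictly increasing. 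The rest of the case analysis, including the collapse to the cyclic transjective component in Dynkin type, is fine.
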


\begin{lem}
For $1<i<s$, $^\perp Y_i=add\,Y_{i+1}$ and $^\perp Z_{i+1}=add\,Z_i$. For the simple objects $Y_1,Z_1$, $^\perp Y_1=add(Y_2\oplus Y_2[1])$ and $^\perp Z_1=add(Y_1\oplus Y_1[1])$.
\end{lem}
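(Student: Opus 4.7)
My plan is to combine three ingredients: (i) $Y_i$ and $Z_{i+1}$ are exceptional modules in the rank-$2$ hereditary algebra $H$, so their left-perpendicular categories inside $mod\text-H$ are wide subcategories of rank $1$; (ii) the only indecomposable objects of $Pres(H)$ that are not modules are the two shifted projectives $P_1^H[1]=Y_1[1]$ and $P_2^H[1]=Y_2[1]$; and (iii) for a projective $P$ and a module $M$, $P[1]\in{}^{\perp_V}\!M$ iff $\Hom_H(P,M)=0$, since $\Hom_{Pres}(P[1],M)=0$ automatically and $\Ext^1_{Pres}(P[1],M)\cong \Hom_H(P,M)$ by Corollary \ref{characterization of VRep}.

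First I would exhibit an object in each perpendicular category. Because every irreducible map in the preprojective component has the form $Y_j\to Y_{j+1}$, nonzero morphisms in that component go only in the direction of increasing index; hence $\Hom_H(Y_{i+1},Y_i)=0$. Auslander--Reiten duality then gives $\Ext^1_H(Y_{i+1},Y_i)\cong D\Hom_H(Y_i,\tau Y_{i+1})$, which equals $D\Hom_H(Y_i,Y_{i-1})=0$ for $i\ge 2$ by the same forward-only argument, and is trivially zero when $i=1$ since $Y_1$ is projective. Thus $Y_{i+1}\in{}^{\perp_V}\!Y_i$, and the symmetric computation in the preinjective component yields $Z_i\in{}^{\perp_V}\!Z_{i+1}$. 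Since ${}^\perp\!Y_i\cap mod\text-H$ is a wide subcategory of rank $1$ by Remark \ref{rem: properties of wide subcategories}(2), it equals $\add N$ for a unique exceptional module $N$, and the previous step forces $N=Y_{i+1}$; similarly ${}^\perp\!Z_{i+1}\cap mod\text-H=\add Z_i$.

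Next I would determine which shifted projectives contribute, using criterion (iii). For $1<i<s$, a short induction on the recursion \eqref{eq: recursion for dim Yi} with base $\undim Y_2=(d_1,1)$ shows that $\undim Y_i$ has strictly positive entries at both vertices, so $\Hom_H(P_j^H,Y_i)\ne 0$ for $j=1,2$; hence no shifted projective belongs to ${}^{\perp_V}\!Y_i$, and ${}^{\perp_V}\!Y_i=\add Y_{i+1}$. The analogous statement for $Z_{i+1}$ in the range $1<i+1<s$ follows the same way from \eqref{eq: recursion for dim Zj}. For the boundary case $Y_1=S_1$, the dimension vector $(1,0)$ yields $\Hom_H(P_1^H,Y_1)=F_1\ne 0$ but $\Hom_H(P_2^H,Y_1)=0$, so exactly $Y_2[1]$ joins; this gives ${}^{\perp_V}\!Y_1=\add(Y_2\oplus Y_2[1])$. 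An identical computation with $Z_1=S_2$ of dimension vector $(0,1)$ gives ${}^{\perp_V}\!Z_1=\add(Y_1\oplus Y_1[1])$.

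The main obstacle is the boundary bookkeeping: confirming that only the two simple modules $Y_1$ and $Z_1$ acquire extra shifted projectives, which comes down to verifying full support of $\undim Y_i$ and $\undim Z_{i+1}$ throughout the interior ranges $1<i<s$ and $1<i+1<s$. Once that is in hand, the rank-$1$ wide-subcategory classification combined with the Hom-vanishing criterion (iii) for shifted projectives completes the lemma.
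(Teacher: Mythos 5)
The paper states this lemma without proof, so your argument fills a genuine gap, and the overall structure — wide subcategory of rank one plus the criterion $P[1]\in{}^{\perp_V}\!M\iff\Hom_H(P,M)=0$ — is the right one and is carried out correctly. Two small points deserve attention. First, in the sentence handling $i=1$ you write ``trivially zero when $i=1$ since $Y_1$ is projective,'' but projectivity of $Y_1$ does not by itself imply $\Ext^1_H(Y_2,Y_1)=0$ (over a hereditary algebra $\Ext^1(\cdot,P)$ can certainly be nonzero for $P$ projective); the correct reason is that $Y_2=P_2^H$ is projective, equivalently $\tau Y_2=0$, which kills the right-hand side of the AR formula. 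Second, the ``short induction on the recursion'' to establish that $\undim Y_i$ is sincere for $1<i<s$ is shakier than it sounds, because the recursion $\undim Y_i=d_{i-1}\undim Y_{i-1}-\undim Y_{i-2}$ involves subtraction and neither coordinate need be monotone in $i$ (e.g.\ for $A_2$ the first coordinate is constant). A cleaner and completely general justification is that over a rank $2$ hereditary algebra the only indecomposable modules with non-sincere dimension vector are the two simples $S_1=Y_1$ and $S_2=Z_1$, so $Y_i$ for $1<i<s$ (and $Z_{i+1}$ away from the boundary) are automatically sincere. With those two tweaks your proof is complete; you are also right to read the paper's range condition for the $Z$-part as $1<i+1<s$, since in the finite case $Z_s=Y_1$ and the boundary statement for $Y_1$ takes over.
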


\begin{prop}\label{prop: action of rho on SH(0)}
\emph{(a)} The elements of $\cS_H(0)$ are $(\pm\undim Z_{j+1},Z_{j})$, $(\pm\undim Y_i,Y_{i+1})$ for $i,j\ge1$ and the 6 pairs $(\pm\undim Y_1, Y_2[1])$, $(\pm\undim Z_1, Y_1)$ and $ (\pm\undim Z_1, Y_1[1])$. 

\emph{(b)} The action of $\rho$ on these pairs is given by the following list and by the reverse of the list (given by changing the sign of the first entries and reversing the order). %We use the notation $\undim Y_i=\alpha_i,\undim Z_j=\beta_j$.% In particular, there are exactly two orbits of the action of $\ZZ$ on $\cS_\cH(0)$ given by $\rho$.
\begin{enumerate}
\item $\rho(\undim Z_{j+2},Z_{j+1})=(\undim Z_{j+1},Z_{j})$ for $j=1,\cdots,s-2$.
\item $\rho(\undim Z_2,Z_1)=(\undim Z_1,Y_1[1])$
\item $\rho(\undim Z_1,Y_1[1])=(-\undim Y_1,Y_2[1])$
\item $\rho(-\undim Y_1,Y_2[1])=(-\undim Z_1,Y_1)$
\item $\rho(-\undim Z_1,Y_1)=(\undim Y_1,Y_2)$
\item $\rho(\undim Y_i,Y_{i+1})=(\undim Y_{i+1},Y_{i+2})$ for $i=1,\cdots,s-2$.
\end{enumerate}
\end{prop}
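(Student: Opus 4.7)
The plan is to use the Auslander--Reiten quiver of $\cC_H$, which is completely explicit in rank 2, together with the AR formula $\Ext^1_H(X,Y) \cong D\Hom_H(Y, \tau X)$, to enumerate $\cS_H(0)$ and to compute $\rho$ case by case.

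For part (a), I first identify, for each exceptional object $U$ of $Pres(H)$, the unique exceptional module in $|U|^\perp \subseteq mod\text-H$. For $U = Y_{i+1}$ with $i \geq 1$, the inclusion $Y_i \in Y_{i+1}^\perp$ is immediate: $\Hom(Y_{i+1}, Y_i) = 0$ by the preprojective ordering, and $\Ext^1(Y_{i+1}, Y_i) = 0$ (by AR duality when $i \geq 2$, or by projectivity of $Y_2$ when $i = 1$). The analogous computation gives $Z_{j+1} \in Z_j^\perp$ for $j \geq 1$. For the shifted projective $U = Y_j[1]$ with $j = 1, 2$, we have $|U|^\perp = Y_j^\perp$; since $Y_j$ is projective, $\Hom_H(Y_j, N)$ detects only the $j$-th component of $N$, so the unique exceptional in $Y_j^\perp$ is the simple $Z_1$ (for $j = 1$) or the simple $Y_1$ (for $j = 2$). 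In these two simple-target cases, Remark \ref{rem: U is unique} predicts two choices of $U$ per $\gamma$, and they yield precisely the six transitional pairs $(\pm\undim Y_1, Y_2), (\pm\undim Y_1, Y_2[1]), (\pm\undim Z_1, Y_1), (\pm\undim Z_1, Y_1[1])$. This exhausts (a).

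For part (b), I verify each of the six transformation rules by checking the defining conditions $\cS1$, $\cS2$, $\cS3$ of Proposition \ref{prop: next function}. From the classification of cluster tilting objects in $\cC_H$ as pairs of consecutive transjective objects, each $U$ has exactly two cluster-tilting completions; condition $\cS1$ narrows $U'$ to these two, and the strict inequality $\cS3$ singles out one. The sign of $\gamma'$ is then forced by $\cS2$. In every case the underlying Euler-form computations reduce to three elementary identities: $\brk{\undim P_j, \undim N} = f_j \dim_{F_j} N_j$ when $P_j$ is projective, $\brk{\undim M_\beta, \undim M_\beta} = f_\beta > 0$ for any exceptional $M_\beta$, and the AR formula for $\Ext^1$.

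The main obstacle is the transitional segment (items (2)--(5)), where the shifted projectives $Y_1[1], Y_2[1]$ intervene and dimension vectors change sign via $\undim P[1] = -\undim P$. Careful sign-tracking is required, but every inequality still reduces to one of the identities above applied to $P_1^H, P_2^H, I_1^H, I_2^H$. The homogeneous segments (1) and (6) are more mechanical; for example $\brk{\undim Z_j, \undim Z_{j+2}} < 0$ because $\Hom(Z_j, Z_{j+2}) = 0$ while $\Ext^1(Z_j, Z_{j+2}) = D\Hom(Z_{j+2}, Z_{j+1}) \neq 0$. Once the enumeration in (a) is in place, the entire verification is a systematic rank-2 bookkeeping exercise.
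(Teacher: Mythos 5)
Your argument is correct and your enumeration in (a) matches the paper's, just run in the opposite direction: you fix $U$ and locate the unique exceptional module $M_\beta$ in $|U|^\perp$, while the paper fixes $\beta$ and uses Remark \ref{rem: U is unique} to locate $U$ in ${}^\perp M_\beta$. The resulting list is the same. One cosmetic slip: you call $(\pm\undim Y_1, Y_2)$, $(\pm\undim Y_1, Y_2[1])$, $(\pm\undim Z_1, Y_1)$, $(\pm\undim Z_1, Y_1[1])$ ``the six transitional pairs,'' but that is eight pairs; $(\pm\undim Y_1, Y_2)$ already belongs to the generic family $(\pm\undim Y_i, Y_{i+1})$ at $i=1$, and only the remaining six are the ``6 pairs'' isolated in the statement.

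For part (b) your route is genuinely different from the paper's. You propose to verify $\cS1$, $\cS2$, $\cS3$ directly for each of the six rules, with $\cS1$ (via the classification of cluster tilting objects of $\cC_H$ as consecutive transjective pairs) narrowing $U'$ to two candidates, $\cS3$ selecting one, and $\cS2$ fixing the sign of $\gamma'$. This is sound and would go through, but it demands six separate Euler-form checks plus the sign-tracking you flag around $Y_1[1],Y_2[1]$. The paper instead notices that rules (1), (2), (3) and (6) are all instances of a single rank-2 identity
\[
\rho(\undim \tau_H X, W) = (\undim W, X),
\]
where $\tau_H$ is read in the derived sense on shifted projectives (so $\tau_H(Y_1[1]) = I_1^H = Z_2$ and $\tau_H(Y_2[1]) = I_2^H = Z_1$), and then derives (4) and (5) from (3) by the symmetry $s\circ\rho\circ s = \rho^{-1}$ established immediately after Proposition \ref{prop: next function}. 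That unifies the preprojective, preinjective, and transitional segments and halves the case analysis. Both methods are correct; the paper's formula is the more economical one, and if you go your route you would be well served to note it anyway, since it explains why the preinjective and preprojective rules look identical.
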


\begin{proof} (a) By Remark \ref{rem: U is unique}, $\gamma$ uniquely determines $U$ in the pair $(\gamma,U)$ when $M_{|\gamma|}$ is not a simple object. By definition $U\in \,^\perp M_{|\gamma|}$. So, $U$ must be the object after $M_{|\gamma|}$ in the Auslander-Reiten quiver of $H$. This gives the pairs $(\pm\undim Z_j,Z_{j-1})$, $(\pm\undim Y_i,Y_{i+1})$. When $\gamma$ is a simple root, $U=P$ or $P[1]$ where $P$ is the projective which does not map to $M_{|\gamma|}$. This gives $(\pm\undim Y_1,Y_2)$ and the remaining 6 pairs.

(b) The computation of $\rho(\gamma,U)$ in (1),(2),(3) and (6) are examples of the general formula: $\rho(\undim \tau_H X,W)=(\undim W,X)$ which holds in rank 2. (4) and (5) follow from (3) by change of sign.
\end{proof}

%The subscript of $B$ should be taken modulo 2 where $B_0$ and $B_1$ are as given in the table. Note that $B_1=-B_0^t$. We will use this table to prove Proposition \ref{prop: Gamma matrix transforms correctly} for $\cH$.

\begin{proof}[Proof of Proposition \ref{prop: Gamma matrix transforms correctly} in rank 2]

By Proposition \ref{prop: action of rho on SH(0)}, there are two sequences of consecutive roots in $\cS_H(0)$:
\[
	\cdots, \undim Z_3,\undim Z_2,\undim Z_1,-\undim Y_1,-\undim Z_1,\undim Y_1,\undim Y_2,\undim Y_3,\cdots
\]
\[
	\cdots, -\undim Y_3,-\undim Y_2,-\undim Y_1,\undim Z_1,\undim Y_1,-\undim Z_1,-\undim Z_2,-\undim Z_3,\cdots
\]
We consider only the first. The second is similar. The calculations are summarized in the following chart.
\[
\begin{array}{ccccccccc}
\gamma& \gamma'& \gamma'' & f_{\gamma'} & \brk{\gamma',\gamma} & \brk{\gamma,\gamma'} & b & \sgn(b\gamma') & \text{formula for $\gamma''$}\\
\hline
\undim Z_3 & \undim Z_2 & \undim Z_1 & f_1 & 0 & m & -d_1 & - & d_1\undim Z_2-\undim Z_3\\
\undim Z_2 & \undim Z_1 & -\undim Y_1 & f_2 & 0 & m & -d_2 & - & d_2\undim Z_1-\undim Z_2\\
\undim Z_1 & -\undim Y_1 & -\undim Z_1 & f_1 & 0 & m & -d_1 & +& -\undim Z_1\\
-\undim Y_1 & -\undim Z_1 & \undim Y_1 & f_2 & 0 & m & -d_2 & +& \undim Y_1\\
-\undim Z_1 & \undim Y_1 & \undim Y_2 & f_1 & -m & 0 & -d_1 & - & d_1\undim Y_1+\undim Z_1\\
\undim Y_1 & \undim Y_2 & \undim Y_3 & f_2 & 0 & m & -d_2 & -& d_2\undim Y_2-\undim Y_1\\
\end{array}
\]
That $\gamma''$ agrees with the formula from Lemma \ref{lem: reformulation in terms of consecutive roots} follows from the formulas \eqref{eq: equation for dim Y2, dim Z2}, \eqref{eq: recursion for dim Yi}, \eqref{eq: recursion for dim Zj}. So, Proposition \ref{prop: Gamma matrix transforms correctly} holds in the case $n=2$ by Lemma \ref{lem: reformulation in terms of consecutive roots}.
\end{proof}

\subsection{Proof of Proposition \ref{prop: Gamma matrix transforms correctly} in general case} Let $T_0$ be a partial cluster tilting object for $\Lambda$ with $n-2$ summands. Let $L_1,L_2$ be the indecomposable injective objects of the rank 2 hereditary abelian subcategory $|T_0|^\perp$ of $mod\text-\Lambda$. Let $H=\End_\Lambda(L)\op$ where $L=L_1\oplus L_2$.

\begin{prop}
The functor $F=\Hom_\Lambda(\cdot,L):mod\text-\Lambda\to mod\text-H$ induces an isomorphism of categories $|T_0|^\perp\cong mod\text-H$.
\end{prop}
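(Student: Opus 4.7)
The plan is to recognize $\cW := |T_0|^\perp$ as a hereditary abelian category of rank $2$ with $L$ an injective cogenerator, and then to exhibit $F$ as the standard ``Hom into cogenerator'' equivalence.

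First, by Remark \ref{rem: properties of wide subcategories}(1), the wide subcategory $\cW$ is equivalent to $mod\text-\Gamma$ for some finite dimensional hereditary algebra $\Gamma$ with exactly two non-isomorphic simple modules. Because $\cW \hookrightarrow mod\text-\Lambda$ is an exact embedding of abelian categories and $L_1,L_2$ are by hypothesis the indecomposable injective objects of $\cW$, the object $L=L_1\oplus L_2$ is an injective cogenerator in $\cW$, and for any $X\in\cW$ one has $\Hom_\Lambda(X,L)=\Hom_\cW(X,L)$; hence $F(X)$ is naturally a module over $H=\End_\Lambda(L)\op=\End_\cW(L)\op$, so $F$ restricted to $\cW$ really does land in $mod\text-H$.

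Second, I would show that $F$ is exact on $\cW$. Since $L$ is injective in $\cW$, the functor $\Hom_\cW(\cdot,L)$ is exact, so any short exact sequence in $\cW$ is taken by $F$ to a short exact sequence of right $H$-modules. Because $\cW$ is hereditary with injective cogenerator $L$, every object $X\in\cW$ admits an injective copresentation
\[
0\to X\to L^{a_0}\to L^{a_1}\to 0.
\]
Applying $F$ and using the canonical identifications $F(L^k)\cong H^k$ (as right $H$-modules) produces a finite free resolution of $F(X)$ in $mod\text-H$, showing in particular that $F(X)$ is finitely generated.

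Third, I would verify full faithfulness and essential surjectivity. Full faithfulness on the additive closure of $L$ is Yoneda: $\Hom_\cW(L^a,L^b)=\Hom_H(H^b,H^a)$ is tautological from $H\op=\End_\cW(L)$. To promote this to all of $\cW$, for $X,Y\in\cW$ use the copresentation of $X$ and exactness of $F$ together with a diagram chase/five-lemma argument applied to the two exact sequences obtained by applying $\Hom_\cW(-,Y)$ and $\Hom_H(F(-),F(Y))$ to the copresentation of $X$. For essential surjectivity, take any $M\in mod\text-H$ with a free presentation $H^{a_1}\to H^{a_0}\to M\to 0$; lift this to a morphism $\varphi\colon L^{a_0}\to L^{a_1}$ in $\cW$ via full faithfulness on $\add\,L$, set $X=\ker\varphi$, and use exactness of $F$ plus $F(L^k)=H^k$ to conclude $F(X)\cong M$.

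The main technical point—and the place that really uses the hereditary, rank-$2$ hypothesis—is the existence of the injective copresentation in $L$ and the exactness of $F$; the remainder is a formal consequence of $L$ being an injective cogenerator whose endomorphism opposite is by definition $H$. Handling the variance convention (so that one genuinely gets an isomorphism $\cW\cong mod\text-H$ rather than $\cW\cong(mod\text-H\op)\op$) requires only bookkeeping, since the $H$-action on $\Hom_\cW(X,L)$ was set up by composing with $\End_\cW(L)$ on the target, producing a right $H$-module exactly as desired.
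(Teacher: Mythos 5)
The paper does not actually give a proof of this proposition: it is stated as a standard fact about wide subcategories and the text immediately moves on to define the map $\eta$. So there is no proof in the paper to compare against. Your overall plan (identify $\cW := |T_0|^\perp$ as a rank-$2$ hereditary abelian subcategory with injective cogenerator $L$, then show that $\Hom_\cW(-,L)$ gives the standard Morita-type identification with $mod\text-H$, with the variance understood as a duality) is the right one and is surely what the authors had in mind.

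However, one step as written would not go through. In the full-faithfulness argument you propose applying $\Hom_\cW(-,Y)$ to an injective copresentation $0\to X\to I^0\to I^1\to 0$ of $X$ (with $I^0,I^1\in\add L$). Since $\Hom_\cW(-,Y)$ is contravariant this yields
\[
0\to\Hom_\cW(I^1,Y)\to\Hom_\cW(I^0,Y)\to\Hom_\cW(X,Y)\to\Ext^1_\cW(I^1,Y)\to\cdots,
\]
and the connecting term $\Ext^1_\cW(I^1,Y)$ has no reason to vanish: injectivity of $I^1$ kills $\Ext^1_\cW(-,I^1)$, not $\Ext^1_\cW(I^1,-)$. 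So $\Hom_\cW(X,Y)$ is not the cokernel of the first map, and the parallel sequence you form in $mod\text-H$ does not line up with it either (because $F$ is contravariant, the natural comparison map lands in $\Hom_H(F(Y),F(I^i))$, not $\Hom_H(F(I^i),F(Y))$). The correct calibration is to take an injective copresentation of $Y$ instead, say $0\to Y\to J^0\to J^1\to 0$ with $J^i\in\add L$, apply the covariant, left-exact functor $\Hom_\cW(X,-)$ to get a sequence beginning $0\to\Hom_\cW(X,Y)\to\Hom_\cW(X,J^0)\to\Hom_\cW(X,J^1)$, and in parallel apply $\Hom_H(-,F(X))$ to the projective presentation $0\to F(J^1)\to F(J^0)\to F(Y)\to 0$. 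Both sequences begin at the desired Hom-groups, the middle terms agree because $F|_{\add L}$ is fully faithful (and $\Hom_\cW(X,J^i)\cong\Hom_H(F(J^i),F(X))$ since $F(J^i)$ is a free $H$-module), and the kernel comparison gives $\Hom_\cW(X,Y)\cong\Hom_H(F(Y),F(X))$. Note that, as this formula makes plain and as you already flag at the end, what one proves is a duality $\cW\to mod\text-H$ rather than a covariant isomorphism; that is indeed the correct reading of the proposition. With that calibration fixed, the remaining ingredients in your outline — the identification $F(L)\cong H$ as a right $H$-module, exactness of $F$ on $\cW$ from injectivity of $L$, and essential surjectivity by lifting a free presentation of $M$ through the fully faithful restriction $F|_{\add L}$ — are all sound.
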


The functor $F$ does not have the properties that we need on objects outside the subcategory $|T_0|^\perp$. So, we will replace it with a mapping $\eta$ which, unfortunately, is defined only on objects. We now set up the notation for this mapping.

Let $\Psi_+$ be the set of all dimension vectors of exceptional objects in $|T_0|^\perp$, let $\Psi=\Psi_+\cup -\Psi_+$ and let $V\subset \RR^n$ be the two dimensional subspace spanned by $\Psi$. Then every vector $v\in V$ is given uniquely as $v=x_1\alpha_1+x_2\alpha_2$ where $x_1,x_2\in\RR$ and $\alpha_1,\alpha_2\in \Psi_+$ are the dimension vectors of the simple objects $M_{\alpha_1},M_{\alpha_2}$ of $|T_0|^\perp$ corresponding to $L_1,L_2$. 

\begin{rem}  {(a)} The Euler-Ringel pairing on $\RR^2$ is given by:
\[
	\brk{(x_1,x_2),(y_1,y_2)}_H=\brk{x_1\alpha_1+x_2\alpha_2,y_1\alpha_1+y_2\alpha_2}
\]
{(b)} The linear isomorphism $\pi:V\to \RR^2$ given by $\pi(x_1\alpha_1+x_2\alpha_2)=(x_1,x_2)$ is also an isometry, i.e., $\brk{v,w}=\brk{\pi(v),\pi(w)}_H$ for all $v,w\in V$.\\
{(c)} Furthermore, $\pi(\Psi_+)$ is the set of all dimension vectors of exceptional $H$-modules.\\
(d) Let $\lambda_i\!=\!\undim L_i$ and $f_j\!=\!\dim_K\End_\Lambda(M_{\alpha_j})\!=\!\dim_K\Hom_\Lambda(M_{\alpha_j},L_j)$. Then $
	\brk{\alpha_i,\lambda_j}\!=\!f_j\delta_{ij}$.
\end{rem}

Let $\theta:\RR^n\to V$ be the linear mapping given by
\[
	\theta(x)=\sum_{i=1,2} f_i^{-1}\brk{x,\lambda_i}\alpha_i\,.
\]
\begin{lem}\label{lem 4.6.3}
\emph{(a)} $\theta$ is a projection, i.e., $\theta(v)=v$ for all $v\in V$.\\
\emph{(b)} $\brk{\theta(x),\beta}=\brk{x,\beta}$ for all $x\in\RR^n$, $\beta\in\Psi$.\\
\emph{(c)} $\brk{\pi\theta(x),\pi(\beta)}_H=\brk{x,\beta}$ for all $x\in\RR^n$, $\beta\in\Psi$.\\
\emph{(d)} $\theta(\undim T_0)=0$.
%\emph{(c)} $\pi$ induces an isomorphism $\pi':V\cong \RR^2$ which is an isometry with respect to the Euler-Ringel pairing, i.e., $\brk{v,w}=\brk{\pi(v),\pi(w)}_H$ for all $v,w\in V$.
\end{lem}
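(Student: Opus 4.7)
For part (a), the plan is to compute $\theta(v)$ directly on $v=x_1\alpha_1+x_2\alpha_2\in V$ using the identity $\brk{\alpha_i,\lambda_j}=f_j\delta_{ij}$ from the preceding remark: bilinearity gives $\brk{v,\lambda_j}=x_jf_j$, whence $\theta(v)=\sum_j f_j^{-1}(x_jf_j)\alpha_j=v$.

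For part (b), the key observation is that (a) forces $\theta$ to be idempotent, so $y:=x-\theta(x)$ satisfies $\theta(y)=0$. From the very definition of $\theta$, $\ker\theta=\{y\in\RR^n:\brk{y,\lambda_1}=\brk{y,\lambda_2}=0\}$. I would then argue that $\lambda_1,\lambda_2$ form a basis of $V$: they are the two non-isomorphic indecomposable injectives of the rank-$2$ hereditary subcategory $|T_0|^\perp$, whose dimension vectors are linearly independent by the standard fact that the Cartan matrix of a hereditary algebra is unimodular. Hence any $\beta\in\Psi\subset V$ can be expanded as $\beta=c_1\lambda_1+c_2\lambda_2$, and bilinearity yields $\brk{y,\beta}=0$, which is exactly $\brk{x,\beta}=\brk{\theta(x),\beta}$.

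Part (c) is then immediate: since $\theta(x),\beta\in V$ and $\pi$ is an isometry on $V$ (as recorded in the preceding remark), $\brk{\pi\theta(x),\pi(\beta)}_H=\brk{\theta(x),\beta}$, which equals $\brk{x,\beta}$ by (b).

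For part (d), it suffices to show $\brk{\undim T_0,\lambda_j}=0$ for $j=1,2$. I would decompose $T_0=T_0'\oplus P[1]$, where $T_0'$ is the sum of the module summands of $T_0$ and $P[1]$ the sum of the shifted-projective summands. Since $L_j\in |T_0|^\perp$, the module part contributes $\brk{\undim T_0',\lambda_j}=\dim_K\Hom_\Lambda(T_0',L_j)-\dim_K\Ext^1_\Lambda(T_0',L_j)=0$. For the shifted-projective part, $\undim P[1]=-\undim P$ and $\brk{\undim P,\lambda_j}=\dim_K\Hom_\Lambda(P,L_j)$ since $P$ is projective; this Hom vanishes because $0=\Ext^1_{Pres(\Lambda)}(P[1],L_j)=\Hom_{\cD^b}(P,L_j)=\Hom_\Lambda(P,L_j)$, using the embedding of $Pres(\Lambda)$ into the bounded derived category provided by Corollary \ref{characterization of VRep}. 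The two contributions sum to $\brk{\undim T_0,\lambda_j}=0$, so $\theta(\undim T_0)=0$. The only mildly delicate step in the whole plan is this bookkeeping for (d), where the shifted-projective summands must be handled via the $\Ext^1$-$\Hom$ shift in $\cD^b$; parts (a)--(c) are essentially formal once one records the idempotency of $\theta$ and the isometry property of $\pi$.
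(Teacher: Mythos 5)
Your proposal is correct and follows essentially the same route as the paper: (a) is the direct computation using $\brk{\alpha_i,\lambda_j}=f_j\delta_{ij}$, (b)–(c) rest on $\brk{\theta(x),\lambda_j}=\brk{x,\lambda_j}$ together with the fact that $\lambda_1,\lambda_2$ span $V$ and $\pi$ is an isometry, and (d) uses $L_j\in|T_0|^\perp$. The one place you go beyond the paper's very terse "(d) follows from $L_i\in|T_0|^\perp$" is in unpacking the contribution of shifted-projective summands of $T_0$; this is a genuine subtlety worth making explicit, but your derived-category detour is heavier than needed: since $|T_0|=T_0'\oplus P$ (underlying module), the condition $L_j\in|T_0|^\perp$ already gives $\Hom_\Lambda(P,L_j)=0$ and $\Ext^1_\Lambda(T_0',L_j)=\Hom_\Lambda(T_0',L_j)=0$ as direct summands of the vanishing $\Hom_\Lambda(|T_0|,L_j)$ and $\Ext^1_\Lambda(|T_0|,L_j)$, so $\brk{\undim T_0,\lambda_j}=\brk{\undim T_0',\lambda_j}-\brk{\undim P,\lambda_j}=0$ without invoking $\cD^b$.
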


\begin{proof}
(a) follows from the observation that $\theta(\alpha_i)=\alpha_i$. (b) follows from the calculation: $\brk{\theta(x),\lambda_j}=\brk{x,\lambda_j}$ and the fact that $\lambda_1,\lambda_2$ span $V$. (c) follows from (b) and the fact that $\pi:V\to\RR^2$ is an isometry. (d) follows from the fact that $L_i\in |T_0|^\perp$.
\end{proof}

Suppose that $\overline\beta\in \pi\Psi_+\subset\ZZ^2$. Then the hyperplane $\{\overline x\in\RR^2\,|\, \brk{\overline x,\overline\beta}_H=0\}$ is a line through the origin in $\RR^2$. And $D_H(\overline\beta)$ is a closed subset of this line given by:
\[
	D_H(\overline\beta)=\{x\in \RR^2\,|\, \brk{x,\overline\beta}_H=0\text{ and }\brk{x,\overline\beta'}_H\le0 \text{ for all }\overline\beta'\subset\overline\beta\}.
\]

\begin{lem}\label{lem: D(b) goes into D(b)}
\emph{(a)} For any $\beta\in\Psi_+$, $\pi\theta(D_\Lambda(\beta))\subseteq D_H(\pi(\beta))$. \\
\emph{(b)} If $\pi\theta(x)\in D_H(\pi(\beta))$, $x\in \RR^n$, then $
	\undim T_0+\delta x\in D_\Lambda(\beta)
$ for sufficiently small $\delta>0$.
\end{lem}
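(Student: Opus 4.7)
The plan is to verify the real semi-stability conditions defining $D_H(\pi(\beta))$ in part (a) and $D_\Lambda(\beta)$ in part (b), invoking Lemma \ref{lem 4.6.3}(c) to transport the Euler pairing between $\RR^n$ and $\RR^2$. By the Virtual Stability Theorem \ref{thm 3.1.1: virtual stability theorem}, both $D_\Lambda(\beta)$ and $D_H(\pi(\beta))$ are cut out by their respective stability inequalities. For (a), given $x\in D_\Lambda(\beta)$ and any real Schur subroot $\overline{\beta'}\subseteq \pi(\beta)$ in $mod\text-H$, I would pull $\overline{\beta'}$ back via the equivalence $|T_0|^\perp\cong mod\text-H$ to a real Schur root $\beta'\in\Psi_+$ with $\pi(\beta')=\overline{\beta'}$. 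Since $|T_0|^\perp$ is an exactly embedded wide subcategory of $mod\text-\Lambda$ (Definition \ref{def: 3.2.3: wide subcategory}), the inclusion $M_{\beta'}\hookrightarrow M_\beta$ inside $|T_0|^\perp$ is an inclusion in $mod\text-\Lambda$, so $\beta'\subseteq\beta$ as real Schur roots in $\Lambda$. Then $\brk{\pi\theta(x),\overline{\beta'}}_H=\brk{x,\beta'}\le 0$ by Lemma \ref{lem 4.6.3}(c); combined with $\brk{\pi\theta(x),\pi(\beta)}_H=\brk{x,\beta}=0$, this gives $\pi\theta(x)\in D_H(\pi(\beta))$.

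For (b), assume $\pi\theta(x)\in D_H(\pi(\beta))$. I would verify each stability condition for $\undim T_0+\delta x$. The condition for $\beta$ itself is immediate: $\brk{\undim T_0,\beta}=0$ because $M_\beta\in|T_0|^\perp$, and $\brk{x,\beta}=\brk{\pi\theta(x),\pi(\beta)}_H=0$ by hypothesis, so $\brk{\undim T_0+\delta x,\beta}=0$. For each real Schur subroot $\beta'\subsetneq\beta$ in $\Lambda$, the monomorphism $M_{\beta'}\hookrightarrow M_\beta$ together with $\Hom_\Lambda(T_0,M_\beta)=0$ forces $\Hom_\Lambda(T_0,M_{\beta'})=0$, so $\brk{\undim T_0,\beta'}=-\dim_K\Ext^1_\Lambda(T_0,M_{\beta'})\le 0$. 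If $\beta'\in\Psi_+$, then $\Ext^1_\Lambda(T_0,M_{\beta'})=0$ and $\brk{\undim T_0,\beta'}=0$, so $\brk{\undim T_0+\delta x,\beta'}=\delta\brk{\pi\theta(x),\pi(\beta')}_H\le 0$ because $\pi(\beta')$ is a real Schur subroot of $\pi(\beta)$ in $H$. Otherwise $\beta'\notin\Psi_+$: then $M_{\beta'}\notin|T_0|^\perp$ forces $\Ext^1_\Lambda(T_0,M_{\beta'})\neq 0$, giving $\brk{\undim T_0,\beta'}<0$ strictly, and the strict inequality persists under any small perturbation $\delta x$. Finiteness of the set of real Schur subroots $\beta'\subseteq\beta$ (bounded coordinatewise by $\beta$) allows a uniform choice of $\delta>0$.

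The main obstacle is the second sub-case of (b): subroots $\beta'\subseteq\beta$ in $\Lambda$ whose module leaves $|T_0|^\perp$ have no counterpart in $H$, so the hypothesis on $\pi\theta(x)$ gives no control over $\brk{x,\beta'}$. The resolution exploits the failure of perpendicularity itself: once $\Hom_\Lambda(T_0,M_{\beta'})=0$ has been secured, $M_{\beta'}\notin|T_0|^\perp$ is equivalent to $\Ext^1_\Lambda(T_0,M_{\beta'})\neq 0$, producing a strictly negative slack in $\brk{\undim T_0,\beta'}$ that absorbs any bounded linear perturbation. The identification of $\Lambda$-subroots in $\Psi_+$ with $H$-subroots via $\pi$ rests crucially on the exact embedding of the wide subcategory $|T_0|^\perp$ into $mod\text-\Lambda$, which is what makes the transfer of subobject relations between the two ambient module categories valid.
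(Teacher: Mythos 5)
Your proof is correct and follows essentially the same strategy as the paper's: verify the semi-stability inequalities, transporting them across $\pi\theta$ using Lemma \ref{lem 4.6.3}(c), and in part (b) handle the two kinds of subroots of $\beta$ separately. The one genuine difference is in the $\beta'\notin\Psi_+$ case of (b): the paper argues by contradiction, observing that $\brk{\undim T_0,\beta'}=0$ combined with $\undim T_0\in D(\beta)$ would put $\undim T_0$ in $D(\beta')$, forcing $\beta'\in\Psi_+$ via Remark \ref{rem 4.1.3}/Lemma \ref{lemma when T0 lies in D(b)} (an appeal to the Virtual Stability Theorem), whereas you compute directly: the monomorphism $M_{\beta'}\hookrightarrow M_\beta$ and left-exactness of $\Hom$ give $\Hom_\Lambda(|T_0|,M_{\beta'})=0$, so the Euler--Ringel pairing reduces to $-\dim\Ext^1$, which is nonzero precisely because $M_{\beta'}\notin|T_0|^\perp$. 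Your route is a bit more hands-on and avoids the detour through the stability theorem; both yield the same strict slack. One notational caution: $T_0$ may contain shifted projective summands $P[1]$, so one should write $\Hom_\Lambda(|T_0|,-)$ and interpret the Ext term as $\Ext^1_{Pres(\Lambda)}(T_0,-)$ (equivalently $\Ext^1_\Lambda$ applied to the module part of $T_0$, the projectives contributing nothing); once this is made precise the identity $\brk{\undim T_0,\beta'}=-\dim_K\Ext^1$ holds exactly as you use it, since $\undim P[1]=-\undim P$ and $\brk{\undim P,\beta'}=\dim\Hom(P,M_{\beta'})=0$ in this situation.
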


\begin{proof} (a) Suppose $x\in D_\Lambda(\beta)$. Then $\brk{\pi\theta(x),\pi(\beta)}_H=\brk{x,\beta}=0$ and $\brk{\pi\theta(x),\pi(\beta')}_H=\brk{x,\beta'}\le0$ for all $\beta'\subset \beta$ in $\Psi_+$. So, $\pi\theta(x)\in D_H(\pi(\beta))$.

(b) Suppose that $\pi\theta(x)\in D_H(\pi(\beta))$. Then, for any $\delta>0$ we have:
\[
	\brk{\undim T_0+\delta x,\beta}=\brk{\pi\theta(\undim T_0+\delta x),\pi(\beta)}_H=\delta \brk{\pi\theta(x),\pi(\beta)}=0
\]
For any subroot $\beta'\subseteq \beta$ with $\beta'\in\Psi_+$ we have:
\[
	\brk{\undim T_0+\delta x,\beta'}=\brk{\pi\theta(\undim T_0+\delta x),\pi(\beta')}_H=\delta \brk{\pi\theta(x),\pi(\beta')}\le0
\]
For any subroot $\beta''\subseteq \beta$ with $\beta''\notin\Psi_+$ we have $\brk{\undim T_0,\beta''}<0$. (If $\brk{\undim T_0,\beta''}=0$ then $\undim T_0\in D(\beta'')$ which implies that $\beta''\in\Psi_+$ by Lemma \ref{lemma when T0 lies in D(b)}(b).) So, $\brk{\undim T_0,\beta''}\le-1$ and
\[
	\brk{\undim T_0+\delta x,\beta''}=\brk{\undim T_0,\beta''}+\delta \brk{x,\beta''}<\tfrac12%1/2
\]
for sufficiently small $\delta$. Therefore, $\undim T_0+\delta x\in D(\beta)$ for all sufficiently small $\delta>0$.
\end{proof}

We need the following criterion equivalent to $\cS1$ from Proposition \ref{prop: next function}.

\begin{lem}\label{lem: S1'}
Suppose that $U,U'$ are exceptional objects of $Pres(\Lambda)$ so that $T_0\oplus U,T_0\oplus U'$ are rigid. Then $T_0\oplus U\oplus U'$ is a cluster tilting objects of $\Lambda$ if and only if $\cS1'$ holds:% the following condition holds.

$\cS1'$: $\forall a,b,c\in\RR_{>0},\forall\beta\in\Phi_+(\Lambda)$,
$
	a\undim T_0+b\undim U+c\undim U'\notin D_\Lambda(\beta).
$
\end{lem}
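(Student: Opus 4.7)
The forward direction is immediate from Theorem \ref{thm 4.1.5: clusters and SIs}(c): if $T_0\oplus U\oplus U'$ is a cluster tilting object, label its summands $T_1,\ldots,T_{n-2},U,U'$, so the interior of the conical simplex $C(\undim T_1,\ldots,\undim T_{n-2},\undim U,\undim U')$ meets no $D_\Lambda(\beta)$. Any combination $a\undim T_0+b\undim U+c\undim U'=\sum_{i=1}^{n-2}a\undim T_i+b\undim U+c\undim U'$ with $a,b,c>0$ has strictly positive barycentric coordinates and hence lies in that open interior, so it is in no $D_\Lambda(\beta)$.

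For the reverse direction, assume $\cS1'$ and suppose for contradiction that $T_0\oplus U\oplus U'$ is not cluster tilting. Since $T_0\oplus U$ and $T_0\oplus U'$ are rigid with $U,U'$ exceptional, both $U$ and $U'$ lie in the cluster category $\cC_H$ of the rank-$2$ wide subcategory $|T_0|^\perp \cong mod\text-H$ (Lemma \ref{lemma when T0 lies in D(b)}(a)), and $T_0\oplus U\oplus U'$ is cluster tilting in $\cC_\Lambda$ if and only if $U\oplus U'$ is a cluster tilting pair in $\cC_H$. By the rank-$2$ structure theory already used in the preceding subsections, this happens precisely when $U$ and $U'$ are consecutive objects of the transjective component of the AR quiver of $\cC_H$; otherwise the open wedge in $\RR^2$ between $\pi\theta(\undim U)$ and $\pi\theta(\undim U')$ contains a ray $D_H(\gamma)$ for some real Schur root $\gamma$ of $H$.

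Under the failure hypothesis, pick $b,c>0$ with $b\pi\theta(\undim U)+c\pi\theta(\undim U')\in D_H(\gamma)$, and let $\beta\in\Psi_+$ be the unique real Schur root of $\Lambda$ in $|T_0|^\perp$ with $\pi(\beta)=\gamma$. By linearity of $\pi\theta$ (Lemma \ref{lem 4.6.3}), $\pi\theta(b\undim U+c\undim U')\in D_H(\pi(\beta))$, so Lemma \ref{lem: D(b) goes into D(b)}(b) applied to $x=b\undim U+c\undim U'$ yields $\undim T_0+\delta x\in D_\Lambda(\beta)$ for some $\delta>0$. The positive triple $(1,\delta b,\delta c)$ then places $a\undim T_0+b'\undim U+c'\undim U'$ in $D_\Lambda(\beta)$, contradicting $\cS1'$.

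The main obstacle is the rank-$2$ geometric fact that any two non-consecutive indecomposables of $\cC_H$ have a separating real Schur root $\gamma$: this amounts to inspecting the (possibly infinite) linear order along the transjective component of the AR quiver of $\cC_H$ and observing that each $D_H(\gamma)$ is a ray through the origin sitting between the dimension vectors of the exceptional $H$-modules that $\gamma$ separates. Everything else is linear bookkeeping through the projection $\pi\theta$.
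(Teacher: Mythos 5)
Your forward direction is fine and equivalent to the paper's appeal to Corollary~\ref{cor 2.4.5}, but for the sufficiency direction you take a genuinely different and substantially more involved route than the paper does. The paper argues directly and elementarily: if $T_0\oplus U\oplus U'$ is not cluster tilting, take the two objects $V,V'$ completing $T_0\oplus U$ to cluster tilting objects $T,T'$; then $\undim(T_0\oplus U)$ lies in the interior of $C(T)\cup C(T')$ while $\undim U'$ lies outside (this uses the Virtual Generic Decomposition Theorem \ref{thm 2.3.11: virtual generic decomposition theorem} together with the fact that rigid presentations are determined by their dimension vectors), so the segment from $\undim(T_0\oplus U)$ to $\undim U'$ must cross $\partial(C(T)\cup C(T'))\subset\partial C(T)\cup\partial C(T')$, which by Theorem~\ref{thm 4.1.5: clusters and SIs}(a) is a union of $D(\alpha)$'s. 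No rank-$2$ reduction is needed.

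Your route has real gaps beyond the one you flag. First, you assert that ``$T_0\oplus U\oplus U'$ is cluster tilting in $\cC_\Lambda$ iff $U\oplus U'$ is a cluster tilting pair in $\cC_H$,'' but $U$ and $U'$ are objects of $Pres(\Lambda)$, not of $\cC_H$; the transfer to $\cC_H$ at the level of objects \emph{and} dimension vectors (so that $\pi\theta(\undim U)$ is the $\cC_H$-dimension vector of the corresponding object, including the sign behavior for shifted projectives) is exactly what the paper proves later in Proposition~\ref{prop: pairs to pairs}(a) and the remark following it, and while that proposition does not circularly use Lemma~\ref{lem: S1'}, it is not available at the point this lemma is proved in the paper's logical development and is not a triviality. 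Second, the ``inspection'' that every pair of non-consecutive indecomposables of $\cC_H$ is separated by some $D_H(\gamma)$ ray is itself nontrivial in the tame and wild rank-$2$ cases, where the $D_H(\gamma)$'s accumulate near the regular cone; establishing it properly requires applying the Virtual Stability Theorem to $H$ and analyzing the limit configuration, which is essentially the same order of work as the paper's whole direct argument. You also implicitly assume $\pi\theta(\undim U)$ and $\pi\theta(\undim U')$ span a nondegenerate wedge (not a single line), which should be verified. In short: the idea is coherent, but as written it relies on machinery the paper develops \emph{after} this lemma, and the ``main obstacle'' you name is underestimated; the paper's cone-boundary argument avoids all of it.
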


\begin{proof}
The necessity of $\cS1'$ was shown in Corollary \ref{cor 2.4.5}.

To show sufficiency, suppose $T_0\oplus U\oplus U'$ is not a cluster tilting object. Let $V,V'$ be the two objects making $T=T_0\oplus U\oplus V$ and $T'=T_0\oplus U\oplus V'$ into cluster tilting objects in $Pres(\Lambda)$. Then $\undim(T_0\oplus U)$ lies in the interior of $C(T)\cup C(T')$ where $C(T)$ is the conical simplex spanned by $\{\undim T_i\}$, the components of $T$. Since $U'\neq V,V'$, by the virtual generic decomposition theorem \ref{thm 2.3.11: virtual generic decomposition theorem} it follows that $U'\notin C(T)\cup C(T')$. So, the straight line from $\undim(T_0\oplus U)$ to $\undim U'$ goes through the boundary of $C(T)\cup C(T')$. But, 
\[
	\partial (C(T)\cup C(T'))\subset \partial C(T)\cup \partial C(T')
\]
is a union of domains $D(\alpha)$. So, there exist $a,b>0$ so that
\[
	a(\undim T_0+\undim U)+b\undim U'\in D(\alpha)
\]
for some $\alpha$ contradicting $\cS1'$.
\end{proof}

\begin{prop}\label{prop: pairs to pairs}
Let $(\gamma,U)\in \cS_\Lambda(T_0)$. Then
\begin{enumerate}
\item[(a)] There is a unique object $\eta(U)\in Pres(H)$ so that $\undim \eta(U)=\pi\theta(\undim U)$.
\item[(b)] $(\pi(\gamma),\eta(U))\in \cS_H(0)$.
\item[(c)] If $\rho(\gamma,U)=(\gamma',U')$ then
$
	\rho_H(\pi(\gamma),\eta(U))=(\pi(\gamma'),\eta(U')).
$
\end{enumerate}
\end{prop}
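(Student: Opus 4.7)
The plan is to transport $(\gamma,U)$ from the rank-$n$ hereditary algebra $\Lambda$ to the rank-$2$ hereditary algebra $H$ using the equivalence $F = \Hom_\Lambda(-,L)\colon |T_0|^\perp \cong mod\text-H$ from the preceding proposition, and to combine this with the isometry property of $\pi\theta$ recorded in Lemma \ref{lem 4.6.3}(c). The three conditions $\cS1$--$\cS3$ from Proposition \ref{prop: next function} that characterize $\rho$ all involve either perpendicular-category data or Euler--Ringel pairings, both of which transfer cleanly through $F$ and $\pi\theta$.

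For part (a), uniqueness is immediate from the explicit classification of exceptional objects of $Pres(H)$ via the rank-$2$ Auslander--Reiten quiver (the $Y_i$, $Z_j$, $Y_1[1]$, $Y_2[1]$), whose dimension vectors are pairwise distinct. For existence I would split into cases. When $U$ is a module, rigidity of $T_0\oplus U$ places $U\in |T_0|^\perp$, so I would set $\eta(U):=F(U)$; this is an exceptional $H$-module, and since $\undim U\in V$ the operator $\theta$ is the identity on it, giving $\undim\eta(U) = \pi(\undim U) = \pi\theta(\undim U)$. When $U=P[1]$ is a shifted projective, the hypothesis that $T_0\oplus P[1]$ be rigid will force $P$ to be projective inside the relevant rank-$2$ wide subcategory containing $|T_0|^\perp$ (in the sense of Lemma \ref{lem: Mb is not simple} and Proposition \ref{prop 4.1.4: every D(beta) has T(beta)}); I would then define $\eta(P[1])$ to be the shift of the corresponding indecomposable $H$-projective and verify the dimension-vector formula by direct computation.

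For part (b), I would write $\gamma = \varepsilon\beta$ with $\varepsilon\in\{\pm 1\}$ and $M_\beta$ the unique exceptional module in $|T_0\oplus U|^\perp \subseteq |T_0|^\perp$. Since $\beta\in V$ we have $\pi(\gamma)=\varepsilon\pi(\beta)$. Under $F$, intersections of perpendicular categories correspond, so $F(M_\beta)$ is the unique exceptional $H$-module in $|\eta(U)|^\perp$, with dimension vector $\pi(\beta)$. Hence $(\pi(\gamma),\eta(U))\in\cS_H(0)$.

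For part (c), I would verify that $(\pi(\gamma'),\eta(U'))$ satisfies the three defining conditions of $\rho_H(\pi(\gamma),\eta(U))$ and then invoke uniqueness in Proposition \ref{prop: next function}. Conditions $\cS2_H$ and $\cS3_H$ transfer directly from the $\Lambda$-versions via Lemma \ref{lem 4.6.3}(c): for instance $\brk{\undim U,\gamma'} = \brk{\pi\theta(\undim U),\pi(\gamma')}_H = \brk{\undim\eta(U),\pi(\gamma')}_H$, so positivity of the pairing is preserved. The substantial step is $\cS1_H$, namely that $\eta(U)\oplus\eta(U')$ is cluster tilting in $Pres(H)$. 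I would argue this contrapositively using the $\cS1'$ criterion of Lemma \ref{lem: S1'}: if $b'\undim\eta(U) + c'\undim\eta(U') \in D_H(\bar\beta)$ for some $b',c'>0$ and $\bar\beta = \pi(\beta)\in\pi(\Psi_+)$, then with $x = b'\undim U + c'\undim U'$ we have $\pi\theta(x) \in D_H(\bar\beta)$, and Lemma \ref{lem: D(b) goes into D(b)}(b) produces $\undim T_0 + \delta x \in D_\Lambda(\beta)$ for sufficiently small $\delta>0$, contradicting $\cS1'$ for the cluster tilting object $T_0\oplus U\oplus U'$. The hardest step will be part (a) in the shifted-projective case, since $\theta$ is not the identity there and one must extract from the rigidity hypothesis the precise object of $Pres(H)$ that $\eta(P[1])$ should be.
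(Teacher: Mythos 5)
Your outline of part (c) — checking $\cS2$, $\cS3$ via the isometry identity $\brk{\pi\theta(x),\pi(\beta)}_H=\brk{x,\beta}$ of Lemma \ref{lem 4.6.3}(c) and dispatching $\cS1'$ through Lemma \ref{lem: D(b) goes into D(b)} and Corollary \ref{cor 2.4.5} — matches the paper's argument. But your proof of part (a) rests on a false premise. You claim that if $U$ is a module and $T_0\oplus U$ is rigid then $U\in|T_0|^\perp$, and then set $\eta(U)=F(U)$. Rigidity gives only $\Ext^1_\Lambda(|T_0|,U)=0=\Ext^1_\Lambda(U,|T_0|)$; it does not give $\Hom_\Lambda(|T_0|,U)=0$. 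For a concrete failure take $\Lambda=\kk Q$ with $Q=1\to 2\to 3$ and $T_0=P_2$, so $|T_0|^\perp=\add(S_1\oplus S_3)$ with $\alpha_1=e_1$, $\alpha_2=e_3$, $L_1=S_1$, $L_2=S_3$ and $H\cong \kk\times\kk$. Then $U=S_2$ is Ext-orthogonal to $P_2$ in both directions, so $(\gamma,U)\in\cS_\Lambda(T_0)$ with $M_\beta=S_1$, $\beta=e_1$; but $\Hom_\Lambda(P_2,S_2)\neq 0$, so $S_2\notin|T_0|^\perp$, and $F(S_2)=\Hom_\Lambda(S_2,S_1\oplus S_3)=0$. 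A direct computation gives $\pi\theta(\undim S_2)=(0,-1)$, so the correct $\eta(U)$ is the shifted projective $P_2^H[1]$, not a module at all. The difficulty you anticipated for $U=P[1]$ is thus already present for ordinary modules $U$: whenever $\undim U\notin V$, the projection $\theta$ is genuinely nontrivial on $\undim U$ and $\eta(U)$ cannot be read off the equivalence $|T_0|^\perp\cong mod\text-H$.

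The paper circumvents this by never evaluating $F$ on $U$; part (a) is argued purely at the level of dimension vectors. Since $\undim U\in D_\Lambda(\beta)$ (by Remark \ref{rem 4.1.1: consider objects T to be in Pres}(3)), Lemma \ref{lem: D(b) goes into D(b)}(a) places $\pi\theta(\undim U)$ in the one-dimensional cone $D_H(\pi(\beta))$, which contains a unique positive root $\overline\alpha$ of $H$ and $\overline\alpha$ has coprime coordinates, so $\pi\theta(\undim U)=m\overline\alpha$ for some $m\in\ZZ$. The paper then pins down $m=\pm1$ by pairing with $\gamma'$: Theorem \ref{thm 4.1.5: clusters and SIs}(e) together with $\cS2$ of Proposition \ref{prop: next function} gives $\brk{\undim U,\gamma'}=f_{\gamma'}$, the isometry then gives $\brk{\pi\theta(\undim U),\pi(\gamma')}_H=f_{\gamma'}$, and since $\brk{\overline\alpha,\pi(\gamma')}_H$ is also an integer multiple of $f_{\gamma'}$ one concludes $m=\pm1$. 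Note that this argument for (a) already uses the \emph{successor} pair $(\gamma',U')$ supplied by Proposition \ref{prop: next function} — an ingredient entirely missing from your sketch. With $\undim\eta(U)\in D_H(\pi(\beta))$ established, (b) is immediate and (c) proceeds as you describe.
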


\begin{proof}
(a) Let $|\gamma|=\beta\in\Psi_+$. Since $\undim U\in D_\Lambda(\beta)$, we have, by Lemma \ref{lem: D(b) goes into D(b)}, that $\pi\theta(\undim U)\in D_H(\pi(\beta))$. Let $M_{\overline\alpha}$ be the object which comes right after $M_{\pi(\beta)}$ in the Auslander-Reiten quiver of $H$ except in the case when $M_{\pi(\beta)}$ is the simple injective object in which case we let $M_{\overline\alpha}$ be the simple projective object. Then $\overline\alpha$ is the unique positive root of $H$ in $D_H(\pi(\beta))$. Since $\overline\alpha,\pi\theta(\undim U)\in\ZZ^2$ are collinear and the coordinates of $\overline\alpha$ are relatively prime, $\pi\theta(\undim U)$ must be an integer multiple of $\overline\alpha$. But $\rho(\gamma,U)=(\gamma',U')$ implies that $\brk{\undim U,\gamma'}>0$. And $\brk{\undim U,\gamma'}=\pm f_{\gamma'}$ by Theorem \ref{thm 4.1.5: clusters and SIs}(e). So, by Lemma \ref{lem 4.6.3}(c) this implies
\[
	\brk{\pi\theta(\undim U),\pi(\gamma')}_H=\brk{\undim U,\gamma'}=f_{\gamma'}.
\]
But $\brk{\overline\alpha,\pi(\gamma')}_H$ is also an integer multiple of $f_{\gamma'}$. So, $\pi\theta(\undim U)=\pm\overline\alpha$. If $\pi(\undim U)=-\overline\alpha\in D_H(\pi(\beta))$ then $M_{\overline\alpha}$ must be projective in $mod\text-H$ and $\pi\theta(\undim U)=\undim M_{\overline\alpha}[1]$. Otherwise, $\pi\theta(\undim U)=\undim M_{\overline\alpha}$. So, either $\eta(U)=M_{\overline\alpha}[1]$ or $\eta(U)=M_{\overline\alpha}$ is the unique object in $Pres(H)$ with $\undim \eta(U)=\pi\theta(\undim U)\in D_H(\pi(\beta))$.

(b) Since $\undim \eta(U)=\pi\theta(\undim U)\in D_H(\pi(\beta))$, $(\pi(\beta),\eta(U))\in\cS_H(0)$ which implies $(\pi(\gamma),\eta(U))\in\cS_H(0)$ since $\gamma=\pm \beta$ and, therefore, $\pi(\gamma)=\pm \pi(\beta)$.

(c) To show that $\rho_H(\pi(\gamma),\eta(U))=(\pi(\gamma'),\eta(U'))$ we will verify $\cS2,\cS3,\cS1'$ from Proposition \ref{prop: next function} and Lemma \ref{lem: S1'}.

$\cS2$. $\brk{\undim \eta(U),\pi(\gamma')}_H=\brk{\pi\theta(\undim U),\pi(\gamma')}_H=\brk{\undim U,\gamma'}>0$.

$\cS3$. $\brk{\undim \eta(U'),\pi(\gamma)}_H=\brk{\pi\theta(\undim U'),\pi(\gamma)}_H=\brk{\undim U',\gamma}<0$.

$\cS1'$. Suppose not. Then there exist $a,b>0$ and $\alpha\in \Psi_+$ so that $a\undim \eta(U)+b\undim\eta(U')\in D_H(\pi(\alpha))$. By Lemma \ref{lem: D(b) goes into D(b)}, this implies, for sufficiently small $\delta>0$, that
\[
	\undim T_0+\delta (a\undim U+b\undim U')\in D_\Lambda(\alpha).
\]
By Corollary \ref{cor 2.4.5} this is not possible since $T_0\oplus U\oplus U'$ is a cluster tilting object.
\end{proof}

\begin{rem}
The mapping $\eta$ is not a functor. However, it has very nice properties. The mapping $\eta$ gives a bijection between the set of all objects $U$ which are $\Ext$-orthogonal to $T_0$ and the set of all exceptional objects in $Pres(H)$. The fact that $\eta$ is surjection onto this set follows from Proposition \ref{prop: pairs to pairs}(c) and the fact that $Pres(H)$ has only two orbits of the action of $\rho$. To show that $\eta$ is 1-1, suppose $\eta(U)=\eta(U')$. Then $\undim \eta(U)=\undim\eta(U')$ lie in the same $D_H(\pi(\beta))$. This implies $\undim U,\undim U'$ lie in $D(\beta)$ which implies $\beta$ is simple and $\undim \eta(U)=-\undim\eta(U')$, a contradiction. For more details, see \cite{IT13}
\end{rem}

\begin{proof}[Proof of Proposition \ref{prop: Gamma matrix transforms correctly} in general] 
Suppose that $(\gamma,U),(\gamma',U'),(\gamma'',U'')$ are consecutive pairs in $\cS_\Lambda(T_0)$. Then, by Proposition \ref{prop: pairs to pairs}, $\pi(\gamma),\pi(\gamma'),\pi(\gamma'')$ are consecutive roots for the rank 2 hereditary algebra $H$. Therefore, by the calculation in the last subsection, the formula in Lemma \ref{lem: reformulation in terms of consecutive roots} holds for $\pi(\gamma),\pi(\gamma'),\pi(\gamma'')$.

But $\pi$ is an isometry. So, the formula also holds for $\gamma,\gamma',\gamma''$. By Lemma \ref{lem: reformulation in terms of consecutive roots} this implies Proposition \ref{prop: Gamma matrix transforms correctly}. 
\end{proof}

We can now prove the $c$-vector theorem.

\begin{proof}[Proof of Theorem \ref{c-vector thm}]
The statement is that the $c$-vectors of a cluster tilting object $T$ are $-\gamma_i$. This holds for the initial cluster tilting object $\Lambda[1]$ by definition. It is well-known that cluster mutation acts transitively on the set of {cluster tilting objects}. (See \cite{Hubery}.) Therefore, it suffices to show that the equation $c_i=-\gamma_i$ remains true under mutation. But this is what was shown in Proposition \ref{prop: Gamma matrix transforms correctly} with the aid of Theorem \ref{thm 4.2.3: NZ on candidate c-vectors}.
\end{proof}

\subsection{Example} Figure \ref{Fig C3} illustrates several concepts discussed in the paper. Take the modulated quiver
\[
	F_1=\CC\xlarrow{M_{21}=\CC}F_2= \RR\xlarrow{M_{32}=\RR}F_3= \RR
\] 
The tensor algebra of this quiver is of finite type with 9 indecomposable objects:
\[\xymatrixrowsep{12pt}\xymatrixcolsep{12pt}
\xymatrix{%begin xy matrix
&&P_3\ar[dr]\ar@{--}[rr] &&S_2\ar[dr]\ar@{--}[rr] &&S_3\\
&P_2\ar[ur]\ar[dr]\ar@{--}[rr] &&X\ar[dr]\ar[ur]\ar@{--}[rr] && Z_1\ar[ur]\\
P_1\ar[ur]\ar@{--}[rr] &&Y\ar[ur]\ar@{--}[rr] && Z_2\ar[ur]
	}%end xy matrix
\]
Consider $L$, the intersection with the unit sphere $S^2\subseteq \RR^3$ with the union $\bigcup D(\beta)$ of all nine semi-invariant domains. Figure \ref{Fig C3} shows the stereographic projection of $L$ onto the plane.

\begin{figure}[htbp]  %{Fig C3}
\begin{center}
\begin{tikzpicture}[scale=.9]
\begin{scope}[xscale=-1]%[scale=.75]
\begin{scope}[xscale=-1]
	\draw[fill] (-.75,-1.3) circle [radius=2pt];
	\draw[fill] (-2.22,1.72) circle [radius=2pt];
	\draw[fill] (.65,2.05) circle [radius=2pt];
	\draw[fill] (-.75,3.9) circle [radius=2pt];
\draw (2.7,2.2) node{$e_1$};
\draw (2.1,1.6) node{$e_2$};
\draw (1.55,1) node{$e_3$};
\draw (-2.4,-3.5) node{\tiny$D[100]$} % D(e3)
(3.5,3.4) node{\tiny$D[001]$} % D(e1)
(-5,3.4) node{\tiny$D[010]$}; % D(e2)
		\draw[thick] (-2.25,1.3) circle [radius=3cm]; % circle 1
		\draw[thick] (.75,1.3) circle [radius=3cm]; % circle 2
		\draw[very thick] (-.75,-1.3) circle [radius=2.3cm]; % circle 3
	\begin{scope}
		\clip (-.75,-3) rectangle (4.25,4.3);
%		\draw[thick, color=blue] (-.75,-3) rectangle (4.25,4.3);
		\draw[thick] (-.75,1.3) ellipse [x radius=3.4cm,y radius=2.6cm]; % 21 semicircle
	\end{scope}
	\begin{scope} % semicircles inside 1 and outside 3
	\clip (-2.25,1.3) circle [radius=3cm];
		\draw[very thick] (-1.74,.4) circle [radius=2.38cm]; % circle 3
		\draw[thick] (-1.35,-.25) circle [radius=2.15cm]; % circle 3
	\end{scope}
\begin{scope}
	\draw[very thick] (-1.85,2.78) .. controls (.5,4) and (3,2) .. (1.47,-0.68);
	\draw[thick] (-2.22,1.73) .. controls (0,3.3) and (1.8,2) .. (1.47,-0.68);
\end{scope}
	\draw[thick] (-1.85,2.78) .. controls (-1,3) and (0,2.8) .. (.65,2.05);
\draw (1.8,-.8) node{$S_3$};
\draw (-2.05,3.1) node{$S_2$};
\draw (-.7,-1.7) node{$S_1$};
\draw (-.7,-1.8) node[below]{$=P_1$};
\end{scope}
\draw (-.25,.4) node{$P_3$};
\draw (1.8,.4) node{$P_2$};
\draw (-.3,3.3) node{$Z_1$};
\draw (-.9,2.3) node{$Z_2$};
\draw (2.5,1.9) node{$Y$};
\draw (0.5,2.1) node{$X$};
\draw (0.7,4.3) node{$P_1[1]$};
\draw (3,-1.8) node[left]{$P_3[1]$};
\draw (-2,-1.8) node{$P_2[1]$};
\draw (3.7,2.5) node{\tiny$D[120]$};
\begin{scope}[xshift=-9.9cm]
\draw (5,-2.5) node{$Q=1\xleftarrow{(2,1)} 2\leftarrow 3$}
(5.1,-3.3) node{$=\mathbb C \leftarrow \mathbb R\leftarrow \mathbb R$};
\end{scope}
\end{scope}
\end{tikzpicture}
\caption{The three circles are domains of semi-invariants with simple {det-weights}. Other {det-weight}s are dimension vectors of other representations. For example, edges $e_1,e_2,e_3$ are domains of $(0,1,1), (1,2,2), (1,1,1)$. The four dark vertices $S_1,Z_2,Y,P_1[1]$ indicate the objects with endomorphism ring $\CC$. The semi-invariant domains $D(1,0,0), D(1,2,0)$ and $e_2=D(1,2,2)$ which correspond to $S_1,Y,Z_2$ by Proposition \ref{prop: vertices and opposite walls} are also darkened.}
\label{Fig C3}
\end{center}
\end{figure}
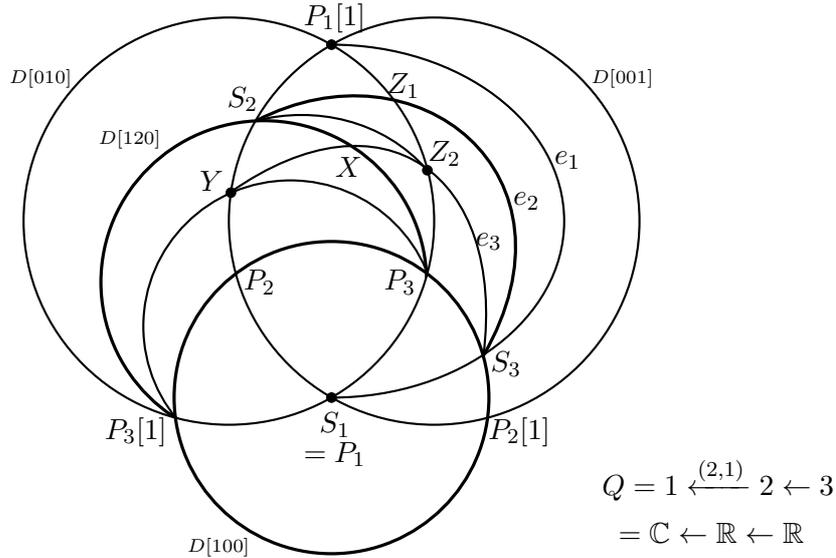
In reading Figure \ref{Fig C3} the following easy observation is helpful.

\begin{prop}\label{prop: vertices and opposite walls}
Let $T=\bigoplus T_i$ be a cluster tilting object with associated matrix $\Gamma_T=(\gamma_i)$. Suppose that $\gamma_k=\beta_k$ is positive and all other columns of $\Gamma_T$ are negative. Then $\undim T_k=\beta_k$. In other words, when the following triangle appears in a picture, $T_2=M_{\beta_2}$.

\begin{center}
\begin{tikzpicture}
\draw[thick] (-2,1) .. controls (-1,1) and (0,.5) .. (0,0)
.. controls (0,.5) and (1,1)..(2,1) .. controls (1,2) and (-1,2) .. (-2,1);
\draw (-2,1)node[left]{$T_1$}
(0,0)node[below]{$T_2$}
(2,1)node[right]{$T_3$};
\draw (1.1,0.4) node{$D(\beta_1))$}
%(0,2.1) node{$D(\beta_2)$}
(0,1.4) node{$D(\beta_2)$}
(-1.1,0.4) node{$D(\beta_3)$};
\end{tikzpicture}
\end{center}
\end{prop}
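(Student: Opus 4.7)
The plan is to apply the Virtual Generic Decomposition Theorem \ref{thm 2.3.11: virtual generic decomposition theorem} to the dimension vector $\alpha=\beta_k$, expanded in the $\ZZ$-basis $\{\undim T_i\}_{i=1}^n$ provided by the cluster tilting object $T$. One writes $\beta_k=\sum_{i=1}^n c_i\undim T_i$ with $c_i\in\ZZ$; uniqueness of this decomposition follows from Theorem \ref{thm 4.1.5: clusters and SIs}, which together with the equation $V^tE\Gamma_T=D$ implies that $\{\undim T_i\}$ is a $\ZZ$-basis of $\ZZ^n$. The theorem will then identify the general element of $\Vrep(\Lambda,\beta_k)$ with the direct sum $\bigoplus_i T_i^{c_i}$ of the canonical rigid virtual representatives of the dimension vectors $\undim T_i$.

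The first key step is to pin down $c_k$. Pairing the identity $\beta_k=\sum_i c_i\undim T_i$ with $\beta_k$ on the right using Theorem \ref{thm 4.1.5: clusters and SIs}(e) and the hypothesis $\varepsilon_k=+1$ gives
\[
f_k=\brk{\beta_k,\beta_k}=\sum_i c_i\brk{\undim T_i,\beta_k}=c_k\varepsilon_k f_k=c_kf_k,
\]
so $c_k=1$. The second step invokes Corollary \ref{thm: Mbeta is unique}, which identifies the general element of $\Vrep(\Lambda,\beta_k)$ with the exceptional module $M_{\beta_k}$, an indecomposable object of $\Vrep(\Lambda)$. Comparing the two descriptions of the generic element yields $M_{\beta_k}\cong\bigoplus_i T_i^{c_i}$.

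Since $M_{\beta_k}$ is indecomposable and $c_k=1$ already contributes the summand $T_k$, the Krull--Schmidt property of $Pres(\Lambda)\simeq\cC_\Lambda$ forces every other summand to be absent: $c_i=0$ for all $i\ne k$. This gives $M_{\beta_k}\cong T_k$ and hence $\undim T_k=\beta_k$, as claimed. The main subtlety to be addressed is the handling of the case when some $c_i$ are negative integers, in which case the Virtual Generic Decomposition Theorem interprets $T_i^{c_i}$ as a shifted presentation in $Pres(\Lambda)$; nevertheless, any nontrivial $c_i$ for $i\ne k$ contributes an additional indecomposable summand to the generic decomposition, contradicting the indecomposability of $M_{\beta_k}$ in either interpretation.
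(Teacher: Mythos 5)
The proposal has a genuine gap at its central step: the Virtual Generic Decomposition Theorem \ref{thm 2.3.11: virtual generic decomposition theorem} identifies the generic element of $\Vrep(\Lambda,\alpha)$ with $\bigoplus_i P(\gamma^i_\ast)^{r_i}$ only when $\alpha$ lies in the nonnegative cone spanned by the cluster tilting set, i.e., when all $r_i\ge 0$; its proof produces a specific rigid presentation $\bigoplus P(\gamma^i_\ast)^{n_i}$ via Theorem \ref{Virtual canonical decomposition theorem}, which makes no sense with negative exponents. In your expansion $\beta_k=\sum_i c_i\undim T_i$ the coefficients $c_j$ with $j\ne k$ are exactly what you must show are zero, and a priori they could be negative. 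Your closing sentence proposes to read $T_i^{c_i}$ with $c_i<0$ as a ``shifted presentation,'' but no such interpretation exists in $Pres(\Lambda)$ (shifting an arbitrary module leaves the category), and the theorem says nothing about the generic decomposition of a dimension vector lying outside the cone $C(T)$. So the claimed contradiction with indecomposability of $M_{\beta_k}$ is not established in the negative-coefficient case, and the argument becomes circular: once you know $c_k=1$, knowing $c_j\ge 0$ for $j\ne k$ is essentially equivalent to the conclusion.

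To see this equivalence, pair your expansion with $\gamma_j$ for $j\ne k$: by Theorem \ref{thm 4.1.5: clusters and SIs}(e), $\brk{\undim T_i,\gamma_j}=\varepsilon_j\brk{\undim T_i,\beta_j}=\delta_{ij}f_j$, so $\brk{\beta_k,\gamma_j}=c_jf_j$, and since $\gamma_j=-\beta_j$ this gives $c_jf_j=-\brk{\beta_k,\beta_j}$; thus $c_j=0$ if and only if $\brk{\beta_k,\beta_j}=0$. This orthogonality $\brk{\gamma_k,\gamma_j}=0$ for all $j\ne k$ is precisely the content of the paper's proof, obtained by showing $M_{\beta_j}$ and $M_{\beta_k}$ are consecutive in the Auslander--Reiten quiver of the rank~2 wide subcategory $|T/(T_j\oplus T_k)|^\perp$, from which the backward Euler pairing vanishes. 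Together with the invertibility of $\Gamma_T$ (Corollary \ref{cor: HT equation}) this forces $\undim T_k$ and $\gamma_k$ to be proportional, and your computation $c_k=1$ then supplies the final normalization $\undim T_k=\beta_k$. Some version of that rank~2 orthogonality step, or another argument establishing $c_j\ge 0$, is needed and is missing from your proposal.
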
 

\begin{proof}
As we say in the proof of the $c$-vector theorem, rank 2 case, for any $j\neq k$, the modules $M_{\gamma_j}$ and $M_{\gamma_k}$ are consecutive objects in the Auslander-Reiten quiver of the rank 2 perpendicular category $|T_0|^\perp$ where $T_0=\bigoplus_{i\neq j,k}T_i$. Therefore, $\brk{\gamma_k,\gamma_j}=0$ for all $j\neq k$. We also have $\brk{\undim T_k,\gamma_j}=0$ for all $j\neq k$. Since $\Gamma_T$ is an invertible matrix (by Corollary \ref{cor: HT equation}), this implies that $\undim T_k$ is a scalar multiple of $\gamma_k$. So, $\undim T_k=\beta_k$.
\end{proof}

\begin{eg} Examples of Proposition \ref{prop: vertices and opposite walls} in Figure \ref{Fig C3}.
\begin{enumerate}
\item $\undim Z_1=(0,1,1)$ and $e_1=D(0,1,1)$
\item $\undim Z_2=(1,2,2)$ and $e_2=D(1,2,2)$ which extends from $S_3$ to $S_2$
\item $\undim P_3=(1,1,1)$ and $e_3=D(1,1,1)$ which extends from $S_3$ throught $Z_2,X$ to $Y$.
\item $\undim X=(1,2,1)$ and $D(1,2,1)$ is the edge connecting $Z_2$ and $S_2$.
\end{enumerate}
\end{eg}

Figure \ref{Fig C3} also illustrates the following concepts used in the paper. For $\beta=(1,1,1)$, the simple objects of the category $^\perp\! M_\beta$ are $S_3$ and $Y$ with dimension vectors $\alpha_1=(0,0,1)$ and $\alpha_2=(0,2,1)$. These form the corners (endpoints in this dimension) of the convex region $D(\beta)$. The other roots in this region are positive integer linear combinations: $\undim Z_2=2\alpha_1+\alpha_2$ and $\undim X=\alpha_1+\alpha_2$.

\subsection{Applications}

In concurrently written papers we use the results of this paper to:
\begin{enumerate}
\item Develop the theory of signed exceptional sequences and show they are in bijection with ordered cluster tilting objects \cite{IT13}. We have seen a special case: $\cS_\Lambda(T_0)$ is the set of all signed exceptional sequences for $|T_0|^\perp$.
\item Develop the theory of semi-invariant picture groups and compute their cohomology in type $A_n$ \cite{IOTW4}.
\item Show that, for acyclic modulated quivers of finite type, the maximal green sequences are in bijection with the positive expressions for the Coxeter element in the picture group \cite{IT14}.
\item For any acyclic modulated quiver with a bimodule $M_{ij}:i\to j$ of infinite type, show that any maximal green sequence mutates at $j$ before $i$ \cite{BHIT}.
\end{enumerate}

Finally, we point out that Theorem \ref{c-vector thm} implies the sign coherence of $c$-vectors (that in each $c$-vector the coordinates have the same sign) a theorem which has been proven many times and in fact the present version of this paper grew out of a desire to understand the proof given by Speyer-Thomas \cite{ST}. Proposition \ref{Weights are nonnegative} gives the conceptual proof of this fact. Namely, semi-invariants defined on presentation spaces are necessarily sign coherent.

In future work, we plan to extend the results of this paper to modulated quivers with oriented cycles.

\section{Appendix A: Associated modulated quiver}\label{ss: Appendix}

In this appendix we discuss the problem of when a finite dimensional hereditary algebra over a field $K$ is Morita equivalent to the tensor algebra of its associated modulated quiver.

\begin{thm}\label{thm: when hereditary algebras are modulated}
$\Lambda$ is Morita equivalent to $T(Q,\cM)$ if and only if, for each arrow $i\to j$, the $F_i$-$F_j$-bimodule epimorphism
\begin{equation}\label{bimodule epimorphism}
	\Hom_\Lambda(P_j,rP_i)\onto M_{ij}=\Hom_\Lambda(P_j,rP_i/r^2P_i)
\end{equation}
has a section. (Recall that $F_i=\End_\Lambda(P_i)$.)
\end{thm}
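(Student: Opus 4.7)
The plan is to study both implications through the hereditary structure of $\Lambda$, in particular the fact that the submodule $rP_i$ of each indecomposable projective $P_i$ is itself projective. From the defining equality $M_{ij}=\Hom_\Lambda(P_j,rP_i/r^2P_i)$, projectivity of $rP_i$ forces a (non-canonical) decomposition $rP_i\cong\bigoplus_j P_j^{d_{ij}}$ of right $\Lambda$-modules, where $d_{ij}=\dim_{F_j}M_{ij}$. Iterating this gives the recursion
\[
\dim_\kk P_i=f_i+\sum_j d_{ij}\dim_\kk P_j,
\]
which depends only on the valued quiver and is solved uniquely because $Q$ is acyclic. The same recursion holds for $T(Q,\mathcal M)$, so $\dim_\kk\Lambda=\dim_\kk T(Q,\mathcal M)$. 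This dimension identity, obtained without reference to any section, will be crucial in the backward implication.

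For the forward direction, since both algebras are basic, Morita equivalence lets us identify $\Lambda=T(Q,\mathcal M)$ and $P_i=e_iT(Q,\mathcal M)$. The canonical path-length grading on $T(Q,\mathcal M)$ restricts to a grading on $P_i$ whose degree-one piece is precisely $\bigoplus_j M_{ij}$, sitting inside $rP_i$ as an $F_i$-$F_j$-sub-bimodule that projects isomorphically onto $rP_i/r^2P_i$. Applying $\Hom_\Lambda(P_j,-)$ to this splitting produces the required bimodule section of \eqref{bimodule epimorphism}.

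For the backward direction, suppose bimodule sections $s_{ij}:M_{ij}\to\Hom_\Lambda(P_j,rP_i)$ are given. Composing with the inclusion $rP_i\hookrightarrow P_i$ and invoking the universal property of the tensor algebra, I would construct a $\kk$-algebra homomorphism
\[
\phi:T(Q,\mathcal M)\longrightarrow\End_\Lambda\bigl(\textstyle\bigoplus_i P_i\bigr)\cong\Lambda
\]
sending $\prod F_i$ to itself canonically and each $m\in M_{ij}$ to $s_{ij}(m)$, with tensor products of arrows mapping to compositions of homomorphisms. Surjectivity of $\phi$ is proved by induction on the depth of vertices in $Q$: for a sink $i$ one has $\Hom_\Lambda(P_i,P_i)=F_i$ and $\Hom_\Lambda(P_k,P_i)=0$ for $k\ne i$, both visibly in the image of $\phi$; the inductive step uses the isomorphism $\Hom_\Lambda(P_k,rP_i)\cong\bigoplus_j\Hom_\Lambda(P_k,P_j)^{d_{ij}}$ realized through the sections, with each summand lying in the image of $\phi$ by the inductive hypothesis. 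The dimension equality from the first paragraph then forces $\phi$ to be an isomorphism.

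The main obstacle is verifying that the $F_i$-$F_j$-bimodule property of the sections $s_{ij}$ is exactly what is needed to make $\phi$ well-defined as an algebra map. The tensor product in $T(Q,\mathcal M)$ imposes the relations $(mf)\otimes m'=m\otimes(fm')$ for $f\in F_j$, and $\phi$ respects these precisely because $s_{ij}$ is $F_i$-$F_j$-linear on both sides; a merely $\kk$-linear section would in general fail. A secondary delicacy is confirming that the sections assemble coherently so that the induced map $\bigoplus_j P_j\otimes_{F_j}M_{ij}\to rP_i$ is an isomorphism of $\Lambda$-modules, rather than just a surjection onto the top; this follows once one knows it is surjective onto the top (by construction of $M_{ij}$) and that both sides are projective of the same $\kk$-dimension, which is precisely the recursion established in the first paragraph.
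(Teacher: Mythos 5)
Your argument is correct but takes a genuinely different route from the paper in the harder direction. The paper builds a \emph{functor} $\varphi:mod\text-\Lambda\to Rep(Q,\cM)$ sending $X$ to $(\Hom_\Lambda(P_i,X))_i$ with structure maps induced by the sections $\sigma_{ij}$, checks it is exact and faithful, proves by induction on length that $\varphi P_i$ is the indecomposable projective $P_i^T$, and concludes that $\varphi$ restricts to an equivalence on projectives and hence extends to an equivalence of module categories. You instead build a \emph{ring homomorphism} $\phi:T(Q,\cM)\to\End_\Lambda(\bigoplus P_i)\cong\Lambda$ directly from the universal property of the tensor algebra (correctly noting that bilinearity of the $s_{ij}$ is exactly what makes $\phi$ well-defined), prove surjectivity by induction on depth using the isomorphism $\bigoplus_j M_{ij}\otimes_{F_j}P_j\cong rP_i$, and finish with a dimension count. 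The two proofs are morally dual --- the paper's $\varphi$ is restriction of scalars along your $\phi$ --- but they are distinct arguments, and yours is arguably more elementary in that it reduces the final step to equality of $K$-dimensions, a recursion that depends only on the valued quiver. Your write-up also makes explicit why $\bigoplus_j M_{ij}\otimes_{F_j}P_j\to rP_i$ is an isomorphism (surjective onto the top by Nakayama, then projective of equal dimension), a point the paper asserts without proof. One small imprecision in your forward direction: the path-length decomposition of $rP_i$ is a splitting of $\prod F_k$-bimodules, not of $\Lambda$-modules, so ``applying $\Hom_\Lambda(P_j,-)$ to this splitting'' is loosely worded; the precise statement is that the grading on $rP_i$ induces a grading on $\Hom_\Lambda(P_j,rP_i)=(rP_i)e_j$ whose degree-one piece is $M_{ij}$, and that inclusion is the required $F_i$-$F_j$-bimodule section.
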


\begin{proof}
This condition is necessary since it holds on the category of representations of $T(Q,\cM)$. Conversely, suppose the condition holds on $mod\text-\Lambda$. Choose a section $\sigma_{ij}:M_{ij}\to \Hom_\Lambda(P_j,rP_i)$ of \eqref{bimodule epimorphism} for every $i\to j$ in $Q_1$. For every $\Lambda$-module $X$, let $X_i=\Hom_\Lambda(P_i,X)$. This is a right $F_i$-module. For each arrow $i\to j$ in $Q_1$, define the morphism $X_i\otimes_{F_i}M_{ij}\to X_j$ to be the composition:
\[
	X_i\otimes_{F_i}M_{ij}\xrarrow{r\otimes \sigma_{ij}}\Hom_\Lambda(rP_i,rX)\otimes_{F_i} \Hom_\Lambda(P_j,rP_i)\xrarrow c\Hom_\Lambda(P_j,rX)\into \Hom_\Lambda(P_j,X)=X_j
\]
where $r:\Hom_\Lambda(P_i,X)\to \Hom_\Lambda(rP_i,rX)$ is the restriction map and $c$ is composition. Since each morphism in this sequence is natural in $X$, this defines a functor
\[
	\varphi:mod\text-\Lambda\to Rep(Q,\cM)
\]
which is clearly exact and faithful since it takes nonzero objects to nonzero objects.

We claim that $\varphi P_i$ is the projective cover $P_i^T$ of $S_i$ in $Rep(Q,\cM)$. This follows by induction on the length of $P_i$ and the fact that the structure maps $c(r\otimes \sigma_{ij}):M_{ij}\to \Hom_\Lambda(P_j,rP_i)$ of $\varphi P_i$ are, together, adjoint to the isomorphism $\bigoplus_j M_{ij}\otimes_{F_j}P_j\cong rP_i$.

Thus, $\Hom_\Lambda(P_i,X)=X_i=\Hom_{T(Q,\cM)}(P_i^T,\varphi X)$ and it follows that $\varphi$ is an equivalence between the full subcategories of projective objects of $mod\text-\Lambda$ and $Rep(Q,\cM)$. Being exact,  $\varphi$ extends to an equivalence of the module categories.
\end{proof}

\begin{eg}
Let $L=\FF_2(t)$ with subfields $K=\FF_2(t^4)\subset F=\FF_2(t^2)\subset L$. We have a short exact sequence of $L$-bimodules:
\begin{equation}\label{eq: nonsplit sequence of L-bimodules}
	0\to L\otimes_FL\xrarrow jL\otimes_KL\xrarrow pL\otimes_FL\to 0
\end{equation}
where $j$ sends $1\otimes 1$ to $t^2\otimes 1+1\otimes t^2$ and $p$ takes $1\otimes 1$ to $1\otimes 1$. This sequence does not split since $L\otimes_KL$ is indecomposable as an $L$-bimodule. This follows from the $L$-algebra isomorphism $\varphi: L[X]/(X^4)\to L\otimes_\kk L$ given by $\varphi(X)=t\otimes 1+1\otimes t$ where we consider $L\otimes_\kk L$ as an $L$-algebra using $L\otimes 1$.

Let $\Lambda$ be the tensor algebra of the modulated quiver 
\[
\xymatrixrowsep{10pt}\xymatrixcolsep{20pt}
\xymatrix{%begin xy matrix
&F_2 \ar[dr]^{M_{23}}&&& & F\ar[dr]^L\\
F_1\ar[ur]^{M_{12}}\ar[rr]^{\widetilde M_{13}}&&F_3&=& L\ar[ur]^L\ar[rr]^{L\otimes_KL}&&L
	}%end xy matrix
\]
modulo the relation that the composition $L\otimes_FL$ of the top two arrows is identified with the image of $j$ in $L\otimes_KL$. Then $\Lambda$ is hereditary since the radical of each projective module is projective, e.g., $rP_1\cong P_2\oplus P_3^2$. However, the bimodule morphism $\widetilde M_{13}=\Hom_\Lambda(P_3,rP_1)\onto M_{13}$ is not split because it is equal to the map $p$ in \eqref{eq: nonsplit sequence of L-bimodules}. By Theorem \ref{thm: when hereditary algebras are modulated}, $\Lambda$ is not Morita equivalent to the tensor algebra of its associated modulated quiver.
\end{eg}

% Section:

%\newpage
%%%%%%%%%%%%%%%%%%%%%%%%%%
%
%                Section  {Appendix on reduced norm}
%
%%%%%%%%%%%%%%%%%%%%%%%%%%

\section{Appendix B: Reduced norm}\label{ss: Appendix B}

This appendix reviews the definition and properties of the reduced norm \cite{Jacobson} and uses them to compare the determinantal weight with the ``true weight'' of a semi-invariant on presentation spaces {as claimed in Remark \ref{rem: det weight and reduced norm}.} We assume that $K$ is an infinite field.

\subsection{Definitions}

For $A$ a finite dimensional algebra over $K$, the \emph{general element} of $A$ is
\[
	a(\xi)=\sum \xi_iu_i \in A\otimes_K K(\xi)
\]
where $u_1,\cdots,u_n$ is a vector space basis for $A$ over $K$ and $\xi_1,\cdots,\xi_n$ are a transcendence basis for $K(\xi)=K(\xi_1,\cdots,\xi_n)$. Let 
\[
	m_{a(\xi)}(\lambda)=\lambda^m+c_1(\xi)\lambda^{m-1}+\cdots +c_m(\xi)\quad \in K(\xi)[\lambda]
\]
be the minimal polynomial of $a(\xi)$ over $K(\xi)$. The degree $m$ of $m_{a(\xi)}(\lambda)$ is called the \emph{degree} of $A$ over $K$. We call it the \emph{reduced degree} in cases where the word ``degree'' is already defined as in the case of field extensions. 

It is easy to see that the reduced degree of a finite separable extension of $K$ is equal to its vector space dimension over $K$ (the usual notion of degree). However, this is not true in general for inseparable extensions and division algebras.

If $D$ is a finite dimensional division algebra over its center $C$ then $\dim_CD=d^2$ where $d$ is the degree of $D$ over $C$. Furthermore, there is an open dense subset of $D$ consisting of all elements $b\in D$ so that $C(b)$ is a separable field extension of $C$ of degree $d$. Each of these is called a \emph{maximal separable subfield} of $D$.% (See [Jacobson].)

\begin{eg}
Let $A=\HH$ and $\kk=\RR$. The minimal polynomial of the general element $a=t+xi+yj+zk\in \HH$ is $m_a(\lambda)=\lambda^2-2t\lambda+t^2+x^2+y^2+z^2$. So, $\HH$ has degree 2 over $\RR$. For any $b\in \HH$ which is not in $\RR$, $\RR(b)\cong \CC$ is a maximal (separable) subfield of $\HH$.
\end{eg}

\begin{lem}\cite{Jacobson}
$m_{a(\xi)}(\lambda)$ is a polynomial in $\xi_1,\cdots,\xi_n,\lambda$ and $c_j(\xi)\in K[\xi]$ is a homogeneous polynomial of degree $j$ in the variables $\xi_i$.
\end{lem}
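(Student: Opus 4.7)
The plan is to bound the minimal polynomial using the characteristic polynomial of left multiplication on $A\otimes_K K(\xi)$ and then extract homogeneity from a rescaling argument.

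First, I would consider the $K(\xi)$-linear endomorphism $L_{a(\xi)}$ of $A\otimes_K K(\xi)$ given by left multiplication by $a(\xi)$. In the $K(\xi)$-basis $u_1,\ldots,u_n$ of $A\otimes_K K(\xi)$, the matrix of $L_{a(\xi)}$ has entries that are $K$-linear forms in $\xi_1,\ldots,\xi_n$: indeed, if $u_iu_k=\sum_\ell \gamma_{ik}^\ell u_\ell$ with $\gamma_{ik}^\ell\in K$, then the $(\ell,k)$-entry of $L_{a(\xi)}$ is $\sum_i \xi_i\gamma_{ik}^\ell$. Therefore the characteristic polynomial $P(\lambda)=\det(\lambda I-L_{a(\xi)})$ lies in $K[\xi][\lambda]$. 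Moreover, the coefficient of $\lambda^{n-j}$ in $P(\lambda)$ is (up to sign) a sum of $j\times j$ minors of $L_{a(\xi)}$, and since each entry is linear in the $\xi_i$, each such minor is homogeneous of degree $j$ in $\xi_1,\ldots,\xi_n$.

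Next, for polynomiality of the $c_j(\xi)$: by Cayley--Hamilton, $L_{a(\xi)}$ annihilates $P(\lambda)$, so $a(\xi)\cdot P(\lambda)=0$ in $A\otimes K(\xi)$ after substitution, and therefore $m_{a(\xi)}(\lambda)$ divides $P(\lambda)$ in $K(\xi)[\lambda]$. Since $K[\xi]$ is a UFD with fraction field $K(\xi)$ and $m_{a(\xi)}(\lambda)$ is monic, Gauss's lemma gives that $m_{a(\xi)}(\lambda)\in K[\xi][\lambda]$, i.e.\ each $c_j(\xi)\in K[\xi]$.

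For homogeneity, I would use a scaling trick. Given $t$ a new indeterminate (or any nonzero element of a suitable extension), one has $a(t\xi)=ta(\xi)$. On the other hand, if $m_{a(\xi)}(a(\xi))=0$, then $t^m m_{a(\xi)}\bigl((ta(\xi))/t\bigr)=0$, i.e.\ the monic polynomial $g(\lambda):=\lambda^m+tc_1(\xi)\lambda^{m-1}+\cdots+t^m c_m(\xi)$ annihilates $ta(\xi)$. It is minimal because any monic $h(\lambda)$ of smaller degree annihilating $ta(\xi)$ would, by the reverse substitution, yield a monic polynomial of smaller degree than $m_{a(\xi)}$ annihilating $a(\xi)$. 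Thus $m_{ta(\xi)}(\lambda)=g(\lambda)$, and comparing with $m_{a(t\xi)}(\lambda)=\lambda^m+c_1(t\xi)\lambda^{m-1}+\cdots+c_m(t\xi)$ gives $c_j(t\xi)=t^jc_j(\xi)$, so $c_j$ is homogeneous of degree $j$. The only subtle point is ensuring that $m_{ta(\xi)}$ and $m_{a(t\xi)}$ really coincide; this is immediate since $a(t\xi)$ and $ta(\xi)$ are literally the same element of $A\otimes_K K(\xi,t)$, and the minimal polynomial over $K(\xi,t)$ of an element already defined over $K(\xi)$ coincides with its minimal polynomial over $K(\xi)$ (transcendental extensions do not shrink minimal polynomials of algebraic elements).
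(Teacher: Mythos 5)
The paper does not prove this lemma; it cites \cite{Jacobson}, so there is no in-paper argument to compare against. Your strategy — bound $m_{a(\xi)}$ by the characteristic polynomial of the left-regular representation $L_{a(\xi)}$, use Gauss's lemma to land the $c_j$ in $K[\xi]$, then extract homogeneity by a rescaling — is the standard one and is correct. Steps 1 and 2 are clean: $L$ is an injective $K(\xi)$-algebra map, so Cayley--Hamilton gives $P(a(\xi))=0$, hence $m_{a(\xi)}\mid P$ in $K(\xi)[\lambda]$, and Gauss's lemma for the UFD $K[\xi]$ (with both polynomials monic) puts the $c_j$ in $K[\xi]$.

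The last step is right in spirit but is misstated. Having defined $g(\lambda)=\lambda^m+tc_1(\xi)\lambda^{m-1}+\cdots+t^mc_m(\xi)$ and shown (correctly) that it is the minimal polynomial of $ta(\xi)$ over $K(\xi,t)$, you then assert $m_{a(t\xi)}(\lambda)=\lambda^m+c_1(t\xi)\lambda^{m-1}+\cdots+c_m(t\xi)$ and justify the identification with $m_{ta(\xi)}$ by saying ``the minimal polynomial over $K(\xi,t)$ of an element already defined over $K(\xi)$ coincides with its minimal polynomial over $K(\xi)$.'' But $ta(\xi)=a(t\xi)$ is \emph{not} defined over $K(\xi)$, so this principle does not literally apply. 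The correct version: the $K$-algebra embedding $\sigma\colon K(\xi)\hookrightarrow K(\xi,t)$ with $\sigma(\xi_i)=t\xi_i$ is an isomorphism onto the subfield $K(t\xi_1,\ldots,t\xi_n)$, and it carries $a(\xi)$ to $a(t\xi)$ and the minimal polynomial $m_{a(\xi)}$ to $\lambda^m+c_1(t\xi)\lambda^{m-1}+\cdots+c_m(t\xi)$; thus the latter is the minimal polynomial of $a(t\xi)$ \emph{over $K(t\xi_1,\ldots,t\xi_n)$}, and the invariance of minimal polynomials under extension of scalars (applied to $K(t\xi_1,\ldots,t\xi_n)\subseteq K(\xi,t)$) then lifts this to $K(\xi,t)$, yielding $g(\lambda)=\lambda^m+c_1(t\xi)\lambda^{m-1}+\cdots+c_m(t\xi)$ and hence $c_j(t\xi)=t^jc_j(\xi)$. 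Note also that the same invariance-under-extension principle is used a second time, implicitly, in the minimality argument for $g$: you need that $a(\xi)$ still has minimal polynomial of degree $m$ over the larger field $K(\xi,t)$ in order to rule out a smaller annihilator $\widetilde h(\lambda)=h(t\lambda)/t^{\deg h}$. Both uses are fine once stated, but it is worth distinguishing them.
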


The \emph{reduced characteristic polynomial} of $b\in A$ is the specialization of $m_{a(\xi)}(\lambda)$ given by
\[
	m_b(\lambda)=\sum_{i=0}^m c_i(b_1,\cdots,b_n)\lambda^{m-i}\in K[\lambda]
\]
where $b=\sum b_i u_i$, $b_i\in K$ and $c_0=1$. We will use the notation $c_i(b)=c_i(b_1,\cdots,b_n)$.

\begin{prop}\cite{Jacobson}
\begin{enumerate}
\item[(0)] $m_b(\lambda)$ depends only on $b\in A$. (The coefficients $c_i(b)$ are independent of the choice of basis $u_1,\cdots,u_n$.)
\item $m_b(b)=0$. Equivalently, the minimal polynomial $\mu_b(\lambda)$ of $b$ is a factor of $m_b(\lambda)$.
\item Every root of $m_b(\lambda)$ is a root of $\mu_b(\lambda)$.
\item The set of all $b\in A$ for which $m_b(\lambda)$ is the minimal polynomial of $b$ is an open dense subset of $A$.
\item $m_b(\lambda)$ is invariant under extension of scalars, i.e., $m_b(\lambda)=m_{b\otimes1}(\lambda)$ if $b\otimes 1\in A\otimes_KL$ is the image of $b$ for any extension field $L$ of $K$.
\end{enumerate}
\end{prop}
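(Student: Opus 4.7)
The plan is to work throughout from the specialization definition: $m_b(\lambda)$ is the image of $m_{a(\xi)}(\lambda)\in K[\xi][\lambda]$ under $\xi_i\mapsto b_i$. I will proceed in the order $(0),(1),(4),(3),(2)$, as it is convenient to reduce $(2)$ to the algebraically closed case using $(4)$. For $(0)$, a change of basis $v_j=\sum_i s_{ij}u_i$ induces an invertible linear change of transcendentals $\xi_i=\sum_j s_{ij}\eta_j$ giving a $K$-algebra isomorphism $K(\xi)\cong K(\eta)$ under which $\sum_i\xi_i u_i=\sum_j\eta_j v_j$; the minimal polynomial of this single element is intrinsic, and the specialization $\xi\mapsto(b_i)$ matches $\eta\mapsto(b'_j)$ where $b=\sum b_i u_i=\sum b'_j v_j$, so the resulting $m_b(\lambda)$ agrees. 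For $(1)$, expand the definitional identity $m_{a(\xi)}(a(\xi))=0$, which actually lives in $A\otimes_K K[\xi]$ since the $c_j(\xi)$ are polynomial; in the basis $u_i$ this becomes $n$ polynomial identities in $K[\xi]$, and the specialization $\xi\mapsto b$ yields $m_b(b)=0$, equivalently $\mu_b\mid m_b$.

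For $(4)$, the basis $u_i$ of $A$ is also a basis of $A\otimes_K L$ over $L$, so the general element has the same form $\sum\xi_i u_i$. The key point is that $1,a(\xi),\ldots,a(\xi)^{m-1}$, being $K(\xi)$-linearly independent in $A\otimes_K K(\xi)$, remain $L(\xi)$-linearly independent in $A\otimes_K L(\xi)\cong(A\otimes_K K(\xi))\otimes_{K(\xi)}L(\xi)$ by flatness of field extensions; this gives $\deg m_{a_L(\xi)}\geq m$, while the reverse inequality holds because $m_{a(\xi)}$ still annihilates the element after extension, so the two reduced minimal polynomials coincide, and this equality is preserved by specialization. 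For $(3)$, by $(1)$, $m_b=\mu_b$ iff $\deg\mu_b=m$ iff $1,b,\ldots,b^{m-1}$ are $K$-linearly independent in $A$, which is the non-vanishing of some $m\times m$ minor of a matrix whose entries are polynomials in the coordinates $b_1,\ldots,b_n$; this polynomial is not identically zero because the condition holds for the general element $a(\xi)$ by the very definition of $m$, so it cuts out a nonempty Zariski-open subset, which is dense since $K$ is infinite.

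For $(2)$, I first invoke $(4)$ to reduce to $K=\overline{K}$ and then argue that both $m_b(\alpha)=0$ and $\mu_b(\alpha)=0$ are equivalent to the single condition that $b-\alpha$ is not invertible in $A$. The equivalence $\mu_b(\alpha)\neq 0\iff b-\alpha$ invertible is elementary: write $\mu_b(\lambda)=(\lambda-\alpha)q(\lambda)+\mu_b(\alpha)$ and use minimality of $\mu_b$. The equivalence $m_b(\alpha)\neq 0\iff b-\alpha$ invertible has an easy direction, namely rewrite $m_{b-\alpha}(b-\alpha)=0$ as $(b-\alpha)\,h(b-\alpha)=\pm m_b(\alpha)$ to produce an explicit inverse whenever $m_b(\alpha)\neq 0$; the reverse direction rests on multiplicativity of the reduced norm $\mathrm{Nrd}(b)=\pm m_b(0)$, applied to $b-\alpha$ and its inverse. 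This multiplicativity is itself a specialization consequence of the corresponding universal identity for the general element, and is the conceptual heart of reduced-norm theory and therefore the main obstacle in this proposition; once it is granted, all items $(0),(1),(3),(4)$ are formal consequences of viewing $m_b(\lambda)$ as a specialization of the universal polynomial $m_{a(\xi)}(\lambda)$.
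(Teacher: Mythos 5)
The paper offers no proof of this proposition; it is stated as a citation to Jacobson, so there is no argument of the paper's to compare against. What follows is therefore an evaluation of your proposal on its own terms.

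Parts $(0)$, $(1)$, $(3)$ and $(4)$ are correct and essentially the standard arguments: $(0)$ is the change-of-transcendentals observation, $(1)$ uses that the coefficients $c_j(\xi)$ are polynomial so that $m_{a(\xi)}(a(\xi))=0$ specializes, $(4)$ uses flatness of field extensions to preserve the degree of the generic minimal polynomial, and $(3)$ identifies the locus $\{\,m_b=\mu_b\,\}$ with the nonvanishing of an $m\times m$ minor that is nonzero at the generic point, hence dense since $K$ is infinite. (In the reduction for $(2)$ you should also note, as you implicitly use, that $\mu_b$ itself is unchanged by extension of scalars, since the $K$-dimension of the span of the powers of $b$ is preserved by $-\otimes_K L$.)

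Part $(2)$ is where there is a genuine gap. Your reduction to $K=\overline K$ is fine, and the equivalence $\mu_b(\alpha)\neq 0\iff b-\alpha$ invertible is elementary. But the direction $m_b(\alpha)=0\Rightarrow b-\alpha$ not invertible is made to rest on two facts neither of which is established: the shift identity $m_{b-\alpha}(\lambda)=m_b(\lambda+\alpha)$, and multiplicativity of the reduced norm. The second is not a light-weight ``specialization consequence of a universal identity'' as you suggest; the identity $\overline n(xy)=\overline n(x)\overline n(y)$ for the general elements $x=a(\xi)$, $y=a(\eta)$ \emph{is} the theorem, not a trivial consequence of something prior, and in Jacobson it is proved either after, or interleaved with, the very facts you are trying to prove. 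Invoking it here leaves the argument with a hole at least as deep as $(2)$ itself.

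There is a cleaner and more elementary route to $(2)$ that avoids multiplicativity entirely, and I would recommend it. Let $\chi_b(\lambda)=\det(\lambda\,\mathrm{id}-L_b)$ be the characteristic polynomial of left multiplication $L_b\in\End_K(A)$. For the general element one has $m_{a(\xi)}(\lambda)\mid \chi_{a(\xi)}(\lambda)$ in $K(\xi)[\lambda]$, since $m_{a(\xi)}$ is by definition the minimal polynomial of $a(\xi)$, hence of $L_{a(\xi)}$. Because $m_{a(\xi)}$ is monic with coefficients in $K[\xi]$ (the cited Lemma) and $\chi_{a(\xi)}$ also has coefficients in $K[\xi]$, the quotient is a polynomial $q(\xi,\lambda)\in K[\xi][\lambda]$, and the factorization $\chi_{a(\xi)}=m_{a(\xi)}\,q$ specializes to $\chi_b(\lambda)=m_b(\lambda)\,q(b,\lambda)$. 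Thus every root of $m_b$ is a root of $\chi_b$. Since $A$ is unital, $b\mapsto L_b$ is injective, so $\mu_b$ is also the minimal polynomial of the operator $L_b$, and the minimal and characteristic polynomials of a linear operator have the same roots. Hence every root of $m_b$ is a root of $\mu_b$, which is $(2)$, with no appeal to the reduced norm.
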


The following observation follows easily from Properties (2) and (3).

\begin{lem}\label{lem: reduced degree of purely inseparable extension}
The reduced degree of a finite purely inseparable extension $F$ of $K$ is the smallest power $q=p^\mu$ of $p=char\,K$ so that $F^q\subseteq K$. Furthermore the reduced characteristic polynomial is $m_b(\lambda)=\lambda^q-b^q$ for every $b\in F$.
\end{lem}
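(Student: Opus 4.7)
The plan is to compute the minimal polynomial of the general element $b(\xi)=\sum \xi_i u_i \in F\otimes_K K(\xi)=F(\xi)$, where $u_1,\ldots,u_n$ is a $K$-basis of $F$. The reduced degree is then the degree of this minimal polynomial, and the reduced characteristic polynomial of any $b\in F$ is obtained by specializing $\xi_i$ to the coordinates of $b$.

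First I would check that $b(\xi)$ satisfies $\lambda^q-b(\xi)^q$ over $K(\xi)$. Using additivity of the Frobenius in characteristic $p$,
\[
b(\xi)^q=\Big(\sum_i \xi_i u_i\Big)^q=\sum_i \xi_i^q u_i^q,
\]
and each $u_i^q\in F^q\subseteq K$ by the definition of $q$, so $b(\xi)^q\in K(\xi)$.

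The main step is to show that this polynomial is actually minimal, i.e.\ that $b(\xi)^{q/p}\notin K(\xi)$ (the case $\mu=0$, where $F=K$, being trivial). Here I would extend $\{1\}$ to a $K$-basis $\{e_1=1,e_2,\ldots,e_n\}$ of $F$, which remains a $K(\xi)$-basis of $F(\xi)=F\otimes_K K(\xi)$. Writing $u_i^{q/p}=\sum_j c_{ij}e_j$ with $c_{ij}\in K$, the Frobenius gives
\[
b(\xi)^{q/p}=\sum_i \xi_i^{q/p}u_i^{q/p}=\sum_j\Big(\sum_i c_{ij}\xi_i^{q/p}\Big)e_j.
\]
If this lay in $K(\xi)$, then the $K(\xi)$-coefficient of every $e_j$ with $j\neq 1$ would vanish. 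But the monomials $\xi_1^{q/p},\ldots,\xi_n^{q/p}$ are linearly independent over $K$ (indeed algebraically independent, since the $\xi_i$ are), so $c_{ij}=0$ for all $i$ and all $j\neq 1$. This means $u_i^{q/p}\in K$ for every $i$, and since the Frobenius is additive and $K$-semilinear we conclude $F^{q/p}\subseteq K$, contradicting the minimality of $q=p^\mu$.

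Therefore $m_{b(\xi)}(\lambda)=\lambda^q-b(\xi)^q$ is the minimal polynomial of $b(\xi)$ over $K(\xi)$, proving that the reduced degree is $q$. Specializing $\xi_i$ to the coordinates $b_i$ of $b=\sum b_i u_i$ gives $m_b(\lambda)=\lambda^q-\sum b_i^q u_i^q=\lambda^q-b^q$, completing the proof. The one subtle point, and the only real obstacle, is the minimality argument: it requires the basis-change trick above to convert the algebraic independence of the $\xi_i^{q/p}$ over $K$ into the statement that each $u_i^{q/p}\in K$.
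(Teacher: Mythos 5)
Your proof is correct. The paper itself gives no argument here, only the remark that the lemma ``follows easily from Properties (2) and (3)'' of the reduced characteristic polynomial. The route those properties suggest is a little different from yours: since $F/K$ is purely inseparable, $\mu_b(\lambda)=(\lambda-b)^{p^{e(b)}}$ has the single root $b$, so Property (3) forces $m_b(\lambda)=(\lambda-b)^m$ where $m$ is the reduced degree; Property (2) (that $\mu_b\mid m_b$) applied to a $b$ maximizing $e(b)$ gives $m\ge q$, while $a(\xi)^q\in K(\xi)$ gives $m\le q$; and then $(\lambda-b)^q=\lambda^q-b^q$. You instead compute the minimal polynomial of the general element $a(\xi)$ directly, establishing both bounds in one stroke and obtaining the formula for $m_b$ by specialization alone, never invoking Properties (2) and (3). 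Both arguments are valid and of comparable length; yours is slightly more self-contained since it rests only on the definitions, at the cost of the basis-expansion argument you flag to show $a(\xi)^{q/p}\notin K(\xi)$ (which is correct: if $u_i^{q/p}\in K$ for every basis vector $u_i$, then Frobenius semilinearity forces $F^{q/p}\subseteq K$, contradicting minimality of $q$; and the degenerate case $q=1$ is handled separately as you note).
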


\begin{eg}%[Exercise from Lang's Algebra]
Let $A=\FF_p(s,t)$ and $\kk=\FF_p(s^p,t^p)$. Then $a^p\in\kk$ for any $a\in A$ and the minimal polynomial of the general element $a\in A$ is $m_a(\lambda)=\lambda^p-a^p$. So, the reduced degree of $A$ over $\kk$ is $p$ although $A$ is a field extension of $\kk$ of degree $p^2$.
\end{eg}

\begin{defn}\label{def: reduced norm} The \emph{reduced norm} $\overline n:A\to K$ is defined to be the homogeneous polynomial function of degree $m$, the degree of $A$ over $K$, given on any $b\in A$ by
$
	\overline n(b)=(-1)^m c_m(b).
$
\end{defn}
The main properties of the reduced norm are the following.
\[
	\overline n(ab)=\overline n(a)\overline n(b)
,\quad	\overline n(1)=1.
\]
Any polynomial function $\chi:A\to K$ satisfying these two properties will be called a \emph{character} on $A$. Another easy consequence of Properties (2) and (3) is the following. If $A,B$ are finite dimensional algebras over $K$ and $(a,b)\in A\times B$, then $
	m_{(a,b)}(\lambda)=m_a(\lambda)m_b(\lambda)
$. This implies in particular that the degree of $A\times B$ over $K$ is the sum of the degrees of $A,B$ over $K$. Also the reduced norm over $A\times B$ is the product:
\[
	\overline n_{A\times B}(a,b)=\overline n_A(a)\overline n_B (b).
\]

\subsection{Theorems related to this paper}
%The main theorem relevant to our paper is the following.

\begin{thm}\label{thm relating determinant and reduced norm}
Let $D$ be a finite dimensional division algebra over $\kk$ which has degree $d$ over its center $C$ and suppose that $C$ has reduced degree $c$ over $\kk$. Then

\emph{(a)} For any $k\ge1$, $M_k(D)$ has degree $dkc$. 

\emph{(b)} Any character $M_k(D)\to \kk$ is a nonnegative power of the reduced norm.
\end{thm}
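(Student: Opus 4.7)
My plan is to establish the tower formula
\[
 \mathrm{redeg}_\kk M_k(D) \;=\; \mathrm{redeg}_C M_k(D) \cdot \mathrm{redeg}_\kk C,
\]
with the two factors equal to $dk$ and $c$ respectively. For the first factor, I would extend $C$ to a splitting field $L$ of $D$ over $C$, so that $M_k(D) \otimes_C L \cong M_{dk}(L)$. Under this isomorphism the general element $a(\xi) = \sum \xi_i u_i$ becomes a generic $dk \times dk$ matrix over $L(\xi)$, whose characteristic polynomial is irreducible and hence equals its minimal polynomial (degree $dk$). Property (4) of the reduced characteristic polynomial (invariance under field extension) then gives that the minimal polynomial $f \in C(\xi)[\lambda]$ of $a(\xi)$ over $C(\xi)$ also has degree $dk$.

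To recover the degree $dkc$ over $\kk(\xi)$, I would pass from $C(\xi)$ down to $\kk(\xi)$. The coefficients of $f$ are polynomial functions of $a(\xi)$ taking values in $C$ (they are the reduced traces, the reduced norm, and the intermediate coefficients), and one must check they generate $C$ over $\kk$ on the generic element. Given that genericity, I would define an analogue of the norm $N_{C/\kk}(f) \in \kk(\xi)[\lambda]$ of $f$, namely $\prod_\sigma f^\sigma$ over embeddings $\sigma\colon C \hookrightarrow \overline\kk$ for the separable part of $C/\kk$, combined with a Frobenius $p^\mu$-th power for the purely inseparable part (using Lemma \ref{lem: reduced degree of purely inseparable extension}). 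This yields a polynomial of degree exactly $dkc$ annihilating $a(\xi)$. Minimality follows because any polynomial in $\kk(\xi)[\lambda]$ killing $a(\xi)$ must, viewed over $C(\xi)$, be divisible by $f$ and by every $\kk$-conjugate of $f$, accounting for the factor of $c$.

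\textbf{Part (b).} I would first reduce to the case $k=1$. Since $\kk$ is infinite, the same commutator identity used in Lemma \ref{lem: characters on Aut(P) factor through components} shows each elementary matrix $I + a e_{ij}$ in $GL_k(D)$ is a commutator, so any character of $M_k(D)$ is trivial on the subgroup $E_k(D)$ generated by elementary matrices. By the Dieudonné theorem, $GL_k(D)/E_k(D) \cong D^\times/[D^\times,D^\times]$, so the character factors through the Dieudonné determinant. This reduces the theorem to classifying polynomial characters $\chi\colon D \to \kk$. For those, I would extend scalars to a splitting field $L$ of $D$ over $\kk$; after quotienting out the nilradical of $D \otimes_\kk L$ (which is zero in the separable case and a Frobenius radical in general), one obtains a product of full matrix algebras over $L$. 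Classical invariant theory over the algebraically closed completion of $L$ forces $\chi \otimes L$ to be a product $\prod_i \det_i^{n_i}$. Galois-plus-Frobenius descent back to $\kk$ forces all $n_i$ to be equal to a single $n \geq 0$, and the resulting common expression is precisely the $n$-th power of the reduced norm $\overline n_D$. Pulling this back through the Dieudonné determinant identifies the original character of $M_k(D)$ as a power of $\overline n_{M_k(D)}$.

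\textbf{Main obstacle.} The central difficulty in both parts is the inseparable case, where $C/\kk$ (or $Z(D)/\kk$) has nontrivial $p$-power inseparability, so that $C \otimes_\kk \overline\kk$ acquires a nontrivial nilradical and the naive product-over-embeddings Galois descent breaks down. The correct replacement is to combine partial Galois conjugation over the separable closure with a Frobenius twist by the inseparable exponent $p^\mu$, which is legitimate precisely because of Lemma \ref{lem: reduced degree of purely inseparable extension}. A secondary technical point is to verify that the coefficients of the minimal polynomial $f$ over $C(\xi)$ genuinely generate $C$ over $\kk$ (and not a proper subfield), a genericity check needed to obtain the sharp degree $dkc$ rather than merely an upper bound.
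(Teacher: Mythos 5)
The central gap is in the minimality step of part (a). You get the upper bound $\mathrm{redeg}_\kk M_k(D)\le dkc$ by exhibiting a degree-$dkc$ annihilating polynomial (separable norm combined with a Frobenius power), and that is fine. But for the matching lower bound your argument that any $g\in\kk(\xi)[\lambda]$ killing $a(\xi)$ "must be divisible by $f$ and by every $\kk$-conjugate of $f$, accounting for the factor of $c$" only accounts for the \emph{separable} degree $s$ of $C/\kk$: in the paper's notation $c=qs$ where $q=p^\mu$ is the inseparable reduced degree, and $C$ admits only $s$ distinct $\kk$-embeddings into $\overline K$, so the conjugation argument bounds $\deg g$ by $sdk$, not $cdk$. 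You would still need $g$ to be divisible by the $q$-th power of each conjugate, which does not follow from conjugation. Worse, the tower formula $\mathrm{redeg}_\kk A=\mathrm{redeg}_C A\cdot\mathrm{redeg}_\kk C$ that organizes your part (a) is false in general — take $\kk=\FF_p(s,t)$, $C=\kk(s^{1/p})$, $A=C(t^{1/p})$: then $\mathrm{redeg}_\kk C=\mathrm{redeg}_C A=\mathrm{redeg}_\kk A=p$, not $p^2$ — so its validity here requires an argument, not a citation. The paper obtains the lower bound by a different device: after base change to the Galois closure $S$ of the separable part $E$ (so $M_k(D)\otimes_K S\cong M_{dk}(CS)^s$), it restricts to the commutative diagonal subalgebra $CS^{dk}\hookrightarrow M_{dk}(CS)$, whose generic element has $dk$ coordinates each of reduced degree $q$ over $S$ with pairwise-coprime minimal polynomials, giving degree $\ge qdk$ on the nose. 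That concrete-subalgebra lower bound is the idea your sketch is missing.

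For (b), the Dieudonn\'e reduction to $k=1$ is a genuinely different route from the paper's (which keeps $M_{dk}(CS)$, restricts the character to diagonal matrices to reduce to polynomial characters of the field $CS$ over $S$, then uses row/column operations for density on $GL(dk,CS)$), and it could in principle be made to work. But the step "extend scalars to a splitting field $L$ of $D$ over $\kk$ and quotient out the nilradical of $D\otimes_\kk L$" is unjustified: a polynomial character need not descend to the nilradical quotient (even the reduced norm, which on $CS$ over $S$ is $x\mapsto x^q$, gives $\overline n(1+\epsilon)=1+\epsilon^q$, which is $\ne 1$ when $\epsilon$ is nilpotent but $\epsilon^q\ne 0$), so you cannot invoke invariant theory for a product of matrix algebras without first proving the factorization. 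The paper avoids this entirely by only tensoring along the separable part, so $CS$ stays a field and there is no nilradical to worry about; you should adopt that precaution. Also note that the Dieudonn\'e determinant is a group-valued, not polynomial-valued, map, so to get a polynomial character on $D$ you need to restrict $\chi$ to a $D$-embedded subalgebra of $M_k(D)$ rather than literally composing with the Dieudonn\'e determinant.
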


\begin{proof} 

We first compute the degree of $M_k(D)$ over $K$. Let $L$ be the maximal separable subfield of $D$. Then $L$ is separable over $C$ of degree $d$ and it is well-known that $M_k(D)\otimes_CL\cong M_{dk}(L)$. Let $E$ be the separable closure of $\kk$ in $L$. Then $F=E\cap C$ is the separable closure of $\kk$ in $C$ and $L=CE$. Let $s=[F:\kk]$. Then $c=q s$ where $q=p^\mu$ is the reduced degree of $C$ over $F$. By Lemma \ref{lem: reduced degree of purely inseparable extension}, the reduced degree of $L$ over $E$ is also $q$ and $q$ is minimal so that $L^q\subseteq E$. Let $S$ be the splitting field of $E$ over $K$. Then $C\otimes_FS\cong CS$ is a separable field extension of $CE=L$. So, 
\[
	M_k(D)\otimes_FS=M_k(D)\otimes_C C\otimes_FS=M_k(D)\otimes_C L\otimes_LCS\cong M_{dk}(CS).
\]

\underline{Claim} The degree of $M_{dk}(CS)$ over $S$ is $qdk$ and the reduced norm $M_{dk}(CS)\to S$ is the $q$-th power of the determinant over $CS$. \vs2

Proof: Any $a\in M_{dk}(CS)$ satisfies its characteristic polynomial $f(\lambda)=\det(\lambda-a)\in CS[\lambda]$ with degree $dk$. Then $f(\lambda)^q$ is a polynomial in $S[\lambda]$ of degree $qdk$ satisfied by $a$. So, the degree of $M_{dk}(CS)$ over $S$ is $\le qdk$. Now consider the inclusion of the diagonal matrices:
\[
	CS^{dk}=CS\times\cdots\times CS\into M_{dk}(CS)
\]
Since the general element of $CS$ has degree $q$ over $S$, the general element of $CS^{dk}$ has degree $qdk$ over $S$. So, the degree of $M_{dk}(CS)$ over $S$ is $\ge qdk$. So, it is equal to $qdk$. Furthermore, the reduced characteristic polynomial is $\det(\lambda-a)^q$ and the reduced norm is $\det(a)^q$.
\vs2

%We return to the proof of the theorem. 

(a) Since $S$ is the splitting field of $E$ over $K$ and $F$ is an intermediate field, we have $F\otimes_KS\cong S^s$ where $s=[F:K]$. Since (reduced) degree is invariant under extension of scalars, the degree of $M_k(D)$ over $K$ is equal to the degree of $M_k(D)\otimes_KS$ over $S$. But
\[
	M_k(D)\otimes_KS=M_k(D)\otimes_FF\otimes_KS=M_k(D)\otimes_FS^s=M_{dk}(CS)^s
\]
which has degree $s$ times the degree of $M_{dk}(CS)$ over $S$. By the claim above this is $s$ times $qdk$ which is $dkqs=dkc$ proving (a).%the first statement of the theorem.

%(b) Thus, the reduced norm  $\overline n:M_k(D)\to K$ has degree $qkc=qdsk$. Now c

(b) Consider any character $\chi:M_k(D)\to K$. We note that arbitrary (polynomial) characters must be homogeneous polynomials. By extending scalars we get a character \[\chi_S:M_k(D)\otimes_KS\cong M_{dk}(CS)^s\to S\]which must be a product of $s$ characters $(\chi_S)_i:M_{dk}(CS)\to S$. By symmetry given by the action of $Gal(S/K)$, these $s$ characters are equal. By restriction to diagonal matrices we get a character $CS^{dks}\to S$. But a character on $CS^{dks}$ is a product of characters one for each factor. By symmetry, these characters must all be equal: $\chi_S|CS^{dks}=(\chi_0)^{dks}$. But each character $\chi_0:CS\to S$ is a power of the reduced norm $\overline n_{CS}:CS\to S$ since $\chi_0(x)=x^m$ and this lies in $S$ only when $m$ is a multiple of $q$, say $m=qt$, $\chi_0=\overline n_{CS}^t$. Therefore,
\[
	\chi_S|CS^{dks}=(\chi_0)^{dks}=\overline n_{CS}^{tdks}
\]
which has degree equal to $qtdks$. When $\chi$ is the reduced norm $\overline n$ we get $t=1$. Therefore, in general we get $(\chi_S)_i=\overline n^t$ when restricted to the diagonal matrices where $\overline n$ is the reduced norm of $M_{dk}(CS)$ over $S$. However, any invertible matrix is equivalent to a diagonal matrix under row and column operations which are given by multiplication by elements of the commutator subgroup of $GL(dk,CS)$. Since $S^\ast$ is abelian, each group homomorphism $(\chi_S)_i:GL(dk,CS)\to S^\ast$ is uniquely determined by its restriction to diagonal invertible matrices. So, $\chi_S=\overline n^t$ for all elements of $GL(dk,CS)^s$. Since this is an open dense subset of $M_{dk}(CS)^s$, $\chi_S=\overline n^t$ as homogeneous polynomials over $S$. But both polynomials have coefficients in $K$. So, they give $\chi=\overline n^t$ as characters $M_k(D)\to K$.
\end{proof}

\begin{rem}\label{rem 6.2.2: every character is a (frac) power of det}
Theorem \ref{thm relating determinant and reduced norm} implies that every character $M_k(D)\to \kk$ is a nonnegative fractional power of the $\kk$-determinant $det_\kk$: $det_\kk=\overline n^{f/dc}$ where $f=\dim_\kk D$. Thus, the ``true weight'' of a semi-invariant with determinantal weight $\beta$ is the vector whose $i$-th coordinate is $\beta_if_i/d_ic_i$ where $d_ic_i$ is the (reduced) degree of $F_i$ over $K$. In particular, if $m$ is the least common multiple of the integers $f_i/d_ic_i$ then the $m$-th power $\sigma^m$ of any semi-invariant on a presentation space $\Hom_\Lambda(P(\gamma_1),P(\gamma_0))$ has determinantal weight.
\end{rem}

\begin{cor}\label{cor: reduced exchange matrix is integral}
Suppose that $F_1,F_2$ are division algebras over $K$ of dimensions $f_1,f_2$ and degrees $n_1,n_2$ over $K$. Let $M$ be an $F_1$-$F_2$-bimodule with $\dim_KM=m$. Then
\[
	\frac{mn_1}{f_1n_2}\ , \frac{mn_2}{f_2n_1}\ \in \ZZ.
\]
%are integers.
\end{cor}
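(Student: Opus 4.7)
The plan is to realize $mn_1/(f_1 n_2)$ as the degree of a character on $F_2^{op}$ obtained by restricting the reduced norm of a matrix algebra over $F_1^{op}$, and then invoke Theorem \ref{thm relating determinant and reduced norm}.

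First, I would use the division algebra structure of $F_1$: as a left $F_1$-module, $M$ is free of some rank $k$, and the equation $\dim_K M = k f_1$ forces $k = m/f_1 \in \NN$. Choosing a basis gives a natural $K$-algebra identification $\End_{F_1}(M) \cong M_k(F_1^{op})$. The right $F_2$-action on $M$ commutes with left multiplication by $F_1$, so it provides a (necessarily injective) $K$-algebra homomorphism $\phi : F_2^{op} \hookrightarrow \End_{F_1}(M) \cong M_k(F_1^{op})$.

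Next, I would form the composition $\overline n \circ \phi : F_2^{op} \to K$, where $\overline n$ is the reduced norm of $M_k(F_1^{op})$. By Theorem \ref{thm relating determinant and reduced norm}(a), $\overline n$ is a homogeneous polynomial of degree $k n_1$ (since $F_1^{op}$ has the same reduced degree $n_1$ as $F_1$). The composition $\overline n \circ \phi$ is then the restriction of a homogeneous polynomial of degree $k n_1$ to the $K$-linear subspace $\phi(F_2^{op})$; it is multiplicative, and $(\overline n \circ \phi)(1) = \overline n(1) = 1$, so in particular it is nonzero. Hence it is a character on $F_2^{op}$ of degree $k n_1$.

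Finally, I would apply Theorem \ref{thm relating determinant and reduced norm}(b) to conclude $\overline n \circ \phi = \overline n_{F_2^{op}}^{\,t}$ for some $t \in \NN$, where $\overline n_{F_2^{op}}$ has degree $n_2$. Comparing degrees yields
\[
t\, n_2 \;=\; k\, n_1 \;=\; \frac{m n_1}{f_1},
\]
so $\dfrac{m n_1}{f_1 n_2} = t \in \NN$. The integrality of $\dfrac{m n_2}{f_2 n_1}$ follows by the symmetric construction, viewing $M$ as a right $F_2$-module and embedding $F_1$ into $\End_{F_2}(M)^{op} \cong M_{m/f_2}(F_2)$, then restricting the reduced norm of this matrix algebra to $F_1$.

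The only subtlety is verifying that $\overline n \circ \phi$ is a genuine character rather than the zero polynomial, but this is immediate from $\overline n(\phi(1)) = 1$; the rest is a direct application of the reduced-norm theorem just proved.
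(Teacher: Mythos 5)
Your proof takes essentially the same route as the paper: view $M$ as a free left $F_1$-module of rank $m/f_1$, compose the inclusion of $F_2^{op}$ (induced by the right $F_2$-action) into $\End_{F_1}(M)\cong M_{m/f_1}(F_1^{op})$ with the reduced norm, and invoke Theorem~\ref{thm relating determinant and reduced norm}(a) for the degree count and (b) to identify the restricted character as a power of $\overline n_{F_2^{op}}$. The only quibble is in the symmetric second case: since $M$ is a free right $F_2$-module, $\End_{F_2}(M)\cong M_{m/f_2}(F_2)$ already without passing to the opposite algebra, and left multiplication gives a genuine embedding $F_1\hookrightarrow\End_{F_2}(M)$; the spurious ${}^{op}$ does not affect any dimensions or reduced degrees, so the argument goes through unchanged.
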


\begin{proof}
The reduced norm gives a character
\[
	\End_{F_1}(M)\cong M_{m/f_1}(F_1)\xrarrow{\overline n_1}K
\]
which is polynomial of degree $mn_1/f_1$. Composing with the inclusion $F_2\into \End_{F_1}(M)$ we get a character $\chi:F_2\to K$ of degree $mn_1/f_1$. By Theorem \ref{thm relating determinant and reduced norm}, $\chi$ is an integer power of the reduced norm $\overline n_2:F_2\to K$ which has degree $n_2$. Therefore $n_2$ divides $mn_1/f_1$ making $mn_1/f_1n_2$ an integer. The other case is similar.
\end{proof}

%\newpage

\begin{defn}
Let $\Lambda$ be a finite dimensional hereditary algebra over a field $K$. Let $B_\Lambda=L^t-R$ be the exchange matrix of $\Lambda$. Define the \emph{reduced exchange matrix} of $\Lambda$ to be
\[
	\overline B_\Lambda= ZB_\Lambda Z^{-1}
\]
where $Z$ is the diagonal matrix with entries $z_i=f_i/n_i$ where $n_i$ is the degree of $F_i$ over $K$ and $f_i=\dim_KF_i$. The entries of $\overline B_\Lambda$ are 
\[
	\overline b_{ij}=\frac{n_j}{f_jn_i}(\left<e_j,e_i\right>-\left<e_i,e_j\right>)
\]
where $e_i$ are the unit vectors. Since $|\left<e_j,e_i\right>|$ is the dimension of an $F_i\text-F_j$-bimodule, $\overline b_{ij}$ are integers by Corollary \ref{cor: reduced exchange matrix is integral}. Given a cluster tilting object $T$ with exchange matrix $B_T$ and $c$-matrix $C_T$, we define the \emph{reduced exchange matrix} and the matrix of \emph{reduced $c$-vectors} by $\overline B_T=ZB_TZ^{-1}$ and $\overline C_T=ZC_TZ^{-1}$.
\end{defn}

Since mutation of exchange matrices and extended exchange matrices commutes with conjugation, $\overline B_T$ and $\overline C_T$ have integer coordinates and are obtained from $\mat{\overline B_\Lambda\\ I_n}$ by mutation. We claim that the reduced $c$-vectors are the reduced weights of the reduced norm semi-invariants which we now define.

\begin{defn}The \emph{reduced norm semi-invariant} $\overline\sigma_{\beta}$ is the polynomial function
\[
	Pres_\Lambda(\gamma_1,\gamma_0)=\Hom_\Lambda(P(\gamma_1),P(\gamma_0))\to K
\]
which sends $f:P(\gamma_1)\to P(\gamma_0)$ to the reduced norm of
\[
	\Hom(f,1):\Hom_\Lambda(P(\gamma_0),M_\beta)\to \Hom_\Lambda(P(\gamma_1),M_\beta)
\]
considered as a linear map of $F_\beta$-vector spaces. We define the \emph{reduced weight} of a semi-invariant $\sigma$ on presentation space $Pres_\Lambda(\gamma_1,\gamma_0)$ to be the vector $w\in\NN^n$ so that $\sigma(gfh)=\prod \overline n_i(g)^{w_i}\sigma(f)\overline n_i(h)^{w_i}$ where $\overline n_i(g)$ is the reduced norm of the $GL(\gamma_{0i},F_i)$-component of $g\in\Aut_\Lambda(P(\gamma_0))$ and similarly for $\overline n_i(h)$. 
\end{defn}

\begin{lem} 
The reduced weight of the reduced norm semi-invariant $\overline\sigma_\beta$ is
\[
	\overline\beta=\frac1{z_\beta}(z_1\beta_1,z_2\beta_2,\cdots,z_n\beta_n).
\]
\end{lem}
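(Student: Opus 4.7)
The plan is to unwind the definition of $\overline\sigma_\beta$ using multiplicativity of the reduced norm, use Lemma \ref{lem: characters on Aut(P) factor through components} to reduce to one $GL(\gamma_{0,i},F_i)$ block at a time, and then apply Theorem \ref{thm relating determinant and reduced norm} to read off the exponent $w_i$ by a degree count.

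Concretely, for $f\in Pres_\Lambda(\gamma_1,\gamma_0)$, $g\in\Aut_\Lambda(P(\gamma_0))$, $h\in\Aut_\Lambda(P(\gamma_1))$, the induced map $(gfh)^\ast=h^\ast f^\ast g^\ast$ is a composition of $F_\beta$-linear endomorphisms, so multiplicativity of $\overline n_{F_\beta}$ gives
\[
\overline\sigma_\beta(gfh)=\overline n_{F_\beta}(h^\ast)\,\overline\sigma_\beta(f)\,\overline n_{F_\beta}(g^\ast).
\]
Thus $g\mapsto \overline n_{F_\beta}(g^\ast)$ is a polynomial character of $\Aut_\Lambda(P(\gamma_0))$, and by Lemma \ref{lem: characters on Aut(P) factor through components} it factors through $\prod_i GL(\gamma_{0,i},F_i)$. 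Writing $g=(g_1,\dots,g_n)$ with $g_i\in GL(\gamma_{0,i},F_i)$ and $V_i:=\Hom_\Lambda(P_i,M_\beta)$, the block $g_i$ acts by pre-composition on $\Hom_\Lambda(P_i^{\gamma_{0,i}},M_\beta)=V_i^{\gamma_{0,i}}$ and as the identity on the other summands of $\Hom_\Lambda(P(\gamma_0),M_\beta)$. Hence the reduced norm factors:
\[
\overline n_{F_\beta}(g^\ast)=\prod_{i=1}^n \overline n_{F_\beta}(g_i^\ast).
\]

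Each factor is a polynomial character $GL(\gamma_{0,i},F_i)\to K$, so by Theorem \ref{thm relating determinant and reduced norm}(b) we have $\overline n_{F_\beta}(g_i^\ast)=\overline n_i(g_i)^{w_i}$ for a unique $w_i\in\NN$; the identity $w_i=z_i\beta_i/z_\beta$ will fall out of a comparison of $K$-degrees of these two homogeneous polynomials. On one hand, applying Theorem \ref{thm relating determinant and reduced norm}(a) to $M_{\gamma_{0,i}}(F_i)$ shows that $\overline n_i$ has $K$-degree $\gamma_{0,i}n_i$, so the right hand side has $K$-degree $w_i\gamma_{0,i}n_i$. On the other hand, $V_i$ is an $F_i$-$F_\beta$-bimodule with $\dim_K V_i=f_i\beta_i$, hence $\dim_{F_\beta}V_i^{\gamma_{0,i}}=\gamma_{0,i}f_i\beta_i/f_\beta$; applying Theorem \ref{thm relating determinant and reduced norm}(a) once more to the matrix algebra $\End_{F_\beta}(V_i^{\gamma_{0,i}})$ shows that $\overline n_{F_\beta}(g_i^\ast)$ has $K$-degree $\gamma_{0,i}f_i\beta_i n_\beta/f_\beta$. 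Equating the two degrees yields
\[
w_i=\frac{f_i\beta_i\,n_\beta}{n_i\,f_\beta}=\frac{z_i\beta_i}{z_\beta}.
\]

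An identical argument applied on the $h$ side gives the matching exponent for $\Aut_\Lambda(P(\gamma_1))$, so $\overline\sigma_\beta$ has reduced weight $\overline\beta=(z_1\beta_1,\dots,z_n\beta_n)/z_\beta$ as claimed. The only real work is the two degree counts above; integrality of the $w_i$ is automatic from Theorem \ref{thm relating determinant and reduced norm}(b), and the reduction to block-diagonal $g$ is routine by Lemma \ref{lem: characters on Aut(P) factor through components}, so there is no serious obstacle---the main subtlety is simply keeping track of the three reduced degrees $n_i$, $n_\beta$ and the $K$-dimensions $f_i$, $f_\beta$ when applying Theorem \ref{thm relating determinant and reduced norm}(a) to the correct matrix algebra.
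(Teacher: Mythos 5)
Your proof is correct, but it takes a genuinely different route from the paper. The paper's argument is a two-line deduction from Remark \ref{rem 6.2.2: every character is a (frac) power of det}: writing $\sigma_\beta=\overline\sigma_\beta^{\,z_\beta}$ (because $\det_K$ on $\End_{F_\beta}$ is the $z_\beta$-th power of $\overline n_{F_\beta}$) and then simply dividing the known identity $\sigma_\beta(gfh)=\sigma_\beta(f)\prod\chi_i(g)^{\beta_i}\chi_i(h)^{\beta_i}$ by $z_\beta$ in the exponent, using $\chi_i=\overline n_i^{\,z_i}$ to convert each factor. Your argument never invokes the relation $\sigma_\beta=\overline\sigma_\beta^{\,z_\beta}$; instead you identify the block-$i$ component of the reduced-norm character directly as the restriction of $\overline n_{F_\beta}$ on $\End_{F_\beta}(V_i^{\gamma_{0,i}})$ to the image of $M_{\gamma_{0,i}}(F_i)$ and then read off the exponent $w_i$ by comparing $K$-degrees via Theorem \ref{thm relating determinant and reduced norm}(a). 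Both routes ultimately rest on Theorem \ref{thm relating determinant and reduced norm}, but yours is self-contained and avoids the fractional-power manipulation, at the cost of one extra justification you should make explicit: when you read off the degree of $\overline n_{F_\beta}(g_i^\ast)$ you are restricting a homogeneous polynomial on $\End_{F_\beta}(V_i^{\gamma_{0,i}})$ to the $K$-linear subspace $\psi(M_{\gamma_{0,i}}(F_i))$, and this preserves the degree only because the restriction is not identically zero (it takes the value $1$ at $g_i=1$). Likewise the block factorization $\overline n_{F_\beta}(g^\ast)=\prod_i\overline n_{F_\beta}(g_i^\ast)$ deserves a word — it follows from the multiplicativity of the reduced norm on the block-diagonal subalgebra together with the product rule $\overline n_{A\times B}=\overline n_A\,\overline n_B$ — but these are minor and the essential computation is sound. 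Incidentally, the paper's displayed calculation has a typo (it writes $\overline n_i(g)^{n_i\beta_i/z_\beta}$ where $\overline n_i(g)^{z_i\beta_i/z_\beta}$ is meant, and one $\overline n_i(g)$ should be $\overline n_i(h)$); your version lands on the correct exponent $w_i=z_i\beta_i/z_\beta$ directly.
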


\begin{proof}
By Remark \ref{rem 6.2.2: every character is a (frac) power of det}, we have $\sigma_\beta=\overline\sigma_\beta^{z_\beta}$ where $z_\beta=\dim_KF_\beta/\deg_KF_\beta$. Since the det-weight of $\sigma_\beta$ is $\beta$ we have:
\begin{eqnarray*}
	\overline\sigma_\beta(gfh) & = & \sigma_\beta(gfh)^{1/z_\beta}\\
	&=& \prod \chi_i(g)^{\beta_i/z_\beta}\sigma_\beta(f)^{1/z_\beta}\chi_i(h)^{\beta_i/z_\beta}\\
	&=& \prod \overline n_i(g)^{n_i\beta_i/z_\beta}\overline\sigma_\beta(f) \overline n_i(g)^{n_i\beta_i/z_\beta}
\end{eqnarray*}
where $\chi_i(g)=\overline n_i(g)^{z_i}$ is the det-weight of the $GL(\gamma_{0i},F_i)$-component of $g\in\Aut_\Lambda(P(\gamma_0))$. So, the reduced weight of $\overline\sigma_\beta$ is $(n_i\beta_i/z_\beta)=\overline\beta$.
\end{proof}

In the notation of Corollary \ref{cor: HT equation} we have the following.

\begin{lem}\label{lem: reduced HT equation}
For any cluster tilting object $T$ of $\Lambda$ we have
\[
	V^t\overline E\,\overline\Gamma_T=N
\]
where $N=DZ^{-1}$ is the diagonal matrix with entries $n_i$, $\overline E=EZ^{-1}=LN$, $\overline\Gamma_T=Z\Gamma_TZ^{-1}$.% with $V,E,\Gamma_T$ being the 
\end{lem}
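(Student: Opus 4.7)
The plan is to derive this identity as a direct algebraic consequence of Corollary \ref{cor: HT equation}, which already states $V^tE\Gamma_T=D$. Since $\overline E$ and $\overline\Gamma_T$ are obtained from $E$ and $\Gamma_T$ by conjugation/scaling by the diagonal matrix $Z$, the proof is essentially a one-line substitution.

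First, I would record the compatibility of the definitions: from $E=LD$ and $N=DZ^{-1}$ we have $LN=LDZ^{-1}=EZ^{-1}$, which matches the stated identity $\overline E=EZ^{-1}=LN$. Then I would compute directly:
\[
V^t\overline E\,\overline\Gamma_T \;=\; V^t(EZ^{-1})(Z\Gamma_TZ^{-1}) \;=\; V^tE\Gamma_TZ^{-1} \;=\; DZ^{-1} \;=\; N,
\]
where the penultimate equality uses Corollary \ref{cor: HT equation}. No deeper input is needed; the associativity of matrix multiplication together with the fact that $Z$ is invertible does all the work.

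There is no real obstacle here. The only thing worth pausing over is conceptual rather than technical, namely that the conjugation $\Gamma_T\mapsto Z\Gamma_TZ^{-1}$ is the correct one to use (so that the columns of $\overline\Gamma_T$ are the reduced weights, as established in the preceding lemma computing the reduced weight of $\overline\sigma_\beta$). Once this normalization is accepted, the identity $V^t\overline E\,\overline\Gamma_T=N$ is just Corollary \ref{cor: HT equation} rewritten, and gives the reduced analogue $V^t\overline E\,\overline\Gamma_T=DZ^{-1}$ that is needed to interpret the columns of $\overline\Gamma_T$ as reduced $c$-vectors for the reduced exchange matrix $\overline B_\Lambda=ZB_\Lambda Z^{-1}$.
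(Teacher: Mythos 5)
Your proposal is correct and follows exactly the paper's own proof: both substitute $\overline E=EZ^{-1}$ and $\overline\Gamma_T=Z\Gamma_TZ^{-1}$, cancel $Z^{-1}Z$, and apply Corollary \ref{cor: HT equation} to get $V^tE\Gamma_TZ^{-1}=DZ^{-1}=N$. Your additional remarks about the verification $\overline E=LN$ and the significance of the conjugation are helpful context but not part of the paper's argument, which is the same one-line computation.
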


\begin{proof}
By Corollary \ref{cor: HT equation} we have: $V^t\overline E\,\overline\Gamma_T=V^tE\Gamma_TZ^{-1}=DZ^{-1}=N$.
\end{proof}

We can now restate the $c$-vector theorem in terms of reduced $c$-vectors.

\begin{thm}[Reduced Norm $c$-vector Theorem]\label{reduced c-vector theorem}
The reduced $c$-vectors associated to a cluster tilting object $T$ are
\[
	\overline c_j=-\varepsilon_j\overline\beta_j
\]
where $\overline\beta_j$ is the reduced weight of the reduced norm semi-invariant $\overline\sigma_{\beta_j}$.
\end{thm}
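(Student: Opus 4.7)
\medskip

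\noindent\textbf{Proof proposal.} The plan is to reduce this theorem to the $c$-vector theorem (Theorem \ref{c-vector thm}) by a direct computation, since essentially all the work has already been done: the reduced $c$-vectors are defined by conjugating the ordinary $c$-matrix by the diagonal matrix $Z$, and the reduced weights $\overline\beta_j$ are obtained from the det-weights $\beta_j$ by the same kind of rescaling. The only substantive point to check is that the two rescaling factors actually agree.

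First, I would unpack the definitions. The $j$-th column of $\overline C_T = Z C_T Z^{-1}$ is $z_j^{-1}Zc_j$, since $Z$ is diagonal with entries $z_i = f_i/n_i$. On the other hand, the reduced weight of the reduced norm semi-invariant $\overline\sigma_{\beta_j}$ was computed in the lemma preceding the theorem to be
\[
\overline\beta_j = \tfrac{1}{z_{\beta_j}}(z_1\beta_{j,1},\dots,z_n\beta_{j,n}) = \tfrac{1}{z_{\beta_j}}Z\beta_j,
\]
where $z_{\beta_j} = f_{\beta_j}/n_{\beta_j}$ and $f_{\beta_j} = \dim_K F_{\beta_j}$, $n_{\beta_j} = \deg_K F_{\beta_j}$ for $F_{\beta_j} = \End_\Lambda(M_{\beta_j})$.

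Next I would apply Theorem \ref{c-vector thm} to write $c_j = -\varepsilon_j\beta_j$, giving
\[
\text{$j$-th column of }\overline C_T \;=\; -\varepsilon_j z_j^{-1}Z\beta_j.
\]
So to conclude $\overline c_j = -\varepsilon_j\overline\beta_j$ it suffices to verify the identity $z_j = z_{\beta_j}$. This is exactly what is furnished by Theorem \ref{thm 4.1.5: clusters and SIs}(b) together with (d): after an appropriate renumbering of the $T_i$ (which also renumbers the $\beta_j$ by the matching of Theorem \ref{thm 4.1.5: clusters and SIs}(a)), one has the chain of isomorphisms of division algebras
\[
F_{\beta_j} = \End_\Lambda(M_{\beta_j}) \cong \End_{\cC_\Lambda}(T_j) \cong \End_\Lambda(S_j) = F_j,
\]
so that $f_{\beta_j} = f_j$ and $n_{\beta_j} = n_j$, hence $z_{\beta_j} = z_j$.

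The main (and essentially only) obstacle is bookkeeping: making sure that the numbering convention used to define $\overline C_T$ is compatible with the numbering convention used in Theorem \ref{thm 4.1.5: clusters and SIs} so that the two diagonal entries $z_j$ and $z_{\beta_j}$ really are indexed consistently. Once one fixes the ordering of the $T_i$ provided by part (d) of Theorem \ref{thm 4.1.5: clusters and SIs}, the computation above is immediate, and the result drops out without any further appeal to mutation, the stability theorem, or Theorem \ref{thm 4.2.3: NZ on candidate c-vectors}. In particular, sign coherence of reduced $c$-vectors follows at once from the sign coherence of det-weights (Proposition \ref{Weights are nonnegative}).
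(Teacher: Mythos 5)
Your proof is correct and essentially identical in strategy to the paper's: both proceed by writing $\overline c_j$ as the $j$-th column of $ZC_TZ^{-1}$, substituting $c_j=-\varepsilon_j\beta_j$ from the $c$-vector theorem, and observing that the resulting $-\varepsilon_j z_j^{-1}Z\beta_j$ equals $-\varepsilon_j\overline\beta_j=-\varepsilon_j z_{\beta_j}^{-1}Z\beta_j$ because $z_j=z_{\beta_j}$. The paper records this last identity without comment (calling it ``reduction of both sides, using the fact that $z_j=z_{\beta_j}$''), while you supply the justification via Theorem~\ref{thm 4.1.5: clusters and SIs}(b) and (d); this is a helpful clarification but not a different argument.
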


\begin{proof}
Since conjugation of exchange matrices and $c$-matrices commutes with mutation, given that $c_j$ is the $j$-th $c$-vector of the object $T$, the reduced vector $\overline c_j$ is the $j$-th $c$ vector of $T$ using $\overline B_\Lambda$ as initial exchange matrix. Since $c_j=-\varepsilon_j\beta_j$ by Theorem \ref{c-vector thm}, reduction of both sides, using the fact that $z_j=z_{\beta_j}$, gives $\overline c_j=-\varepsilon_j\overline\beta_j$.
\end{proof}

\begin{eg}
Consider the $\RR$-modulated quiver $\HH\ot \CC\ot \CC$. This has $9$ indecomposable modules giving the same picture as Figure \ref{Fig C3}. Using the same label for these modules as in Figure \ref{Fig C3} we have:
\[
\begin{array}{cccc}
	\text{label}&\beta & z_\beta & \overline\beta=\frac1{z_\beta}(\beta_1,\beta_2,2\beta_3)\\
	\hline
	S_1 & (1,0,0) & 2 & (1,0,0)\\
	P_2 & (1,1,0) & 1 & (2,1,0)\\
	P_3 & (1,1,1) & 1 & (2,1,1)\\
	Y & (1,2,0) & 2 & (1,1,0)\\
	X & (1,2,1) & 1 & (2,2,1)\\
	S_2 & (0,1,0) & 1 & (0,1,0)\\
	Z_2 & (1,2,2) & 2 & (1,1,1)\\
	Z_1 & (0,1,1) & 1 & (0,1,1)\\
	S_3 & (0,0,1) & 1 & (0,0,1)
\end{array}
\]
As an example, take the injective module $Z_1$. This has a determinantal semi-invariant of det-weight $\beta=(1,2,2)$ since a presentation for $Z_1$ is $P_2\oplus P_1\to P_3\oplus P_2\to Z_1$. We take homomorphisms to $Z_2$ to get:
\begin{equation}\label{eq: example of reduced norm semi-invariant}
	\Hom_\Lambda(P_3\oplus P_2,Z_2)=\CC^2\oplus \CC^2\to \Hom_\Lambda(P_2\oplus P_1,Z_2)=\CC^2\oplus \HH
\end{equation}
The determinantal semi-invariant $\sigma_\beta$ is given by considering this as an isomorphism of $8$-dimensional real vector spaces and taking determinant. This has determinantal weight $(1,2,2)$ since the automorphism of $P_3$ given by $z=a+bi\in\CC^\ast$ has real determinant $\left|\begin{matrix}a & b\\-b&a\end{matrix}\right|=a^2+b^2$ and multiplies the $8\times 8$ determinant by $(a^2+b^2)^2$ (since it multiplies the first two $\CC$ coordinates) which is the second power of the det-weight $|z|^2$ of $z$. Similarly any $z\in\Aut(P_2)$ also changes the $8\times 8$ determinant by $|z|^4$ making the det-weight of $\sigma_\beta$ equal to $(?,2,2)$. The first coordinate of the det-weight is 1 since $h\in \Aut(P_1)$ changes the $8\times 8$ determinant by $|h|^4$ which is the det-weight of $h$.

The reduced norm semi-invariant $\overline\sigma_\beta$ is given by considering \eqref{eq: example of reduced norm semi-invariant} as an isomorphism of $2$-dimensional vector spaces over $\HH$ and taking the reduced norm over $\HH$ which is the square root of the real determinant. So, any automorphism of $P_3$ or $P_2$ given by $z\in\CC^\ast$ will change the reduced norm semi-invariant by $|z|^2$ which is the norm of $z$. Also, any automorphism of $P_1$ given by $h\in \HH^\ast$ will change $\overline\sigma_\beta$ by $|h|^2=\overline n(h)$. So, the reduced weight is $(1,1,1)$.
\end{eg}

\section*{Acknowledgements}

The last two authors gratefully acknowledge the support of National Science Foundation. The first author was supported by the National Security Agency.  The second author was supported by the Simons Foundation. The first two authors also acknowledge support of the NSF at the beginning of this project many years ago. The first and third authors thank Faculty of Mathematics and Computer Science of Nicolaus Copernicus University in Torun, Poland and the University of Iowa for their hospitality in September 2013 and November 2014, where the results were announced. The third author thanks University of Syracuse, University of Barcelona and Centro de Investigaci\'on en Matem\`aticas (CIMAT) in Guanajuato, Mexico for the invitation to present the results of this paper and application on April 11, May 26 and June 26, 2015. The first, third and fourth authors are very grateful to the University of Connecticut for hosting (and to National Science Foundation for sponsoring) a very enjoyable and productive International Conference in Representation Theory and Commutative Algebra (ICRTCA) in honor of the fourth author on April 24-27, 2015. The first and third authors thank the Centre de Recerca Matem\'atica (CRM) at the University of Barcelona for their hospitality during May 2015 where first versions of the appendices of this paper were written. The second author gratefully acknowledges the support provided by the SFB 1085 ÔHigher InvariantsÕ at the University of Regensburg while on sabbatical in the fall of 2014, and funded by the Deutsche Forschungsgemeinschaft (DFG). The authors also had very useful conversations and communications with Hugh Thomas, Nathan Reading, Calin Chindris, Helmut Lenzing, Stephen Hermes, Thomas Br\"ustle, Alex Dugas and Ernst Dieterich.

%%%%%%%%%%%%%%%%%%%%%%%%%%%%%%%%%%%
\end{document}